\newcommand{\rhode}{\sigma}
\newcommand{\ctext}{}
\newcommand{\R}{\mathbb R}
\newcommand{\Z}{\mathbb Z}
\newcommand{\N}{\mathbb N}
\newcommand{\HOX}[1]{\marginpar{\footnotesize #1}}
\newcommand{\Cone}{C_1}
\newcommand{\Ctwo}{C_2}
\newcommand{\Cfour}{C_3}
\newcommand{\Cfive}{C_4}
\newcommand{\cone}{C_5}
\newcommand{\ctwo}{C_6}
\newcommand{\cthree}{C_7}
\newcommand{\cfour}{C_8}
\newcommand{\Csix}{C_9}
\newcommand{\Cseven}{C_{10}}
\newcommand{\Ceight}{C_{11}}
\newcommand{\Cnine}{C_{12}}
\newcommand{\Cten}{C_{13}}
\newcommand{\cfive}{C_{14}}
\newcommand{\csix}{C_{15}}
\newcommand{\Cthirteen}{C_{16}}
\newcommand{\Celeven}{C_{17}}
\newcommand{\Ctwelve}{C_{18}}
\newcommand{\Csixteen}{C_{19}}
\newcommand{\Cnineteen}{C_{20}}
\newcommand{\Cseventeen}{C_{21}}
\newcommand{\Ctwenty}{C_{22}}
\newcommand{\Ctwentytwo}{C_{23}}
\newcommand{\Ctwentythree}{C_{24}}
\newcommand{\Ctwentyfour}{C_{25}}
\newcommand{\Ctwentyfive}{C_{26}}
\newcommand{\Ctwentysix}{C_{27}}
\newcommand{\Ctwentyseven}{C_{28}}
\newcommand{\Ctwentyeight}{C_{29}}
\newcommand{\Ctwentynine}{C_{30}}
\newcommand{\Cthirtythree}{C_{31}}
\newcommand{\ctwelve}{C_{32}}
\newcommand{\Cthirtysix}{C_{33}}
\newcommand{\Cthirtyseven}{C_{34}}
\newcommand{\Cthirtyeight}{C_{35}}
\newcommand{\Cthirtynine}{C_{36}}
\newcommand{\Cforty}{C_{37}}
\newcommand{\Cfortyone}{C_{38}}
\newcommand{\Cfortyfour}{C_{39}}
\newcommand{\Cfortyfive}{C_{40}}
\newcommand{\Cfortysix}{C_{41}}
\newcommand{\Cfortyseven}{C_{42}}
\newcommand{\Cfortyeight}{C_{43}}
\newcommand{\Cfortynine}{C_{44}}
\newcommand{\Cfifty}{C_{45}}
\newcommand{\Cfiftyone}{C_{46}}
\newcommand{\Cfiftytwo}{C_{47}}
\newcommand{\Cfiftythree}{C_{48}}
\newcommand{\Cfiftyfour}{C_{49}}
\newcommand{\Cfiftyfive}{C_{50}}
\newcommand{\Cfiftysix}{C_{51}}
\newcommand{\Cfiftyseven}{C_{52}}
\newcommand{\cfourteen}{C_{53}}
\newcommand{\Clast}{{C_{54}}}
\newcommand{\Cbracetthree}{{C_{55}}}
\newcommand{\Cbracetfour}{{C_{56}}}
\newcommand{\cbracetone}{{C_{57}}}
\newcommand{\Cbracetfive}{{C_{58}}}
\newcommand{\cbracettwo}{{C_{59}}}
\newcommand{\cbracetthree}{{C_{60}}}
\newcommand{\MM}{{\mathcal{M}}}
\newcommand{\figcommented}[1]{}
\newcommand{\epscommented}[1]{#1}
\def \beq {\begin {eqnarray}}
\def \eeq {\end {eqnarray}}
\def \ba {\begin {eqnarray*}}
\def \ea {\end  {eqnarray*}}
\renewcommand{\hat}{\widehat}
\renewcommand{\tilde}{\widetilde}
\newcommand{\ep}{\varepsilon}
\newcommand{\de}{\delta}
\newcommand{\la}{\lambda}
\newcommand{\Om}{\Omega}
\newcommand{\co}{\colon}
\newcommand{\pd}{\partial}
\newcommand{\be}{\begin{equation}}
\newcommand{\ee}{\end{equation}}
\DeclareMathOperator{\dist}{dist}
\DeclareMathOperator{\length}{length}
\DeclareMathOperator{\diam}{diam}
\DeclareMathOperator{\inj}{inj}
\DeclareMathOperator{\Reach}{Reach}
\DeclareMathOperator{\dis}{dis}
\numberwithin{equation}{section}
\newtheorem{theorem}{Theorem}
\newtheorem{lemma}{Lemma}[section]
\newtheorem{proposition}[lemma]{Proposition}
\newtheorem{corollary}[lemma]{Corollary}
\theoremstyle{definition}
\newtheorem{definition}[lemma]{Definition}
\newtheorem*{convention}{Convention}
\newtheorem*{notation}{Notation}
\theoremstyle{remark}
\newtheorem{remark}[lemma]{Remark}
\newtheorem{example}[lemma]{Example}
\newcommand{\Sec}{\operatorname{Sec}}
\begin{document}
\title[Geometric Whitney problem]{Reconstruction and interpolation of manifolds I: The geometric Whitney problem
}

\author
[Fefferman, Ivanov,  Kurylev, 
Lassas, Narayanan]
{Charles Fefferman, Sergei Ivanov,  Yaroslav Kurylev, \\
Matti Lassas, Hariharan Narayanan}

\address{\hspace{2cm} \linebreak
Charles Fefferman, Princeton University, Mathematics Department,
Fine Hall, Washington Road,
Princeton NJ, 08544-1000, USA.\hspace{7cm} 
\linebreak
 Sergei Ivanov,
St.~Petersburg Department of Steklov Institute of Mathematics, Russian Academy of Sciences, 
27 Fontanka, 191023 St.~Petersburg, Russia.
\hspace{6cm} 
\linebreak
Yaroslav Kurylev, 
University College London,
Department of Mathematics,
Gower Street WC1E  6BT, London, UK.
\hspace{8cm} 
\linebreak
Matti Lassas,
University of Helsinki,
Department of Mathematics and Statistics, P.O. Box 68,
00014, Helsinki, Finland.x
\hspace{8cm} 
 \linebreak
Hariharan Narayanan, 
School of Technology and Computer Science, 
Tata Institute for Fundamental Research, 
Mumbai 400005, India.
\hspace{24cm} 
 \linebreak 
 \linebreak {\bf Contact information of the corresponding author:}\hspace{8cm} 
  \linebreak 
 Matti Lassas\hspace{16cm}   \linebreak 
University of Helsinki,
Department of Mathematics and Statistics, P.O. Box 68  (Gustaf Hallstromin katu 2b),
00014, Helsinki, Finland.\hspace{24cm} 
 \linebreak 
 phone: -358-50-5674417, fax  -358-9-19151400, \hspace{24cm} 
 \linebreak 
e-mail: {Matti.Lassas@helsinki.fi}\hspace{8cm} 
 }


\begin{abstract}
We study the geometric Whitney problem on
how a Riemannian manifold $(M,g)$ can be constructed to approximate a metric space $(X,d_X)$.  
This  problem is closely related to  manifold interpolation (or manifold {reconstruction})
 where a smooth $n$-dimensional {submanifold} $S\subset {\mathbb R}^m$, $m>n$ needs to be constructed
to approximate a point cloud in ${\mathbb R}^m$.
These questions are  encountered in differential geometry, machine learning, and
in many inverse problems encountered in applications.
The determination of a Riemannian manifold includes the construction of its topology, differentiable 
structure, and metric. 
 
We give constructive solutions to the above problems. 
Moreover, we
characterize the metric spaces that can be approximated,
by Riemannian
manifolds with bounded geometry:
We give sufficient conditions to ensure that
 a metric space  can be approximated,
in the Gromov-Hausdorff or quasi-isometric sense, 
by a Riemannian
manifold of a fixed dimension and with bounded 
 diameter, sectional curvature,
and injectivity radius. Also, we show that similar conditions,
with modified values of parameters, are necessary.

 {As an application of the main results we give a new characterisation of Alexandrov spaces with two-sided curvature bounds.}
 Moreover, we characterise the subsets of Euclidean spaces that can be
approximated in the Hausdorff metric by submanifolds of a fixed dimension
and with bounded principal curvatures and normal injectivity radius.

 {We  develop {algorithmic procedures} that solve the geometric Whitney problem for a metric space and the
manifold {reconstruction} problem in Euclidean space, and estimate the computational complexity of these {procedures}}.

 The above interpolation problems   are also studied for unbounded metric sets and manifolds.
The results for Riemannian manifolds are based on a generalisation of the Whitney embedding construction where
approximative  coordinate charts are embedded in  ${\mathbb R}^m$ and interpolated to a smooth {submanifold}.


%
%
\end{abstract}



\maketitle

\noindent{\bf Keywords:} Whitney's extension problem, Riemannian manifolds, 
machine learning,  inverse problems. 

\tableofcontents

\section{Introduction and the main results}\label{section: Introduction}

\subsection{Geometrization of Whitney's extension problem}\label{subsec Geometrization}

      In this paper we develop a geometric version of Whitney's extension problem. 
Let $f : K \rightarrow  \R$ be a function defined on a given (arbitrary) set $K \subset  \R^n$, and let $m \geq 1$ be
a given integer. The classical Whitney problem is the question  
 whether $f$ extends to a function $F \in C^m(\R^n)$ and  if such an
$F$ exists, what is the optimal $C^m$ norm of the extension. Furthermore, one is interested in
the questions
if the derivatives
of $F$, up to order $m$, at a given point can be estimated, or if 
one can construct extension $F$ so that it depends linearly on $f$.

{\color{black}
These questions go back to the {work} of H.\ Whitney \cite{W1,W11,W12} in 1934.
In the decades 
since
Whitney's seminal work, fundamental progress was made by G.\ Glaeser \cite{G}, Y.\ Brudnyi and
P.\ Shvartsman \cite{Br1,Br2,Br3,Br4,BS1,BS2} and \cite{Shv1,Shv2,Shv3}, and E.\ Bierstone-P.\ Milman-W.\ Pawluski \cite{BMP}. (See also N.\
Zobin \cite{ZO1,ZO2} for the solution of a closely related problem.)

The above questions have been answered in the last few years, thanks to work of E.\ Bierstone,
Y.\ Brudnyi, C.\ Fefferman, P.\ Milman, W.\ Pawluski, P.\ Shvartsman and others, (see
\cite{BMP, Brom, Br1,Br3,Br4,BS2,F1,F2,F3,F4,F5,F6}). 
  Along the way,
the analogous problems with $C^m(\R^n)$ replaced by $C^{m,\omega }(\R^n)$, the space of functions whose
$m^{th}$ derivatives have a given modulus of continuity $\omega $, (see \cite{F5,F6}), were also solved.

The solution of Whitney's problems has led to a new algorithm for interpolation
of data, due to C.\ Fefferman and B.\ Klartag \cite{FK1,FK2}, where the authors show how to compute efficiently an interpolant $F(x),$ whose $C^m$ norm
lies within a factor $C$ of least possible,  where $C$ is a constant depending only on $m$ and $n.$ }

In recent years, the focus of attention in this problem has moved to the  direction when
the measurements $\tilde f:K\to \R$ on
the function $f$  are given with errors bounded by
$\ep>0$. Then, the task is to find a function $F:\R^n\to \R$ 
such that  $\sup_{x\in K} |F(x)-\tilde f(x)| \leq \ep$. Since the solution is not unique, one wants
to find {the extensions that have} the optimal norm in $C^m(\R^n)$,  see e.g. \cite{FK1, FK2}.
Finding $F$ can be considered as the task of finding a graph $\Gamma(F)=\{(x,F(x)):\ x\in \R^n\}\subset \R^{n+1}$
of a function in $C^m(\R^n)$
that {\ctext passes near} the points $\{(x,\tilde f(x)):\ x\in K\}$.
To formulate the above problems in geometric (i.e.\ coordinates invariant) terms,
instead of a graph set 
$\Gamma(F)$, we aim to construct a general {submanifold} or a Riemannian manifold
that approximates the given data. Also, instead of the $C^m(\R^n)$-norms,  we will measure
the optimality of the solution in terms of  invariant bounds for the curvature and the injectivity radius.

%
%

In this paper we consider the following two geometric  Whitney
problems:
\begin{itemize}

\item[A.] Let $E$ be a separable Hilbert space, e.g.\ $\R^N$, and assume that
we are given a set $X\subset E$.  When can one  construct a smooth
$n$-dimensional {submanifold} $M\subset E$  that approximates $X$ with given  bounds for 
 the geometry of $M$ and the Hausdorff distance between $M$ and $X$?
How {can}  the {submanifold} $M$  be efficiently constructed when $X$ is given? 

\item [B.] Let $(X,d_X)$ be a metric space. When there exists 
a Riemannian manifold $(M,g)$ that has given  bounds for geometry 
and approximates well~$X$? How can the manifold $(M,g)$ be
constructed when $X$  is given? {What is an algorithm that constructs $(M,g)$
when $X$ is finite?}
\end{itemize}

In Question B, by `approximation' we mean Gromov-Hausdorff
or quasi-isometric approximation, see definitions in Def.\ \ref{d:QI} and
Section \ref{subsec:gh}.

We answer  the Question $A$ 
 in Theorem \ref{t:surface} below, by showing that  if
  $X\subset E$ is locally  (i.e., at a certain small scale) close to affine $n$-dimensional planes, see Def.\ \ref{d:delta-flat},
there is a {submanifold} $M\subset E$  such that 
the Hausdorff distance of $X$  and $M$ is small and the second fundamental
form and the normal injectivity radius of $M$ are bounded.

The answer to the Question $B$ is given in Theorem \ref{t:manifold} below.
Roughly speaking, it asserts that the following natural conditions on $X$
are necessary and sufficient:
locally, $X$ should be close to $\R^n$,
and globally, the metric of $X$ should be almost intrinsic.

The conditions  
in Theorem \ref{t:manifold} are optimal, up to multiplying the obtained bounds  by a
constant factor depending on $n$.
Theorem \ref{t:manifold} gives sufficient conditions for metric spaces that 
approximate smooth manifolds.
In Corollary \ref{cor:compact class} we show that  similar conditions,
with modified values of parameters, are  necessary.

The result of 
 Theorem \ref{t:surface} is optimal, up to multiplication the obtained bounds  by a
 constant factor depending on $n$.

The proofs of  Theorems \ref{t:manifold} and \ref{t:surface} are constructive
and give raise to  {manifold reconstruction} algorithms when $X$ is a finite set.
Moreover, we give  algorithms
that verify if a finite data set $X$ satisfies the characterisations given in 
 Theorems \ref{t:manifold} and  \ref{t:surface}. {We analyse in subsection \ref{subsec :complexity 1}
the computational complexity of these algorithms, but emphasize that to keep the algorithms simple, the
algorithms have not been optimized  in this paper  to have the minimal complexity.}

{\ctext Finally, we note that  this paper is related to
 two rather different theorems of Whitney. One is the
Whitney embedding theorem of smooth manifolds into an Euclidean space and the other is the Whitney
extension theorem for functions in $\R^n$.} 

Next we formulate the definitions needed to state the results rigorously.

\begin{notation}
For a metric space $X$ and sets $A,B\subset X$,
we denote by $d_H^X(A,B)$, or just by $d_H(A,B)$,
the Hausdorff distance between $A$ and $B$ in $X$.

By $d_{GH}(X,Y)$ we denote the Gromov-Hausdorff (GH) distance
between metric spaces $X$ and~$Y$.
For the reader's convenience, we collect definitions
and elementary facts about the GH distance in section \ref{subsec:gh}.
For more detailed account of the topic, see e.g.\ \cite {BBI,Pe,Shiohama}.
In most cases we work with \emph{pointed} GH distance
between pointed metric spaces $(X,x_0)$ and $(Y,y_0)$,
where $x_0\in X$ and $y_0\in Y$ are distinguished points.
For the definition of pointed GH distance, 
see \cite[\S1.2 in Ch.~10]{Pe}) or section~\ref{subsec:gh}.

For a metric space $X$, $x\in X$ and $r>0$,
we denote by $B^X_r(x)$ or $B_r(x)$ 
the ball of radius $r$ centered at~$x$.
For $X=\R^n$, we use notation $B_r^n(x)=B_r^{\R^n}(x)$
and $B_r^n=B_r^n(0)$.
For a set $A\subset X$ and $r>0$, we denote by ${\mathcal U}_r^X(A)$ or ${\mathcal U}_r(A)$ the
metric neighborhood of $A$ of radius $r$, that is the set points
within distance $r$ from $A$.

When speaking about GH distance between metric balls $B_r^X(x)$
and $B_r^Y(y)$,
we always mean the pointed GH distance where the centers $x$ and $y$
are distinguished points of the balls.
We abuse notation and write $d_{GH}(B_r^X(x),B_r^Y(y))$
to denote this pointed GH distance.

For a Riemannian manifold $M$, we denote by $\Sec_M$ its sectional curvature
and by $\inj_M$ its injectivity radius.
\end{notation}

Small metric balls in a Riemannian manifold
are GH close to Euclidean balls.
{More precisely, let $M$ be a Riemannian $n$-manifold with $|\Sec_M|<K$
where $K$ is a positive constant,
and $0<r\le\frac12\min\{\frac\pi{\sqrt K},\inj_M\}$.
Then for every $x\in M$,
the metric balls $B^M_r(x)$ in $M$ and $B_r^n$ in $\R^n$ satisfy}
\be\label{eq: GH distance of balls}
 d_{GH}(B_r^M(x),B_r^n)\le {\tfrac14} Kr^3 .
\ee
For a proof of this estimate, see section~\ref{sec:injrad}.

If $M$ is a submanifold of $\R^N$,
one can write a similar estimate for the Hausdorff distance in $\R^N$.
Namely if the principal curvatures of $M$ are bounded by $\kappa>0$,
then $M$ deviates from its tangent space by at most $\tfrac12 \kappa r^2$ 
within a ball of radius $r$. 
Thus the Hausdorff distance between $r$-ball $B^M_r(x)$ in $M$ 
and the ball $B_r^{T_xM}(x)=B_r^N(x)\cap T_xM$ of the affine tangent space of $M$ at $x$ satisfy
\beq\label{eq: Hausdorff distance of balls}
 d_{H}(B^M_r(x),B_r^{T_xM}(x))\le \tfrac12 \kappa r^2 .
\eeq
Note the different order of the above estimates for the intrinsic distances
\eqref{eq: GH distance of balls} and  the extrinsic distances
\eqref{eq: Hausdorff distance of balls}.

With \eqref{eq: GH distance of balls} in mind,
we give the following definition.

\begin{definition}
\label{d:closetoRn}
Let $X$ be a metric space, $r>\delta>0, \, n \in \N$. We say that $X$ is 
\textit{$\delta$-close to $\R^n$ at scale $r$} if, for any $x \in X$,
\be \label{A}
d_{GH}(B_r^X(x), B_r^n) < \delta .
\ee
\end{definition}

Condition \eqref{A} can be effectively verified, up to a constant factor, see {\it Algorithm GHDist} below. The condition
can be also formulated for finite subsets:  If  sequences 
$(y_j)_{j=1}^N\subset  B_r^n$ and $(x_j)_{j=1}^N\subset B_r^X(x)$ 
are $\frac \delta 4$-nets such that $|d_{\R^n}(y_j,y_k)-d_X(x_j,x_k)|<\frac \de 4$ for all $j,k=1,2,\dots,N$,
then \eqref{A} is valid  by \cite[Prop.\ 7.3.16 and Cor.\ 7.3.28]{BBI}. 
On the other hand,
if $X$ is $\frac \delta{16}$-close to $\R^n$ at scale $r$, then such $\frac \delta 4$-nets exists.

In a Riemannian manifold, large-scale distances are determined
by small-scale ones through the lengths of paths.
However Definition \ref{d:closetoRn} does 
not impose any restrictions on distances larger that $2r$ in~$X$.
To rectify this, we need to make the metric `almost intrinsic'
as explained below.

\begin{definition}
\label{d:almost-intrinsic}
Let $X=(X,d)$ be a metric space and $\de>0$.
A \textit{$\de$-chain} in $X$ is a finite sequence
$x_1,x_2,\dots,x_N\in X$ such that $d(x_i,x_{i+1})<\de$ for all $1\le i\le N-1$.
A sequence $x_1,x_2,\dots,x_N\in X$ is said to be \textit{$\de$-straight}
if 
\be
\label{e:delta-straight}
d(x_i,x_j)+d(x_j,x_k)<d(x_i,x_k)+\de
\ee
for all $1\le i<j<k\le N$.
We say that $X$ is \textit{$\de$-intrinsic}
if for every pair of points $x,y\in X$ 
there is a $\de$-straight $\de$-chain
$x_1,\dots,x_N$ with $x_1=x$ and $x_N=y$.
\end{definition}

Clearly every Riemannian manifold (more generally, every length space)
is $\de$-intrinsic for any $\de>0$.
Moreover, if $X$ lies within GH distance $\de$ from a length space,
then $X$ is $C\de$-intrinsic.
In fact, this property characterizes $\de$-intrinsic metrics,
see Lemma \ref{l:intrinsic metric}.


\medbreak

In order to conveniently compare metric spaces at both
small scale and large scale, we need the notion
of quasi-isometry.

\begin{definition}\label{d:QI}
Let $X,Y$ be metric spaces, $\ep>0$ and $\la\ge 1$.
A (not necessarily continuous) map $f\co X\to Y$ 
is said to be a \textit{$(\la,\ep)$-quasi-isometry}
if the image $f(X)$ is an $\ep$-net in $Y$ and
\be\label{e:QI}
 \la^{-1} d_X(x,y)-\ep < d_Y(f(x),f(y)) < \la d_X(x,y)+\ep
\ee
for all $x,y\in X$, where $d_X$ and $d_Y$ denote the distances
in $X$ and $Y$, resp.
\end{definition}

Unlike the use of quasi-isometries in e.g.\ geometric group theory,
in this paper we consider quasi-isometries with parameters
$\ep\approx 0$ and $\la\approx 1$.
The quasi-isometry relation is almost symmetric:
if there is a $(\la,\ep)$-quasi-isometry from $X$ to $Y$,
then there exists a $(\la,{3}\la\ep)$-quasi-isometry from $Y$ to $X$.
We say that metric spaces $X$ and $Y$ are \textit{$(\la,\ep)$-quasi-isometric}
if there are $(\la,\ep)$-quasi-isometries in {both directions}.

The existence of a $(\la,\ep)$-quasi-isometry $f\co X\to Y$
implies that
\be\label{e:GH-via-QI}
 d_{GH}(X,Y) \le \tfrac12(\la-1)\diam(X)+{\tfrac 32}\ep .
\ee
See section \ref{subsec:gh} for the proof.

\medbreak
Now we formulate our main result.


\begin{theorem}
\label{t:manifold}
For every $n\in\N$
there exist {$\rhode_1=\rhode_1(n)>0$} 
and $\Cone=\Cone(n),\Ctwo=\Ctwo(n)>0$
such that the following holds.
Let {\ctext $r>0$, $X$ be a metric space with $\diam(X)>r$}, and
\begin{equation}
\label{delta-condition}
0<\de<\rhode_1r.
\end{equation}
Suppose that $X$ is $\de$-intrinsic and
$\delta$-close to $\R^n$ at scale $r$, 
see Definitions \ref{d:closetoRn} and \ref{d:almost-intrinsic}.
Then there exists a complete $n$-dimensional Riemannian manifold $M$
such that
\begin{enumerate}

\item $X$ and $M$ are $(1+\Cone\de r^{-1},\Cone\de)$-quasi-isometric. 
{Moreover, when the diameter of $X$ is finite, we have} 
\be
\label{e:t1-gh-estimate}
 d_{GH}(X,M) \le {2\Cone}\de r^{-1}\diam(X) .
\ee

\item The sectional curvature $\Sec_M$ of $M$ satisfies $|\Sec_M|\le \Ctwo\de r^{-3}$.

\item The injectivity radius of $M$ is bounded below by $r/2$.
\end{enumerate}
\end{theorem}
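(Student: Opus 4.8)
The plan is to reconstruct $M$ by a generalized Whitney embedding: cover $X$ by overlapping balls of radius $\sim r$, approximate each ball by a piece of $\R^n$ (using the GH-closeness hypothesis to get near-isometric charts into $\R^n$), embed these charts into a common high-dimensional Euclidean space $\R^m$ via a partition-of-unity construction, and interpolate/smooth them into a genuine $n$-dimensional submanifold $M \subset \R^m$. The ambient submanifold can then be equipped with the induced Riemannian metric; its second fundamental form and injectivity radius are controlled by how much the individual charts are rotated/translated relative to each other on overlaps, which in turn is controlled by $\delta/r$. Finally, the quasi-isometry between $X$ and $M$ is built by sending each point of $X$ to (roughly) its image under the partition-of-unity embedding, and the $\delta$-intrinsic hypothesis is what upgrades local metric control (valid only within balls of radius $r$, by Definition \ref{d:closetoRn}) to the global distance estimate in conclusion (1).

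**In more detail, the steps I would carry out are:** (i) Fix a maximal $cr$-separated net $\{p_i\}$ in $X$; by the GH hypothesis, for each $i$ there is a near-isometry $\phi_i \co B_r^X(p_i) \to B_r^n$ with distortion $O(\delta)$, so the balls carry approximate coordinate charts. (ii) On each overlap $B_r^X(p_i)\cap B_r^X(p_j)$, the transition map $\phi_j\circ\phi_i^{-1}$ is within $O(\delta)$ of a rigid motion of $\R^n$; record these rigid motions. (iii) Choose a Lipschitz partition of unity $\{\chi_i\}$ subordinate to $\{B_r^X(p_i)\}$ with $\|\nabla\chi_i\|=O(1/r)$, and define a map $F\co X \to \R^n \times (\R^n)^{?} \cong \R^m$ roughly of Whitney type, $F(x) = \big(\sum_i \chi_i(x)\, A_i\phi_i(x) + b_i,\ (\sqrt{\chi_i(x)})_i,\ \dots\big)$, where the extra coordinates (the $\sqrt{\chi_i}$ and possibly second-order bump data) guarantee $F$ is an embedding and has image close to an $n$-plane at every scale $\le r$. (iv) Verify that $F(X)$ is $O(\delta)$-close, in Hausdorff distance, to a smooth $n$-submanifold $M$ — this is exactly Question A / Theorem \ref{t:surface}, so I would check that $F(X)$ (or a net in it) is $O(\delta/r)$-flat at scale $r$ in the sense of Definition \ref{d:delta-flat} and then invoke that theorem to produce $M$ with $|\text{II}_M| = O(\delta/r^2)$ (hence $|\Sec_M| = O(\delta/r^2)\cdot O(1) $; to hit the stated $C\delta r^{-3}$ one tracks constants more carefully, noting $\Sec \sim |\text{II}|^2$ plus Gauss equation terms and that the relevant curvature bound comes out as $\delta r^{-3}$ after rescaling) and $\inj_M \ge r/2$. (v) Transport the quasi-isometry: $F$ composed with nearest-point projection onto $M$ gives a map $X\to M$; conclusions \eqref{e:QI} with $\la = 1+C\delta r^{-1}$ and additive error $C\delta$ follow because $F$ distorts distances $\le r$ by $O(\delta)$ (local charts) and distorts larger distances by the multiplicative factor $1+O(\delta/r)$ after chaining along a $\delta$-straight $\delta$-chain, which exists by the $\delta$-intrinsic hypothesis. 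Then \eqref{e:t1-gh-estimate} is immediate from \eqref{e:GH-via-QI}.

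**The main obstacle** I expect is step (iv) together with the curvature bookkeeping: getting the embedded charts to patch into a single \emph{smooth} surface whose second fundamental form is genuinely $O(\delta/r^2)$ requires the transition rotations $A_j A_i^{-1}$ to vary slowly (Lipschitz in $O(1/r)$) across the net, and this demands a careful choice of the $A_i$ — essentially solving a discrete "synchronization" problem for the orthogonal group, where accumulated errors around loops must be shown to stay $O(\delta)$ rather than growing with the diameter. This is precisely where a Whitney-style construction must be executed with quantitative control, and it is the technical heart of the paper; I would isolate it as a separate lemma (constructing the consistent family of approximate charts and rigid motions) before feeding the output into Theorem \ref{t:surface}. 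A secondary subtlety is that Definition \ref{d:closetoRn} controls only balls of radius exactly $r$, so all local estimates must be done at a slightly smaller scale (say $r/2$) to leave room, and the passage from the discrete net data to honest estimates on all of $X$ uses the net being a $\delta$-net together with the triangle inequality — routine, but it must be threaded consistently through every estimate.
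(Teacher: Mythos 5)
Your overall architecture does match the paper's: near-isometric charts $f_i\co B^X_r(q_i)\to\R^n$, pairwise transition maps close to rigid motions, a Whitney-type embedding of the chart data into a high-dimensional Hilbert space, an appeal to Theorem \ref{t:surface} to interpolate the image by a submanifold $M$, and Lemma-\ref{l:local-gh-implies-qi}-style chaining (using the $\de$-intrinsic hypothesis) to upgrade local $O(\de)$ distance control to the $(1+C\de r^{-1},C\de)$-quasi-isometry. However, there is a genuine gap at the heart of your step (iv): you plan to endow $M$ with the metric \emph{induced} from the ambient space and to obtain $|\Sec_M|\le C\de r^{-3}$ from a second fundamental form bound $|\mathrm{II}_M|=O(\de r^{-2})$. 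This cannot work. The image $\Sigma$ of any Whitney-type embedding is bent by the bump functions and the per-chart coordinates at scale $\sim r$, so it is only $O(\rho^2 r^{-1})$-close to $n$-flats at scale $\rho$ (this is Lemma \ref{l:Sigma-near-flat}); consequently Theorem \ref{t:surface} must be applied at a fixed scale $r_0(n)r$ with flatness parameter $\sim r_0^2 r$, and it yields $|\mathrm{II}_M|\le C r^{-1}$, not $C\de r^{-2}$. Worse, no embedding at all can achieve your claimed bound: if $X$ is (say) a flat torus of diameter $D$, any closed $n$-submanifold of a Hilbert space that is Hausdorff- or quasi-isometrically close to $X$ has a point where $|\mathrm{II}|\gtrsim D^{-1}$, which exceeds $\de r^{-2}$ once $\de$ is small; also $|\mathrm{II}|=O(\de r^{-2})$ would force $|\Sec|=O(\de^2 r^{-4})$, too flat to be quasi-isometric (with your parameters) to spaces of curvature $\sim\de r^{-3}$. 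The missing idea, which the paper supplies in section \ref{subsec:riemannian metric}, is that $M$ is used only as a smooth model: the Riemannian metric $g$ is built separately as a partition-of-unity average of the push-forwards of the Euclidean metric under the charts $\psi_i=P_M\circ F|_{D_i}$, and the curvature bound $C\de r^{-3}$ comes from the fact that the transition maps $\psi_j^{-1}\circ\psi_i$ are $C\de$-close to affine isometries in $C^m$ (Lemma \ref{l:transdelta}), so $g$ is $C\de$-close to Euclidean in coordinates. Likewise, conclusion (3) with the bound $r/2$ is not read off from the normal injectivity radius in Theorem \ref{t:surface} (which only gives $\sim r_0 r/3$ for the embedded picture); it is proved for $(M,g)$ via Proposition \ref{p:injrad}, comparing metric balls of $(M,g)$ with Euclidean balls through the already-established quasi-isometry.

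A secondary point: the ``synchronization'' problem you isolate as the main obstacle is, in the paper's design, deliberately avoided rather than solved. Because each chart contributes its own coordinate block ($F_i(x)=\phi_i(A_{ji}(x))$ valued in a separate copy of $\R^{n+1}$), only pairwise transition maps and an approximate cocycle condition on triple overlaps (Lemmas \ref{l:transition} and \ref{l:3way}) are needed, and these are purely local consequences of the GH hypothesis. Your formula $\sum_i\chi_i(x)\,(A_i\phi_i(x)+b_i)$, which averages all charts into a single $\R^n$-valued block, would indeed require globally consistent rigid motions $A_i$, and such a global alignment is obstructed topologically in general (e.g.\ when $X$ approximates a sphere or a flat torus), not merely hard to estimate; so that part of the construction should be replaced by the per-chart block design before the rest of your plan can go through.
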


By following the steps of the proof, one can obtain
explicit formulas for the values of $\rhode_1(n)$, $\Cone(n)$, and $\Ctwo(n).$

The estimate \eqref{e:t1-gh-estimate} follows from the existence of
a $(1+\Cone\de r^{-1},\Cone\de)$-quasi-isometry from $X$ to $M$ due to \eqref{e:GH-via-QI}
and the fact that $\diam(X)>r$.
The proof of Theorem~\ref{t:manifold} is given in Section \ref{sec:manifold-proof}.

The quasi-isometry parameters and sectional curvature bound in Theorem \ref{t:manifold}
are optimal up to constant factors depending only on $n$,
see Remark \ref{rem: Thm 1 converse}.

{
\begin{remark}
The assumption that $X$ is $\de$-intrinsic in Theorem \ref{t:manifold}
is not crucial.
Without this assumption the following more technical variant of the theorem
holds:

{\it
If a metric space $X$ is $\de$-close to $\R^n$ at scale $r$,
where $\de/r$ is bounded above by a constant depending on $n$,
then there exists a complete (possibly not connected)
Riemannian $n$-manifold $M$ which satisfies properties (2) and (3) from
Theorem~\ref{t:manifold}
and approximates $X$ in the following sense:
there is a map $f\co X\to M$ such that
$$
  |d_M(f(x),f(y){)}-d_X(x,y)| < C\de
$$
for all $x,y\in X$ such that $d_X(x,y)<r$ or $d_M(f(x),f(y))<r$.
}

This variant follows from Theorem \ref{t:manifold} and the fact that
one can modify ``large'' distances in $X$ so that the resulting
metric is $C\de$-intrinsic and coincides with $d$ within balls of radius~$r$.
The new distances are measured along ``discrete shortest paths'' in $X$,
see \eqref{e:minchain} and Lemma \ref{l:minchain}
in section \ref{subsec:almost intrinsic}.

This procedure may split $X$ into several ``components'' with modified
distances between them being infinite.
In the original metric space these components are subsets separated
by distances greater that~$r$. They correspond to connected components of the
approximating manifold $M$.

For $\de$-intrinsic metrics, an approximation $f$ as above is
$(1+C\de r^{-1},C\de)$-quasi-isometry and vice versa.
This follows from Lemma \ref{l:QI-for-balls} 
and Lemma \ref{l:local-gh-implies-qi}, see section~\ref{sec:metric spaces}.
\end{remark}
}

Furthermore, Theorem \ref{t:manifold} gives a characterisation result for metric spaces that
GH approximate smooth manifolds with certain geometric bounds. 
The precise formulation is the following.
 
Let $\mathcal M(n,K,i_0,D)$ denote the class of $n$-dimensional compact
Riemannian manifolds $M$ satisfying $|\Sec_M|\leq K$, $\inj_M\ge i_0$, 
and $\diam(M)\leq D$. 
Denote by $\mathcal M_\ep(n,K,i_0,D)$  the class of metric spaces $X$
such that $d_{GH}(X,M)<\ep$  for some $M\in \mathcal M(n,K,i_0,D)$.
Also, let $\mathcal X(n,\delta,r,D)$  denote the class of metric spaces $X$
that are $\de$-intrinsic and
$\delta$-close to $\R^n$ at scale $r$, and satisfy $\diam(X)\leq D$.
Theorem \ref{t:manifold} has the following corollary that concerns neighbourhoods of smooth manifolds
and the class of metric spaces that satisfy a weak $\de$-flatness condition in the scale
of injectivity radius and a strong $\de$-flatness condition in a small scale $r$. 

\begin{corollary}\label{cor:compact class}
For every $n\in\N$ there exist ${\sigma_1}={\sigma_1}(n)>0$ 
and $\Cfour=\Cfour(n),\Cfive=\Cfive(n)>0$ such that the following holds.
Let $K,i_0,D>0$ and assume that $i_0<\sqrt{{\sigma_1}/K}$.
Let  $\de_0=Ki_0^3$, 
$0<\de<\de_0$, and $r=(\de/K)^{\frac 1 3}$.
Let $\mathcal X$ be the class of metric spaces
defined by
$$
 \mathcal X := \mathcal X(n,\de,r,D)\cap \mathcal X(n,\de_0,i_0,D) .
$$
Then
\be\label{e:class inclusion}
 \mathcal M_{\ep_1}(n,K/2,2i_0,D-\de) \subset \mathcal X 
 \subset \mathcal M_{\ep_2}(n,\Cfour K,i_0/{2},D)
\ee
where $\ep_1=\de/6$ and  $\ep_2=\Cfive DK^{1/3}\de^{2/3}$.
\end{corollary}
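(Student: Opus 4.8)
The plan is to derive Corollary~\ref{cor:compact class} from Theorem~\ref{t:manifold} together with the estimate \eqref{eq: GH distance of balls} for metric balls in manifolds with bounded geometry. There are two inclusions to establish. For the second (right-hand) inclusion, I start with a metric space $X\in\mathcal X$. By definition $X$ is $\de$-intrinsic and $\de$-close to $\R^n$ at scale $r=(\de/K)^{1/3}$, and the relation $\de_0=Ki_0^3$ forces $\de=Kr^3$, so $\de r^{-1}=Kr^2$ and $\de r^{-3}=K$. I would first check that $\de<\rhode_1 r$ holds once $i_0$ (hence $\de$, $r$) is small enough relative to $K$: this is exactly the role of the hypothesis $i_0<\sqrt{\rhode_2/K}$, choosing $\rhode_2=\rhode_2(n)$ small enough that $\de_0=Ki_0^3$ implies $\de<\de_0\le Kr_0^3<\rhode_1 r$. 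Theorem~\ref{t:manifold} then produces a complete Riemannian $n$-manifold $M$ with $|\Sec_M|\le C\de r^{-3}=CK$, with $\inj_M\ge r/2$, and $(1+C\de r^{-1},C\de)$-quasi-isometric to $X$; by \eqref{e:GH-via-QI} and $\diam(X)\le D$ this gives $d_{GH}(X,M)<C\de r^{-1}D=CKr^2D=CDK^{1/3}\de^{2/3}=\ep_2$ (after renaming $C$). To place $M$ in the class $\mathcal M(n,CK,i_0/4,D)$ I still need $\inj_M\ge i_0/4$ and $\diam(M)\le D$. The diameter bound follows because the quasi-isometry distorts distances by a factor $1+C\de r^{-1}$ close to $1$ plus an additive $C\de$, and $\diam(X)\le D$; absorbing the small excess is where I must be slightly careful, possibly replacing $D$ by $D$ at the cost of enlarging $C$ in $\ep_2$, or noting that $\mathcal M_{\ep_2}$ already tolerates GH error $\ep_2$. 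For the injectivity radius I need $r/2\ge i_0/4$, i.e.\ $r\ge i_0/2$; since $r=(\de/K)^{1/3}$ could be tiny when $\de$ is tiny, this does \emph{not} follow from the scale-$r$ hypothesis alone — and this is precisely why the definition of $\mathcal X$ intersects with $\mathcal X(n,\de_0,i_0,D)$. Using that $X$ is also $\de_0$-close to $\R^n$ at the larger scale $i_0$ and $\de_0$-intrinsic, I run Theorem~\ref{t:manifold} at scale $i_0$ instead (legitimate since $\de_0=Ki_0^3<\rhode_1 i_0$ for $\rhode_2$ small), obtaining a manifold $M'$ with $\inj_{M'}\ge i_0/2$ and $|\Sec_{M'}|\le CK$; a minor point is that I want \emph{one} manifold with both the sharp curvature/GH bound from scale $r$ and the injectivity lower bound from scale $i_0$, so in fact I should run the proof of Theorem~\ref{t:manifold} once, feeding it the scale $r$ for the fine estimates but verifying along the way (using the scale-$i_0$ flatness) that the constructed manifold's injectivity radius is at least $i_0/4$ rather than merely $r/2$. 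This interplay — that the construction at fine scale $r$ still yields the coarse-scale injectivity bound because $X$ is uniformly flat up to scale $i_0$ — is the technical heart of the argument and the main obstacle; it amounts to revisiting the injectivity-radius estimate in Section~\ref{sec:manifold-proof} with the coarser flatness hypothesis plugged in.

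For the first (left-hand) inclusion I take $M\in\mathcal M(n,K/2,2i_0,D-\de)$ and any $X$ with $d_{GH}(X,M)<\ep_1=\de/6$; I must show $X\in\mathcal X$, i.e.\ $X$ is $\de$-intrinsic, $\de$-close to $\R^n$ at scale $r$, $\de_0$-intrinsic, $\de_0$-close to $\R^n$ at scale $i_0$, and $\diam(X)\le D$. The diameter bound is immediate: $\diam(X)\le\diam(M)+2\ep_1\le (D-\de)+\de/3<D$. Intrinsicness: $M$ is a length space, so it is $\eta$-intrinsic for every $\eta>0$; by the remark following Definition~\ref{d:almost-intrinsic} (that a space within GH distance $\de'$ of a length space is $C\de'$-intrinsic) and $d_{GH}(X,M)<\de/6$, $X$ is $C'\de/6$-intrinsic, hence $\de$-intrinsic and $\de_0$-intrinsic provided $C'/6\le1$ and $\de\le\de_0$ — the former forced by the choice of the universal constant, the latter true since $\de<\de_0$. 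Closeness to $\R^n$: for $x\in X$ pick $p\in M$ with the GH approximation controlling balls; by \eqref{e:GH-via-QI-for-balls} (applied to the approximating correspondence, both spaces being intrinsic up to $O(\de)$) one has $d_{GH}(B_r^X(x),B_r^M(p))< C\ep_1<C\de$, and since $|\Sec_M|\le K/2$, $\inj_M\ge2i_0>2(\de/K)^{1/3}\ge2r$ (using $i_0<\sqrt{\rhode_2/K}$ to guarantee $r\le K^{-1/2}$, required by \eqref{eq: GH distance of balls}), \eqref{eq: GH distance of balls} gives $d_{GH}(B_r^M(p),B_r^n)\le (K/2)r^3=\de/2$. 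Adding these and verifying $C\de+\de/2<\de$ — which needs $C<1/2$, again just a matter of tracking the universal constant and choosing $\ep_1=\de/6$ small enough (this is where the specific value $\de/6$ gets used, rather than say $\de/2$) — yields $\de$-closeness at scale $r$. The same computation with $r$ replaced by $i_0$, using $|\Sec_M|\le K/2$ so $(K/2)i_0^3=\de_0/2$, and $\inj_M\ge2i_0$, gives $\de_0$-closeness at scale $i_0$. The one thing I must watch is the compatibility of all the universal constants appearing in \eqref{e:GH-via-QI-for-balls}, the intrinsic-metric remark, and the choice $\ep_1=\de/6$; shrinking $\rhode_2$ further if necessary keeps everything consistent.

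In summary, the corollary is essentially a packaging of Theorem~\ref{t:manifold} (forward direction, for the right inclusion) and of the standard facts \eqref{eq: GH distance of balls}, \eqref{e:GH-via-QI-for-balls} about bounded-geometry manifolds (converse direction, for the left inclusion), with the two-scale definition of $\mathcal X$ designed exactly so that the fine scale $r$ controls curvature and GH distance while the coarse scale $i_0$ controls injectivity radius. I expect the only genuinely nontrivial point to be extracting the injectivity-radius lower bound $i_0/4$ for the manifold built at the fine scale $r$; everything else is constant-chasing, handled by choosing $\rhode_2=\rhode_2(n)$ and the ambient $C=C(n)$ appropriately.
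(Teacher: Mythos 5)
Your first inclusion is essentially the paper's argument (balls compared via the GH hypothesis plus \eqref{eq: GH distance of balls} with curvature $K/2$, intrinsicness from Lemma \ref{l:intrinsic metric}(1) with the factor $6$ explaining $\ep_1=\de/6$, and the diameter bound), and that part is fine. The genuine gap is in the second inclusion, at exactly the point you flag as ``the technical heart'': you do not actually obtain the lower bound $\inj_M\ge i_0/4$ for the manifold built at the fine scale $r$. Your proposed fix --- rerunning the proof of Theorem \ref{t:manifold} at scale $r$ while ``verifying along the way'' an injectivity bound at scale $i_0$ --- is left unexecuted, and it is not needed. The paper's resolution is external to the construction: apply Theorem \ref{t:manifold} twice, once at scale $r$ (giving $M$ with $|\Sec_M|\le CK$ and the sharp quasi-isometry parameters) and once at scale $i_0$ (giving $M_0$ with $\inj_{M_0}>i_0/2$ and $|\Sec_{M_0}|\le CK$). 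Since $\de<\de_0$ and $\de r^{-1}<\de_0 i_0^{-1}$, both $M$ and $M_0$ are $(1+C\de_0 i_0^{-1},C\de_0)$-quasi-isometric to $X$, hence to each other, and Proposition \ref{p:injrad}(2) then transfers the injectivity bound: $\inj_M\ge(1-C\de_0 i_0^{-1})\min\{\inj_{M_0},\pi/\sqrt{CK}\}\ge i_0/3$. Without this (or an equivalent argument) your proof of the right-hand inclusion is incomplete.

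A secondary, fixable issue: membership in $\mathcal M_{\ep_2}(n,CK,i_0/4,D)$ requires the approximating manifold itself to satisfy $\diam\le D$, so ``noting that $\mathcal M_{\ep_2}$ already tolerates GH error $\ep_2$'' does not absorb the diameter excess $\diam(M)\le D(1+C\de r^{-1})$. The paper handles this by rescaling $M$ by the factor $(1+C\de r^{-1})^{-1}$: the rescaled manifold $M_1$ has $\diam(M_1)\le D$, still satisfies $|\Sec_{M_1}|\le CK$ and $\inj_{M_1}\ge\tfrac34\inj_M\ge i_0/4$, and the rescaling only adds $C\de r^{-1}D$ to the GH distance, which is absorbed into $\ep_2=CDK^{1/3}\de^{2/3}$. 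With these two points supplied, your outline matches the paper's proof.
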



{For a metric space $X$,
the first inclusion in \eqref{e:class inclusion} means that
the condition
$X\in \mathcal X$ is necessary for $X$ to approximate
a manifold  from $\mathcal M(n,K/2,2i_0,D-\de)$ with accuracy $\ep_1$. 
Likewise, the second inclusion in \eqref{e:class inclusion}
says that $X\in \mathcal X$ is a sufficient condition for $X$
to approximate a manifold from $\mathcal M(n,\Cfour K,i_0/{2},D)$  with  accuracy~$\ep_2$.
}

The optimal values of $\ep_1$ and $\ep_2$ in Corollary \ref{cor:compact class}
remain an open question.
The proof of Corollary \ref{cor:compact class} is given
{in Section \ref{sec:proof-corollaries}.}
It is based on Theorem~\ref{t:manifold} and Proposition \ref{p:injrad} below.

{

\medskip

Another application of Theorem \ref{t:manifold} is the following
characterization of Alexandrov spaces with two-sided curvature bounds.

\begin{corollary}\label{cor:alexandrov}
For a complete geodesic metric space $X$ and $n\in\N$,
the following two conditions are equivalent:
\begin{enumerate}
\item There exists $K>0$ such that for all $x\in X$ and $r>0$,
\beq\label{gh dist BB}
  d_{GH}(B^X_r(x),B^n_r) \le Kr^3 .
\eeq
 \item
$X$ is an $n$-dimensional manifold, its metric has two-sided bounded 
curvature in the sense of Alexandrov,
and its injectivity radius is bounded away from~0.
\end{enumerate}
Furthermore, if $(1)$ holds then $X$ has Alexandrov curvature bounds
between $-\cone K$ and $\cone K$ and injectivity radius at least $1/(\ctwo\sqrt K)$,
where $\cone$ and $\ctwo$ are positive constants depending only on~$n$.
\end{corollary}

The proof of Corollary \ref{cor:alexandrov} is given in Section \ref{sec:proof-corollaries}.
We refer to \cite{BBI}, \cite{BGP}, \cite{BrHa} or \cite{BerNik}
for the definition and basic properties of Alexandrov
curvature bounds. 
Here we only mention the fact that finite-dimensional boundaryless Alexandrov spaces
with two-sided curvature bounds are Riemannian manifolds 
with $C^{1,\alpha}$ metrics (\cite{Nik83}, see also \cite[Theorem 14.1]{BerNik}).

}

\subsubsection*{On the proof of Theorem \ref{t:manifold}}
In the proof of Theorem \ref{t:manifold}, $M$ is constructed as a
submanifold of a separable Hilbert space~$E$,
which is either $\R^N$ with a large $N$
(in case when $X$ is bounded) or $\ell^2$ endowed with the the standard $\|\,\cdotp \|_{\ell^2}$ norm.
However the Riemannian metric on $M$
is different from the one inherited from $E$.

We note that an algorithm based on Theorem \ref{t:manifold}, that 
also summarises some of the main objects used in the proof, is given in  
Section \ref{sec:constructive}, see also Figure \ref{fig: manifold}
in Section~\ref{sec:manifold-proof}. 

Here is the idea of the proof of Theorem \ref{t:manifold}.
Since the $r$-balls in $X$ are $GH$ close to the Euclidean ball $B_r^n$,
they admit nice maps ($2\de$-isometries) to $B_r^n$.
These maps can be used as a kind of coordinate charts for~$X$,
allowing us to argue about $X$ as if it were a manifold.
In particular, we can mimic the proof of Whitney embedding theorem 
(on the classical Whitney {\ctext embedding}, see \cite{W2,W3}).
If $X$ were a manifold, this would give us
a diffeomorphic submanifold of a higher-dimensional Euclidean space $E$.
In our case we get a set $\Sigma\subset E$ which is
a Hausdorff approximation of a submanifold $M\subset E$.
In order to prove this, we use Theorem \ref{t:surface} 
(see subsection \ref{sec:intro-submanifold} below)
which characterizes sets approximable by (nice) submanifolds.
We emphasize that the resulting submanifold $M\subset E$ is the image
of a Whitney embedding but not of a Nash isometric embedding \cite{Nash1,Nash2}.
As the last step of the construction (see section \ref{subsec:riemannian metric}),
we construct a Riemannian metric $g$ on $M$ so that a natural map from $X$ to $(M,g)$ is almost
isometric at scale $r$. 
The construction is explicit and can be performed in an algorithmic manner,
see section \ref{sec:constructive}.
Then, with the assumption that $X$ is $\de$-intrinsic,
it is not hard to show that $X$ and $(M,g)$ are quasi-isometric with small
quasi-isometry constants.

\begin{convention}
Here and later we fix the notation $n$ for the dimension of
a (sub)mani\-fold in question.
Throughout the paper we denote by $c,\,C$, $C_1,C_2,$ etc.,
various constants depending only on $n$
and, when dealing with derivative estimates, on the order of the derivative involved. 
The same letter $C$ can be used to denote different constants,
even within one formula. {The  constants with a number are used for two reasons:
First, to make it possible to compute the values of these constants when needed. Second, to make the presentation easier, so that the reader can
see what earlier estimates are involved in each formula. However, to keep the presentation simpler, {some} constants that are not needed
in later estimates or are not main in focus of the interest of this paper are not numbered.}
To indicate dependence on other parameters,  {when we introduce a new constant,} we use
notation like $C(M,k)$ or {$C_j{(M,k)}$} for numbers depending on manifold $M$ and number $k$. {The locations where the constants appear first time are listed in Table 2.}
{Constants that does not depend on any parameters, including the dimension $n$ of the manifold, are called universal constants. Most of the constants $C_j$ depend on the intrinsic dimension $n$ and we do not usually indicate it (except in the introduction where main results are stated),
that is, we have $C_j=C_j(n)$,  $C(M,k)= C(M,k,n)$, etc. 
We note that several constants $C_j$ have an exponential dependency in $n$. One of the reasons for this is that in a manifold having a negative sectional curvature, e.g.\ in the hyperbolic space,
 a ball of radius $r$ contains $e^{cr/\delta}$  points that are $\delta$-separated. We emphasize that
 $n$ is the intrinsic dimension of the manifold, that is relatively small in several applications, and the constants does not depend on the dimension of an ambient space where a considered manifold may be {\ctext embedded} in.}

\end{convention}

\subsection{Manifold reconstruction and inverse problems}

Theorem~\ref{t:manifold} and  Corollary \ref{cor:compact class}
give quantitative estimates on how
 one can use discrete
metric spaces as models of Riemannian manifolds, for example
for the purposes of numerical analysis.
With this approach, a data set representing a Riemannian manifold is just a matrix 
of distances between points of some $\de$-net.
Naturally, the distances can be measured with some error.
In fact, only `small scale' distances need to be known,
see Corollary \ref{cor 2:manifold} below.

The statement of Theorem~\ref{t:manifold} provides a verifiable
criterion to tell whether a given data set approximates
any Riemannian manifold (with certain bounds
for curvature and injectivity radius).
See section \ref{subsec:algorithm-gh} for an explicit algorithm.

The proof of Theorem \ref{t:manifold} is constructive.
It provides an algorithm, although a rather complicated one, 
to {\it construct} a Riemannian manifold approximated
by a given discrete metric space~$X$.
See section \ref{sec:constructive} for an outline of the algorithm.

Next we formulate results that describe properties of
the manifold $M$ constructed from data $X$ that approximates 
some smooth manifold $\tilde M$ and discuss how this result
is used in inverse problems.

\subsubsection{Reconstructions with data that approximate a smooth manifold.}

When dealing with inverse problems,
it is assumed that the data set $X$ comes from
some unknown Riemannian manifold $\tilde M$, and moreover some {\it a priori} 
bounds on the geometry of this manifold are given.
Applying Theorem \ref{t:manifold} to this data set
yields another manifold $M$ which is
$(1+C\de r^{-1},C\de)$-quasi-isometric to~$\tilde M$.
One naturally asks what information about the original manifold $\tilde M$
can be recovered, {in particular, if the topological and differentiable type of the manifold $\tilde M$ can be determined using the set $X$}.
An answer is given by the following proposition.

\begin{proposition}[{cf.\ Theorem 8.19 in \cite{Gr}}]
\label{p:diffeo}
There exist 
${\kappa_0}>0$ 
and $\cthree>0$ such that the following holds.
Let $M$ and $\tilde M$ be complete Riemannian $n$-manifolds 
with $|\Sec_M|\le K$ and $|\Sec_{\tilde M}|\le K$, 
where $K>0$.

Let $0<{\kappa}<{\kappa_0}$ and assume that $M$ and $\tilde M$
are $(1+{\kappa},{\kappa} r)$-quasi-isometric, where
$
  r < \min\{({\kappa}/K)^{1/2}, \inj_M, \inj_{\tilde M} \} .
$

Then $M$ and $\tilde M$ are diffeomorphic.
Moreover there exists
a bi-Lipschitz diffeomorphism $\Psi$ between $M$ and $\tilde M$
with bi-Lipschitz constant bounded by $1+\cthree{\kappa}$,
\beq\label{c3}
(1+\cthree{\kappa})^{-1}d_{\tilde M}(\Psi(x),\Psi(y))\le d_{M}(x,y)\le (1+\cthree{\kappa})\,d_{\tilde M}(\Psi(x),\Psi(y)).\hspace{-2cm}
\eeq
\end{proposition}

We do not prove Proposition \ref{p:diffeo} because it is
essentially the same as Theorem 8.19 in \cite{Gr}
except that the approximation is quasi-isometric 
rather than GH.
To prove Proposition \ref{p:diffeo} one can apply the same 
arguments as in \cite[8.19]{Gr} using 
coordinate neighborhoods of size~$r$.
The estimates are not given explicitly in \cite{Gr}
but they follow from the argument.
These results can be regarded as quantitative versions
of Cheeger's Finiteness Theorem \cite{Ch},
see \cite[Ch.~10]{Pe} and \cite{Peters} for different proofs.

\begin{remark}
Using results of \cite{Anderson} one can show that
{{for any $\alpha<1$,}}
 $M$ and $\tilde M$
in Proposition \ref{p:diffeo} are close to each other in $C^{1,\alpha}$ topology.
However we do not know explicit estimates in this case.
\end{remark}

\subsubsection{An improved  estimate for the injectivity radius}
The injectivity radius estimate provided by Theorem~\ref{t:manifold}
is not good enough in the context of manifold reconstruction.
Indeed, in order to obtain a good approximation
one has to begin with a small $r$. 
(Recall that for Theorem \ref{t:manifold} to work,
$\de$ should be of order $Kr^3$ where $K$ is the curvature bound.)
However Theorem~\ref{t:manifold} guarantees only a lower
bound of order $r$ for $\inj_M$, so a priori
one could end up with an approximating manifold $M$
with a very small injectivity radius.
In order to rectify this we need the following result.

\begin{proposition}\label{p:injrad} 
There exists {a universal constant} $\cfour>0$
such that the following holds.
Let $K>0$ and let $M, \tilde M$ be complete $n$-dimensional
Riemannian manifolds with $|\Sec_M|\le K$ and $|\Sec_{\tilde M}|\le K$. 
\smallskip

1.  Let $x\in M$, $\tilde x\in\tilde M$, and
$
 0 < \rho \le \min\{\inj_{\tilde M}(\tilde x) , \tfrac\pi{\sqrt K} \} .
$
Then
\be\label{e:injrad-gh}
 \inj_M(x) \ge \rho - \cfour\cdot d_{GH}(B_\rho^M(x),B_\rho^{\tilde M}(\tilde x)) .
\ee

{
2. Suppose that $M$ and $\tilde M$ are $(1+\ep,\de)$-quasi-isometric
where $\ep,\de\ge 0$.
Then 
\beq\label{final injectivity radius estimate}
 \inj_M \ge (1-\cfour \ep) \min \{\inj_{\tilde M}, \tfrac\pi{\sqrt K} \} - \cfour\de.
\eeq
}
\end{proposition}
{This result is important for the inverse problems
of an approximate recovery of an unknown manifold $\widetilde M$.
It is often the case that we {\it a priori}
know bounds for the sectional curvature, injectivity radius, etc
of $\widetilde M$. On the other hand,  
any other manifold $M$ described by Theorem \ref{t:manifold}
 is
$(1+C\de r^{-1},C\de)$-quasi-isometric to $\widetilde M$.
Thus, the second part of Proposition \ref{p:injrad} gives a better
estimate for $\inj_M$ than Theorem \ref{t:manifold}.

The proof of Proposition \ref{p:injrad} is given in section~\ref{sec:injrad}.}

%
%
\subsubsection{An approximation result with only one parameter}

We summarize the manifold reconstruction features of 
Theorem \ref{t:manifold} in the following corollary where all approximations, errors in data, as well as
the errors in the reconstruction are given in terms of a single parameter $\hat\de$. 
Essentially, the corollary tells that a manifold $N$ can be approximately reconstructed 
from a $\hat \delta$-net $X$ of $N$
and the information about local distances between points of $X$ containing small errors. 
This type of results are useful e.g.\ in inverse problems discussed below.

 \begin{corollary}
\label{cor 2:manifold}
Let  $K>0$, $n\in\Z_+$ and {$(N,g)$}  be a compact $n$-dimensional manifold 
with sectional curvature bounded by $|\Sec_N|\leq  K$.
There exist $\delta_0=\delta_0(n,K)$ 
{and $\Csix=\Csix(n),\Cseven=\Cseven(n)>0$}
such that if $0<\hat \delta<\delta_0$  then the following holds:

Let $r=(\hat \delta/K)^{1/3}$ and suppose that the injectivity radius $\inj_N$ of $N$ satisfies $\inj_N>2r$. 
Also, let $X=\{x_j:\ j=1,2,\dots,J\}\subset N$
be a $\hat \delta$-net of $N$
 and 
 $\tilde d \co
 X\times X\to \R_+\cup\{0\}$
be {an approximate local distance function} that satisfies for all $x,y\in X$
\be
\label{delta-condition 0 B}
 |\tilde d(x,y)-d_N(x,y)|\leq \hat \delta,\quad \hbox{if } d_N(x,y)< r,
\ee
and
$$
 \tilde d(x,y)>r-\hat\de,\quad \hbox{if } d_N(x,y)\ge r.
$$
Given the set $X$ and the function $\tilde d$, one can 
effectively construct a  {compact, smooth $n$-dimensional Riemannian manifold $(M,g_M)$,  with distance function $d_M$. This manifold  approximates the manifold $(N,g)$ in the following way:}

\begin{enumerate}
\item 
There is a diffeomorphism $F:M\to N$ satisfying 
\beq
\label{Lip-condition}
\frac 1L\leq \frac{d_N(F(x),F(y))}{d_M(x,y)}\leq L,\quad \hbox{for all }x,y\in M,
\eeq
where 
$L=1+\Cseven K^{1/3}\hat \delta\,{}^{2/3}$.


\item 
{The} sectional curvature $\Sec_M$ of $M$ satisfies $|\Sec_M|\le \Csix K$.

\item The injectivity radius $\inj_M$ of $M$ satisfies
$$
 \inj_M\ge \min\{(\Csix K)^{-1/2}, (1-\Cseven K^{1/3}\hat \delta\,{}^{2/3})\inj_N\} .
$$
\end{enumerate}
\end{corollary}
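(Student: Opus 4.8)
The plan is to derive Corollary \ref{cor 2:manifold} as a packaging of Theorem \ref{t:manifold} together with Propositions \ref{p:diffeo} and \ref{p:injrad}, with the single parameter $\hat\de$ playing the role of $\de$ in those statements and $r=(\hat\de/K)^{1/3}$ chosen precisely so that the curvature bound $C\de r^{-3}$ becomes $CK$. First I would take the data $(X,\tilde d)$ and build from it a metric space $(X,d_X)$: define $d_X$ by the ``discrete shortest path'' (minimal chain) construction applied to $\tilde d$ restricted to pairs at distance $<r$, as in \eqref{e:minchain}. Since $X$ is a $\hat\de$-net in $N$ and $\tilde d$ agrees with $d_N$ up to $\hat\de$ on short pairs (and reports ``long'' correctly for the rest), standard comparison shows $(X,d_X)$ is $C\hat\de$-intrinsic and $C\hat\de$-close to $\R^n$ at scale $r$: indeed $B_r^X(x_j)$ is $C\hat\de$-Hausdorff close to $B_r^N(x_j)$, which by \eqref{eq: GH distance of balls} (using $\inj_N>2r$ and $r\le K^{-1/2}$, guaranteed by taking $\delta_0$ small) is $Kr^3=\hat\de$-close to $B_r^n$. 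One also checks $X$ and $N$ are $(1+C\hat\de r^{-1},C\hat\de)$-quasi-isometric, the inclusion $X\hookrightarrow N$ being the map.

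Next I would apply Theorem \ref{t:manifold} to $(X,d_X)$ with $\de$ replaced by $C\hat\de$: choosing $\delta_0$ small enough that $C\hat\de<\rhode_1 r$, we obtain a complete $n$-manifold $(M,g)$ that is $(1+C\hat\de r^{-1},C\hat\de)$-quasi-isometric to $X$, with $|\Sec_M|\le C\hat\de r^{-3}=CK$ (giving item (2) with $C_1=C(n)$), and $\inj_M\ge r/2$. Compactness of $M$ follows because $X$ is finite hence bounded, so $M$ has bounded diameter. Composing the two quasi-isometries, $M$ and $N$ are $(1+C\hat\de r^{-1},C\hat\de)$-quasi-isometric; rescaling the additive error, this is a $(1+\sigma,\sigma r)$-quasi-isometry with $\sigma=C\hat\de r^{-1}$. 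Since $r<\min\{(\sigma/(C_1K))^{1/2},\inj_M,\inj_N\}$ holds once $\delta_0$ is small (note $r=(\hat\de/K)^{1/3}$ and $\sigma/K\sim\hat\de^{2/3}K^{-2/3}r^{-2}\cdot$const, so $(\sigma/K)^{1/2}\gtrsim r$ fails unless we are careful — see the obstacle below), Proposition \ref{p:diffeo} yields a diffeomorphism $F\co M\to N$ that is bi-Lipschitz with constant $1+C\sigma=1+C\hat\de r^{-1}=1+CK^{1/3}\hat\de^{2/3}$, which is item (1) with $L=1+CK^{1/3}\hat\de^{2/3}$. Finally, item (3) comes from part 2 of Proposition \ref{p:injrad} applied to $M$ and $\tilde M=N$ (again with the quasi-isometry parameters above): it gives $\inj_M\ge(1-C\hat\de r^{-1})\min\{\inj_N,\pi/\sqrt K\}\ge(1-CK^{1/3}\hat\de^{2/3})\min\{\inj_N,(C_1K)^{-1/2}\}$ after adjusting constants, using $\pi/\sqrt K\ge (C_1 K)^{-1/2}$. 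The effectiveness/algorithmic claim is inherited from the constructive proof of Theorem \ref{t:manifold}, cited to Section \ref{sec:constructive}.

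The main obstacle is bookkeeping the interaction between the three scalings $r=(\hat\de/K)^{1/3}$, $\sigma\sim\hat\de r^{-1}=K^{1/3}\hat\de^{2/3}$, and the hypothesis thresholds of Propositions \ref{p:diffeo} and \ref{p:injrad}. One must verify that $r\le\min\{(\sigma/K)^{1/2},\inj_M,\inj_N\}$ and $r\le\min\{\inj_N,\pi/\sqrt K\}$ for the parameter regime $0<\hat\de<\delta_0(n,K)$; the condition $r\le(\sigma/K)^{1/2}$ is where the choice of exponent $1/3$ is forced, since $(\sigma/K)^{1/2}\sim\hat\de^{1/3}K^{-1/3}=r$ up to constants, so $\delta_0$ must absorb the constant and the chain of constants from the two compositions. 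The other delicate point is confirming that the minimal-chain metric $d_X$ is simultaneously $C\hat\de$-intrinsic (a global condition coming from ``$\tilde d(x,y)>r-\hat\de$ for long pairs'' plus the net property) and still $C\hat\de$-close to $\R^n$ at scale $r$ (a local condition unaffected, by Lemma \ref{l:minchain}, since $d_X$ coincides with $\tilde d$ within $r$-balls). Once these threshold verifications are done, the rest is a direct concatenation of the quoted results with a final consolidation of the $n$-dependent constants into a single $C=C(n)$ and $C_1=C_1(n)$.
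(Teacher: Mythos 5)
Your proposal follows the paper's own proof essentially verbatim: replace $\tilde d$ by a $C\hat\de$-intrinsic metric via the minimal-chain construction of Lemma \ref{l:minchain}, use \eqref{eq: GH distance of balls} and Lemma \ref{l:local-gh-implies-qi} to get the $(1+C\hat\de r^{-1},C\hat\de)$-quasi-isometry with $N$, apply Theorem \ref{t:manifold} at scale $r=(\hat\de/K)^{1/3}$ so the curvature bound becomes $C_1K$, and then invoke Proposition \ref{p:diffeo} for item (1) and Proposition \ref{p:injrad}(2) for item (3), exactly as the paper does. The only slight imprecision is the threshold $r<(\sigma/K)^{1/2}$ that you flag: since the ratio $(\sigma/K)^{1/2}/r$ is independent of $\hat\de$, shrinking $\de_0$ alone does not fix it; one instead enlarges the quasi-isometry constant $\sigma=C\hat\de r^{-1}$ by a dimensional factor (which remains below $\sigma_0$ once $\hat\de<\de_0$), a point on which the paper itself is equally terse.
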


The proof of Corollary \ref{cor 2:manifold}
is given in the end of Section \ref{sec:proof-corollaries}.

We call the function $\tilde d\co X\times X\to \R_+\cup\{0\}$, defined
on the $\hat\delta$-net $X$ and satisfying the assumptions of Corollary \ref{cor 2:manifold},
an approximate local distance function with accuracy $\hat \delta$. 
Many inverse problems can be reduced to a setting where one can 
 determine the distance function $d_N(x_j,x_k)$, with measurement errors $\epsilon_{j,k}$, in a discrete set $\{x_j\}_{j\in J}\subset N$.
 Thus, if the set $\{x_j\}_{j\in J}$ is $\hat \delta$-net in $N$, the errors $\epsilon_{j,k}$ satisfy
 conditions \eqref{delta-condition 0 B}, and
  $\hat \delta$ is small enough, then the diffeomorphism type of the manifold can be
 uniquely determined by Corollary \ref{cor 2:manifold}. Moreover, the bi-Lipschitz 
 condition (\ref {Lip-condition}) means that also the distance function can 
 be determined with small errors.
We emphasize that in \eqref{delta-condition 0 B} 
one needs to approximately know only the distances smaller than~$r=(\hat \delta/K)^{1/3}$.
The larger distances can be computed as in \eqref{e:minchain}.

\subsubsection{Manifold reconstructions in imaging and inverse problems.} 
Recently, geometric models have became an area of  focus of research in inverse
problems. As an example of such problems, one may consider an object with
a variable speed of wave propagation. The travel time of a wave
between two points defines a natural non-Euclidean distance between the
points. This is called the travel time metric and it corresponds
to the distance function of a Riemannian metric.
In many topical inverse problem the task  is to determine the Riemannian metric
inside an object from external measurements,
 see e.g.\ \cite{LU,LeU,PSU,PU,SU,SUV1,SUV2,UV}.
 These problems are the idealizations
of practical imaging tasks encountered in medical imaging or in Earth sciences.
Also, the relation of discrete and continuous models for these problems
is an active topic of research, see e.g.\ \cite{Beretta,Borcea1,Borcea2,KatKL}.
In these results  discrete models have been reconstructed from various types of measurement
data. However, a rigorously analyzed technique to construct a smooth manifold from these discrete models
to complete the construction has been missing until now.

In practice the {{measurement data always contain measurement errors and 
the amount of these data is limited.}}
This is why the problem of the approximate reconstruction of a Riemannian manifold
and the metric on it from discrete or noisy data is essential for several
 geometric inverse problems. Earlier, various regularization
techniques have been developed to solve noisy inverse problems in the PDE-setting, see 
e.g.\ \cite{Engl,MS},
but most of such methods depend on the used coordinates and, therefore, are not invariant.
{\ctext One of the purposes of this paper is to provide invariant tools for solving practical
imaging problems.}

An example of problems with limited  data is an inverse problem for the heat kernel, 
where the information about the
unknown manifold $(M, g)$ is given
in the form of discrete samples $(h_M(x_j, y_k, t_i))_{j,k\in J,i\in I}$ of the heat kernel $h_M(x, y, t)$,
satisfying
\ba
& &(\partial_t-\Delta_g)h_M(x, y,t)=0,\quad \hbox{on }(x,t)\in M\times\R_+,\\
& &h_M(x, y, 0)=\delta_y(x),
\ea
where the Laplace operator $\Delta_g$  operates in the $x$ variable, see e.g.\ \cite{KKL}.
Here $y_j=x_j$, where $\{x_j:\ j\in J\}$  is a finite
 $\ep$-net in an open set $\Omega \subset M$,
while $\{t_i:\ i\in I\}$ is in  $\ep$-net of the time interval $(t_0, t_1)$. It is also natural to
assume that one is given   
 measurements $h_M^{(m)}(x_j, y_k, t_i)$  of the heat kernel
 with errors satisfying $|h_M^{(m)}(x_j, y_k, t_i)-h_M(x_j, y_k, t_i)|<\ep$.    
Several inverse problems for {the} wave equation lead to a similar problem
for the wave kernel $G_M(x,y,t)$  satisfying  
\ba
& &(\partial_t^2-\Delta_g)G_M(x, y,t)=\delta_0(t)\delta_y(x),\quad \hbox{on }(x,t)\in M\times \R,\\
& &G_M(x, y, t)=0,\quad \hbox{for }t<0,
\ea 
see e.g.\ \cite{Iversen, KKL,Oksanen}.
In the case of complete data (corresponding to the case when $\ep$  vanishes), the inverse problem for heat kernel and wave kernel are 
equivalent to  the inverse interior spectral problem, see \cite{Mand}. In this problem
 one considers the eigenvalues
$\lambda_k$  of  $-\Delta_g$, counted by their multiplicity, and the corresponding
$L^2(M)$-orthonormal eigenfunctions,
$\varphi_k(x)$ that satisfy $$
-\Delta_g \varphi_k(x)=\lambda_k\varphi_k(x),\quad x\in M.$$


{
In the inverse interior spectral problem one
assumes that  we are given  
approximations  $\tilde \lambda_k,$ $k=0,1, 2,\dots, N-1$,
to the first $N$ smallest 
eigenvalues of $-\Delta_g$,
and  values $\varphi_k^{\prime}(x_j)$, at points $x_j$, of
approximations to the eigenfunctions $\varphi_k$.
Here $x_j$ form
an $\ep$-net $\{x_j:\ j\in J\}\subset \Omega$,
where $\Omega\subset M$  is open, and
$|\tilde \lambda_k -\lambda_k| \le \ep$ and
$|\varphi_k^{\prime}(x_j)-\varphi_k(x_j)|<\ep$.

  It is shown in \cite{BosiKL}
that 
these data determine 
a metric space $(X, d_X)$ which is a $\delta-$approximation (in the Gromov-Hausdorff distance) to
the unknown manifold $M$, where $\delta= \delta(\ep,N; \Omega)$
 tends to $0$
as $\ep \to 0$ and $N\to \infty$. It may be noted that the earlier works  
  \cite{AKKLT, KatKL} dealt with similar approximations (under other
geometric conditions)
for the case of 
manifolds with boundary and the Laplace operators with some classical boundary conditions.

Returning to the case when $M$ has no boundary,
Theorem \ref{t:manifold} completes the solution
of the above inverse problems by constructing a smooth manifold that approximates $M$.}

%
%
%
%

\subsection{Interpolation of manifolds in Hilbert spaces}
\label{sec:intro-submanifold}

As already mentioned, in the proof of Theorem \ref{t:manifold}
we need to approximate a set in 
a Hilbert space by an $n$-dimensional
submanifold (with bounded geometry).
At small scale, the set in question should be
close to affine subspaces in the following sense.

\begin{definition}\label{d:delta-flat}
Let $E$ be a Hilbert space, $X\subset E$, $n\in\N$ and $r,\de>0$.
We say that $X$ is \textit{$\delta$-close to $n$-flats at scale $r$} if
for any $x\in X$, there exists an $n$-dimensional affine space $A_x\subset E$ through
$x$ such that
\be \label{affine1}
d_H(X \cap B^E_r({x}),\, A_x \cap B^E_r(x)) \leq  \delta.
\ee
\end{definition}

To formulate our result for the sets in Hilbert spaces, we recall some definitions.
By a \textit{closed submanifold} of a Hilbert space $E$ we mean a finite-dimensional
smooth submanifold which is a closed subset of $E$.

Let $M\subset E$ be a closed submanifold.
{The \textit{reach} of $M$, denoted by $\Reach(M)$, 
is the supremum of all $r>0$
such that for every $x\in {\mathcal U}_r(M)$ there exists a unique nearest point in $M$.}
We denote this nearest point by $P_M(x)$ and
refer to the map $P_M\co {\mathcal U}_r(M)\to M$ as the \textit{normal projection}.

{For $x\in M$ we denote by $T_xM$ the tangent space of $M$ at~$x$.
The tangent space is regarded as an affine subspace of $E$
containing $x$.
We denote by $\vec T_xM$ the linear subspace of $E$ parallel to $T_xM$.
}

\begin{theorem}
\label{t:surface}

For every $n,k\in\N$
there exist  {positive  constants ${\sigma_2}$, $\Ceight$, $\Cnine$ depending only on $n$,
and a positive constant $\Cten({n,k})>0$}
such that the following holds.
Let $E$ be a separable Hilbert space,
$X\subset E$, $r>0$ and
\begin{equation}
\label{delta-condition 0}
0<\de<{\sigma_2}r.
\end{equation}
Suppose that $X$
is $\delta$-close to $n$-flats at scale $r$ (see Def.\ \ref{d:delta-flat}).
Then there exists a closed $n$-dimensional smooth
submanifold $M\subset E$ such that:

\begin{enumerate}

\item $d_H(X,M)\le 5\de$.

\item  The second fundamental form of $M$ at every point is bounded by $\Ceight \de r^{-2}$.

\item  
{$\Reach(M)\ge r/3$.}

\item  The normal projection $P_M\co {\mathcal U}_{r/3}(M)\to M$
{is smooth and satisfies for all $x\in {\mathcal U}_{r/3}(M)$
\be
\label{e:thm1dxmPM}
 \|d^k_x P_M\| < \Cten(n,k)
\de r^{-k}, \qquad k\ge 2 ,
\ee
and 
\be\label{e:thm1dx1mPM}
 \|d_x P_M - P_{\vec T_yM}\| < \Cten(n,1)
 \de r^{-1}
\ee
where $y=P_M(x)$ and $P_{\vec T_yM}$ is the orthogonal projector to $\vec T_yM$.
}

\item The tangent spaces of $M$ approximate subspaces $A_x$ from  Def.\ \ref{d:delta-flat}
in the following sense. If $x\in X$ and $y=P_M(x)$, then the angle between $A_x$
and the tangent space $T_yM$ satisfies
\be
\label{e:thm1angle}
\angle(A_x,T_yM) < \Cnine \de r^{-1} .
\ee

\end{enumerate}
\end{theorem}

\begin{notation}
{In \eqref{e:thm1dx1mPM}, \eqref{e:thm1dxmPM} and throughout the paper, 
$d_x$ and $d^k_x$ denote the first and $k$th differentials of a smooth map
at a point~$x$.}
The norm of the $k$th differential is derived from the
{inner product} 
norm on $E$ in the standard way.
As usual, we define the $C^k$-norm of a map $f$ defined on an open set $U\subset E$, by
$$
 \|f\|_{C^k(U)} = \sup_{x\in U}\max_{0\le m\le k} \|d^m_x f\| 
$$
{where $d_x^0f=f(x)$}.


{The angle $\angle(A_1,A_2)$ between $n$-dimensional linear subspaces $A_1,A_2\subset E$
is defined by
\be\label{angle 1}
 \angle(A_1,A_2):=\max_{u_1}\min_{u_2} 
\left\{
 \angle(u_1,u_2) \ | \
  u_1\in A_1,u_2  \in A_2,\ u_j\not=0\right\}
\ee
where $\angle(u_1,u_2) = \arccos \frac{\langle u_1,u_2\rangle}{|u_1||u_2|}$
and $\langle\,\cdotp,\cdotp\rangle$ is the inner product in~$E$.
The angle between affine subspaces is defined as the angle between their parallel translates containing the origin.}
Note that if $A_1$  and $A_2$ are linear subspaces and $P_{A_1}$ and $P_{A_2}$
are orthogonal projectors onto $A_1$ and $A_2$, respectively,
then 
\be\label{angle 2}
\|P_{A_1}-P_{A_2}\| {= \sin \angle(A_1,A_2)} .
\ee
\end{notation}

The proof of Theorem \ref{t:surface} is given in Section
\ref{sec:submanifold-construction}.
{
An algorithm based on Theorem~\ref{t:surface}, that also
summarises the main objects used in its proof, is given in  
Section \ref{sec:constructive}, see also Figure~\ref{fig: surface}.
}

{

 The above question of {submanifold} interpolation has attracted recently much interest in geometry and machine learning. For example,  
 an interpolation problem similar to Theorem \ref{t:surface} has been considered in the recent paper of Kleiner and Lott concerning Perelman's proof of the geometrization
conjecture, see \cite[Lemma~B.2]{Kleiner-Lott}.
{Compared} to these results, our method provides
explicit geometric bounds for $M$ in claims (2)--(4) of Theorem \ref{t:surface}, whereas 
the bounds in \cite[Lemma~B.2]{Kleiner-Lott}
arise from a contradiction argument and 
have the form of an unknown $\ep=\ep(\de,\dots)$
which just goes to 0 along with~$\de$.
In particular, Theorem \ref{t:surface} provides
explicit curvature bounds that are linear in~$\de$.
This is essential in the proof of Theorem~\ref{t:manifold} as well as in applications.
}

In  Remark \ref{rem: Theorem 2 optimality} below we show that
the bounds in claims (2) and (3) in Theorem \ref{t:surface} are optimal, up to  constant factors
depending on $n$.
Thus Theorem \ref{t:surface} 
gives necessary and sufficient conditions (up to multiplication
of the bounds by a constant factor) for 
a set $X\subset E$ to approximate a smooth submanifold
with given geometric bounds.

  %
%
%

In order to approximate a submanifold $M$ as in Theorem \ref{t:surface},
the set $X$ must contain as many points as a $C\de$-net in~$M$.
This is an unreasonably large number of points when $\de$ is small.
The following corollary allows one to reconstruct $M$ from
a smaller approximating set. 
It involves two parameters $\ep$ and $\de$ where $\ep$ is a `density'
of a net and $\de$ is a `measurement error'.
Note that $\de$ may be much smaller than $\ep$.
A similar generalization is possible for Theorem \ref{t:manifold} but
we omit these details.

\begin{corollary}
For every $n\in\N$
there exists ${\sigma_2}={\sigma_2}(n)>0$ 
{and $\cfive=\cfive(n)>0$}
such that the following holds.
Let $E$ be a Hilbert space, $X\subset E$,
$0<\ep<r/10$ and $0<\de<{\sigma_2}r$.
Suppose that for every $x\in X$ there exists
an $n$-dimensional affine subspace $A_x\subset E$
such that the set $X\cap B_r(x)$ is within Hausdorff distance $\de$
from an $\ep$-net of the affine $n$-ball $A_x\cap B_r(x)$.

Then there exists a closed $n$-dimensional submanifold $M\subset E$
satisfying properties 2--4 of Theorem \ref{t:surface} {and
an $\ep$-net  $Y$ of~$M$
such that}
\beq\label{c5}
d_H(X,Y)\le \cfive\de.
\eeq

\end{corollary}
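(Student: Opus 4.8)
The plan is to reduce this corollary to Theorem~\ref{t:surface} by ``filling in'' the net to recover the full $\delta$-close-to-$n$-flats hypothesis. Given $X\subset E$ satisfying the hypotheses, I first enlarge $X$ slightly: for each $x\in X$ let $A_x$ be the affine $n$-space provided by the hypothesis, and set $\widetilde X := \bigcup_{x\in X}\bigl(A_x\cap B_r(x)\bigr)$, or more precisely I would take $\widetilde X$ to be a suitable union of the relevant affine balls. The point is that the hypothesis says $X\cap B_r(x)$ is within Hausdorff distance $\delta$ of an $\ep$-net of $A_x\cap B_r(x)$; since an $\ep$-net of $A_x\cap B_r(x)$ is within Hausdorff distance $\ep$ of $A_x\cap B_r(x)$ itself, we get that $X\cap B_r(x)$ is within Hausdorff distance $\ep+\delta$ of the affine ball $A_x\cap B_r(x)$. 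This is not quite $\delta$-close to $n$-flats because the scale-$r$ estimate has $\ep+\delta$ on the right-hand side, not $\delta$; but $\ep$ can be large, so I should not apply Theorem~\ref{t:surface} directly to $X$.

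Instead, the clean route is to apply Theorem~\ref{t:surface} to the enlarged set $\widetilde X$. First I verify that $\widetilde X$ is $C\delta$-close to $n$-flats at scale $r$ (shrinking $r$ by a controlled factor if needed, e.g. to $r/2$, to absorb boundary effects of the affine balls): for a point $z\in \widetilde X$, say $z\in A_x\cap B_r(x)$, one uses $A_x$ itself (translated to pass through $z$, which costs nothing since it already contains $z$) and checks that $\widetilde X\cap B_{r/2}(z)$ stays within $C\delta$ of $A_x\cap B_{r/2}(z)$. This requires knowing that nearby generating affine balls $A_{x'}\cap B_r(x')$, for $x'\in X$ close to $x$, are $C\delta$-close to $A_x$ near the overlap region — which follows because each such $A_{x'}$ must approximate the common points of $X$ in the overlap, and two affine $n$-planes that both $C\delta$-approximate a set containing a $C\delta$-net of an $n$-ball of definite radius must be $C\delta$-close to each other on that ball (a standard linear-algebra/Hausdorff-distance estimate; this is essentially the kind of estimate used in the proof of Theorem~\ref{t:surface} itself). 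With this, Theorem~\ref{t:surface} applied to $\widetilde X$ produces a closed $n$-dimensional submanifold $M\subset E$ with $d_H(\widetilde X, M)\le 5\delta$ (after rescaling, $\le C\delta$) and satisfying properties 2--4 with the stated bounds (the normal injectivity radius and curvature bounds only improve or change by a constant factor under the harmless rescaling of $r$).

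Finally I transfer the conclusion back to $X$. We have $d_H(\widetilde X, M)\le C\delta$. I claim $X$ is within Hausdorff distance $C\delta$ of an $\ep$-net of $M$. For one direction: every $x\in X$ lies within $\delta$ of the $\ep$-net of $A_x\cap B_r(x)$ given by hypothesis, hence within $C\delta$ of $\widetilde X$, hence within $C\delta$ of $M$; projecting via $P_M$ (property~4, which is available on $U_{r/3}(M)$ and is $C$-Lipschitz), the images $P_M(x)$ form a set that is $C\ep$-dense in $M$ — here one checks that the $\ep$-density of the nets in each $A_x\cap B_r(x)$, together with the Lipschitz normal projection, forces the projected points to be $C\ep$-dense; one then takes the actual $\ep$-net of $M$ to be a maximal $\ep$-separated subset refining or coarsening this, absorbing the constant into the meaning of ``$\ep$-net'' up to the allowed constant factor in $\delta$. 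For the reverse direction: every point of the $\ep$-net of $M$ lies in $M$, hence within $C\delta$ of $\widetilde X$, and one must produce a nearby point of $X$; this uses that $\widetilde X$ is covered by the affine balls $A_x\cap B_r(x)$ each of which is within Hausdorff distance $\delta$ of (an $\ep$-net contained in) $X$, so every point of $\widetilde X$ is within $\ep+\delta$ of $X$, and combining gives $C(\ep+\delta)$ — again absorbed into the definition of the $\ep$-net approximation. The main obstacle is the bookkeeping in the last step: matching the ad hoc nets coming from the hypothesis with a genuine $\ep$-net of the output manifold $M$, while keeping all error terms of size $O(\delta)$ rather than $O(\ep)$, and being careful that ``$X$ is within Hausdorff distance $C\delta$ from an $\ep$-net of $M$'' is interpreted with the same constant conventions as elsewhere in the paper; the geometric content is entirely contained in Theorem~\ref{t:surface} and the affine-plane comparison lemma.
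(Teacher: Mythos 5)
Your proposal is correct and follows essentially the same route as the paper: form the enlarged set $\bigcup_{x\in X}(A_x\cap B_r(x))$, use a comparison of nearby affine planes (the paper's ``suitably modified'' Lemma \ref{l:small-angle}, giving $\angle(A_x,A_y)<C\de r^{-1}$) to see this union is $C\de$-close to $n$-flats at a slightly reduced scale, and then apply Theorem \ref{t:surface}. Your extra bookkeeping on transferring the $\ep$-net statement back to $X$ is consistent with (and more detailed than) the paper's sketch.
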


\begin{proof}[Proof sketch] 
Consider the set
$
 X' = \bigcup_{x\in X} (A_x \cap B_r(x)) \subset E .
$
A suitably modified version of Lemma \ref{l:small-angle}
implies that $\angle(A_x,A_y)<C\de r^{-1}$ for all $x,y\in X$
such that $|x-y|<r$.
It then follows that $X'$ is $C\de$-close to $n$-flats
at scale $r-C\de$.
Now the corollary follows from Theorem \ref{t:surface}
applied to $X'$.
\end{proof}

\subsection{Submanifold interpolation and Machine Learning}

{\ctext The construction of a manifold  that  approximates, in some suitable sense, the given data points is a classical problem of Machine Learning.}
{We emphasise that we consider reconstruction of manifolds which are either considered as (differentiable) Riemannian manifolds or embedded submanifolds of an Euclidean space but not   
immersed submanifolds of an Euclidean space (i.e., a submanifold that intersects itself)
that are outside the context of this paper.} 

Next we give
a short review on existing methods and discuss how Theorem \ref{t:surface} is applied for
problems of Manifold Learning.

\subsubsection{Literature on Submanifold interpolation}
The question of fitting a manifold to data has been of interest to data analysts and statisticians  of late. There are several results dealing exclusively with sample complexity such as \cite{Aamari,Wasserman, NarMit}. We will restrict our attention to results that provide an algorithm for describing a manifold to fit the data  together with upper bounds on the sample complexity. 

A  work in this direction, \cite{Wasserman3}, building over \cite{Ozertem11} provides an upper bound on the Hausdorff distance between the output manifold and the true manifold equal to $O((\frac{\log N}{N})^{\frac{2}{D+8}}) + \tilde{O}(\sigma^2\log (\sigma^{-1}))$. In order to obtain a Hausdorff distance of $c\ep$, one needs more than $\ep^{-D/2}$ samples, where $D$ is the ambient dimension. The  results of \cite{putative}  guarantee (for sufficiently small $\sigma$) a Hausdorff distance of $$Cd^{7} (\sigma \sqrt{D}) = O(\sigma)$$ with less than $$\frac{CV}{\omega_d( \sigma \sqrt{D})^{d}} = O(\sigma^{-d})$$ samples, where $d$ is the dimension of the submanifold, $V$ is in upper bound in the $d-$dimensional volume, and $\sigma$ is the standard deviation of the noise projected in one dimension.
The question of fitting a manifold $\MM_o$ to data with control both on the reach $\tau_o$ and mean squared distance of the data to the manifold was considered in \cite{FMN}. The paper \cite{FMN} did not assume a generative model for the data, and had to use an exhaustive search over the space of candidate manifolds whose time complexity was doubly exponential in the intrinsic dimension $d$ of $\MM_o$. In \cite{putative} the construction of $\MM_o$ has a sample complexity that is singly exponential in $d$, made possible by the generative model.  While \cite{FMN} did not specify the bound on $\tau_o$, beyond stating that the multiplicative degradation $\frac{\tau}{\tau_o}$ in the reach depends on the intrinsic dimension alone. In \cite{putative} this degradation is pinned down to within $(0, C d^7]$, where $C$ is an absolute constant and $d$ is the dimension of $\MM$.

{\ctext
There are also  methods which map high dimensional data points to low dimensional
piecewise linear manifolds. 
Cheng, Dey and Ramos present an algorithm \cite{cheng} to reconstruct a smooth $k$ dimensional manifold $\mathcal M$ embedded in a Euclidean space
from a sufficiently dense point sample on the manifold. The algorithm outputs a simplicial manifold that is
homeomorphic to $\mathcal M$ and close to $\mathcal M$ in Hausdorff distance (see also the related work using witness complexes in \cite{witness}). 
In recent work, Aamari and Levrard \cite{Aamari} derive optimal rates for the estimation of tangent spaces the second fundamental form, and the submanifold $\mathcal M$
given a sample drawn from a submanifold $\mathcal M$ of Euclidean space. Unlike this paper or \cite{Wasserman3, FMN, putative} they do not however provide a method to produce a single consistent manifold from finitely many samples.
In other  recent work \cite{boissonnat}, Boissonnat
 et al.  
present an algorithm for producing Delaunay triangulations of manifolds. Given a set of sample points and an atlas on a compact manifold, a manifold Delaunay complex is produced for a perturbed point set
provided the transition functions are bi-Lipschitz with a constant close to 1, and the
original sample points meet a local density requirement. The output complex is endowed with a piecewise-flat metric which is a close approximation of
the original Riemannian metric. This is similar to our present work, except that our metric is $C^\infty$, and not just piecewise linear.
}

\subsubsection{Literature on Manifold Learning}\label{sec:lit}

The following methods aim to transform data lying near a $d$-dimensional manifold
in an $N$ dimensional space
into a set of points in a low dimensional space close to a $d$-dimensional
manifold. During transformation all of them try to preserve some geometric
properties, such as appropriately measured distances between points of the original data set. Usually the Euclidean distance to the `nearest' neighbours of a point 
is preserved. In addition some of the methods preserve, for points farther
away, some notion of geodesic distance capturing the curvature 
of the manifold.

Perhaps the most basic of such methods is `Principal Component Analysis' (PCA), \cite{PCA1, PCA2}
where 
one projects the data points onto the span of the $d$  eigenvectors corresponding
to the top $d$ eigenvalues of the ($N\times N$) covariance matrix of the data points.

An important variation is the `Kernel PCA' \cite{kernel} where one defines a feature map
$\varphi(\cdot)$ mapping the data points into a Hilbert space called the feature space. A `kernel matrix' $K$ is built
whose $(i,j)^{th} $ entry is the dot product $\langle \varphi(x_i), \varphi(x_j)\rangle$
between the data points $x_i,x_j.$ From the top $d$ eigenvectors of this matrix, the corresponding
eigenvectors of the covariance matrix  of the image of the data points in the feature space can be computed. The data points are projected onto
the span of these
eigenvectors of this  covariance matrix  in the feature space.

In the case of `Multi Dimensional Scaling' (MDS) \cite{mds}, only pairwise distances between
points are attempted to be preserved. One minimizes a certain `stress function' which captures the total error in pairwise distances between the data points
and between their lower  dimensional counterparts. For instance, a raw stress
function could be $\Sigma (\|x_i-x_j\|-\|y_i-y_j\|)^2,$ where $x_i$ are the
original data points, $y_i,$ the transformed ones, and $\|x_i-x_j\|,$
the distance between $x_i, x_j.$

`Isomap' \cite{TSL} attempts to improve on MDS by trying to capture geodesic distances
between points while projecting. For each data point a `neighbourhood graph' 
is constructed using its 
$k$ neighbours ($k$ could be varied based on various criteria), the edges 
carrying the length between points. Now shortest distance between points is computed in the resulting global graph containing all the neighbourhood graphs 
using a standard graph theoretic algorithm such as Dijkstra's.
Let $D = [d_{ij}]$ be the $n\times n$ matrix of graph distances. Let $S = [d_{ij}^2]$ be the $n \times n$ matrix of squared graph distances. Form the matrix
$A =\frac{1}{2}HSH,$ where $H = I - n^{-1}\mathbf{1}\mathbf{1}^T$. The matrix $A$ is of rank $t < n$, where $t$ is the dimension of the manifold. Let $A^Y =\frac{1}{2}HS^YH$, where $[S^Y]_{ij} = \|y_i - y_j\|^2.$  Here the $y_i$ are arbitrary $t$-dimensional vectors. The embedding vectors $\hat{y}_i$ are chosen to minimize $\|A - A^Y\|$. The optimal solution is given by the eigenvectors $v_1, \dots, v_t$ corresponding to the $t$ largest eigenvalues of $A$. The vertices of the graph $G$ are embedded by the $t \times n$ matrix $$\hat{Y}=(\hat{y}_1, \dots, \hat{y}_n) = (\sqrt{\la_1}v_1, \dots, \sqrt{\la_t}v_t)^T.$$

`Maximum Variance Unfolding' (MVU) \cite{MaximumVariance} also constructs the neighbourhood graph as in the case of Isomap but tries to maximize distance between projected points keeping
distance between nearest points unchanged after projection. It uses semidefinite programming
for this purpose.

In `Diffusion Maps' \cite{diffusion}, a complete graph on the data points is built and each
edge is assigned a weight based on a gaussian:
$w_{ij}\equiv \exp({\frac{\|x_i-x_j\|^2}{\sigma ^2}}).$  
Normalization is performed on this matrix so that the entries in each row add up to $1.$
This matrix is then used as the transition matrix $P$ of a Markov chain.
$P^t$ is therefore the transition probability between data points in $t$ steps.
The $d$ nontrivial eigenvalues $\lambda _i$ and their eigenvectors $v_i$ of $P^t$ are computed and
the data is now represented by the matrix $[\lambda _1v_1, \cdots , \lambda _dv_d], $ with the row $i$ corresponding to data point $x_i.$


The following are essentially local methods of manifold learning in the sense that they attempt to preserve local properties of the manifold around a 
data point.

`Local Linear Embedding' (LLE) \cite{RS} preserves solely local properties of the data.
Let $N_i$ be the neighborhood of $x_i$, consisting of $k$ points. Find optimal weights $\hat{w}_{ij}$ by solving
 $ \hat{W} := \arg\min_W \sum_{i=1}^n \|x_i - \sum_{j=1}^n w_{ij}x_j\|^2,$
 subject to the constraints (i) $\forall i, \sum_j w_{ij} = 1$, (ii) $\forall i,j,  w_{ij} \geq 0,$
 (iii) $w_{ij} = 0 $ if $j \not\in N_i.$
 Once the weight matrix $\hat{W}$ is found a spectral embedding is constructed using it. More precisely,
 a matrix $\hat{Y}$ is is a $t \times n$ matrix constructed satisfying
 $ \hat{Y} = \arg\min_Y Tr( {YMY^T}),$
under the constraints $Y\mathbf{1} = 0$ and {$YY^T = nI_t$},  {where $M = (I_n - \hat{W})^T(I_n - \hat{W})$.} $\hat{Y}$ is used to get a $t$-dimensional embedding of the initial data.

In the case of the `Laplacian Eigenmap'  \cite{BN}, \cite{heat} again, a nearest neighbor graph is formed. 
The details are as follows.
Let $n_i$ denote the neighborhood of $i$. Let $W = (w_{ij})$ be a symmetric $(n \times n)$ weighted adjacency matrix defined by (i)  $w_{ij} = 0$ if $j$ does not belong to the neighborhood of $i$;
(ii)  $w_{ij} = \exp(\|x_i - x_j\|^2/{2\sigma^2}),$ if $x_j$ belongs to the neighborhood of $x_i$.
Here $\sigma$ is a scale parameter. Let $G$ be the corresponding weighted graph.
Let $D = (d_{ij})$ be a diagonal matrix whose $i^{th}$ entry is given by $(W \mathbf{1})_i$. The matrix
$L = D - W$ is called the  Laplacian of $G$. We seek a solution in the set of $t \times n$ matrices
$ \hat{Y} = \arg \min_{Y:YDY^T = I_t}\hbox{Tr}(YLY^T).$ The rows of $\hat{Y}$ are  given by solutions of the equation $Lv = \la D v$.

Hessian LLE (HLLE) (also called Hessian Eigenmaps) \cite{donoho} and `Local Tangent Space Alignment' (LTSA) \cite{ZZ} 
attempt to improve on LLE by also taking into consideration
the curvature of the higher dimensional manifold while preserving the local
pairwise distances. We describe LTSA below.

LTSA  attempts to compute coordinates of the low dimensional data points and align the tangent spaces in the resulting embedding.  
It starts with computing bases for the approximate tangent spaces at the datapoints $x_i$ by applying PCA on the neighboring data points.
The  coordinates of the low dimensional data points are computed 
by carrying out a further minimization $min_{Y_i,L_i}\Sigma _i \|Y_iJ_k -L_i\Theta _i\|^2$. Here $Y_i$ has as its columns, the lower dimensional vectors,
$J_k$ is a `centering'  matrix, $\Theta _i$ has as its columns 
the projections of the  $k$ neighbors onto the $d$ eigenvectors obtained from the PCA and $L_i$ maps these coordinates to those
of the lower dimensional representation of the data points.
The minimization is again carried out through suitable spectral methods.

The alignment of local coordinate mappings also underlies some other methods
such as `Local Linear Coordinates' (LLC) \cite{llc} and `Manifold Charting' \cite{mc}.

%

Each of the algorithms is based on strong domain based intuition and in general
performs well in practice {at least} for the domain for which it was originally intended. PCA is still competitive as a general method.

Some of the algorithms are known to perform correctly under the hypothesis that data lie on a manifold of  a specific kind. In Isomap and LLE, the manifold has to be an isometric embedding of a convex subset of Euclidean space. 
In the limit as number of data 
points tends to infinity, 
when the data approximate a manifold, then one can recover the geometry of this manifold by computing an approximation of the Laplace-Beltrami operator.
 Laplacian Eigenmaps and Diffusion maps rest on this idea.
LTSA works for parameterized manifolds 
and detailed error analysis is available for it.

\subsubsection{Theorems \ref{t:manifold} and \ref{t:surface}  and the problems of machine {reconstruction}}
\label{sec:learning}


The Theorem \ref{t:manifold}
addresses the fundamental question, when  
a given metric space 
$(X,d_X)$, corresponding to data points and their `abstract' mutual distances,
{\ctext approximates}  a Riemannian manifold with
a bounded sectional curvature and injectivity radius. In the context of Theorem \ref{t:manifold},
the distances are measured in intrinsic sense in $M$ and~$X$.

Theorem~\ref{t:surface} deals with approximating a subset of a Hilbert space $E$ satisfying certain 
local constraints by a manifold having bounded second fundamental form and {reach}. 
In the context of Theorem \ref{t:surface},
the distances are measured in extrinsic sense in $E$.
Such approximations have extensively been considered
in machine learning or, more precisely, manifold learning and non-linear dimensionality reduction, 
where the goal is to approximate the set of data lying in a high-dimensional space like $E$ 
by a submanifold in $E$ of a low enough dimension in order to visualize these data,
see e.g. references of Section~\ref{sec:lit}. 

The results of this paper provide for the observed data an abstract low-dim\-en\-sio\-nal representation of the intrinsic manifold
structure that the data may possess. In particular, the topology of the manifold structure is determined,
assuming that the sampling density has been sufficient.
As described in Section~\ref{sec:submanifold-construction}, the proof of
Theorem \ref{t:surface} is of a constructive nature and provides an algorithm
to perform such visualisation. Note that this algorithm starts with tangent-type planes
which makes it distantly similar to the LTSA method in machine learning, see e.g. \cite{MCT, ZZ}.
In paper \cite{FMN},  the authors provide a  method of visualization of a given data
using a probabilistic setting. In comparison, Theorem \ref{t:surface} helps us visualize data in a deterministic setting.

The results of this paper are also related to  dimensionality reduction considered extensively in machine learning, see e.g. \cite{BN, BN2,BN1}. Using the constructions of Section \ref{AWE}, we can associate with given data not only the metric structure but also
point measures. Combining this with the constructions of \cite{BIK}, one could analyse the approximate determination of the eigenvalues
and eigenfunctions of a manifold that approximates the  data set.

\section{Approximation of metric spaces}
\label{sec:metric spaces}

In this section we collect preliminaries about GH 
and quasi-isometric approximation of metric spaces.
In subsections \ref{subsec:algorithm-gh} and \ref{subsec:algorithm-finddisc}
we present algorithms that can be used to verify
the assumptions of Theorems \ref{t:manifold} and~\ref{t:surface}.

\subsection{Gromov-Hausdorff approximations}
\label{subsec:gh}

Let $X$ be a metric space.
Recall that the Hausdorff distance between sets $A,B\subset X$
is defined by
\be\label{dist H}
 d_H(A,B) = \inf\{r>0: A\subset {\mathcal U}_r(B)\text{ and } B\subset {\mathcal U}_r(A)\}
\ee
where ${\mathcal U}_r$ denotes the $r$-neighborhood of a set.

The Gromov-Hausdorff (GH) distance $d_{GH}(X,Y)$ between
metric spaces $X$ and $Y$ is 
{the infimum of all $\ep>0$ such that}
there exists a metric space $Z$ and subsets $X',Y'\subset Z$ 
isometric to $X$ and $Y$, resp., such that
$d_H(X',Y')<\ep$. 
One can always assume that $Z$ is the disjoint union
of $X$ and $Y$ with a metric extending those of $X$ and $Y$.
The pointed GH distance between pointed metric spaces $(X,x_0)$ and $(Y,y_0)$
is defined in the same way with an additional requirement 
that $d_Z(x_0,y_0)<\ep$.
{See e.g.\ \cite[\S1.2 in Ch.~10]{Pe} or \cite{BBI} for details.}

\begin{example}[Distorted net]\label{ex:distorted net}
Recall that a subset $S$ of a metric space $X$ is called an $\ep$-net
if ${\mathcal U}_\ep(S)=X$. Let $S$ be an $\ep$-net in $X$ and
imagine that we have measured the distances between points of $S$
with an absolute error~$\ep$. That is, we have a 
distance function $d'$ on $S\times S$ such that 
$|d'(x,y)-d(x,y)|<\ep$ for all $x,y\in S$.
Then the GH distance between $X$ and $(S,d')$ is bounded by~$2\ep$.
This follows from the fact that the inclusion $S\hookrightarrow X$
is an $\ep$-isometry from $(S,d')$ to $(X,d)$, see below.

Strictly speaking, the `measurement errors'
in this example may break the triangle inequality
so that $(S,d')$ is no longer a metric space. 
This can be fixed by adding $3\ep$ to all $d'$-distances.
\end{example}


\vfill

\medskip
{
\noindent
\begin{tabular}{|p{1cm}|p{8.5cm}|p{2cm}|}
\hline
& {\bf Table 1: Index of the key notations} &\\
\hline
Name & Description  & Defined in \\
\hline
$d_X$ &  $d_X(x,y)=\hbox{dist}(x,y)$ in metric space $X$ &Sec.\ \ref{section: Introduction}\\
$d_H$ & Hausdorff distance & (\ref{dist H}) \\
$d_{GH}$ & Gromov-Hausdorff distance &Subsec.\ \ref{subsec:gh} \\
$\sigma_1$ & Maximal ration of $\delta$ and $r$ in Theorem \ref{t:manifold} & Thm.\ \ref{t:manifold} \\
$K$ & Bound for sectional curvature, $|\Sec_M|\leq K$ &Cor.\ \ref{cor 2:manifold}\\
$\inj_M$ & injectivity radius of manifold  $M$ &Sec.\ \ref{section: Introduction}\\
$d^k$ & $k$-th derivative &Thm.\ \ref{t:surface}\\
$E$ & Euclidean space $\R^N$  or the Hilbert space $\ell^2$ &Sec.\ \ref{section: Introduction} \\
$\Reach$ & $\Reach(M)$ is the reach of submanifold $M\subset E$ &Sec.\ \ref{sec:intro-submanifold}\\
${\sigma_0}(n)$ & Maximal value for parameter $\delta$ when $r=1$ & (\ref{delta condition}) \\
$X_0$ & subset of $X$ or a maximal $r/100$ separated subset of $X$ &(\ref{X0 net})\\
$B_r^n$ & a ball of radius $r$ in ${\mathbb R}^n$& Sec.\ \ref{section: Introduction}\\ 
$B_r(x)$ & a ball either in $E$, or $X$ depending on the context& Sec.\ \ref{section: Introduction}\\  
$D_i^r$ & $D_i^r=B_r^n(p_i)\subset\R^n$ is a ball of radius $r$ centered at $p_i$ &
Subsec.\ \ref{subsec:app charts} \\
$D_i$ & $D_i=B_1^n(p_i)\subset\R^n$ is a ball of radius 1 centered at $p_i$ &Subsec.\ \ref{subsec:app charts} \\
${\mathcal U}_\de(A)$ & a $\de$-neighbourhood of the set $A$&Subsec.\ \ref{subsec Geometrization}\\
$A_x$ & Affine plane of dimension $n$ containing $x$ &  Prop. \ref{p:surface}\\
$P_i $ & Orthogonal projector $P_i=P_{A_{q_i}}$ onto affine plane ${A_{q_i}}$ & (\ref{PAx}) \\
$ \mu_i(x)$ & Smooth cut-off functions $\mu_i\co E\to[0,1]$ & (\ref{mu function})-(\ref{mu function 2})\\
$\varphi_i(x)$ &  Function $\varphi_i:E\to E$,  $\varphi_i(x)= \mu_i(x) P_i(x) + (1-\mu_i(x)) x$  & (\ref{e:def of phi_i})\\
$f_i(x)$ & $ f_i\co E\to E$, $f_i= \varphi_i\circ\varphi_{i-1}\circ\ldots\circ\varphi_1$  & (\ref{e:def of fi})\\
$f(x)$ & $f(x)=\lim_{i\to \infty} f_i(x)$ or $f(x)=f_{\max i}(x)$ &(\ref{e:def of f})\\
$M$ & Reconstructed submanifold $M=f({\mathcal U}_{1/5}(X_0))\subset E$ & (\ref{e:Mdefinition})\\
$A_{ij}$ &
Affine transition maps $A_{ij}\co\R^n\to\R^n$  & \eqref{e:transition2b} \\
$\Omega$  & $\Omega= \bigcup_i D_i\subset \R^n$ is a union of coordinate charts& Subsec.\ \ref{AWE}\\
$\Omega_0$  & $\Omega_0= \bigcup_i D_i^{1/10}\subset \R^n$ is a union of coordinate charts& Subsec.\ \ref{AWE}\\
${\phi}(x)$  & A smooth map
${\phi}\co\R^n\to \mathbb S^n$
& (\ref{smooth map phi})\\
$F_i(x)$ & Embedding $F_i:\Omega\to \R^{n+1}$,
$ F_i(x) =
  {\phi}(A_{ji}(x)-p_i)$
   & (\ref{e:Fi definition})\\
$F(x)$ & An embedding 
$
F\co\Omega\to \R^m $, $F(x)=(F_i(x))_{i=1}^N$ &(\ref{e:Fi definition})\\
$\Sigma_i$& $\Sigma_i=F(D_i^{1/10})$ are local coordinate patches & (\ref{Sigma})\\
$\Sigma_i^0$ & $\Sigma_i^0=F(D_i^{1/50})$ are small local coordinate patches & (\ref{Sigma})\\
$P_M(x)$ & A projection of $x$ to the nearest point of $M$ to $x$ &Sec.\ \ref{section: Introduction}\\
${{\bf F}_{(i)}}$ & Map $\bf F$ for the ball $B_1(q_i)$ given by GHDist  & (\ref{GHDist map F} )\\
$V_i$ & $ V_i = P_M(\Sigma_i)$ coordinate neighborhoods on $M$  & (\ref{Vi set})\\
$\psi_i(x)$ & $\psi_i = P_M\circ F$  maps local chart  $D_i^{1/10}$  to $V_i\subset M$ &(\ref{e:psi-F-mod1})\\
$u_i(x)$ & $u_i=\tilde u_i/(\sum_i\tilde u_i)\in C^\infty_0(V_i)$ is a partition of unity on $M$ & (\ref{tilde ui})\\
$g$ & $g=\sum_i u_i g_i$ the metric constructed on $M$ &
(\ref{e: gi metric})-(\ref{e: g metric})\\
\hline
\end{tabular}
\medskip
}
\vfill

\newpage
{
{\bf Table 2:} Lemmas or formulas in which or after which $C_k$ is introduced.
\medskip

 \begin{tabular}{|p{1cm}|p{1.89cm}|p{1cm}|p{1.89cm}|p{1cm}|p{1.89cm}|}
\hline 
$C_k$  & Location  & $C_k$  & Location & $C_k$  & Location\\
\hline
$\Cone$ & \eqref {e:t1-gh-estimate}&
$\Ctwo$ & \eqref {e:t1-gh-estimate}&
$\Cfour $ & \eqref{e:class inclusion}\\
$\Cfive $ & \eqref{e:class inclusion}&
$\cone $ & \eqref{gh dist BB}&
$\ctwo $ & \eqref{gh dist BB}\\
$\cthree$ & \eqref {c3}&
$\cfour$ & \eqref {e:injrad-gh}&
$\Csix$ & \eqref{Lip-condition}\\
$\Cseven$ & \eqref{Lip-condition}&
$\Ceight $ & \eqref{delta-condition 0}&
$\Cnine $ & \eqref{e:thm1angle}\\
$\Cten$ & \eqref{delta-condition 0}&
$\cfive $ & \eqref{c5}&
$\csix $ & \eqref{c6}\\
$\Cthirteen$ & \eqref{e:rotation0}&
$\Celeven$& Lemma \ref{l:gh algorithm}&
$\Ctwelve$& Lemma \ref{Lem C12}\\
$\Csixteen $ & \eqref{e C 16}&
$\Cnineteen $ & \eqref{l:fdisplacement}&
$\Cseventeen $ & \eqref{e:fm-almost-projection}\\
$\Ctwenty $ & \eqref{e C20}&
$\Ctwentytwo$ & \eqref{e:transition}&
$\Ctwentythree $ & \eqref{e:3way}\\
$\Ctwentyfour$ & \eqref{e C24}&
$\Ctwentyfive$ & \eqref{e:Fsmooth}&
$\Ctwentysix$ & \eqref{e:Fbilip}\\
$\Ctwentyseven$ & \eqref{e C27}&
$\Ctwentyeight$ & \eqref{e:Sigma01}&
$\Ctwentynine$ & \eqref{e:Sigmai-near-flat}\\
$\Cthirtythree $ & \eqref{e:Sigma-near-flat0}&
$\ctwelve$  &Lemma \ref{l:submanifold}&
$\Cthirtysix$ & \eqref{e C36}\\
$\Cthirtyseven $ & \eqref{e:charts2}&
$\Cthirtyeight$  &Lemma \ref{l:charts}&
$\Cthirtynine$  &Lemma \ref{l:charts}\\
$\Cforty $ & \eqref{e C40}&
$\Cfortyone $ & \eqref{e: C41}&
$\Cfortyfour$ & \eqref{e:PMbounded}\\
$\Cfortyfive$ & \eqref{e:psibounded}&
$\Cfortysix$ & \eqref{e:sameprojection}&
$\Cfortyseven$ & \eqref{e C47}\\
$\Cfortyeight$ & \eqref{e:transdelta}&
$\Cfortynine$ & \eqref{e:tilde psibounded}&
$\Cfifty $ & \eqref{e: inverse tilde psi} 
\\
$\Cfiftyone $ & \eqref{e:ubounded}&
$\Cfiftytwo$ & \eqref{e:g-estimate}&
$\Cfiftythree $ & \eqref{e:g-estimate}\\
$\Cfiftyfour$ & \eqref{e:another-qi}&
$\Cfiftyfive$ & \eqref{e:another-qi2}&
$\Cfiftysix $ & \eqref{e C56}\\
$\Cfiftyseven$ & \eqref{e:distancedelta}&
$\cfourteen$ & \eqref{e:injMi}&
$\Clast$ & \eqref{e:sup  PM}\\
$\Cbracetthree $ & \eqref{e C61 interpolated}&
$\Cbracetfour $ & \eqref{added a1}&
$\cbracetone $ & \eqref{C2 estimate}\\
$\Cbracetfive $ & \eqref{e C65}&
$\cbracettwo$ & \eqref{e C66}&
$\cbracetthree$ & \eqref{e C66}
\\
\hline
\end{tabular}}
\bigskip

Let $X,Y$ be metric spaces,
{$f\co X\to Y$ a (not necessarily continuous) map, and $\ep>0$.
The \textit{distortion} of $f$, denoted by $\dis f$, is defined by
$$
 \dis f = \sup_{x,y\in X} |d_Y(f(x),f(y))-d_X(x,y)| ,
$$
and $f$ is called an \textit{$\ep$-isometry} if $\dis f<\ep$ and
$f(X)$ is an $\ep$-net in $Y$.}


If $d_{GH}(X,Y)<\ep$ then there exists a $2\ep$-isometry from $X$ to $Y$,
and conversely, if there is an $\ep$-isometry from $X$ to $Y$
then $d_{GH}(X,Y)<2\ep$.
{
Moreover,
\beq  \label{2.08.1a}
 d_{GH}(X,f(X))\le\frac12\dis f.
 \eeq
 Also, if $f(X)$ is $\ep-$net in $Y$, then
 \beq \label{2.08.1b}
d_{GH}(X,Y)\le\frac12\dis f+\ep.
\eeq
%
See \cite[\S7.3.3]{BBI} for proofs of these facts.}
They also hold for the pointed GH distance between pointed metric spaces
$(X,x_0)$ and $(Y,y_0)$, provided that $f(x_0)=y_0$.
Throughout the paper we use these properties without explicit reference.

{If $f$ is a $(\la,\ep)$-quasi-isometry (see Definition~\ref{d:QI}),
then $\dis f\le (\la-1)\diam(X)+\ep$. This together with (\ref{2.08.1a})-(\ref{2.08.1b})
implies \eqref{e:GH-via-QI}.}
{The next lemma is a variant of \eqref{e:GH-via-QI} for metric balls.

\begin{lemma}\label{l:QI-for-balls}
Let $f\co X\to {M}$ be a $(\la,\ep)$-quasi-isometry
and suppose {that $({M,d_M)}$ is a Riemannian manifold.} 
Then every $r$-ball in ${M}$ is within GH distance $2(\la-1) r +  {5}\ep$
from some $r$-ball in~$X$. More precisely, 
\be\label{e:GH-via-QI-for-balls}
 d_{GH}(B_r^X(x),B_r^{M}(y)) < 2(\la-1) r + {5}\ep
\ee
for all $x\in X$ and $y\in {M}$ such that $d_{M}(f(x),y)<\ep$.
\end{lemma}

\begin{proof}
Let $x$ and $y$ be as in the formulation. Then,
\be\label{e:ball-neighborhood}
 B^M_{r_1+r_2}(y) =\mathcal U_{r_2} (B^M_{r_1}(y))
\ee
for all $r_1,r_2>0$.

Fix $r>0$ and denote ${X_1}=B^X_r(x)$ and ${{M}_1}={B^{M}_r(y)}$.
Since $\diam({X_1})\le 2r$, 
the distortion of $f|_{{X_1}}$ is bounded by $2(\la-1)r+\ep$.
Hence by (\ref{2.08.1b}),
\be\label{e:QI-for-balls1}
 d_{GH}({X_1},f({X_1}))\le (\la-1)r + \ep/2
\ee
where $f({X_1})$ is regarded as a pointed metric space with distinguished point $f(x)$.
Now we estimate the Hausdorff distance $d_H(f({X_1}),{{M}_1})$ in ${M}$.
By \eqref{e:QI} and \eqref{e:ball-neighborhood}, 
$$
 f({X_1})\subset {B^{M}_{\la r+\ep}(f(x))\subset B^{M}_{\la r+2\ep}(y)}\subset\mathcal U_{\ep_1}({{M}_1}), \qquad \ep_1=(\la-1)r+2\ep.
$$
To prove that ${{M}_1}$ is contained in a suitable neighborhood of $f({X_1})$,
let $r_1=\la^{-1}r-3\ep$ and consider $z\in {B^{M}_{r_1}(y)}$.
Since $f(X)$ is an $\ep$-net in ${M}$, there is $x'\in X$ such {that $d_{M}(z,f(x'))<\ep$
and hence $d_{M}(f(x),f(x'))<r_1+2\ep$.}
This and \eqref{e:QI} imply that
$$
 d_X(x,x') < \la (d_{M}(f(x),f(x')) + \ep) < \la ( r_1 + 3\ep) = r,
$$
hence $x'\in {X_1}$.
Thus $ {B^{M}_{r_1}(y)} \subset \mathcal U_{\ep}(f({X_1})) $.
This and \eqref{e:ball-neighborhood} imply that
$$
{{M}_1}\subset\mathcal U_{\ep_2}(f({X_1})), \qquad
 \ep_2 = r-r_1 + \ep= (1-\la^{-1})r+4\ep .
$$
Thus $d_H(f({X_1}),{{M}_1}) \le \max(\ep,\ep_1,\ep_2) < (\la-1)r+4\ep$.
Since the Hausdorff distance is an upper bound for the GH distance, 
this and \eqref{e:QI-for-balls1} imply \eqref{e:GH-via-QI-for-balls}.
\end{proof}
}

\subsection{Almost intrinsic metrics}
\label{subsec:almost intrinsic}

Here we discuss properties of $\de$-intrinsic metrics
and related notions from
Definition \ref{d:almost-intrinsic}.
First observe that, if $x_1,x_2,\dots,x_N$
is a $\de$-straight sequence, then its 
`length' satisfies
\be\label{e:dist-chain}
 \sum_{i=1}^{N-1} d(x_i,x_{i+1}) \le d(x_1,x_N) + (N-2)\de .
\ee
This follows by induction from \eqref{e:delta-straight} 
and the triangle inequality.

The next lemma characterizes almost intrinsic metrics
as those that are GH close to Riemannian manifolds.
However manifolds provided by this lemma
may have extremely large curvatures and tiny injectivity radii.

\begin{lemma}\label{l:intrinsic metric}
Let $X$ be a metric space and $\de>0$.

1. If there exists a length space $Y$ such that $d_{GH}(X,Y)<\de$,
then $X$ is $6\de$-intrinsic.

2. Conversely, if $X$ is {compact} and $\de$-intrinsic, then there exists
a two-dimensional Riemannian manifold $M$ such that
\beq\label{c6}
d_{GH}(X,M)<\csix\de,
\eeq
 where $\csix$ is a universal constant.
\end{lemma}

\begin{proof}
1. By the definition of the GH distance, there exists a metric $d$
on the disjoint union $Z:=X\sqcup Y$ such that $d$ extends $d_X$ and $d_Y$
and $d_H(X,Y)<\de$ in $(Z,d)$.
Let $x,x'\in X$. Since  $d_H(X,Y)<\de$, there exist $y,y'\in Y$
such that $d(x,y)<\de$ and $d(x',y')<\de$. 
Connect $y$ to $y'$ by a minimizing geodesic and let
$y=y_1,y_2,\dots,y_N=y'$ be a sequence of points along this geodesic
such that $d(y_i,y_{i+1})<\de$ for all $i$.
For each $i=2,\dots,N-1$, choose $x_i\in X$ such that $d(x_i,y_i)<\de$.
Then $x,x_2,\dots,x_{N-1},x'$ is a $6\de$-straight $3\de$-chain
connecting $x$ and $x'$.
Since $x$ and $x'$ are arbitrary points of $X$, the claim follows.

2. Since we do not use this claim, we do not give a detailed proof of it.
Here is a sketch of the construction. 
First, arguing as in \cite[Proposition {7.5.5}]{BBI},
one can approximate $X$ by a metric graph.
If $X$ is $\de$-intrinsic, the graph can be made GH $\csix\de$-close to~$X$.
Consider a piecewise-smooth arcwise isometric embedding 
of the graph into $\R^3$ and let $M$ be a smoothed boundary
of a small neighborhood of the image.
Then $M$ is a two-dimensional Riemannian manifold which
can be made arbitrarily close to the graph and hence $\csix\de$-close to $X$.
\end{proof}

Now we describe a construction that makes a $C\de$-intrinsic metric
out of a metric which is $\de$-close to $\R^n$
at scale~$r$ (see Definition~\ref{d:closetoRn}).
More generally, let $X=(X,d)$ be a metric space 
in which every ball of radius $r$
is $\de$-intrinsic, where $r>\de>0$.
For $x,y\in X$, define the new distance $d'(x,y)$ by
\be
\label{e:minchain}
 d'(x,y) = \inf_{\{x_i\}} \biggl\{\sum_{i=1}^{N-1} d(x_i,x_{i+1}) :
  \ x_1=x, \ x_N=y
\biggr\}
\ee
where the infimum is taken over all 
finite sequences $x_1,\dots,x_N$ connecting $x$ to~$y$
and such that every pair of subsequent points $x_i,x_{i+1}$
is contained in a ball of radius~$r$ in $(X,d)$.

In order to avoid infinite $d'$-distances, we need to assume
that any two points can be connected by such a sequence.
If this is not the case, $X$ divides into components
separated from one another by distance at least~$r$.
For our purposes, such components are unrelated to one another
just like {disconnected} components of a manifold.

\begin{lemma}
\label{l:minchain}
Under the above assumptions, the function $d'$ given by \eqref{e:minchain}
is a {$8\de$-intrinsic} metric on $X$. 
Furthermore, $d$ and $d'$ coincide within any ball of radius~$r$.
\end{lemma}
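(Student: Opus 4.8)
The plan is to verify in turn the three assertions of Lemma~\ref{l:minchain}: that $d'$ is a metric, that it agrees with $d$ inside every ball of radius $r$, and that it is $C\de$-intrinsic. First I would check that $d'$ is finite (this is exactly the connectivity hypothesis just discussed), symmetric, and satisfies the triangle inequality — symmetry is immediate from reversing chains, and the triangle inequality follows by concatenating an admissible chain from $x$ to $y$ with one from $y$ to $z$, since the concatenation is again admissible (every consecutive pair still lies in a common $r$-ball). To see $d'(x,y)=0 \implies x=y$, note that any admissible chain from $x$ to $y$ has length at least $d(x,y)$ by the triangle inequality in $(X,d)$, so $d'(x,y)\ge d(x,y)$; hence $d'(x,y)=0$ forces $d(x,y)=0$.

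Next I would prove that $d=d'$ on balls of radius $r$. The inequality $d'\ge d$ was just observed. For the reverse inequality, suppose $d(x,y)<r$ — more precisely, suppose $x,y$ lie in a common ball $B_r(z)$. Then the trivial two-term chain $x_1=x$, $x_2=y$ is admissible, so $d'(x,y)\le d(x,y)$. Combining, $d'(x,y)=d(x,y)$ whenever $x$ and $y$ lie in a common $r$-ball, which in particular covers the claim for pairs inside a single ball of radius $r$ (here one should be slightly careful: "within a ball of radius $r$" should be read as "for $x,y$ both in $B_r(z)$ for some $z$", and then $x,y$ are automatically in a common $r$-ball; if instead one means $d(x,y)<r$, take $z=x$).

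The substantive part is showing $d'$ is $C\de$-intrinsic, and this is where the $\de$-intrinsic hypothesis on $r$-balls is used. Fix $x,y\in X$ and an admissible chain $x=x_1,\dots,x_N=y$ with $\sum d(x_i,x_{i+1})$ within $\de$ of $d'(x,y)$; we may also assume (shortcutting) that no $r$-ball contains three consecutive points, so consecutive $d$-distances are comparable to the $d'$-distances along the chain. Each consecutive pair $x_i,x_{i+1}$ lies in a common $r$-ball, which is $\de$-intrinsic in $(X,d)$, so there is a $\de$-straight $\de$-chain from $x_i$ to $x_{i+1}$ within that ball; since such a chain stays in the ball, it is admissible, and by \eqref{e:dist-chain} its $d$-length — hence its $d'$-length, using $d'=d$ inside $r$-balls — is at most $d(x_i,x_{i+1})+(\text{length}-2)\de$. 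Concatenating these over all $i$ gives a single $d'$-chain from $x$ to $y$ whose consecutive $d'$-steps are all $<\de$, i.e. a $\de$-chain in $(X,d')$. It remains to check this chain is $C\de$-straight in $(X,d')$: for three of its points $p,q,s$ occurring in this order, one estimates $d'(p,q)+d'(q,s)$ from above by the $d'$-length of the sub-chain between $p$ and $s$, which by \eqref{e:dist-chain} applied to the original chain (plus the straightness of the inserted sub-chains and the fact that inserted detours add at most $O(\de)$ per original link that is "wasted", controlled by the near-optimality of the original chain) is at most $d'(p,s)+C\de$. The bookkeeping here is the only delicate point — one must be sure that the accumulated slack stays $O(\de)$ and does not grow with $N$; this is handled exactly as in the proof of \eqref{e:dist-chain}, because each refinement step is individually $\de$-straight and the original chain was chosen $\de$-nearly optimal, so the total excess telescopes. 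I expect this accounting of the straightness constant to be the main obstacle; everything else is routine.
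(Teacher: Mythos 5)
Your treatment of the metric axioms, of $d'\ge d$, and of $d'=d$ inside $r$-balls is correct and is exactly the paper's argument, and your construction of the candidate chain (refine a nearly optimal admissible chain by inserting, inside a common $r$-ball, a $\de$-straight $\de$-chain for each link, then concatenate) is also the paper's. The gap is in the straightness verification, which is in fact the only nontrivial point of the lemma. You propose to bound $d'(p,q)+d'(q,s)$ by the $d'$-length of the refined sub-chain between $p$ and $s$, and then to claim that this length is at most $d'(p,s)+C\de$ because ``inserted detours add at most $O(\de)$ per original link.'' That claim is false: a $\de$-straight $\de$-chain can be far longer than the distance between its endpoints. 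Inequality \eqref{e:dist-chain} only bounds its length by the endpoint distance plus $(N-2)\de$ with $N$ unbounded, and this is essentially sharp -- for instance, a chain oscillating back and forth between two points at distance $2\de/5$ is a $\de$-straight $\de$-chain of arbitrarily large length. So the length of the refined sub-chain need not be within $O(\de)$ of $d'(p,s)$; and even if each inserted chain contributed only $O(\de)$, summing over the links between $p$ and $s$ would give $O(N\de)$, and no telescoping rescues this. (Also, \eqref{e:dist-chain} cannot be ``applied to the original chain'': that chain is only nearly length-minimizing, not assumed $\de$-straight with respect to $d$.)

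The verification must therefore avoid the lengths of the inserted chains altogether; it uses the $\de$-straightness of each inserted chain only at the single marked point it contains. Concretely, suppose $p,q,s$ lie in the inserted chains of links $a\le b\le c$ of the original chain $x=x_1,\dots,x_N=y$. Since $d'=d$ inside $r$-balls, the triangle inequality for $d'$ together with $\de$-straightness at the triple $(x_b,q,x_{b+1})$ gives
$d'(p,q)+d'(q,s)\le d(p,x_{a+1})+\sum_{k=a+1}^{c-1}d(x_k,x_{k+1})+d(x_c,s)+\de$.
On the other side, near-optimality of the original chain passes to its contiguous sub-sums (otherwise replace the middle portion by a better admissible chain from $x_a$ to $x_{c+1}$ and contradict near-optimality of the whole), so $\sum_{k=a}^{c}d(x_k,x_{k+1})\le d'(x_a,x_{c+1})+\de\le d(x_a,p)+d'(p,s)+d(s,x_{c+1})+\de$; combining this with $\de$-straightness at the triples $(x_a,p,x_{a+1})$ and $(x_c,s,x_{c+1})$ cancels the terms $d(x_a,p)$ and $d(s,x_{c+1})$ and yields $d(p,x_{a+1})+\sum_{k=a+1}^{c-1}d(x_k,x_{k+1})+d(x_c,s)\le d'(p,s)+3\de$, hence $d'(p,q)+d'(q,s)\le d'(p,s)+4\de$ (degenerate cases $a=b$, $b=c$ are handled the same way), well within the $10\de$ the paper asserts. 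This is the ``easy verification using the triangle inequality'' the paper alludes to; your write-up identifies the right ingredients but routes them through a length estimate that does not hold, so as it stands the key step is not proved.
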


\begin{proof}
The triangle inequality for $d$ implies that $d'$ is a metric,
$d'\ge d$, and $d'(x,y)=d(x,y)$ if $x$ and $y$ belong to an $r$-ball 
in $(X,d)$.
It remains to verify that $(X,d')$ is ${8}\de$-intrinsic.
Let $x,y\in X$ and let $x=x_1,\dots,x_N=y$ be a
sequence realizing the infimum in \eqref{e:minchain}
{with an error less than $\delta$.
Then
$$
 \sum d'(x_i,x_{i+1}) = \sum d(x_i,x_{i+1}) < d'(x,y)+\de ,
$$
hence the sequence $\{x_i\}$ is $\de$-straight with respect to~$d'$.
Recall that every pair $x_i,x_{i+1}$ belongs to an $r$-ball
and this ball is $\de$-intrinsic.
Hence there is a $\de$-straight $\de$-chain $S_i=\{z_j^{(i)}\}_{j=1}^{N_i}$
connecting $x_i$ to $x_{i+1}$ and contained in an $r$-ball.
Joining the sequences $S_i$ together yields a $\de$-chain $\{y_k\}_{k=1}^{N'}$,
$N'=\sum N_i$,
connecting $x$ to~$y$.

It suffices to prove that the sequence $\{y_k\}$ is $8\de$-straight with respect to~$d'$.
Note that the chains $S_i$ are $\de$-straight
with respect to both $d$ and $d'$ since the two metrics coincide in any $r$-ball.
Let $a_i=d'(x,x_i)$ for $i=1,\dots,N$.
Then for $i\le j$ we have
\be\label{e:minchain1}
 a_j-a_i \le d'(x_i,x_j) < a_j-a_i + \de
\ee
by the triangle inequality and the $\de$-straightness of $\{x_i\}$.
For $k\in\{1,\dots,N'\}$ define 
\be\label{e:minchaineq}
b_k = a_i+d'(x_i,y_k)
\ee
where $i=i(k)$ is the index such that $y_k$ belongs to $S_i$.
Note that, for $y_k\in S_i$,
\be\label{e:minchain2}
  d'(y_k,x_{i+1}) 
 < d'(x_i,x_{i+1}) - d'(x_i,y_k) + \de
 < a_{i+1} - b_k + 2\de
\ee
due to the $\de$-straightness of $S_i$ and \eqref{e:minchain1}.
We claim that
\be\label{e:minchain3}
  {\ctext b_m-b_k} - 2\de < d'(y_k,y_m) <   {\ctext b_m-b_k}+3\de
\ee
for all $k,m\in\{1,\dots,N'\}$ such that $k\le m$.
If both $y_k$ and $y_m$ are from one sub-chain $S_i$,
then \eqref{e:minchain3} follows from the $\de$-straightness of $S_i$.
Assume that $y_k\in S_i$ and $y_m\in S_j$ where $i<j$.
Then the triangle inequality
$$
 d'(y_k,y_m) \le d'(y_k,x_{i+1})+d'(x_{i+1},x_j) + d'(x_j,y_m)
$$
and relations $d'(y_k,x_{i+1})<a_{i+1} - b_k + 2\de$ (cf.~\eqref{e:minchain2}),
$d'(x_{i+1},x_j)< a_j-a_{i+1} + \de$ (cf.~\eqref{e:minchain1}),
and $d'(x_j,y_m)=b_m-a_j$ (cf.~\eqref{e:minchaineq}) imply the upper bound in \eqref{e:minchain3}.
Similarly, the lower bound in \eqref{e:minchain3} follows from the
triangle inequality
$$
 d'(y_k,y_m) \ge d'(x_i,x_{j+1}) - d'(x_i,y_k) - d'(y_m,x_{j+1})
$$
and relations $d'(x_i,x_{j+1}) \ge a_{j+1}-a_i$ (cf.~\eqref{e:minchain1}),
$d'(x_i,y_k) = b_k-a_i$ (cf.~\eqref{e:minchaineq}),
and $d'(y_m,x_{j+1}) < a_{j+1}-b_m+2\de$ (cf.~\eqref{e:minchain2}).
This finishes the proof of \eqref{e:minchain3}.

For $k,m,n\in\{1,\dots,N'\}$ such that $k\le m\le n$, \eqref{e:minchain3} implies that
$$
  -7\de < d'(y_k,y_m)+d'(y_m,y_n)-d'(y_k,y_n) < 8\de .
$$
Thus $\{y_k\}$ is a $8\de$-straight sequence and the lemma follows.
}
\end{proof}

The next lemma shows that if a map is almost
isometric at small scale then it is a quasi-isometry
with small constants.
It is used in the proof of Theorem~\ref{t:manifold}.

\begin{lemma}
\label{l:local-gh-implies-qi}
Let $r>{15}\de>0$.
Let $X$ and $Y$ be $\de$-intrinsic metric spaces
and $f\co X\to Y$ a map such that
$f(X)$ is a $\de$-net in $Y$ and
\be
\label{e:ghqi0}
 |d_Y(f(x),f(y)) - d_X(x,y)| < \de
\ee
for all $x,y\in X$ such that
$$
 \min\{d_X(x,y), d_Y(f(x),f(y)) \} < r .
$$
Then $f$ is a $(1+{10}r^{-1}\de,{3\de})$-quasi-isometry. 
\end{lemma}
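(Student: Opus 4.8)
The plan is to verify the two defining properties of a $(1+Cr^{-1}\de, C\de)$-quasi-isometry directly from the hypotheses, using the $\de$-intrinsic structure of $X$ and $Y$ to ``chain together'' the small-scale estimate \eqref{e:ghqi0} across long distances. The net condition is immediate: $f(X)$ is a $\de$-net in $Y$ by assumption, and $\de < C\de$, so the first requirement in Definition \ref{d:QI} holds. It remains to prove, for all $x,y \in X$, the two-sided bound
\be
\label{e:plan-QI-goal}
 (1+Cr^{-1}\de)^{-1} d_X(x,y) - C\de < d_Y(f(x),f(y)) < (1+Cr^{-1}\de)\, d_X(x,y) + C\de .
\ee

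\emph{Upper bound.} Fix $x,y \in X$. Since $X$ is $\de$-intrinsic, there is a $\de$-straight $\de$-chain $x = x_1, x_2, \dots, x_N = y$ in $X$. We may assume it is essentially minimal, so that by \eqref{e:dist-chain} its length satisfies $\sum_{i=1}^{N-1} d_X(x_i,x_{i+1}) \le d_X(x,y) + (N-2)\de$; moreover, by discarding redundant points one can arrange that consecutive steps are not too small, say $d_X(x_i,x_{i+1}) \ge \tfrac{r}{2}$ except possibly for the last step, which forces $N - 2 \le \tfrac{2}{r} d_X(x,y)$ (here I would spell out the standard ``thinning'' argument: if two consecutive steps each have length $< r/3$ they can be merged while staying $\de$-straight since $r > 5\de$). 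Each step has $d_X(x_i,x_{i+1}) < r$ after a further subdivision into pieces of length $< r$ — in fact the cleanest route is to subdivide the chain so that every step has length $< r$ and there are at most $O(r^{-1} d_X(x,y) + 1)$ steps, keeping $\de$-straightness. Then \eqref{e:ghqi0} applies to each step, giving $d_Y(f(x_i),f(x_{i+1})) < d_X(x_i,x_{i+1}) + \de$, and the triangle inequality in $Y$ yields
\be
\label{e:plan-upper}
 d_Y(f(x),f(y)) \le \sum_{i=1}^{N-1} d_Y(f(x_i),f(x_{i+1})) < \sum_{i=1}^{N-1} d_X(x_i,x_{i+1}) + (N-1)\de \le d_X(x,y) + C r^{-1}\de\, d_X(x,y) + C\de ,
\ee
which is the right-hand inequality in \eqref{e:plan-QI-goal}.

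\emph{Lower bound.} This is the symmetric argument carried out in $Y$: since $Y$ is $\de$-intrinsic, connect $f(x)$ to $f(y)$ by a $\de$-straight $\de$-chain, thin and subdivide it into at most $O(r^{-1} d_Y(f(x),f(y)) + 1)$ steps of length $< r$, and approximate each intermediate vertex by a point of the $\de$-net $f(X)$, i.e.\ by $f(x_i')$ for suitable $x_i' \in X$ with $x_1' = x$, $x_N' = y$ (the endpoints need a minor adjustment since $f(x), f(y)$ are already in $f(X)$). Consecutive points $f(x_i'), f(x_{i+1}')$ then lie within distance $< r + 2\de < 2r$ of each other, so after one more subdivision the hypothesis applies: $d_X(x_i', x_{i+1}') < d_Y(f(x_i'), f(x_{i+1}')) + \de$. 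Summing and using the triangle inequality in $X$ gives $d_X(x,y) < d_Y(f(x),f(y)) + C r^{-1}\de\, d_Y(f(x),f(y)) + C\de$, and since $d_Y(f(x),f(y)) \le d_X(x,y) + C\de$ (from the just-proved upper bound, after absorbing constants) one can replace the $d_Y$ on the right by $d_X$ at the cost of enlarging $C$, which rearranges to the left-hand inequality in \eqref{e:plan-QI-goal}. Adjusting all universal constants to a single $C$ completes the proof.

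\emph{Main obstacle.} The only genuinely delicate point is the bookkeeping in the ``thinning and subdividing'' step: one must produce, for each pair of points, an intermediate chain that simultaneously (i) has all steps of length $< r$ so that \eqref{e:ghqi0} is applicable, (ii) has a number of steps linear in $r^{-1} d(x,y)$ so that the accumulated additive errors $(N-1)\de$ are of the claimed size $C r^{-1}\de\, d(x,y) + C\de$, and (iii) remains $\de$-straight (or $C\de$-straight, which is enough) so that its length is controlled by $d(x,y)$ via \eqref{e:dist-chain}. The inequality $r > 5\de$ is exactly what makes it possible to merge two short consecutive steps without destroying straightness, and keeping track of how straightness degrades under merging and subdivision — staying within a universal multiple of $\de$ — is where care is needed; everything else is the triangle inequality.
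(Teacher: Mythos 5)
Your overall strategy is the same as the paper's: coarsen a $\de$-straight $\de$-chain to a subsequence whose steps have length $<r$, bound the number of steps by $Cr^{-1}d_X(x,y)+C$ via \eqref{e:dist-chain}, apply \eqref{e:ghqi0} to each step and sum; then run the symmetric argument from $Y$ back to $X$ using the net property (the paper packages this as an ``almost inverse'' map $g\co Y\to X$ with $g(y)\in f^{-1}(B_\de(y))$, which, unwound, is exactly your lifting of a $Y$-chain to points of $f(X)$). So the route is right; the problems are in the execution of the coarsening step, which you yourself single out as the crux, and there they are real.

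Concretely: (i) you never need to ``subdivide'' a chain, and in a general metric space you cannot --- the original $\de$-chain already has all steps $<\de<r$, and the only operation available (and needed) is deletion of points; since \eqref{e:delta-straight} is quantified over all triples of indices, every subsequence of a $\de$-straight chain is automatically $\de$-straight, so the degradation of straightness you worry about, and the appeal to $r>5\de$ when merging, are non-issues. (ii) Your merging recipe does not deliver what you claim: merging only when two consecutive steps are both $<r/3$ gives at best ``no two consecutive short steps'', not ``all steps $\ge r/2$'', and the resulting step count can fail outright when $\de\in(r/6,r/5)$, because the length lower bound of roughly $(N/2)(r/3)$ grows more slowly in $N$ than the straightness upper bound $d_X(x,y)+(N-2)\de$, so no bound on $N$ follows. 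The clean mechanism is the paper's greedy extraction: choose each next point of the subsequence as far along the chain as possible subject to distance $<r$ from the current one; then all steps except the last lie in $(r-\de,r)$, whence $(N-2)(r-2\de)\le d_X(x,y)$ and $N<2r^{-1}d_X(x,y)+2$. (iii) In the lower bound, after snapping the $Y$-chain to net points $f(x_i')$ you only know $d_Y(f(x_i'),f(x_{i+1}'))<r+2\de$, and neither entry of $\min\{d_X,d_Y\}$ is known to be $<r$, so \eqref{e:ghqi0} is not yet applicable, and ``one more subdivision'' is not an available move; instead coarsen the $Y$-chain greedily with threshold $r-2\de$ (steps then exceed $r-3\de>2r/5$, so the count still has universal constants), after which the snapped $Y$-distances are $<r$ and the hypothesis applies. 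With these repairs your argument becomes the paper's proof.
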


\begin{proof} 
{Let $p,q\in X$ and $D=d_X(p,q)$.}
We have to verify that
\be\label{e:ghqi1}
 (1+{10}r^{-1}\de)^{-1} D-{3\de} < d_Y({f(p),f(q)}) < (1+{10}r^{-1}\de) D+{3\de} .
\ee
{Since $X$ is $\de$-intrinsic,
$p$ and $q$ can be connected by a $\de$-straight $\de$-chain,
see Definition \ref{d:almost-intrinsic}.
This chain contains a subsequence $p=x_1,x_2,\dots,x_N=q$
such that
$ r-\de < d_X(x_i,x_{i+1}) < r $
for all $i=1,\dots,N-2$ and $d_X(x_{N-1},q)<r$.}
Since the subsequence is also $\de$-straight,
by \eqref{e:dist-chain} we have
\be\label{e:ghqi2}
 \sum d_X(x_i,x_{i+1}) < D + (N-2)\de .
\ee
Since $d_X(x_i,x_{i+1})>r-\de$ for each $i\le N-2$,
the left-hand side {of \eqref{e:ghqi2}} is bounded below by
$(N-2)(r-\de)$.
Hence
\be\label{e:ghqi3}
 N\le (r-2\de)^{-1}D+2 
\ee
By \eqref{e:ghqi0} we have
$d_Y(f(x_i),f(x_{i+1})) < d_X(x_i,x_{i+1})+\de$
for all~$i$. 
Therefore
$$
  \sum d_Y(f(x_i),f(x_{i+1})) 
 < \sum d_X(x_i,x_{i+1}) + (N-1)\de
 < D+(2N-3)\de .
$$
by \eqref{e:ghqi2}.
By \eqref{e:ghqi3},
$$
D+(2N-3)\de
 < D + (2(r-2\de)^{-1} D+1)\de
 = (1+2(r-2\de)^{-1}\de)D+\de .
$$
{
Thus
\be\label{e:ghqi4}
 d_Y(f(p),f(q)) < (1+2(r-2\de)^{-1}\de)D+\de
\ee
Since $r-2\de>r/2$, the second inequality in \eqref{e:ghqi1} follows.
}

{To prove the first inequality in \eqref{e:ghqi1}, interchange the roles of $X$ and $Y$ and}
apply the same argument
to an `almost inverse' map $g\co Y\to X$ constructed as follows:
for each $y\in Y$, let $g(y)$ be an arbitrary point from the set $f^{-1}(B_\de(y))$.
{
This map satisfies the assumptions of the lemma with $3\de$ in place of $\de$ 
and $r-2\de$ in place of~$r$.
We may assume that $g(f(p))=p$ and $g(f(q))=q$, then \eqref{e:ghqi4} for $g$ takes the form
$$
 D <  (1+6(r-6\de)^{-1}\de) D' + 3\de
 < (1+10r^{-1}\de) D' + 3\de , \qquad D'=d_Y(f(p),f(q)) .
$$
This implies the first inequality in \eqref{e:ghqi1}
and the lemma follows.
}
\end{proof}

{
\subsection{GH approximations of the disc}
Here we prove a technical Lemma \ref{l:rotation}
about $\de$-isometries to subsets of $\R^n$. {For a matrix $A\in \R^{n\times n}$,
the norm $\|A\|$ is the operator norm of the map $A:\R^n\to \R^n$, unless stated otherwise.}
First we need the following estimate.
}

{
\begin{lemma}\label{l:gram}  
Let $\ep>0$ and $v_1,\dots,v_n\in\R^n$ be such that
$\left| |v_i|^2-1\right| < \ep$ and $|\langle v_i,v_j \rangle| < \ep$
if $i\ne j$, for all $i,j\in\{1,\dots,n\}$.
Define a linear map $L\co\R^n\to\R^n$ by
$ L(v) = (\langle v,v_i\rangle)_{i=1}^n $.
Then there exists an orthogonal operator $U\co\R^n\to\R^n$ such that
$ \|L-U\| < n\ep$.
\end{lemma}

\begin{proof}
We regard $L$ as an $n\times n$ matrix whose $i$th row consists of coordinates of $v_i$. 
The inner products $\langle v_i,v_j\rangle$ are elements of the matrix $LL^t$.
By assumptions of the lemma, all elements of the matrix $LL^t-I$ are bounded by~$\ep$.
Therefore the operator norm $\|LL^t-I\|$ is bounded by $n\ep$.
Decompose $L$ as $L=U_1DU_2$ where $U_1$ and $U_2$ are orthogonal matrices
and $D$ is a diagonal matrix with nonnegative entries.
Then $LL^t=U_1D^2U^{-1}_1$ and
$$
  \|L-U_1U_2\| = \|D-I\| \le \|D^2-I\| = \|U_1D^2U_1^{-1}-I\| = \|LL^t-I\| < n\ep .
$$
Thus the operator $U=U_1U_2$ satisfies the desired inequality.
\end{proof}
}

{
\begin{lemma}\label{l:rotation}
There is {a universal} constant $\Cthirteen>0$ such that the following holds.
Let $X$ be a metric space, $x_0\in X$,
and $f,g\co X\to\R^n$ maps 
with $f(x_0)=g(x_0)=0$.
Let $R\ge r\ge \de > 0$ and assume that
$f$ and $g$ {\ctext are}  $\de$-isometries to sets
$Y_1\subset\R^n$ and $Y_2\subset\R^n$, resp., 
such that $B_r^n\subset Y_i\subset B_R^n$ for $i=1,2$.

Then there exists an orthogonal operator $U\co\R^n\to\R^n$
such that 
\be\label{e:rotation0}
 |f(x)-U(g(x))| < \Cthirteen n Rr^{-1}\de
\ee
for all $x\in X$.
\end{lemma}

\begin{proof}
The statement of the lemma is scale invariant,
i.e., one can multiply the parameters $R,r,\de$, the maps $f,g$,
and the distances in $X$ by the same scale factor.
Thus we may assume that $r=1$.
Since $f$ and $g$ are $\de$-isometries, we have
$$
 {\bigl||f(x)-f(y)| - |g(x)-g(y)|\bigr| < 2\de}
$$
for all $x,y\in X$.
In particular $ \bigl||f(x)| - |g(x)|\bigr| < 2\de$ since $f(x_0)=g(x_0)=0$.
Hence
$$
 \bigl||f(x)|^2 - |g(x)|^2\bigr| \le 2\de (|f(x)| + |g(x)|)
$$
and
\begin{align*}
 {\bigl||f(x)-f(y)|^2 - |g(x)-g(y)|^2\bigr| }
 &\le 2\de (|f(x)-f(y)| + |g(x)-g(y)|) \\
&\le 2\de (|f(x)|+|f(y)| + |g(x)|+|g(y)|)
\end{align*}
for all $x,y\in X$.
These inequalities and the polarization identity
\be\label{e:Rn-polarization}
 \langle u,v\rangle = \tfrac 12 \bigl(|u|^2+|v|^2-|u-v|^2\bigr), \qquad u,v\in\R^n,
\ee
imply that
\be\label{e:rotation1}
 | \langle g(x),g(y)\rangle - \langle f(x),f(y)\rangle | 
 \le 2\de(|f(x)|+|f(y)|+|g(x)|+|g(y)|) 
\ee
for all $x,y\in X$.

Since $B_1^n\subset Y_1$,
there exist $x_1,\dots,x_n\in X$ such that 
$|f(x_i)-e_i|<\de$ for all~$i$, where $(e_i)_{i=1}^n$ is the standard basis of~$\R^n$.
Let $v_i=g(x_i)$, $i=1,\dots,n$.
Then, by \eqref{e:rotation1} applied to $x=x_i$ and $y=x_j$,
for all $i,j\in\{1,\dots,n\}$ we have
$$
 | \langle v_i,v_j\rangle - \langle f(x_i),f(x_j)\rangle | 
 \le 2\de (4+8\de) = (8+16\de)\de ,
$$
since $|f(x_i)|<1+\de$ and $|g(x_i)|<1+3\de$.
Therefore
$$
| \langle v_i,v_j\rangle - \langle e_i,e_j\rangle | 
 \le (10+17\de)\de =: \de_1 ,
$$
since $|\langle f(x_i),f(x_j)-\langle e_i,e_j\rangle|<2\de+\de^2$.
Thus the vectors $v_i$ satisfy the assumptions of Lemma \ref{l:gram} with $\ep=\de_1$.
As in Lemma \ref{l:gram}, define $L(v)=(\langle v ,v_i\rangle)_{i=1}^n$ for all $v\in\R^n$
and let $U$ be an orthogonal operator such that $\|L-U\|<n\de_1$.

Since $f(X)$ and ${g}(Y)$ are contained in $B_R^n$,
the right-hand side of \eqref{e:rotation1} is bounded by $8R\de$.
Hence, by \eqref{e:rotation1}  applied to $y=x_i$,
\be\label{e:rotation2}
 | \langle g(x),v_i)\rangle - \langle f(x),f(x_i)\rangle |  < 8R\de
\ee
for all $x\in X$ and $i\in\{1,\dots,n\}$.
We also have
$$
 | \langle f(x),f(x_i)\rangle - \langle f(x),e_i\rangle | \le |f(x)|\cdot |f(x_i)-e_i| \le R\de .
$$
This and \eqref{e:rotation2} imply that
\be\label{e:rotation3}
 | \langle g(x),v_i)\rangle - \langle f(x),e_i\rangle |  < 9R\de .
\ee
The term $\langle g(x),v_i)\rangle$ is the $i$th coordinate of the vector $L(g(x))$
(recall the definition of $L$ above),
and $\langle f(x),e_i\rangle$ is the $i$th coordinate of $f(x)$.
Hence \eqref{e:rotation3} implies that
$$
 | L(g(x)) - f(x) | < 9\sqrt n R\de .
$$
Since $\|L-U\|<n\de_1$, we also have 
$$
 | L(g(x)) - U(g(x)) | \le n\de_1 |g(x)| \le nR\de_1 .
$$
Therefore
$$
 | f(x) -  U(g(x)) | < (9\sqrt n \de + n\de_1) R \le 36nR\de
$$
since $\de_1\le 27\de$.
Thus \eqref{e:rotation0} holds with $\Cthirteen=36$.
\end{proof}
}

\subsection{Verifying GH closeness to the disc}
\label{subsec:algorithm-gh}


Here we present an algorithm that can be used to verify the main assumption
of Theorem \ref{t:manifold}. Namely, given a discrete metric space $X$, $n\in \N$
and $r>0$, one can approximately (i.e., up to a factor $C=C(n)$) 
find the smallest $\de$ such
that $X$ is $\de$-close to $\R^n$ at scale $r$ (see Definition \ref{d:closetoRn}).
Due to rescaling it suffices to handle the case $r=1$. 

Thus the problem boils down to the following: given a point $x_0\in X$, 
find approximately the (pointed) GH distance
between the metric ball $B^X_1(x_0)\subset X$ 
of radius 1 centered at $x_0$
and the Euclidean unit ball $B_1^n\subset\R^n$.
 {In the case when $X$ is finite, the following algorithm solves this problem.} 

\medskip

\underline{Algorithm GHDist:}
Assume that we are given $n$, the point $x_0\in X$ and the ball ${X_1}=B^X_1(x_0)\subset X$.
We regard ${X_1}$ as a metric space with metric $d=d_X|_{{X_1}\times {X_1}}$.
We implement the following steps: 
\begin{enumerate}
\item Let $x_1\in {X_1}$ be a point that minimizes $ |1 - d(x_0, x)|$ over all $x\in {X_1}$.
\item Given $x_1, x_2,\dots x_m$ for $m \leq n$,  we define the coordinate function
\be \label{20.7.2}
f_m(x)=\tfrac 12 \bigl(d(x,x_0)^2-d(x,x_m)^2+d(x_0,x_m)^2\bigr)
\ee

\item Given $x_1, x_2,\dots x_m$ and coordinate functions $f_1(x),f_2(x),\dots,f_m(x)$ for $m \leq n-1$, 
choose $x_{m+1}$ that is the solution of the minimization problem
 $$
 \min_{x\in {X_1}} K_m(x),\quad K_m(x)=\max(|1-{d(x_0, x)^2}|, |f_1(x)|,\dots,|f_{m}(x)| ).
 $$
 \item {
When $x_1, x_2,\dots ,x_n$ and coordinate functions $f_1(x),f_2(x),\dots,f_n(x)$
are determined, compute the map ${{\bf F}}\co {X_1}\to B_1^n$ defined by
\be\label{GHDist map F} 
{{\bf F}}(x)=P(f_1(x),\dots,f_n(x))
\ee
where $P$ is the map from $\R^n$ to $B_1^n$ defined as follows: $P(v)=v$ if $|v|\le 1$,
otherwise $P(v)=v/|v|$. 

 \item {Let  $\ell_1=\# X_1$ be the number of elements in $X_1$ and compute the values
 \begin{eqnarray}\nonumber
& & \de_1=\sup_{x,x'\in {X_1}} \bigg|d(x',x)-|{{\bf F}}(x')-{{\bf F}}(x)|\bigg|,\quad
\de_2=\sup_{y\in Y(\ell_1)} \inf _{x\in  {X_1}} |{{\bf F}}(x)-y| + \ell_1^{-1/n},\\
 & &\qquad \de_a=\max(\de_1,\de_2). \label{delta alpha estimate0}
 \end{eqnarray}
 where  $Y(\ell_1)=(h\Z^n)\cap B^n_1$ is the set of points in the unit ball whose coordinates are integer 
multiplies of $h=\ell_1^{-1/n}/\sqrt{n}$.}

Finally, the algorithm outputs the value of $\de_a$
and the map ${{\bf F}}$.}

\end{enumerate}
\begin{lemma}
\label{l:gh algorithm}
{
There is {a universal} constant $\Celeven>0$ such that the following holds.
Let ${X_1}$, $x_0$ be as in the above algorithm, $\de>0$,
and suppose that $d_{GH}({X_1},B_1^n)<\de$
where ${X_1}$ and $B_1^n$ are regarded as pointed metric spaces with distinguished points $x_0$ and 0, resp.}
Then 
\begin{enumerate}
 \item The output value $\de_a$ of the algorithm satisfies $\de_a<\Celeven n\de$.
 \item The {output} map {${{\bf F}}\co {X_1}\to B_1^n$
is a $\delta_a$-isometry with ${{\bf F}}(x_0)=0$}.
\end{enumerate}
\end{lemma}

\begin{proof} {First we make some preliminary considerations.
 Let $\de_1$  be as in (\ref{delta alpha estimate0}), and define 
 \begin{eqnarray}
& & \label{delta alpha estimate0 B}
\de_2'=\sup_{y\in B^n_1} \inf _{x\in  {X_1}} |{{\bf F}}(x)-y| ,\\
 & &\de_a'=\max(\de_1,\de_2'). \nonumber \end{eqnarray}
Here, $\de_a'$ is considered as a better approximation of the Gromov-Hausdorff distance $d_{GH}({X_1},B_1^n)$ than
$\de_a$, but it is computationally more difficult to obtain.

Next we show that
\beq\label{e: delta2' vs delta2}
\delta_2' \leq \delta_2 \leq 2 \delta_2'.
\eeq
To show this, denote 
 $\# X_1=\ell_1$.  By (\ref{delta alpha estimate0 B}), the unit ball $B^n_1$ 
 can be covered with  closed $\delta_2'$-balls which center points are in ${\bf F}(X_1)$. 
Considering 
their volumes we obtain an estimate $\ell_1(\delta_2')^n \geq 1$, or, equivalently, 
$\delta_2' \geq \ell_1^{-1/n}$. 
%
As $Y(\ell_1)$ is a $(\ell_1^{-1/n})$-net in the 
unit ball, we see that the supremums in the definitions of $\delta_2$ and 
$\delta_2'$ differ by no more than $\ell_1^{-1/n}$. This yields 
 the first inequality in \eqref{e: delta2' vs delta2}.
The second inequality in \eqref{e: delta2' vs delta2} follows from the fact that both the 
{\ctext supremum} in (\ref{delta alpha estimate0})
and $\ell_1^{-1/n}$ are no greater than $\delta_2'$. Thus \eqref{e: delta2' vs delta2} is valid
and we have
\beq\label{e: deltaa' vs deltaa}
\delta_a' \leq \delta_a \leq  2 \delta_a'.
\eeq
}

Now we are ready to prove the claims of the lemma.
{
By construction of ${{\bf F}}$ we have ${{\bf F}}(x_0)=0$
and the definition of $\de_a'$ implies that ${{\bf F}}$
is a $\de_a'$-isometry from ${X_1}$ to $B^n_1$.
This proves the second claim of the lemma.
It remains to prove the first one.






Consider the points $x_1,\dots,x_n$ constructed by the algorithm
and the corresponding functions $f_1(x),\dots,f_n(x)$,
see \eqref{20.7.2}.
Note that $f_i(x_i)=d(x_i,x_0)^2$.
Fix a $2\de$-isometry $h\co {X_1}\to B_1^n$ with $h(x_0)=0$ and
define functions $h_i\co {X_1}\to\R$, $i=1,\dots,n$, by
\ba
h_{i}(x):=\langle h(x),h(x_i)\rangle=\tfrac 12(| h(x)|^2 + |h(x_i)|^2 - |h(x)-h(x_i)|^2) .
\ea
Since $h$ is a $3\delta$-isometry, $h(x_0)=0$, $d(x,x_0)\le 1$ and $|h(x)|\le 1$ for all $x\in {X_1}$,
we have
$
 \left| d(x,x_0)^2-|h(x)|^2\right| <4\de
$
and
$
 \left| d(x,y)^2-|h(x)-h(y)|^2\right| <8\de
$
for all $x,y\in {X_1}$. Therefore
\be\label{h close to f}
|h_i(x)-f_i(x)|\leq  \tfrac 12 (4\de+4\de+8\de) = 8\de
\ee
for all $x\in {X_1}$, $i=1,\dots,n$.

Now we estimate $K_m(x_{m+1})$ for $m\in\{0,1,\dots,n-1\}$
assuming that $K_0$ is defined by $K_0(x)=|1-d(x,x_0)^2|$.
Since $m<n$, there exists $y_{m+1}\in \pd B_1^n$ orthogonal to all vectors
$h(x_1),\dots,h(x_m)$.
Since $h$ is a $2\de$-isometry, there exists $x_{m+1}'\in {X_1}$ such 
that $|h(x_{m+1}')-y_{m+1}|<2\delta$. 
This implies that ${\ctext d(x_0,x_{m+1}')}>|h(x_{m+1}')|-2\de>1-4\delta$ and therefore
\be \label{20.7.3}
|1 - d(x_0,x_{m+1}')^2| < 8\de.
\ee
Moreover,  for all $ i=1,2,   \dots, m$, we have
$$
 |h_i(x'_{m+1})| = |\langle h(x_i),h(x'_{m+1})\rangle| 
 = |\langle h(x_i),h(x'_{m+1})-y_{m+1}\rangle|  < 2\delta
$$
since $y_{m+1}$ is orthogonal to $h(x_i)$
and $|h(x'_{m+1})-y_{m+1}\rangle|<2\de$.
Hence, by \eqref{h close to f}, $|f_i(x'_{m+1})|<10\de$.
This and \eqref{20.7.3} imply that $K_m(x_{m+1}')<10\delta$.
Hence the minimizer $x_{m+1}$ of ${K_m}$ also satisfies ${K_m}(x_{m+1})<10\delta$.
Equivalently, $|f_i(x_{m+1})| < 10\de$ for $i=1,\dots,m$
and $f_{m+1}(x_{m+1})=d(x_0,x_{m+1})^2 > 1-10\de$.
Since $m+1$ is an arbitrary element of $\{1,\dots,n\}$,
we have shown that, for all $i,j\in\{1,\dots,n\}$,
$|f_i(x_j)| < 10\de$ if $i<j$ and $|f_i(x_i)-1| < 10\de$.

These inequalities and \eqref{h close to f} imply that
$$
 |\langle h(x_i),h(x_j)\rangle| = |h_i(x_j)| < |f_i(x_j)| + 8\de < 18\de \qquad\text{if $i<j$},
$$
and
$$
 \bigl||h(x_i)|^2-1\bigr| = |h_i(x_i)-1| < |f_i(x_i)-1| + 8\de < 18\de .
$$
Thus the vectors $v_i=h(x_i)$ satisfy the assumptions
of Lemma \ref{l:gram} for ${\ep=18\de}$.
Let $L\co\R^n\to\R^n$ be as in Lemma \ref{l:gram}, namely
$L(v)=(L_i(v))_{i=1}^n$ where $L_i(v)=\langle v,h(x_i)\rangle$. 
Then Lemma \ref{l:gram} provides an orthogonal operator $U\co\R^n\to\R^n$
such that $\|L-U\|\le 18n\de$.

For every $x\in {X_1}$ and $i\in\{1,\dots,n\}$ we have $L_i(h(x))=\langle h(x),h(x_i)\rangle=h_i(x)$.
This and \eqref{h close to f} imply that $|f_i(x)-L_i(h(x))| < 8\de$.
Thus for $f(x)=(f_i(x))_{i=1}^n$ we have
$
 |f(x) - L(h(x))| < 8n\de .
$
Since $\|L-U\|\le 18n\de$ and $|h(x)|\le 1$,
it follows that
$
 | f(x) - U(h(x)) | < 26n\de
$
for all $x\in {X_1}$.
Since ${{\bf F}}(x)=P(f(x))$ where $P\co\R^n\to B_1^n$ is a retraction
that does not increase distances, ${{\bf F}}(x)$ satisfies the same inequality:
\be\label{e:gram5}
| {{\bf F}}(x) - U(h(x)) | < 26n\de
\ee
for all $x\in {X_1}$.
Since $h$ is a $2\de$-isometry to $B_1^n$, so is $U\circ h$.
This and \eqref{e:gram5} imply that 
${{\bf F}}$ is a $54n\delta$-isometry from ${X_1}$ to~$B_1^n$.
Thus the first claim of the lemma holds with $\Celeven=54$.}
\end{proof}

The above lemma and \eqref{e: deltaa' vs deltaa} imply that the (pointed) Gromov-Hausdorff distance 
between ${X_1}$ and $B_1^n$ satisfies
\beq\label{delta alpha estimate}
{(2\Celeven n)^{-1}}\de_a\leq d_{GH}({X_1},B_1^n)\leq 2\de_a .
\eeq
Thus the algorithm {\it GHDist} gives the Gromov-Hausdorff distance of ${X_1}$ 
and $B_1^n$ up to a constant factor $2\Celeven$ depending only on dimension $n$.


\subsection{Learning the subspaces that approximate the data locally}
\label{subsec:algorithm-finddisc}

Let $X$ be a finite set of points in $E= \mathbb{R}^N$ and $X \cap B_1(x) := \{x, \tilde{x}_1, \dots, \tilde{x}_s\}$ be a set of points within a Hausdorff distance $\delta$ of some (unknown) unit $n$-dimensional disc $D_1(x)$ centered at $x$.  Here $B_1(x)$ is the set of points in ${\mathbb R}^N$ whose distance from $x$ is less or equal to $1$. We give below a simple algorithm that finds a unit $n$-disc centered at $x$ within a Hausdorff distance $C{n}\delta$ of ${X_1}:= X \cap B_1(x)$, where $C$ is {a universal} constant.

The basic idea is to choose a near orthonormal basis from ${X_1}$ where $x$ is taken to be the origin and let the span of this basis intersected with $B_1(x)$ be the desired disc.

\underline{Algorithm FindDisc:}
\begin{enumerate}
\item Let $x_1$ be a point that minimizes $ |1 - |x- x'||$ over all $x' \in {X_1}$.
\item Given $x_1, \dots x_m$ for $m \leq n-1$, choose $x_{m+1}$ such that $$\max(|1-|x- x'||, |\langle x_1/|x_1|, x'\rangle|, \dots, |\langle x_m/|x_m|, x'\rangle|)$$ is minimized among all $x' \in {X_1}$ for $x'= x_{m+1}$.
    \end{enumerate}
    {The output of the algorithm is the sequence $(x_1,x_2,\dots,x_n)$.}
    Let $\tilde{A}_x$  be the affine $n$-dimensional subspace containing $x, x_1, \dots, x_n$, and the unit $n$-disc $\tilde{D}_1(x)$ be $\tilde{A}_x \cap B_1(x)$. Recall that for two subsets $A, B$ of $\R^N$, $d_H(A, B)$ represents the Hausdorff distance between the sets. The same letter $C$ can be used to denote different constants,
even within one formula.

\begin{lemma}\label{Lem C12}
Suppose there exists an $n$-dimensional affine subspace $A_x$ containing $x$ such that $D_1(x) = A_x \cap B_1(x)$ satisfies $d_H({X_1}, D_1(x)) \leq \delta$. 
{{Suppose $0 < \delta < \frac{1}{20n}$.}} 
Then $d_H({X_1}, \tilde{D}_1(x)) \leq  \Ctwelve {n}\delta$.
\end{lemma}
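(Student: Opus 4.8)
The plan is to analyze \emph{Algorithm FindDisc} step by step, showing that the points $x_1,\dots,x_n$ it selects form an almost-orthonormal system near $x$, so that the affine span $\tilde A_x$ is within a small angle of the true affine subspace $A_x$, whence $\tilde D_1(x)$ and $D_1(x)$ are Hausdorff-close and therefore $\tilde D_1(x)$ and $X_0$ are Hausdorff-close. Throughout I take $x$ to be the origin of $E$, so that $|x'-x|=|x'|$ and the disc $D_1(x)$ is a genuine linear $n$-disc (intersected with the unit ball). The hypothesis $d_H(X_0,D_1(x))\le\de$ gives two things: (i) every $x'\in X_0$ lies within $\de$ of $A_x$; (ii) every point of $D_1(x)$, in particular every point of the unit sphere $\partial D_1(x)\subset A_x$, is within $\de$ of some point of $X_0$.

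First I would run the induction. Since $A_x$ contains a unit vector $e_1\in\partial D_1(x)$, by (ii) there is $x'\in X_0$ with $|x'-e_1|<\de$, so the minimizer $x_1$ satisfies $\big|1-|x_1|\big|<\de$; combined with (i), $x_1$ is within $\de$ of $A_x$ and within $2\de$ (say) of the unit sphere of $A_x$. Inductively, suppose $x_1,\dots,x_m$ have been chosen and we have shown that $u_i:=x_i/|x_i|$ for $i\le m$ form a system whose pairwise inner products are $O(m\de)$ and each $u_i$ is $O(m\de)$-close to $A_x$; then the orthogonal complement inside $A_x$ of $\mathrm{span}(u_1,\dots,u_m)$ is nonempty (as $m<n$), pick a unit vector $v$ there, use (ii) to get $x'\in X_0$ with $|x'-v|<\de$, and observe that for this $x'$ the quantity $\max(\,|1-|x'||,\,|\langle u_1,x'\rangle|,\dots,|\langle u_m,x'\rangle|\,)$ is $O(m\de)$ because $v$ is exactly a unit vector orthogonal to each $u_i$ and each $u_i$ is $O(m\de)$-close to $A_x$ while $x'$ is $\de$-close to $v$. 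Hence the minimizer $x_{m+1}$ also achieves this bound, which forces $\big|1-|x_{m+1}|\big|=O(m\de)$ and $|\langle u_i,x_{m+1}\rangle|=O(m\de)$ for $i\le m$; together with (i) this propagates the inductive hypothesis to level $m+1$, with the implicit constant growing by a bounded amount at each step, so after $n$ steps everything is $O(n\de)$. The condition $\de<\tfrac1{2n}$ is exactly what keeps these vectors linearly independent (the Gram matrix stays invertible) so that $\tilde A_x$ is a genuine $n$-dimensional affine subspace.

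Next I would convert the near-orthonormality into an angle bound: $\tilde A_x=x+\mathrm{span}(x_1,\dots,x_n)$ and each $x_i$ is within $O(n\de)$ of $A_x$, while the $x_i/|x_i|$ are an $O(n\de)$-perturbation of an orthonormal basis; a standard linear-algebra estimate (using $\de<\tfrac1{2n}$ to control the inverse Gram matrix) gives $\angle(\tilde A_x,A_x)\le Cn\de$. Consequently, within the unit ball, $\tilde D_1(x)=\tilde A_x\cap B_1(x)$ and $D_1(x)=A_x\cap B_1(x)$ satisfy $d_H(\tilde D_1(x),D_1(x))\le Cn\de$ (comparing a plane through $0$ with an $O(n\de)$-rotated plane through a point $O(n\de)$ away, restricted to the unit ball). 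Finally, by the triangle inequality for Hausdorff distance,
$$
 d_H(X_0,\tilde D_1(x)) \le d_H(X_0,D_1(x)) + d_H(D_1(x),\tilde D_1(x)) \le \de + Cn\de \le C'n\de ,
$$
which is the claim.

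The main obstacle is the bookkeeping of constants through the induction: each step of \emph{FindDisc} only minimizes a quantity, so one must exhibit a competitor (the point near a unit vector orthogonal to the previously chosen directions \emph{inside the true subspace $A_x$}) whose cost is small, and then argue the chosen point inherits that bound; the error from "$u_i$ approximates but does not lie in $A_x$" accumulates, and one must check it stays $O(n\de)$ rather than blowing up, which is where $\de<\tfrac1{2n}$ and the linear-independence/Gram-matrix control enter. The angle-to-Hausdorff conversion and the final triangle inequality are routine once the almost-orthonormal frame is in hand.
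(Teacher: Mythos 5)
Your proposal is correct and follows essentially the same route as the paper: at each step of \emph{FindDisc} you exhibit a competitor point of $X_0$ near a unit vector of $A_x$ that is exactly orthogonal to the previously chosen directions, conclude the minimizer has small cost, deduce that $x_1,\dots,x_n$ form a nearly orthonormal frame close to $A_x$, compare $D_1(x)$ with $\tilde D_1(x)$, and finish with the triangle inequality for Hausdorff distance. The paper performs the disc comparison by expanding points in the two near-orthonormal systems and controlling singular values of the matrices $\tilde V,\hat V$ rather than via an angle bound, and its per-step competitor bound is a clean $\delta$ rather than your accumulated $O(m\delta)$, but these are only presentational differences.
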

\begin{proof}
Without loss of generality, let $x$ be the origin. Let $d(x, y)$ be used to denote $|x-y|$.
We will first show that for all $m \leq n-1$,
$$\max\left(|1-d(x, x_{m+1})|, \left | \left\langle \frac{x_1}{|x_1|}, x_{m+1}\right\rangle \right |, \dots, \left | \left\langle \frac{x_m}{|x_m|}, x_{m+1}\right\rangle \right | \right) < \delta.$$

To this end, {consider 
the function  $L_{m+1}:D_1(x)\to \R$, given by
\beq 
 {L_{m+1}(y)=}\max\left(|1-d(x, y)|, \left | \left\langle \frac{(x_1)}{|x_1|}, y\right\rangle \right |, \dots, \left | \left\langle \frac{(x_m)}{|x_m|}, y\right\rangle \right | \right),\label{eq:Hari1}
\eeq
and 
let $z_{m+1}\in D_1(x)$ be the point where $L_{m+1}$ obtains its minimum in $D_1(x)$.
The minimal value $L_{m+1}(z_{m+1})$
is $0$,} because the dimension of $D_1(x)$ is $n$ and there are only $m \leq n-1$ linear equality constraints. Also, the radius of $D_1(x)$ is $1$, so $|1 - d(x, z_{m+1})|$ has a value of $0$ where a minimum of (\ref{eq:Hari1}) occurs at $y = z_{m+1}$.
Since the Hausdorff distance between $D_1(x)$ and ${X_1}$ is less than $ \delta$ there exists a point $y_{m+1} \in {X_1}$ whose distance from $z_{m+1}$ is less than $\delta$. For this point $y_{m+1}$, we have $\delta$ greater than
\beq \label{eq:prec}
\max\left(|1-d(x, y_{m+1})|, \left | \left\langle \frac{x_1}{|x_1|}, y_{m+1}\right\rangle \right |, \dots, \left | \left\langle \frac{x_m}{|x_m|}, y_{m+1}\right\rangle \right | \right).\eeq Since
$$\max\left(|1-d(x, x_{m+1})|, \left | \left\langle \frac{x_1}{|x_1|}, x_{m+1}\right\rangle \right |, \dots, \left | \left\langle \frac{x_m}{|x_m|}, x_{m+1}\right\rangle \right | \right)
$$ is no more than the corresponding quantity in (\ref{eq:prec}), we see that for each $m+1 \leq n$,
$$\max\left(|1-d(x, x_{m+1})|, \left | \left\langle \frac{x_1}{|x_1|}, x_{m+1}\right\rangle \right |, \dots, \left | \left\langle \frac{x_m}{|x_m|}, x_{m+1}\right\rangle \right | \right) < \delta.$$
Let $\tilde V$ be an $N \times n$ matrix whose $i^{th}$ column is the column $x_i$. {We recall that the operator norm 
 of a matrix $Z$ is denoted by $\|Z\|$.}  For any distinct $i,j$ we have  $|\langle x_i , x_j \rangle|<\delta$, and for any $i$, $|\langle x_i, x_i\rangle - 1|<2\delta$, because $0 < 1-\de < |x_i| < 1$. For a matrix $X$, let $\|X\|_F$ denote its Frobenius norm. Therefore, {$$\|{\tilde V}^t {\tilde V} - I\| \leq \|{\tilde V}^t {\tilde V} - I\|_F \leq  \sqrt{(n^2 -n + 4n)\delta^2} < 2n\delta.$$}
Therefore, the singular values of $\tilde V$ lie in the interval 
$$
{I_C
=[1 - 4 n\de, 1 + 4 n\de].}$$
For each $i \leq n$, let $x'_i$ be the nearest point on $D_1(x)$ to the point $x_i$. Since the Hausdorff distance of ${X_1}$ to $D_1(x)$ is less than $\delta$, this implies that $|x'_i - x_i| < \delta$ for all $i \leq n$.
Let $\hat{V}$ be an $N \times n$ matrix whose $i^{th}$ column is $ x'_i$. Since for any distinct $i,j$, $|\langle x'_i, x'_j\rangle|<3\delta +\de^2 < 4\delta$, and for any $i$, $|\langle x'_i , x'_i \rangle - 1|<4\delta$,  
This means that the singular values of $\hat{V}$ lie in the interval $I_C$.

We shall now proceed to obtain an upper bound of $Cn\delta$ on the Hausdorff distance between ${X_1}$ and $\tilde{D}_1(x)$. Recall that the unit $n$-disc $\tilde{D}_1(x)$ is $\tilde{A}_x \cap B_1(x)$.  By the triangle inequality, since the Hausdorff distance of ${X_1}$ to $D_1(x)$ is less than $\delta$, it suffices to show that the Hausdorff distance between $D_1(x)$ and $\tilde{D}_1(x)$ is less than $ {6}n\delta.$

Let $x'$ denote a point on $D_1(x)$.  We will show that there exists a point $z' \in \tilde{D}_1(x)$ such that 
$|x' - z'| <  {4} n \delta.$ 

Let $\hat{V}\alpha = x'$.  {Assuming
that $\delta<1/(16{n}\delta)$ and using the bound on the singular values of $\hat{V}$,  
we have $|\alpha| \le 1+4 {n}\delta.$
Let $y' = \tilde{V}\alpha$. Then, by the bound on the singular values of $\tilde{V}$, we have $|y'| \leq( 1+4 {n}\delta)^2\leq  1+10 {n}\delta$.}
{Let $z' = \min(1-\de,|y'|)\,  |y'|^{-1}y' $.} By the preceding two lines,
$  z'$ belongs to $\tilde{D}_1(x).$ We next obtain an upper bound on $|x' -  z'|$
\begin{eqnarray} |x' - z'|  \leq  |x' - y'|\label{eq:1stterm} +|y' - z'| 
.\end{eqnarray}
We examine the first  term in the right side of (\ref{eq:1stterm})
\begin{eqnarray*}  |x' -  y'|  =  |\hat{V}\alpha - \tilde{V}\alpha|
                                                               \leq  \sup_i |x_i - x'_i|(\sum_i |\alpha_i|)
                                                               \leq  \delta {n}(1+10 {n}\delta).
\end{eqnarray*}
We next bound the {\ctext second term in} the right side of (\ref{eq:1stterm}). We have
\begin{eqnarray*} |y' - z'| \leq  {\delta |y'| 
                                                                            \leq 2\delta}. 
                                                                            \end{eqnarray*}
                                                                            Together, these calculations show that
$$|x' - z'| <   {4} {n}\delta.$$ 
A similar argument shows that if $ z''$ belongs to $\tilde{D}_1(x)$ then there is a point $p' \in D_1(x)$ such that 
$|p'-z''| < {6} {n}\delta$; the details follow.
 {Again, assume
that $\delta<1/(16{n}\delta)$ and 
let} $\hat{V}\beta = z''$. From the bound on the singular values of $\hat{V}$, $|\beta|<( 1+4 {n}\delta).$
Let $q' :=  \tilde{V}\beta$. Let $p' :=
{z' = \min(1-\de,|q'|)\,  |q'|^{-1}q'}$.
 {Then,
\begin{eqnarray*} |p' - z''| & \leq & |q' - z''| + |p' - q'|\\
                                   & \leq & |\tilde{V}\beta - V\beta| + |p' - \tilde{V}\beta|\\
                                   & \leq &  \sup_i |x_i - x'_i|(\sum_i |\beta_i|) + 5\delta n\\
                                   & \leq & \delta{n}(1+10{n}\delta) + 5 \delta{n}\\
                                   & \leq &  6 \delta{n}.
\end{eqnarray*}
This} proves that the Hausdorff distance between ${X_1}$ and $\tilde{D}_1(x)$ is bounded above by 
{$ \Ctwelve {n}\delta=6n\delta$.}
\end{proof}

{
\begin{remark}
\label{rem:complexity 1}
Let us consider the computational complexity of the above algorithms {\it GHDist} and
  {\it FindDisc}
 in terms the number of elementary operations one has to perform. 
{Here, we} count the computation of an algebraic function of
the distance $d_X(x_i,x_j),$ of two elements of $x_i,x_j\in X$ {and a computations of a piecewise analytic function  $t\mapsto f(t)$ of a real variable,} as one operation and the computation of
the inner product of two $m$-dimensional vectors in $\R^m$ as $m$ operations.
{We note that computing an inverse of an $n\times n$ matrix requires  a number of elementary operations that depends only on the intrinsic dimension $n$, and thus it requires in our notation convention  $C=C(n)$ elementary operations.}

Let us consider the algorithm {\it GHDist}. 
When the set $X_1$  has $\ell_1 =\# X_1$  elements,
the steps 1-3 of the algorithm {\it GHDist} need $C\ell_1$ steps, where $C$ is a generic constant depending on the
dimension $n$. The step 4 needs $C\ell_1$ steps. In  the step 5 the set
$Y(\ell_1)$ contains at most $C\ell_1$ points, and hence the step 5
needs at most $C\ell_1^2$ steps. Thus the computational complexity 
of {\it GHDist} is $C\ell_1^2$.

We assume that $E=\R^m$  and $X$  satisfies assumptions of Theorem \ref{t:surface}, so that 
 the set $X$ is $\delta$-close to $n$-flats in scale $r$.  
When the set $X$  has $\ell =\# X$  elements,
the algorithm  {\it FindDisc} minimises $n$ times functions that are the maximum of at most $n$
functions on involving inner products of $m$-dimensional vectors (i.e.\ points of $X$). Thus  the computational complexity of   {\it FindDisc} is $Cm\ell$.
\end{remark}
}

\section{Proof of Theorem \ref{t:surface}}
\label{sec:submanifold-construction}

The statement of Theorem \ref{t:surface} is scale invariant:
it does not change if one multiplies $r$ and $\de$
by $\lambda>0$ and applies a $\lambda$-homothety to
all subsets of~$E$.
Hence it suffices to prove the theorem only for $r=1$. {\ctext  We recall that we use the notation
$\hat \de_0=\sigma_2 r$.
Thus to prove   Theorem \ref{t:surface}  with $r=1$, it is enough to prove the following proposition
(where ${\sigma_2}$ is renamed to ${\ctext \hat \de_0}$):}

\begin{proposition}
\label{p:surface}
There exist {positive constants 
$\delta_0<1$, $\Ceight$, $\Cnine$ depending only on $n$, and $\Cten(k)>0$}
such that the following holds.
Let $E$ be a separable Hilbert space,
$X\subset E$ and $0<\de<{\ctext \hat \de_0} $.
Suppose that for every $x\in X$ there is
an $n$-dimensional affine subspace $A_x\subset E$
through $x$ such that
\be
\label{e:close-to-affine-ball}
d_H(X\cap B_1(x), A_x\cap B_1(x)) < \de .
\ee
Then there is a closed $n$-dimensional smooth
submanifold $M\subset E$ such that 

1. $d_H(X,M)\le 5\de$.

2. The second fundamental form of $M$ at every point is bounded by ${\Ceight}\de$.

3. 
{$\Reach(M)\ge 1/3$.}

4. The normal projection $P_M\co {\mathcal U}_{1/3}(M)\to M$
{
is smooth and for all $x\in {\mathcal U}_{1/3}(M)$
\be\label{e:dkPM}
\|d^k_x P_M\| < {\Cten(k)}\de, \qquad k\ge 2,
\ee
and
\be\label{e:d1PM}
\|d_x P_M-P_{\vec T_yM}\| < \Cten(1)\de, \qquad y=P_M(x).
\ee
}

5. Let $x\in X$ and $y=P_M(x)$. Then
\be
\label{e:Pangle}
\angle(A_x,T_yM) < \Cnine \de .
\ee

\end{proposition}

The proof of Proposition \ref{p:surface} occupies the rest of this section.
Let $X$ and $\{A_x\}_{x\in X}$ be as in the proposition.
Let 
\be\label{PAx}
P_{A_x}\co E\to A_x
\ee
 be the orthogonal
projection to $A_x$.
By $\vec A_x$ we denote the linear subspace parallel to $A_x$.
For $x\in X$ and $\rho>0$, 
we define $B_\rho^X(x)=X\cap B_\rho(x)$ and 
$D_\rho(x)=A_x\cap B_\rho(x)$.
In this notation, \eqref{e:close-to-affine-ball} takes the form
\be
\label{e:affine-delta}
 d_H(B_1^X(x),D_1(x)) < \de, \qquad x\in X .
\ee

In the sequel we assume that $\de$ is sufficiently small
so that the inequalities arising throughout the proof are valid, that is,
{we have 
\beq\label{delta condition}
 \delta <{\sigma_0}(n)\hbox{ \ctext and } r=1,
\eeq
where ${\sigma_0}(n)>0$ depends only on $n$. The number
${\sigma_0}(n)$ can be explicitly estimated by numbers $C_k$  
 appearing in 
the proof. 
}



\begin{lemma}
\label{l:small-angle}
Let $p,q\in X$ be such that $|p-q|<1$. Then
$\dist(q,A_p)<\de$ and
\item $\angle(A_p,A_q)<5\de$.
\end{lemma}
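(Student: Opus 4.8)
The plan is to extract enough points of $X$ near $q$ lying in $A_q$-like positions, push them over to $A_p$ using the Hausdorff bound \eqref{e:affine-delta} at $p$, and read off both claims from the fact that an affine $n$-plane is determined, up to small error, by $n+1$ points in sufficiently general position. First I would prove the inclusion statement $\dist(q,A_p)<\de$: since $|p-q|<1$ we have $q\in B_1(p)$, and since $q\in X = B_1^X(q)$ trivially, applying \eqref{e:affine-delta} at the point $p$ gives a point of $D_1(p)=A_p\cap B_1(p)$ within distance $\de$ of $q$; in particular $\dist(q,A_p)<\de$. (This is the easy half and uses only the definition.)

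For the angle bound, the idea is as follows. Because $\de$ is small, $B_{1/2}(q)\subset B_1(p)$, so on the ball $B_{1/2}(q)$ we have $X$ within $\de$ of $D_{1/2}(q)=A_q\cap B_{1/2}(q)$ (by \eqref{e:affine-delta} at $q$, restricting) and $X\cap B_{1/2}(q)$ within $\de$ of $A_p$ (by \eqref{e:affine-delta} at $p$, since $B_{1/2}(q)\subset B_1(p)$). Choose $n$ points $a_1,\dots,a_n\in D_{1/2}(q)$ forming, together with $q$, a well-conditioned affine frame of $A_q$ — concretely, $a_i=q+\tfrac13 e_i$ where $e_1,\dots,e_n$ is an orthonormal basis of $\vec A_q$, so the vectors $a_i-q$ are orthonormal of length $\tfrac13$. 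For each $i$, pick $x_i\in X$ with $|x_i-a_i|<\de$, and note $x_i\in B_{1/2}(q)$, so there is $a_i'\in A_p$ with $|x_i-a_i'|<\de$; hence $|a_i'-a_i|<2\de$. Likewise the earlier step gives a point $p'\in A_p$ with $|p'-q|<\de$. Now $A_q$ is the affine span of $\{q,a_1,\dots,a_n\}$ and the points $\{p',a_1',\dots,a_n'\}\subset A_p$ are $2\de$-close to them; since the frame $\{a_i-q\}$ is $\tfrac13$-orthonormal (hence uniformly non-degenerate, with condition number depending only on $n$), a standard perturbation estimate for the Gram–Schmidt / projection construction shows that the orthogonal projection onto $\vec A_q$ and onto $\vec A_p$ differ in operator norm by at most $C\de$, and therefore $\angle(A_p,A_q)<C\de$. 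One then checks that the constant can be taken to be $5$ by being slightly more careful (or, as is done elsewhere in the paper, absorbing it into the running constant; here the statement wants the explicit $5\de$, which follows since $2\de$-perturbations of a $\tfrac13$-orthonormal frame change the spanned subspace's projection by at most $C_0\de$ with a small explicit $C_0$, and $n\ge 1$, $\de\le\rhode_0$ with $\rhode_0$ chosen small enough).

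The main obstacle is the quantitative perturbation step: turning "$n+1$ points moved by $\le 2\de$" into "the affine planes they span make angle $\le 5\de$", with a clean constant. This is where one must invoke that the chosen frame $a_i-q$ is uniformly well-conditioned — if the $a_i$ were nearly collinear the conclusion would fail — which is why I would hand-pick $a_i=q+\tfrac13 e_i$ rather than take arbitrary net points. Concretely I would: (i) form the matrices $V$ with columns $a_i-q$ and $V'$ with columns $a_i'-p'$, note $\|V-V'\|\le C\de$ and $\|V^tV-\tfrac19 I\|=0$ so $V$ has smallest singular value $\tfrac13$; (ii) deduce $\|V'^tV'-\tfrac19 I\|\le C\de$, so for small $\de$ the column space of $V'$ (which is $\vec A_p$, after noting $\dim$ is right) is $n$-dimensional; (iii) use the standard bound that if $V,V'$ are $N\times n$ of rank $n$ with $\|V-V'\|\le\eta$ and $\sigma_{\min}(V)\ge s$ then $\|\Pi_{\mathrm{col}V}-\Pi_{\mathrm{col}V'}\|\le C s^{-1}\eta$, giving $\le C\de$; (iv) finally recall $\angle(A_p,A_q)$ is by definition $\|\Pi_{\vec A_p}-\Pi_{\vec A_q}\|$ (the paper's convention), so we are done. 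Everything else is bookkeeping with the triangle inequality and the smallness of $\de$.
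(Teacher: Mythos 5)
Your first claim is fine and coincides with the paper's one\,-\,line argument: $q\in B_1^X(p)$, so \eqref{e:affine-delta} at $p$ gives $\dist(q,A_p)<\de$. The proof of the angle bound, however, has a genuine gap at its very first step: you assert that ``because $\de$ is small, $B_{1/2}(q)\subset B_1(p)$'', and everything after that (the points $x_i\in X$ near your frame points $a_i=q+\tfrac13 e_i$ being $\de$-close to $A_p$) depends on it. But the hypothesis is only $|p-q|<1$; the inclusion $B_{1/2}(q)\subset B_1(p)$ holds iff $|p-q|\le\tfrac12$, and smallness of $\de$ is irrelevant. When $\tfrac12<|p-q|<1$ your points $x_i$ can lie at distance up to about $\tfrac13+|p-q|>1$ from $p$, so \eqref{e:affine-delta} at $p$ simply does not apply to them, and you get no control of $\dist(x_i,A_p)$. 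This is exactly the difficulty the paper's proof is built around: it works not near $q$ but near the midpoint, setting $z=P_{A_p}\bigl(\tfrac{p+q}2\bigr)$ and considering the disc $B=A_p\cap B_{1/2-2\de}(z)$, whose points are simultaneously within distance $<1$ of both $p$ and $q$, so that both instances of \eqref{e:affine-delta} can be invoked (giving $\dist(y,A_q)<2\de$ for all $y\in B$); the angle is then extracted via a quadratic-polynomial argument, $\dist(z+v,A_q)^2=Q(v)+L(v)+h_0$ with $Q(v_0)\le 4\de^2$ at $|v_0|=\tfrac12-2\de$, which yields $\sin\alpha\le 4\de(1-4\de)^{-1}<5\de$.

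Two further remarks. First, your strategy (a well-conditioned frame plus a perturbation bound for projections) could be repaired by centering the frame near the midpoint of $p$ and $q$ rather than at $q$, with radius just under $\tfrac12$; then the points you need do lie in $B_1(p)\cap B_1(q)$ and the argument goes through qualitatively. Second, even after that fix, your explicit constant is not free: a $2\de$ perturbation of a frame of radius $\rho$ tilts the plane by roughly $2\de/\rho$ (times the constant in your projection-perturbation bound), so with $\rho=\tfrac13$ you cannot reach $5\de$; the paper gets $5$ precisely because its disc has radius $\tfrac12-2\de$ and the symmetrized quadratic trick $Q(v_0)=\tfrac{h(v_0)+h(-v_0)}2-h(0)$ kills the linear term without losing a factor. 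As written, the proposal does not prove the stated bound.
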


\begin{proof}
Since $q\in B_1^X(p)$, we have
$$
\dist(q,A_p)\le\dist(q,D_1(p))\le d_H(B_1^X(p),D_1(p))<\de
$$
by \eqref{e:affine-delta}.
It remains to prove the second claim of the lemma.

Let $z=P_{A_p}(\frac{p+q}2)$.
Then $|z-p|<\tfrac12$ and $|z-q|<\tfrac12+\de$
by the triangle inequality.
Define $B=A_p\cap B_{1/2-2\de}(z)$.
We claim that $\dist(y,A_q)<2\de$ for every $y\in B$.
Indeed, let $y\in B$. 
Then $|y-q|<1-\de$ and $|y-p|<1-2\de$.
The latter implies that $y\in D_1(p)$, hence by \eqref{e:affine-delta}
there exists $x\in X$ such that $|x-y|<\de$.
By the triangle inequality we have $x\in B_1^X(q)$, 
hence \eqref{e:affine-delta} implies that $\dist(x,A_q)<\de$.
Therefore
$
\dist(y,A_q)\le|y-x|+\dist(x,A_q)<2\de
$
as claimed.

Define a function $h\co \vec A_p\to\R_+$ by
$
 h(v) = \dist(z+v,A_q)^2 .
$
As shown above, $h(v)\le 4\de^2$
for all $v\in\vec A_p$ such that $|v|\le \frac12-2\de$.
The function $h$ is polynomial of degree~2, i.e.,
$
 h(v) = Q(v) + L(v) + h_0
$
where $Q$ is a (nonnegative) quadratic form, $L$ is a linear function
and $h_0=h(0)$.
Furthermore,
$$
 Q(v) = \sin^2\angle (v,\vec A_q) \cdot |v|^2
$$
for all $v\in\vec A_p$.
Let $\alpha=\angle(A_p,A_q)$,
and let $v_0\in\vec A_p$ be such that
$\angle (v_0,\vec A_q)=\alpha$ and $|v_0|=\frac12-2\de$.
Then
$$
 Q(v_0) = \frac{h(v_0)+h(-v_0)}2-h(0) \le 4\de^2
$$
since $h(\pm v_0)\le 4\de^2$ and $h(0)\ge 0$.
Thus
$
 \sin^2(\alpha) \cdot |v_0|^2 \le 4\de^2 ,
$
or, equivalently,
$$
 \sin\alpha \le 2\de (\tfrac12-2\de)^{-1} = 4\de(1-4\de)^{-1} .
$$
If $\de$ is sufficiently small, this implies the desired inequality $\alpha<5\de$.
\end{proof}

 Let $X_0$ be a maximal (with respect to inclusion)
$\frac1{100}$-separated subset of~$X$, {that is, a maximal subset $X_0\subset X$
satisfying 
\be\label{X0 net}
d_X(x,x')\ge \frac1{100}\quad\hbox{for all }x,x'\in X_0,\ x\not =x'.
\ee
Note} that $X_0$ is a $\frac1{100}$-net in $X$
and $X_0$ is at most countable.
Let $X_0=\{q_i\}_{i=1}^{|X_0|}$.
For brevity, we introduce notation $A_i=A_{q_i}$ and $P_i=P_{A_{q_i}}$.

Throughout the argument below we assume that $|X_0|=\infty$,
i.e.\ $X_0$ is a countably infinite set.
In the case when $X_0$ is finite the proof is the same
except that ranges of some indices should be restricted.

Assuming that $\delta<\frac1{300}$,
there is a number $N=N(n)$ such that every set
of the form $X_0\cap B_1(q_i)$ contains at most $N$ points.
This follows from the fact that this set is $\frac1{100}$-separated
and contained in the $\delta$-neighborhood of
a unit $n$-dimensional ball $D_1(q_i)$.

{Let us fix} a smooth function $\mu\co\R_+\to[0,1]$ such that
$\mu(t)=1$ for all $t\in[0,\frac13]$ and $\mu(t)=0$ for all $t\ge \frac12$.
{Below, we will use the function $\mu(t)=\alpha_{1/3,1/2}(t)$,  where
\be\label{mu function}
\alpha_{a,b}(t)=(\exp((t-a)^{-1})/ (\exp((t-a)^{-1})+
 \exp((b-t)^{-1}))\quad\hbox{for }t\in [a,b].
 \ee}
For each $i\ge 1$ define a function $\mu_i\co E\to[0,1]$ by
\be\label{mu function 2}
 \mu_i(x) = \mu (|x-q_i|) .
\ee
Clearly $\mu_i$ is smooth and
\beq\label{e C 16}
\max_{j\leq k}\|d_x^j\mu_i\|_{L^\infty(\R)}\leq \Csixteen(k)
\eeq
for every $k\ge 1$. {Here,
$\Csixteen(k)$ can be chosen uniformly over $n$  as by Lemma~\ref{thm:nest} in Appendix A,  the supremum of the  the $k$-th order derivative  $d_x^k\mu_i$, considered as a multilinear form, is attained at a derivative corresponding to vectors that are all equal, which in turn implies that for all $n\geq 2$, $\|d_x^k\mu_i\|_{L^\infty(\mathbb{R}^n)}$ is independent of $n$.}
Let $\varphi_i\co E\to E$ be a map given by
\be\label{e:def of phi_i}
 \varphi_i(x) = \mu_i(x) P_i(x) + (1-\mu_i(x)) x .
\ee
Now define a map $f_i\co E\to E$ by
\be\label{e:def of fi}
 f_i = \varphi_i\circ\varphi_{i-1}\circ\ldots\circ\varphi_1
\ee
for all $i\ge 1$, and let $f_0=id_E$.


For $x\in E$ and $i\ge 1$ we have
$f_i(x)=f_{i-1}(x)$ if $|f_{i-1}(x)-q_i|\ge\frac12$.
This follows from the relation $f_i=\varphi_i\circ f_{i-1}$
and the fact that $\varphi_i$ is the identity outside
the ball $B_{1/2}(q_i)$.

Let $U={\mathcal U}_{1/4}(X_0)\subset E$.
We are going to show that for every $x\in U$
the sequence $\{f_i(x)\}$ stabilizes and hence a map
$f=\lim_{i\to\infty} f_i$ is well-defined on~$U$.

Define $B_m=B_{1/4}(q_m)$ for $m=1,2,\dots$.
Note that $U=\bigcup_m B_m$.

\begin{figure}

\psfrag{1}{$p^-$}
\psfrag{2}{$p^+$}
\psfrag{4}{$J$}
\psfrag{3}{\hspace{-0.1cm}U}
\psfrag{5}{$V$}
\psfrag{6}{}

\figcommented{
}

\epscommented{
\includegraphics[width=10.5cm]{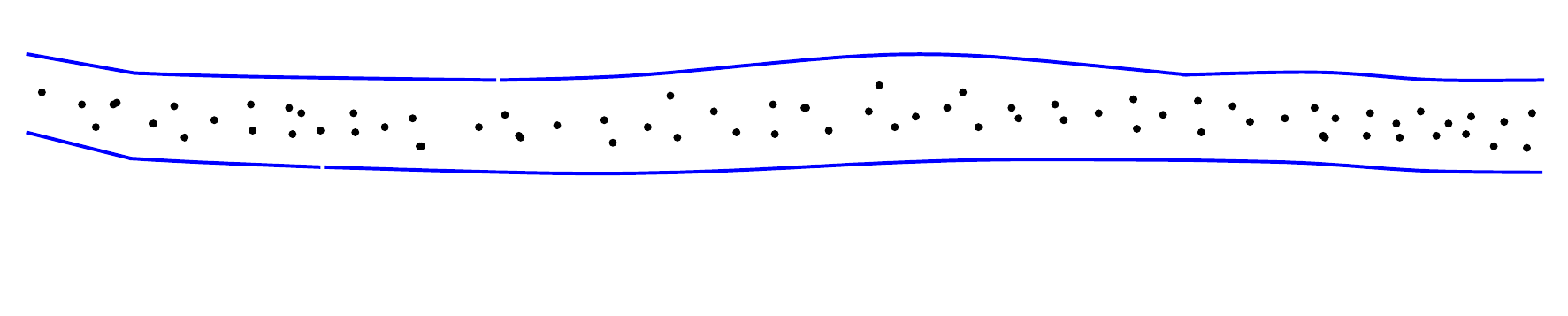}

\includegraphics[width=10.5cm]{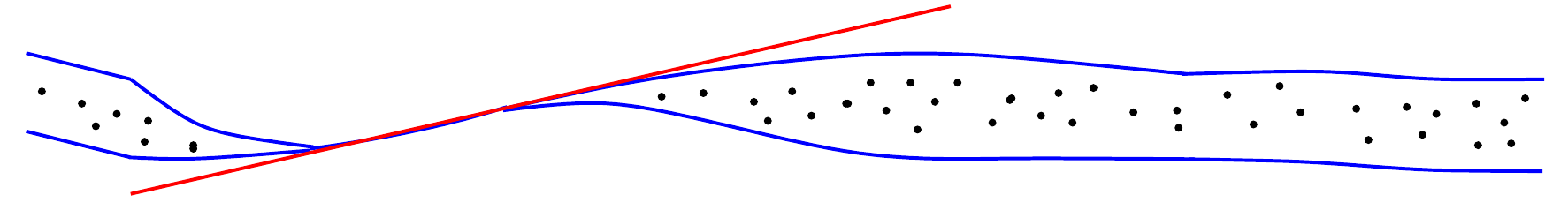}

\includegraphics[width=10.5cm]{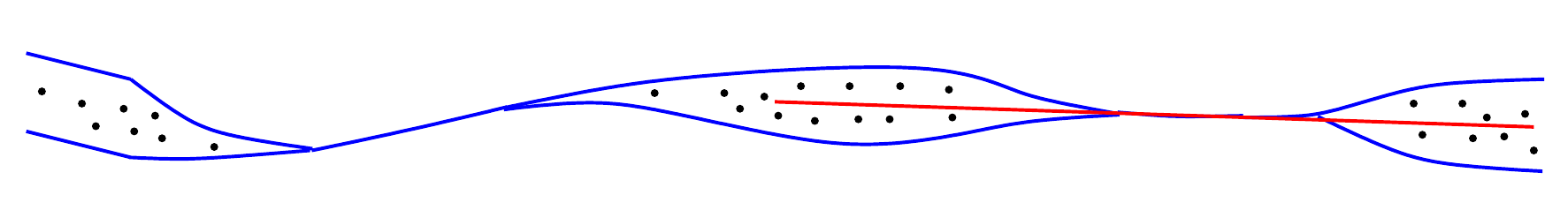}

\includegraphics[width=10.5cm]{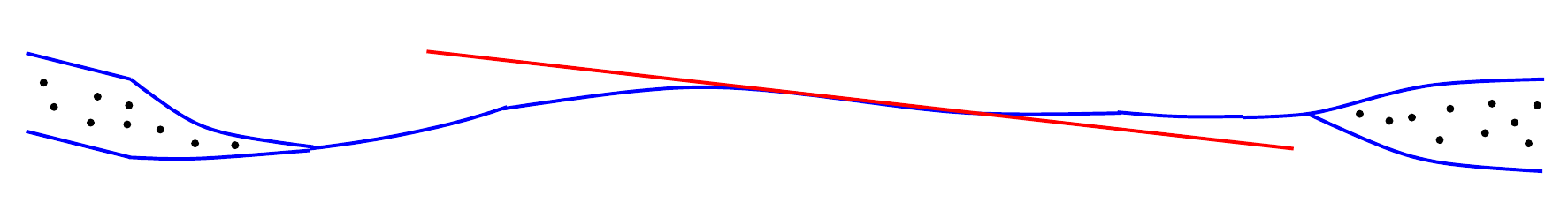}
}

\psfrag{1}{$y_0$}
\psfrag{2}{$\Sigma$}
\psfrag{3}{$\Sigma_1$}
\psfrag{4}{$y^\prime$}
\psfrag{5}{$\mu_{a}$}
\psfrag{4}{$W_{\nohat g}$}
\psfrag{5}{$\mu_{a}$}

\caption{A schematic visualisation of the interpolation algorithm Algorithm `SubmanifoldInterpolation' based on {\ctext Theorem \ref{t:surface}}.
In the figure, {\ctext on the top}, the black data points $X\subset E=\R^m$ have a $\delta$-neighbourhood
$U={\mathcal U}_\delta(X)$. The boundary of $U$ is marked by blue. In the figures below,
we determine, near points $x_i\in X$, $i=1,2,3$ the approximating $n$-dimensional planes $A_i$,
marked by red lines. Then we
map the set $U$ by applying to it iteratively
 functions $\varphi_i:E\to E$, defined in 
(\ref{e:def of phi_i}). The maps $\varphi_i$  are convex combinations of the projector $P_{A_i}$, onto $A_i$,
 and  the identity map. {\ctext The three figures on  the bottom} show the sets $\varphi_1(U)$,
 $\varphi_2(\varphi_1(U))$ and  $\varphi_3(\varphi_2(\varphi_1(U)))$, respectively. The limit
of these sets converge to the $n$-dimensional {submanifold} $M\subset E$.
}
\label{fig: surface}
\end{figure}

\begin{lemma}
\label{l:capture}
If $x\in B_m$ then $|f_i(x)-q_m| < \tfrac13$ for all {$i\ge 1$.}
\end{lemma}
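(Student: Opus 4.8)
The plan is to show by induction on $i$ that $|f_i(x) - q_m| < \tfrac13$ for every $x \in B_m$, with a sufficiently strong inductive hypothesis that survives one application of a map $\phi_j$. The natural thing to track is not just the distance from $f_i(x)$ to $q_m$ but also the distance from $f_i(x)$ to the affine plane $A_m$: the maps $\phi_j$ move points toward $A_j$, and by Lemma \ref{l:small-angle} the planes $A_j$ and $A_m$ are almost parallel and almost coincide (to within $O(\delta)$) whenever $|q_j - q_m| < 1$. So I would carry along the two quantities $a_i := |f_i(x) - q_m|$ and $b_i := \operatorname{dist}(f_i(x), A_m)$, aiming to prove something like $a_i < \tfrac13$ and $b_i < C\delta$ simultaneously for all $i$.

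The key step is the one-step estimate. Fix $x \in B_m$, so $a_0 = |x - q_m| < \tfrac14$ and $b_0 = \operatorname{dist}(x, A_m) < \delta$ (since $x \in B_{1/4}(q_m)$ and the argument of Lemma \ref{l:small-angle} gives $\operatorname{dist}(q_m, A_m) = 0$, while nearby points of $X$ are $\delta$-close to $A_m$ — one needs to be slightly careful here, but $x$ lies in the $1/4$-ball of a point of $X_0 \subset X$, and we can use that $\operatorname{dist}$ to $A_m$ changes by at most the displacement). Now suppose $a_{i-1} < \tfrac13$ and $b_{i-1} < C\delta$ and consider $f_i(x) = \phi_i(f_{i-1}(x))$. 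If $|f_{i-1}(x) - q_i| \ge \tfrac12$ then $\phi_i$ is the identity near $f_{i-1}(x)$ and nothing changes. Otherwise $|f_{i-1}(x) - q_i| < \tfrac12$, hence $|q_i - q_m| < \tfrac12 + \tfrac13 < 1$, so Lemma \ref{l:small-angle} applies: $\angle(A_i, A_m) < 5\delta$ and $\operatorname{dist}(q_i, A_m) < \delta$, which together with $b_{i-1} < C\delta$ gives $\operatorname{dist}(f_{i-1}(x), A_i) < C'\delta$. The map $\phi_i$ replaces its argument by a convex combination of itself and its projection $P_i$ onto $A_i$; since the point is already within $C'\delta$ of $A_i$, the displacement $|\phi_i(f_{i-1}(x)) - f_{i-1}(x)|$ is at most $C'\delta$. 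Therefore $a_i \le a_{i-1} + C'\delta$ and $b_i \le b_{i-1} + (\text{error from the small angle}) \le b_{i-1} + C''\delta a_{i-1}$ or similar — and here is the subtlety: if I only get $a_i \le a_{i-1} + C'\delta$ I cannot iterate, since the displacements could accumulate over infinitely many steps.

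The resolution — and the main obstacle — is to observe that $\phi_i$ can only produce a nonzero displacement when $f_{i-1}(x)$ lies in $B_{1/2}(q_i)$, and the points $q_i$ are $\tfrac1{100}$-separated, so among all indices $i$ the ball $B_{1/2}(q_i)$ can be "visited" by the orbit of $x$ only for $q_i$ lying in a bounded region around $q_m$; more precisely, once $a_i$ stays below $\tfrac13$, only those $q_i$ with $|q_i - q_m| < \tfrac13 + \tfrac12 < 1$ can ever act nontrivially, and by the packing bound established just before the lemma there are at most $N = N(n)$ such indices. Hence the total accumulated displacement is at most $N \cdot C'\delta$, and choosing $\delta_0$ small enough (depending only on $n$, through $N$) so that $N C'\delta_0 < \tfrac13 - \tfrac14 = \tfrac1{12}$ closes the induction: $a_i < \tfrac14 + N C'\delta < \tfrac13$ for all $i$. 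The bookkeeping that needs care is making the "only $N$ indices act" argument rigorous — one should first establish, as a sub-claim, that $|f_i(x) - q_m| < \tfrac13$ holds on an initial segment of $i$'s and that for those $i$ the displacement is nonzero only for the $\le N$ relevant indices, then let the segment be all of $\mathbb{N}$ by the usual "first failure" contradiction. Everything else is the triangle inequality plus Lemma \ref{l:small-angle}.
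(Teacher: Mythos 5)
Your overall skeleton (argue by a first-failure index, note that only indices with $|q_i-q_m|<1$ can act nontrivially, of which there are at most $N=N(n)$ by the packing bound, so the total error is $O(N\delta)$) is exactly the paper's. The gap is in your one-step estimate. You bound the change of $a_i=|f_i(x)-q_m|$ by the displacement $|\phi_i(z)-z|=\mu_i(z)\,\dist(z,A_i)$, and to make this $O(\delta)$ you need the invariant $b_i=\dist(f_i(x),A_m)<C\delta$. That invariant is false at the base case: $B_m=B_{1/4}(q_m)$ is a ball of the ambient Hilbert space $E$ (the lemma must hold for arbitrary points of $U_{1/4}(X_0)$ -- it is later applied, e.g., in Lemma \ref{l:fdisplacement} and in the definition $M=f(U_{1/5}(X_0))$), so $x$ need not lie in or anywhere near $X$, and $\dist(x,A_m)$ can be as large as $\tfrac14$. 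Consequently, at the first nontrivial step the displacement can be of order $\tfrac14$ rather than $C'\delta$, and the bound $a_1\le a_0+C'\delta$ already fails. Moreover the invariant would not propagate even if it held once: $\phi_i$ moves $z$ only the fraction $\mu_i(z)\in[0,1]$ of the way to $A_i$, and $\mu_i(z)$ can be arbitrarily small when $\tfrac13<|z-q_i|<\tfrac12$, so after a nontrivial step the point need not be within $C\delta$ of $A_m$ and no contraction of $b_i$ is guaranteed; your "$b_i\le b_{i-1}+C''\delta a_{i-1}$ or similar" does not repair this.

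The missing idea -- and the way the paper proves the lemma -- is to estimate the change of the distance to $q_m$ directly, not through the displacement. Since $q_m\in A_m$, the orthogonal projection $P_m$ satisfies $|P_m(z)-q_m|\le|z-q_m|$, and Lemma \ref{l:small-angle} (applied to $p=q_i$, $q=q_m$, using $|q_i-q_m|<1$ and $|z-q_m|<\tfrac13$) gives $|P_i(z)-P_m(z)|<6\delta$; hence $|P_i(z)-q_m|\le|z-q_m|+6\delta$, and since $\phi_i(z)$ is a convex combination of $z$ and $P_i(z)$, one gets $|f_i(x)-q_m|\le|f_{i-1}(x)-q_m|+6\delta$ at every nontrivial step, no matter how far $f_{i-1}(x)$ is from the affine planes. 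With this per-step bound, your counting of at most $N$ nontrivial indices and the first-failure contradiction go through verbatim, so the repair is local but essential.
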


\begin{proof}
Suppose the contrary
and let
$$
i_0=\min\{i:|f_i(x)-q_m|\ge\tfrac13 \} .
$$
{Let $i\le i_0$ be such that $|q_i-q_m|<1$. Such $i$ does exist since
otherwise $|q_i-q_m|>1$ for all $i\leq i_0$ implying that
$f_i(x)=x \in B_m$. In particular, for $i=i_0$, 
$|f_{i_0}(x) -q_m|  <1/3$ which is a contradiction.
Next let $z=f_{i-1}(x)$.
} 
Since $i-1<i_0$, we have $|z-q_m|<\frac13$.
Lemma \ref{l:small-angle} applied to $p=q_i$ and $q=q_m$
implies that $|P_i(z)-P_m(z)| < 6\de$.
Since $P_m$ is the orthogonal projection to a subspace
containing $q_m$, we have
$|P_m(z)-q_m| \le |z-q_m|$,
therefore
$$
 |P_i(z)-q_m| \le |P_m(z)-q_m|+|P_i(z)-P_m(z)| \le |z-q_m| + 6\de
$$
and hence the point
$$
f_i(x)=\varphi_i(z)=\mu_i(z)P_i(z)+(1-\mu_i(z))z
$$
satisfies
$$
 |f_i(x)-q_m| \le \mu_i(z)|P_i(z)-q_m| + (1-\mu_i(z))|z-q_m|
 \le |z-q_m| + 6\de .
$$
Thus
\be
\label{e:capture1}
 |f_i(x)-q_m| \le |f_{i-1}(x)-q_m| + 6\de
\ee
for all $i\le i_0$ such that $|q_i-q_m|<1$.
For indices $i\le i_0$ such that $|q_i-q_m|\ge 1$,
we have
$$
|f_{i-1}(x)-q_i|\ge 1 - |f_{i-1}(x)-q_m| > 1 - \tfrac13 > \tfrac12
$$
and hence
$f_i(x)=f_{i-1}(x)$.
Since there are at most $N=N(n)$ indices $i\le i_0$ such that
$|q_i-q_m|<1$, by \eqref{e:capture1} it follows that
$$
 |f_{i_0}(x)-q_m| \le |x-q_m| + 6N\de < |x-q_m| + \tfrac1{20} < \tfrac13,
$$
provided that $\de < 1/(120N)$.
This contradicts the choice of $i_0$.
\end{proof}

Lemma \ref{l:capture} implies that
there exists only finitely many indices $i$
such that $f_i|_{B_m}\ne f_{i-1}|_{B_m}$.
Indeed, if $f_i(x)\ne f_{i-1}(x)$ for some $x\in B_m$,
then $|q_i-q_m|<1$ because $|f_{i-1}(x)-q_m|<\frac13$
by Lemma \ref{l:capture} and $|f_{i-1}(x)-q_i|<\frac12$
(since $\varphi_i$ is the identity outside $B_{1/2}(q_i)$).
Thus the sequence $\{f_i|_{B_m}\}_{i=1}^\infty$ stabilized
and hence the map 
\be\label{e:def of f}
f(x)=\lim_{i\to \infty} f_i(x)
\ee 
is well-defined and smooth on~$B_m$.
Since $m$ is arbitrary, $f$ is well-defined and smooth
on $U=\bigcup_m B_m$. 

\begin{remark}
{We note that in the case when $X$ and thus $X_0\subset X$ are finite sets and
when $N$ is the number of the elements in $X_0$, we define instead of (\ref{e:def of f})
\be\label{e:def of f finite X0}
f(x)=f_N(x).
\ee}
\end{remark}

\subsubsection{Estimates for interpolation maps $f_i$ and $f$.}\label{subsubsection f and f_i}
Next, we consider functions $f$ and $f_i$ defined in (\ref{e:def of phi_i}), (\ref{e:def of fi}), and (\ref{e:def of f}).

\begin{lemma}
\label{l:almost-projection} {Let $k\ge 0$. There is $\Cseventeen(k)= {(C(k))^{N(n)}}
>0$
 such that}
\be
\label{e:fm-almost-projection}
 \|f_i-P_m\|_{C^k(B_m)} \le \Cseventeen(k)\de \qquad\text{for all $i\ge m$},
\ee
and  therefore
\be
\label{e:f-almost-projection}
 \|f-P_m\|_{C^k(B_m)} \le \Cseventeen(k)\de .
\ee
{Below we denote $\Cseventeen=\Cseventeen(0)$,
$\Cseventeen'=\Cseventeen(1)$, and $\Cseventeen''=\Cseventeen(2)$.}
\end{lemma}

\begin{proof}
Let $I_m=\{i:|q_i-q_m|<1\}$ and
let $j_1<\dots<j_{N_m}$ be all elements of $I_m$.
Recall that $N_m=|I_m|\le N=N(n)$.
As shown above, Lemma \ref{l:capture} implies that
$\varphi_i$ is the identity on $f_{i-1}(B_m)$
for $i\notin I_m$.
Therefore for every $i$ we have
\be
\label{e:fi-short}
 f_i|_{B_m} = \varphi_{j_{l(i)}}\circ\varphi_{j_{l(i)-1}}\circ\ldots\circ\varphi_{j_1}|_{B_m}
\ee
where $l(i)=\max \{k: j_k\le i \}$.

We compare $\varphi_i$ and $f_i$ with maps
$\hat\varphi_i$ and $\hat f_i$ defined by
\be\label{hat phi}
 \hat\varphi_i(x) = \mu_i(x) P_m(x) + (1-\mu_i(x)) x ,
\ee
and
\be\label{hat f}
 \hat f_i = \hat\varphi_{j_{l(i)}}\circ\hat\varphi_{j_{l(i)-1}}\circ\ldots\circ\hat\varphi_{j_1}
\ee
By induction one easily sees that
\be
\label{e:lambdami}
 \hat f_i(x) = \la_i(x) P_m(x) + (1-\la_i(x))x
\ee
for some $\la_i(x)\in[0,1]$, $\la_1(x)\le\la_2(x)\le\dots$.
Therefore $\hat f_i(B_m)\subset B_m$ for all~$i$.
Similarly to the case of $f_i$ this implies that
\be
\label{e:hatfi-short}
 \hat f_i|_{B_m} = \hat\varphi_{j_{l(i)}}\circ\hat\varphi_{j_{l(i)-1}}\circ\ldots\circ\hat\varphi_{j_1}|_{B_m}
\ee

{Let 
$$
\Phi_{i}^{i'}=\varphi_{j_{l(i)}}\circ\varphi_{j_{l(i)-1}}\circ\ldots \circ \varphi_{j_{i'+1}},\quad 
\hat \Phi_{i}^{i'}=
\hat\varphi_{j_{i'}}\circ \ldots \circ \hat\varphi_{j_{1}}
$$
and 
$$ f_i^{i'} :=
\Phi_{i}^{i'}\circ \hat \Phi_{i}^{i'}=
 \varphi_{j_{l(i)}}\circ\varphi_{j_{l(i)-1}}\circ\ldots \circ\varphi_{j_{i'+1}}\circ\hat\varphi_{j_{i'}}\circ \ldots \circ \hat\varphi_{j_{1}}|_{B_m}$$
By Lemma \ref{l:small-angle} and \eqref{angle 2}, for every $i\in I_m$ we have
$$
 \|P_i(x)-P_m(x)\|\le  {11}\de, \qquad \|d_xP_i-d_xP_m\|\le {10}\de
$$
for all $x\in B_1(q_m)$ and therefore, {as $P_i$ and $P_m$ are affine maps,}
\beq\label{e difference}
& &\|\varphi_i  \|_{C^k(B_1(q_m))}\leq \Csixteen(k)k,\\
\nonumber
& &\|\hat\varphi_i  \|_{C^k(B_1(q_m))}\leq \Csixteen(k)k,\\
\nonumber
& &
 \|\hat\varphi_i - \varphi_i \|_{C^k(B_1(q_m))} = \|\mu_i\cdot(P_m-P_i)\|_{C^k(B_1(q_m))} \le {11 \Csixteen(k) k^2}\de ,
\eeq
{\ctext where the factor $k^2$ appears due to the Leibniz rule for derivatives of the product and the fact that the 2nd and the higher order derivatives of affine maps vanish.}
This estimate, \eqref{e:fi-short}, \eqref{e:hatfi-short} and the fact that $l(i)\le|I_m|\le N(n)$
imply that
\begin{eqnarray}
\label{e:fi-hatfi}
\nonumber  \|f_i-\hat f_i\|_{C^k(B_m)}  \le  \sum_{i' = 1}^{l(i)} A_i^{i'},\quad  A_i^{i'}=\|f_i^{i'} - f_i^{i'-1} \|_{C^k(B_m)}\end{eqnarray}
As
$$
A_i^{i'}=\|\Phi_{i}^{i'+1}\circ   \varphi_{j_{i'}}\circ \hat \Phi_{i}^{i'}-\Phi_{i}^{i'+1}\circ   \hat \varphi_{j_{i'}}\circ \hat \Phi_{i}^{i'}\|_{C^k(B_m)}
$$
we see by using 
Lemma \ref{l:iterated chainrule}(2) in Appendix A with $f=f_{i'}=\Phi_{i}^{i'+1}$, $h=h_{i'}= \varphi_{j_{i'}}\circ \hat \Phi_{i}^{i'}$  and $g=g_{i'}= \hat\varphi_{j_{i'}}\circ \hat \Phi_{i}^{i'}$,
we see for $k\geq 1$ that 
\begin{eqnarray}\label{e: technical 0}\\ \nonumber 
A_i^{i'}  \leq
(k+1)2^{k(k-1)}\|  f_{i'}  \|_{C^{k+1}(B_m)}(1+\| g_{i'} \|_{C^{k}(B_m)}+\|  h_{i'}   \|_{C^{k}(B_m)})^{k} \| g_{i'} - h _{i'} \|_{C^{k}(B_m)}.\hspace{-1cm}
\end{eqnarray}
Here, by Lemma \ref{l:iterated chainrule}(1) in Appendix A 
and (\ref{e difference}) we have
\begin{eqnarray}\label{e: technical 1}
 & &\hspace{-1cm}\| g_{i'} \|_{C^{k}(B_m)}+\|  h_{i'}   \|_{C^{k}(B_m)} \leq 2^{(k+1)^2N(n)}\Csixteen(k) \,(1+ k\Csixteen(k))^{kN(n)}\hspace{-2cm}\\ \nonumber
& & \hspace{-1cm}\| f_{i'} \|_{C^{k+1}(B_m)} \leq 2^{(k+2)^2N(n)}\Csixteen(k+1) \,(1+ (k+1) \Csixteen(k+1))^{(k+1)N(n)}\hspace{-2cm}
\end{eqnarray}
and 
\begin{eqnarray} \nonumber
   \| g_{i'} - h _{i'} \|_{C^{k}(B_m)}&=& \| ( \hat\varphi_{j_{i'}}-\varphi_{j_{i'}})\circ \hat \Phi_{i}^{i'} \|_{C^{k}(B_m)}
\\   \label{e: technical 2}
& &  \leq  
 2^{k(k+1)N(n)}\,\cdotp
 {11 \Csixteen(k) k^2}\de\,\cdotp
  \,(1+ k\Csixteen(k))^{kN(n)}.\end{eqnarray}
By substituting formulas (\ref{e: technical 1}) and \eqref{e: technical 2} in to formula \eqref{e: technical 0}, and using that
${l(i)} \leq N(n)$,
we see that
\begin{eqnarray*}
 & &\hspace{-1cm}\|f_i-\hat f_i\|_{C^k(B_m)}  \le \sum_{i' = 1}^{l(i)} A_i^{i'}
 \leq \Cseventeen(k)\de
\end{eqnarray*}
for all $i$ and $k\ge 0$, where $\Cseventeen(k)$ can be written as an explicit formula involving $k$, $\Csixteen(k)$, $ \Csixteen(k+1)$, and $N(n)$.}
Observe that $\hat\varphi_m|_{B_m}=P_m|_{B_m}$
since $\mu_m=1$ on $B_m$.
{This fact together with $\hat f_m= \hat \phi_m \circ \hat f_{m-1}$ and
 \eqref{e:lambdami}
imply that $\hat f_m|_{B_m}=P_m|_{B_m}$.
%
Thus}
$\hat f_i|_{B_m}=P_m|_{B_m}$
for all $i\ge m$.
Therefore for $i\ge m$ the estimate \eqref{e:fi-hatfi}
turns into \eqref{e:fm-almost-projection}
and the claim of the lemma follows.
\end{proof}

\begin{lemma}
\label{l:ndim-image}
$f_m(B_m)\subset D_{1/3}(q_m)$.
\end{lemma}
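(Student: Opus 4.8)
The plan is to deduce this immediately from Lemma~\ref{l:capture}. Recall that $B_m=B_{1/4}(q_m)$, that $D_{1/3}(q_m)=A_m\cap B_{1/3}(q_m)$ with $A_m=A_{q_m}$, and that by definition $f_m=\phi_m\circ f_{m-1}$, where $\phi_m(z)=\mu_m(z)P_m(z)+(1-\mu_m(z))z$ and $\mu_m(z)=\mu(|z-q_m|)$ with $\mu\equiv 1$ on $[0,\tfrac13]$.

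First I would apply Lemma~\ref{l:capture} with $i=m-1$: for every $x\in B_m$ it yields $|f_{m-1}(x)-q_m|<\tfrac13$. Consequently $\mu_m(f_{m-1}(x))=\mu(|f_{m-1}(x)-q_m|)=1$, so that
\[
 f_m(x)=\phi_m\bigl(f_{m-1}(x)\bigr)=P_m\bigl(f_{m-1}(x)\bigr).
\]
Since $P_m=P_{A_{q_m}}$ is the orthogonal projection onto the affine subspace $A_m$, this shows $f_m(x)\in A_m$.

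Next I would apply Lemma~\ref{l:capture} again, now with $i=m$, to get $|f_m(x)-q_m|<\tfrac13$, i.e.\ $f_m(x)\in B_{1/3}(q_m)$. Combining the two inclusions gives $f_m(x)\in A_m\cap B_{1/3}(q_m)=D_{1/3}(q_m)$, which is the assertion.

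There is no real obstacle here: the statement is a two-line consequence of Lemma~\ref{l:capture} and the choice of the cutoff $\mu$. The only point requiring a little care is the bookkeeping of indices — one needs $i=m-1$ in Lemma~\ref{l:capture} to guarantee that the cutoff $\mu_m$ has already saturated to $1$ by the time $\phi_m$ acts (so that $\phi_m$ reduces to the genuine orthogonal projection onto $A_m$), and then $i=m$ to place the image inside the ball of radius $\tfrac13$.
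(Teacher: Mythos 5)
Your proof is correct and follows essentially the same route as the paper: Lemma~\ref{l:capture} gives $|f_{m-1}(x)-q_m|<\tfrac13$, so $\mu_m$ saturates and $f_m(x)=P_m(f_{m-1}(x))\in A_m$. The only cosmetic difference is that the paper gets the radius bound from the contraction property of the orthogonal projection ($|P_m(y)-q_m|\le|y-q_m|<\tfrac13$, since $q_m\in A_m$), whereas you re-invoke Lemma~\ref{l:capture} at $i=m$; both steps are valid and interchangeable.
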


\begin{proof}
Let $x\in B_m$ and $y=f_{m-1}(x)$, then $f_m(x)=\varphi_m(y)$.
By Lemma \ref{l:capture}, $|y-q_m|<\frac13$.
Therefore $\mu_m(y)=1$ and hence $\varphi_m(y)=P_m(y)$.
Thus $f_m(x)=P_m(y)\in D_{1/3}(q_m)$.
\end{proof}

By definition, $f=g\circ f_m$ for some smooth map $g\co E\to E$.
Therefore $f(B_m)$ is contained in an image of {\ctext the} $n$-dimensional
disc $D_{1/3}(q_m)$ under a smooth map~$g$.

\begin{lemma}
\label{l:fU-near-X}
$f(B_m)\subset {\mathcal U}_{4\de}(D_{1/3}(q_m))$ for every $m$,
and $f(U)\subset {\mathcal U}_{5\de}(X)$.
\end{lemma}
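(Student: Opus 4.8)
The plan is to control $\dist(f_i(x),A_m)$ uniformly in $i$ for points $x\in B_m=B_{1/4}(q_m)$, exploiting that $f|_{B_m}$ stabilizes after finitely many steps (as noted before Lemma~\ref{l:almost-projection}). First I would fix $x\in B_m$ and set $z_i=f_i(x)$, $d_i=\dist(z_i,A_m)$. By Lemma~\ref{l:ndim-image}, $z_m\in D_{1/3}(q_m)\subset A_m$, so $d_m=0$, and by Lemma~\ref{l:capture} every $z_i$ lies in $B_{1/3}(q_m)$. For $i>m$ we have $z_i=\phi_i(z_{i-1})=\mu_i(z_{i-1})P_i(z_{i-1})+(1-\mu_i(z_{i-1}))z_{i-1}$; if $\mu_i(z_{i-1})=0$ this is just $z_{i-1}$, and otherwise $|z_{i-1}-q_i|<\tfrac12$, whence $|q_i-q_m|<\tfrac56<1$ and Lemma~\ref{l:small-angle} applies, giving $\dist(q_i,A_m)<\de$ and $\angle(A_i,A_m)<5\de$.

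The key one-step estimate is that $\dist(P_i(z_{i-1}),A_m)<\tfrac72\de$. Indeed $w:=P_i(z_{i-1})$ lies on the affine plane $A_i$ through $q_i$ with $|w-q_i|\le|z_{i-1}-q_i|<\tfrac12$ (orthogonal projection is $1$-Lipschitz and fixes $q_i$); writing $w=q_i+v$ with $v\in\vec A_i$, the point $P_m(q_i)+P_{\vec A_m}v\in A_m$ is within $\dist(q_i,A_m)+|v|\sin\angle(\vec A_i,\vec A_m)<\de+\tfrac12\cdot5\de$ of $w$. Since $\dist(\cdot,A_m)$ is convex, the displayed convex combination gives $d_i\le\mu_i(z_{i-1})\cdot\tfrac72\de+(1-\mu_i(z_{i-1}))\,d_{i-1}$, so a trivial induction from $d_m=0$ yields $d_i<4\de$ for all $i\ge m$, hence $\dist(f(x),A_m)\le 4\de$.

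To finish the first assertion, note $|f(x)-q_m|\le\tfrac13$ (Lemma~\ref{l:capture}), so $P_{A_m}(f(x))\in D_{1/3}(q_m)$ and therefore $\dist(f(x),D_{1/3}(q_m))\le\dist(f(x),A_m)\le4\de$; since $x\in B_m$ was arbitrary, $f(B_m)\subset U_{4\de}(D_{1/3}(q_m))$. For the second assertion, \eqref{e:affine-delta} gives $D_{1/3}(q_m)\subset D_1(q_m)\subset U_\de(B_1^X(q_m))\subset U_\de(X)$, hence $f(B_m)\subset U_{5\de}(X)$, and taking the union over $m$ together with $U=\bigcup_m B_m$ yields $f(U)\subset U_{5\de}(X)$.

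The one delicate point is the per-step estimate: one cannot simply sum the displacements $|z_i-z_{i-1}|$ over the (up to $N(n)$) indices at which $\phi_i$ acts nontrivially, since each such displacement is only $O(\de)$ and the sum would carry a dimensional factor $N(n)$; the gain comes from tracking the distance to the single fixed plane $A_m$ and using convexity of $\dist(\cdot,A_m)$, so that nontrivial steps cannot push $d_i$ above the bound $\tfrac72\de$ already guaranteed for $\dist(P_i(z_{i-1}),A_m)$ by the small-angle Lemma~\ref{l:small-angle}.
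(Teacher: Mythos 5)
Your proposal is correct and follows essentially the same route as the paper: induction over the steps $i\ge m$ starting from Lemma \ref{l:ndim-image}, with Lemma \ref{l:capture} keeping the iterates in $B_{1/3}(q_m)$, Lemma \ref{l:small-angle} giving $\dist(P_i(z_{i-1}),A_m)<\de+\tfrac12\cdot5\de$, and convexity (you use convexity of $\dist(\cdot,A_m)$, the paper uses convexity of the neighborhood $U_{4\de}(A_m)$ — the same thing) to keep the convex combination within $4\de$ of $A_m$. The conclusion via $D_1(q_m)\subset U_\de(X)$ is also identical to the paper's.
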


\begin{proof}
Let $x\in B_m$.
By Lemma \ref{l:capture} we have $f_i(x)\in B_{1/3}(q_m)$
for all~$i$. Let us show that $f_i(x)\in {\mathcal U}_{4\de}(A_m)$
for all $i\ge m$. This is true for $i=m$
since $f_m(x)\in D_{1/3}(q_m)\subset A_m$ by Lemma \ref{l:ndim-image}.
Arguing by induction, let $i>m$ and assume that $y=f_{i-1}(x)\in {\mathcal U}_{4\de}(A_m)$.
If $|y-q_i|\ge\frac12$, then ${f_i(x)}=y\in {\mathcal U}_{4\de}(A_m)$,
so we assume that $|y-q_i|<\frac12$.
Note that
$$
|q_i-q_m|\le |q_m-y|+|y-q_i|<\tfrac13+\tfrac12<1 .
$$
By definition, the point $f_i(x)=\varphi_i(y)$ belongs to the line segment $[yz]$
where $z=P_i(y)$.
Since $z\in A_i$ and $|q_i-z| \le |q_i-y|<\frac12$,
we have
$$
 \dist(z,A_m) \le \dist(q_i,A_m)+ \tfrac12\sin\angle(A_i,A_m)<\de+\tfrac52\de<4\de
$$
where the second inequality follows from Lemma \ref{l:small-angle}.
Thus $z\in {\mathcal U}_{4\de}(A_m)$.
Since $f_i(x)\in[yz]$, both $y$ and $z$ belong to ${\mathcal U}_{4\de}(A_m)$
and ${\mathcal U}_{4\de}(A_m)$ is a convex set,
$f_i(x)\in {\mathcal U}_{4\de}(A_m)$ as claimed.

Thus $f_i(x)\in {\mathcal U}_{4\de}(A_m)\cap B_{1/3}(q_m)$ for all $x\in B_m$ and all $i\ge m$.
This implies the first claim of the lemma.
To prove the second one, recall that
$D_1(q_m)\subset {\mathcal U}_\de(X)$ by \eqref{e:affine-delta}.
Hence $f(B_m)\subset {\mathcal U}_{4\de}(D_{1/3}(q_m))\subset {\mathcal U}_{5\de}(X)$.
Since $m$ is arbitrary, the second assertion of the lemma follows.
\end{proof}

\subsubsection{Construction and properties of the submanifold $M$} 
Now define 
\be\label{e:Mdefinition}
M=f({\mathcal U}_{1/5}(X_0)) .
\ee
We are going to show that $M$ is a desired submanifold.

\begin{lemma}
\label{l:disc-cover}
For every $y\in M$ there exists $q_m\in X_0$ such that
$|y-q_m|<\frac1{100}+5\de$ and
$$
 M\cap B_{1/100}(y) \subset f(D_{1/10}(q_m)) .
$$
In particular, $M=\bigcup_m f(D_{1/10}(q_m))$.
\end{lemma}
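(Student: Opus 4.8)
The plan is to first use the net property of $X_0$ to pin down the center $q_m$, and then to show that the whole small ball $M\cap B_{1/100}(y)$ sits inside the single ``chart'' $f(D_{1/10}(q_m))$ by producing, for each point $z$ of that ball, an $f$-preimage lying in $D_{1/10}(q_m)$. For the center: $M=f(U_{1/5}(X_0))\subset f(U)$, so Lemma~\ref{l:fU-near-X} gives $x_*\in X$ with $|y-x_*|<5\de$, and since $X_0$ is a $\tfrac1{100}$-net in $X$ there is $q_m\in X_0$ with $|x_*-q_m|\le\tfrac1{100}$, whence $|y-q_m|<\tfrac1{100}+5\de$. The ``in particular'' clause will follow from the inclusion, because $D_{1/10}(q_m)\subset B_{1/5}(q_m)\subset U_{1/5}(X_0)$ gives $f(D_{1/10}(q_m))\subset M$.

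Next I would fix $z\in M\cap B_{1/100}(y)$, set $\rho:=|z-q_m|<\tfrac2{100}+5\de$, and write $z=f(x_1)$ with $x_1\in B_{1/5}(q_k)\subset B_k$ for some $k$. Lemma~\ref{l:almost-projection} gives $|f(x_1)-P_k(x_1)|\le C\de$, so $|x_1-z|\le\dist(x_1,A_k)+C\de<\tfrac15+C\de$, and therefore $|x_1-q_m|<\tfrac15+\rho+C\de<\tfrac14$; that is, $x_1\in B_m=B_{1/4}(q_m)$. On $B_m$ Lemma~\ref{l:almost-projection} gives $\|f-P_m\|_{C^1(B_m)}\le C\de$, hence $\|d_xf-P_{\vec A_m}\|\le C\de$; and since $f|_{B_m}=g\circ f_m$ with $g$ smooth and $f_m(B_m)\subset A_m$ (Lemma~\ref{l:ndim-image} and the remark following it), the image of $d_xf$ has dimension $\le n$, hence exactly $n$, and is $C\de$-close to $\vec A_m$. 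Consequently the map $\Psi:=P_{A_m}\circ f|_{B_m}\co B_m\to A_m$ satisfies $\|\Psi-P_m\|_{C^1(B_m)}\le C\de$, is a submersion, and has $\ker d_x\Psi=\ker d_xf$ for every $x\in B_m$ (both are closed of codimension $n$, the first containing the second).

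Then I would set $v:=P_{A_m}(z)$, so $|v-q_m|\le\rho<\tfrac18$, and apply an elementary degree argument to $\Psi-v$ on the disc $A_m\cap B_{1/8}(q_m)\subset B_m$ (on whose boundary $\Psi$ is $C\de$-close to the identity while $v$ lies well inside) to get $w$ with $\Psi(w)=v$; since $P_{A_m}(f(w))=v$ and $f(w)$ is $C\de$-close to $P_m(w)=w$, this forces $|w-q_m|<\rho+C\de<\tfrac1{10}$, i.e.\ $w\in D_{1/10}(q_m)$. Both $x_1$ and $w$ then lie in the fiber $\Sigma:=\Psi^{-1}(v)\cap B_m$ (indeed $\Psi(x_1)=P_{A_m}(f(x_1))=P_{A_m}(z)=v$), which is connected, being a $C^1$-small perturbation inside $B_m$ of the affine ball $(v+\vec A_m^{\perp})\cap B_m$; along $\Sigma$ one has $df|_{T\Sigma}=0$ because $T\Sigma=\ker d\Psi=\ker df$, so $f$ is constant on $\Sigma$. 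Hence $z=f(x_1)=f(w)\in f(D_{1/10}(q_m))$, and uniting over $y\in M$ gives $M=\bigcup_m f(D_{1/10}(q_m))$.

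The hard part will be the passage from ``$v=P_{A_m}(z)$ is hit by $\Psi$'' to ``$z$ itself is hit by $f|_{D_{1/10}(q_m)}$'': a priori $M$ is only known to be $C\de$-close to the flat discs, so one must upgrade the estimate $f\approx P_m$ to the $C^1$ level, observe that $f$ and $P_{A_m}\circ f$ share the same kernel distribution on $B_m$ (so that $M$ near $q_m$ is genuinely a graph over $A_m$), and use the connectedness of the fiber $\Sigma$ to promote ``same $A_m$-projection'' to ``same point''. The bookkeeping of radii ($\tfrac1{100},\tfrac1{20},\tfrac18,\tfrac1{10},\tfrac15,\tfrac14$) that guarantees $x_1\in B_m$ and $w\in D_{1/10}(q_m)$ is routine but must be checked.
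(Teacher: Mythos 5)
Your argument reaches the lemma by a genuinely different route at the decisive step, and it is essentially sound. The paper pins down $q_m$ and shows $x\in B_m$ exactly as you do, but then runs the degree argument on the intermediate map $f_m|_{D}$, $D=D_{1/10}(q_m)$: by Lemma \ref{l:ndim-image} its image lies in $A_m$, by Lemma \ref{l:almost-projection} it is $C\de$-close to $\mathrm{id}_D$, so $f_m(D)\supset D_{1/10-C\de}(q_m)\ni f_m(x)$; once one has $x'\in D$ with $f_m(x')=f_m(x)$, the equality $f(x')=f(x)=z$ is automatic because $f=g\circ f_m$ factors through $f_m$. You instead run the degree argument on $\Psi=P_{A_m}\circ f$ and must then upgrade ``same $A_m$-projection'' to ``same value of $f$'' via constancy of $f$ on the fiber $\Sigma=\Psi^{-1}(v)\cap B_m$. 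Your ingredients for that are correct: $\rk d_xf=n$ and $\ker d_x\Psi=\ker d_xf$ (note the rank bound itself already needs the factorization $f=g\circ f_m$ — $C\de$-closeness of $d_xf$ to $P_{\vec A_m}$ alone cannot cap the rank — and it is literally valid at interior points of $B_m$, which is all you use). So the comparison is: the paper's use of the factorization makes the fiber structure irrelevant and keeps the topological input to a single degree count; your version is closer to the geometric picture (``$M$ near $q_m$ is a graph over $A_m$ and $f$ kills the normal directions''), but it buys this at the cost of an extra implicit-function/connectedness argument — and since you invoke $f=g\circ f_m$ anyway, the paper's shortcut is strictly cheaper.

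The one place where your write-up is thinner than it should be is the connectedness of $\Sigma$: ``a $C^1$-small perturbation of $(v+\vec A_m^{\perp})\cap B_m$'' is not yet a proof, because $\Sigma$ is cut off by the metric ball $B_m$ and $x_1$ may sit at distance about $\tfrac15+\tfrac1{50}$ from $q_m$, close to the radius $\tfrac14$. It is repairable: write $E=\vec A_m\oplus\vec A_m^{\perp}$ with origin at $q_m$, so $\Psi(u,h)=u+\psi(u,h)$ with $\|\psi\|_{C^1(B_m)}\le C\de$; each slice $h=\mathrm{const}$ of $B_m$ is convex, so it meets $\Sigma$ in at most one point $(U(h),h)$, with $|U(h)-(v-q_m)|\le C\de$ whenever it exists, i.e.\ $\Sigma$ is a graph over part of $\vec A_m^{\perp}$. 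For the pair you actually need, a continuation argument in $t$ along $t\mapsto(U(th_1),th_1)$ (openness by the inverse function theorem in the slice, closedness from the crude bound $(\tfrac1{50}+C\de)^2+(\tfrac15+\tfrac1{50}+C\de)^2<\tfrac1{16}$, which keeps the path strictly inside $B_m$) connects $x_1=(U(h_1),h_1)$ to the point of $\Sigma$ over $h=0$, which by uniqueness in the $h=0$ slice is your $w$. With that inserted, and with the fiber taken inside the open ball so that the submersion/regular-value statements apply verbatim, your proof is complete; the remaining radius bookkeeping in your plan checks out.
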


\begin{proof}
By Lemma \ref{l:fU-near-X}, $y\in {\mathcal U}_{5\de}(X)$.
Since $X_0$ is a $\frac1{100}$-net in $X$,
there is point $q_m\in X_0$ such that 
$|y-q_m|<\tfrac1{100}+5\de$.
Let us show that this point satisfies the requirements of the lemma.
%
Let $W=M\cap B_{1/100}(y)$ and $D=D_{1/10}(q_m)$.
We are to show that $W\subset f(D)$.
Fix a point $z\in W$. Observe that
$$
 |z-q_m|\le |z-y|+|y-q_m|<\tfrac1{100}+\tfrac1{100}+{5}\de =\tfrac1{50}+{5}\de .
$$
Since $z\in M$, we have $z=f(x)$ for some $x\in {\mathcal U}_{1/5}(X_0)$.
Let $p\in X_0$ be such that $|x-p|<\frac15$.
Then $|z-P_{A_p}(x)|<\Cseventeen\de$
by Lemma \ref{l:almost-projection}.
On the other hand,
$$
 |x-P_{A_p}(x)|\le |x-p| <\tfrac15 .
$$
Therefore, {assuming that $\delta$  is smaller than some constant depending only on $n$ (see (\ref{delta condition})), we have} 
$$
|x-q_m|\le|x-P_{A_p}(x)|+|z-P_{A_p}(x)|+|z-q_m|<\tfrac15+\Cseventeen\de+\tfrac1{50}+5\de <\tfrac14,
$$
thus $x\in B_m$.

By Lemma \ref{l:almost-projection} it follows that $|z-P_m(x)|=|f(x)-P_m(x)|<\Cseventeen\de$
and $|f_m(x)-P_m(x)|<\Cseventeen\de$.
Therefore $|f_m(x)-z| < 2\Cseventeen\de$
and hence
$$
 |f_m(x)-q_m|\le |f_m(x)-z|+|z-q_m|<\tfrac1{50}+(2\Cseventeen+5)\de .
$$
By Lemma \ref{l:ndim-image} we have $f_m(x)\in A_m$,
hence $f_m(x)\in D_{1/50+{(2\Cseventeen+5)}\de}(q_m)$.  

Now consider the map $f_m|_D$.
By Lemma \ref{l:ndim-image}, its image $f_m(D)$
is contained in $A_m$. By Lemma \ref{l:almost-projection},
$f_m|_D$ is $\Cseventeen\de$-close to the projection $P_m|_D$,
which equals $id_D$ since $D\subset A_m$.
Thus $f_m|_D$ is $\Cseventeen\de$-close to the identity
and maps $D$ to a subset of the $n$-dimensional subspace $A_m$.
By topological reasons, see \cite[Thm. 1.2.6]{degreetheory},
this implies that $f_m(D)$ contains
an $n$-ball $D_{1/10-\Cseventeen\de}(q_m)$, {see (\ref{delta condition})}. 
Since $f_m(x)\in D_{1/50+(2\Cseventeen+5)\de}(q_m)\subset D_{1/10-\Cseventeen\de}(q_m)$,
it follows that there exists a point $x'\in D$ such that
$f_m(x')=f_m(x)$.
Since $f$ factors through $f_m$, this implies that $f(x')=f(x)=z$.
Thus $z\in f(D)$. Since $z$ is an arbitrary point of $W$,
the lemma follows.
\end{proof}


Now we prove that $M$ is a submanifold.

\begin{lemma}
\label{l:Mismanifold}
$M$ is a closed $n$-dimensional smooth submanifold of $E$.
{
Every $y\in M$ has a neighborhood in $M$
that admits a parametrization by a smooth map
$\varphi\co V\to E$, $V\subset\R^n$,
which is $\Cseventeen(k)\de$-close to an affine isometric embedding
in the $C^k$-topology for any $k\ge 0$,
where $\Cseventeen(k)$ is the constant provided by Lemma \ref{l:almost-projection}.
}
\end{lemma}

\begin{proof}
Pick $y\in M$ and
let $q_m\in X_0$ be as in Lemma \ref{l:disc-cover}.
{As in the proof of  Lemma \ref{l:disc-cover}, we use the notation}
$D=D_{1/10}(q_m)$.
By Lemma \ref{l:almost-projection},
$f|_D$ is $\Cseventeen(k)\de$-close to the {inclusion $D\hookrightarrow E$} in the $C^k$-topology.
{
Assuming that $\de<\Cseventeen(1)^{-1}$,
it follows that $f|_D$ is a smooth embedding and hence $f(D)$ is a smooth submanifold of $E$.}
By Lemma~\ref{l:disc-cover},
$$
 f(D)\cap B_{1/100}(y)=M\cap B_{1/100}(y) .
$$ 
{Thus $M\cap B_{1/100}(y)$ is
a submanifold for every $y\in M$, hence so is~$M$.
}

To see that $M$ is closed, recall that $|y-q_m|<\frac1{100}+5\de$.
Since $f|_D$ is $\Cseventeen\de$-close to identity,
this implies that the $f$-image of the boundary of $D$ is separated
away from $y$ by distance at least $\frac1{10}-\frac1{100}-{\ctext 5\de}-\Cseventeen\de>\frac1{100}$.
Therefore $M\cap B_{1/100}(y)$ is contained in a compact subset of
the submanifold $f(D)$. Since this holds within a uniform radius $\frac1{100}$
from any $y\in M$, it follows that $M$ is a closed set in $E$.

To construct the desired local parametrization $\varphi$, just compose $f|_D$
with an affine isometry between $D$ and an appropriate ball {$V\subset\R^n$}.
\end{proof}

{
The bounds on derivatives of $\varphi$ from Lemma \ref{l:Mismanifold} imply that
the second fundamental form of $M$ is bounded by
\beq\label{use of Ceight}
  (1-\Cseventeen'\de)^{-2}\Cseventeen''\de < 2\Cseventeen''\de  
\eeq
provided that $\de<(4\Cseventeen')^{-1}$.
See Lemma \ref{l:almost-projection} for the notation $\Cseventeen'$ and $\Cseventeen''$.
{The inequality (\ref{use of Ceight}) proves the second assertion of Proposition \ref{p:surface} with
$\Ceight=2\Cseventeen''$.}

The first assertion of Proposition \ref{p:surface} is the following lemma.
}


\begin{lemma}\label{l:dHMX}
$d_H(M,X)\le 5\de$. 
\end{lemma}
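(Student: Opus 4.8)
The plan is to prove the two inclusions $M\subseteq U_{5\de}(X)$ and $X\subseteq U_{5\de}(M)$ separately. The first is immediate: since $U_{1/5}(X_0)\subseteq U=U_{1/4}(X_0)$, we have $M=f(U_{1/5}(X_0))\subseteq f(U)$, and $f(U)\subseteq U_{5\de}(X)$ by Lemma \ref{l:fU-near-X}.

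For the reverse inclusion, fix $x\in X$ and use that $X_0$ is a $\tfrac1{100}$-net in $X$ to choose $q_m\in X_0$ with $|x-q_m|<\tfrac1{100}$; then $x\in B_m$ and $x\in U_{1/5}(X_0)$, so $f(x)\in M$ and it suffices to show $|x-f(x)|<5\de$. I would decompose $x-f(x)$ orthogonally relative to the linear subspace $\vec A_m$, noting that its component lying in $\vec A_m$ equals $P_m(x)-P_m(f(x))$, so that
\be
\label{e:dHdecomp}
 |x-f(x)|^2=\bigl|P_m(x)-P_m(f(x))\bigr|^2+\bigl|(x-P_m(x))-(f(x)-P_m(f(x)))\bigr|^2 .
\ee
The last term of \eqref{e:dHdecomp} is at most $\bigl(\dist(x,A_m)+\dist(f(x),A_m)\bigr)^2$. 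Here $\dist(x,A_m)<\de$ by \eqref{e:affine-delta} (as $|x-q_m|<1$), while $\dist(f(x),A_m)\le\tfrac72\de$: an inspection of the proof of Lemma \ref{l:fU-near-X} shows it establishes $\dist(f_i(x),A_m)\le\de+\tfrac52\de=\tfrac72\de$ for every $i$, the stated constant $4\de$ being a rounding and the base value $\dist(x,A_m)<\de$ being supplied by \eqref{e:affine-delta}. Thus the last term of \eqref{e:dHdecomp} is $<(\tfrac92\de)^2$.

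It remains to bound $|P_m(x)-P_m(f(x))|$. Writing $I_m=\{i:|q_i-q_m|<1\}=\{j_1<\dots<j_K\}$ with $K\le N=N(n)$, one has $f(x)=\phi_{j_K}\circ\dots\circ\phi_{j_1}(x)$ as in the proof of Lemma \ref{l:almost-projection}; set $z_0=x$, $z_k=\phi_{j_k}(z_{k-1})$ and $d_k=z_k-z_{k-1}=\mu_{j_k}(z_{k-1})\bigl(P_{j_k}(z_{k-1})-z_{k-1}\bigr)$, so $f(x)-x=\sum_{k=1}^K d_k$ and $P_m(f(x))-P_m(x)$ is the orthogonal projection of $\sum_k d_k$ onto $\vec A_m$. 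The crucial observation is that each $d_k$ is orthogonal to $\vec A_{j_k}$, while $\angle(A_{j_k},A_m)<5\de$ by Lemma \ref{l:small-angle} whenever $d_k\ne0$ (a nontrivial move forces $|z_{k-1}-q_{j_k}|<\tfrac12$, and $z_{k-1}\in B_{1/3}(q_m)$ by Lemma \ref{l:capture}, so $|q_{j_k}-q_m|<\tfrac56<1$); hence the component of $d_k$ along $\vec A_m$ has norm $\le\sin(5\de)\,|d_k|\le5\de\,|d_k|$. Since each $|d_k|$ is bounded by a universal multiple of $\de$ — using $\dist(z_{k-1},A_m)\le\tfrac72\de$, the inequality $|q_{j_k}-z_{k-1}|<\tfrac12$, Lemma \ref{l:small-angle}, and \eqref{e:affine-delta} for $k=1$ (where $z_0=x\in X$) — and $K\le N$, we obtain $|P_m(f(x))-P_m(x)|\le5\de\sum_k|d_k|\le C(n)\de^2$. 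Feeding this and the previous bound into \eqref{e:dHdecomp} gives $|x-f(x)|\le\tfrac92\de+C(n)\de^2<5\de$, the last inequality holding because $\de$ is assumed small (the required bound depending only on $n$). Hence $\dist(x,M)\le|x-f(x)|<5\de$, and the proof is complete.

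The step I expect to be the main obstacle is getting the constant down to exactly $5\de$ rather than some $C(n)\de$: the crude estimate $|f(x)-P_m(x)|\le C\de$ afforded by Lemma \ref{l:almost-projection} is too lossy, and one really has to exploit that every elementary displacement $\phi_i(z)-z$ is normal to a plane $\vec A_{j_k}$ that makes an angle $<5\de$ with $\vec A_m$, so that its tangential effect on $x-f(x)$ is only of order $\de^2$ and can be absorbed by choosing $\de$ small enough, leaving the normal displacement (bounded by $\tfrac72\de$, as in the proof of Lemma \ref{l:fU-near-X}) plus the $<\de$ from \eqref{e:affine-delta}.
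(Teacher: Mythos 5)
Your proposal is correct, but it proves the inclusion $X\subset U_{5\de}(M)$ by a genuinely different route than the paper. The paper does not try to show that $f(x)$ itself is close to $x$: it considers the map $P_m\circ f|_{D_{1/5}(q_m)}$ from the disc $D_{1/5}(q_m)\subset A_m$ to $A_m$, which is $C\de$-close to the identity by Lemma \ref{l:almost-projection}, so (by the same topological/degree argument as in Lemma \ref{l:disc-cover}) its image contains $D_{1/5-C\de}(q_m)\ni P_m(x)$; choosing $y\in D_{1/5}(q_m)$ with $P_m(f(y))=P_m(x)$, it then bounds $|f(y)-x|\le \dist(f(y),A_m)+\dist(x,A_m)<4\de+\de=5\de$, using Lemma \ref{l:fU-near-X} and \eqref{e:affine-delta}. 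Your argument instead estimates $|x-f(x)|$ directly for data points $x$, splitting the displacement orthogonally relative to $\vec A_m$: the tangential part is $O(\de^2)$ because each elementary move $\phi_{j_k}(z)-z$ is normal to $\vec A_{j_k}$ with $\angle(A_{j_k},A_m)<5\de$ and has length $O(\de)$ (and indeed $|P_{\vec A_m}v|\le\sin\angle(A_{j_k},A_m)\,|v|$ for $v\perp\vec A_{j_k}$, since equal-dimensional subspaces make the angle symmetric), while the normal part is $<\de+\tfrac72\de$; your rerun of the induction of Lemma \ref{l:fU-near-X} with base point $x$ (using $\dist(x,A_m)<\de$ from \eqref{e:affine-delta}) does give the sharper constant $\tfrac72\de$ for all $i$, not just $i\ge m$, so the total $\tfrac92\de+C(n)\de^2<5\de$ is legitimate under the standing smallness assumption on $\de$. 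The trade-off: the paper's proof is softer (no refined constants, no second-order estimate, but it needs the degree-theoretic surjectivity step and only produces \emph{some} point of $M$ near $x$), whereas yours is sharper and more explicit, in effect strengthening Lemma \ref{l:fdisplacement} to $|x-f(x)|<5\de$ for $x\in X$, at the cost of tracking the constants $\tfrac72\de$ and $\sin(5\de)$ carefully.
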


\begin{proof}
By Lemma \ref{l:fU-near-X} we have
$M\subset {\mathcal U}_{5\de}(X)$.
It remains to prove the inclusion $X\subset {\mathcal U}_{5\de}(M)$.
Fix $x\in X$ and let $q_m\in X_0$ be such that $|q_m-x|\le\frac1{100}$.
Consider the map $P_m\circ f|_{D_{1/5}(q_m)}$ from $D_{1/5}(q_m)\subset A_m$
to $A_m$.
By Lemma \ref{l:almost-projection}, this map is $\Cseventeen\de$-close to the identity.
Therefore its image contains the $n$-disc $D_{1/5-\Cseventeen\de}(q_m)$.
This disc contains the point $P_m(x)$ because
$$
|P_m(x)-q_m|\le |x-q_m|\le \tfrac1{100}<\tfrac15-\Cseventeen\de .
$$
Hence $P_m(x)\in P_m(f(D_{1/5}(q_m)))$.
This means that there exists $y\in D_{1/5}(q_m)$ such that
$P_m(f(y))=P_m(x)$.
By Lemma~\ref{l:fU-near-X}, we have $\dist(f(y),A_m)<4\de$
and therefore $$|f(y)-P_m(x)|=|f(y)-P_m(f(y))|<4\de.$$
By {Lemma \ref{l:small-angle}}, 
we have $\dist(x,A_m)\le\de$
and therefore $|x-P_m(x)|\le\de$. Hence
$$
 |f(y)-x| \le |f(y)-P_m(x)|+|x-P_m(x)| < 4\de+\de = 5\de .
$$
Observe that $f(y)\in M$ since $y\in D_{1/5}(q_m)\subset {\mathcal U}_{1/5}(X_0)$.
This and the above inequality imply that $x\in {\mathcal U}_{5\de}(M)$.
Since $x$ is an arbitrary point of $X$,
we have shown that $X\subset {\mathcal U}_{5\de}(M)$.
The lemma follows.
\end{proof}

\begin{remark}
\label{rem:small-tube}
We observe that
\be
\label{e:small-tube}
 M = f({\mathcal U}_\de(X))
\ee
(compare with \eqref{e:Mdefinition}).
Indeed, we have $ M \subset \bigcup_m f(D_{1/10}(q_m)) $
by Lemma \ref{l:disc-cover}
and
$
D_{1/10}(q_m) \subset {\mathcal U}_\de(X)
$
by \eqref{e:affine-delta}.

One can think of \eqref{e:small-tube}, \eqref{e:Mdefinition}
and the last claim of Lemma \ref{l:disc-cover}
as various reconstruction procedures for $M$.
\end{remark}

\begin{lemma}
\label{l:fdisplacement} 
$|f(y)-y|< \Cnineteen\de$ for every $y\in {\mathcal U}_\de(X)$.
\end{lemma}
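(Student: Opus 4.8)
The plan is to localize $y$ inside one of the balls $B_m=B_{1/4}(q_m)$, $q_m\in X_0$, on which $f$ is already controlled by Lemma \ref{l:almost-projection}, and then to observe that the value $P_m(y)$ it is compared to is itself close to $y$, because $y$ is close to the affine plane $A_m$.

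First I would locate a suitable $q_m$. Since $y\in U_\de(X)$, pick $x\in X$ with $|y-x|<\de$, and then, using that $X_0$ is a $\frac1{100}$-net in $X$, pick $q_m\in X_0$ with $|x-q_m|\le\frac1{100}$. The triangle inequality gives $|y-q_m|<\frac1{100}+\de<\frac14$, so $y\in B_m$; in particular $y\in U=U_{1/4}(X_0)$, so $f(y)$ is defined. Now apply Lemma \ref{l:almost-projection} with $k=0$ on the ball $B_m$ to obtain $|f(y)-P_m(y)|\le C\de$, where $P_m=P_{A_{q_m}}$ is the orthogonal projection onto the affine plane $A_m=A_{q_m}$.

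It then remains to estimate $|P_m(y)-y|=\dist(y,A_m)$. Since $|x-q_m|<1$, the flatness hypothesis \eqref{e:affine-delta} (equivalently the first assertion of Lemma \ref{l:small-angle}) gives $\dist(x,A_m)<\de$, whence $\dist(y,A_m)\le|y-x|+\dist(x,A_m)<2\de$. Combining the two estimates, $|f(y)-y|\le|f(y)-P_m(y)|+|P_m(y)-y|<C\de+2\de$, which is $\le C\de$ after renaming the constant.

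There is no genuine obstacle here: the only point to watch is that the argument presupposes $\de$ is small enough that $\frac1{100}+\de<\frac14$, so that $y$ does land inside a ball $B_m$ where Lemma \ref{l:almost-projection} applies — and this smallness of $\de$ has been assumed throughout the section.
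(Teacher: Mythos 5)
Your proposal is correct and follows essentially the same route as the paper's proof: locate $q_m\in X_0$ with $y\in B_m$, apply Lemma \ref{l:almost-projection} (case $k=0$) to get $|f(y)-P_m(y)|<C\de$, and use the flatness condition \eqref{e:affine-delta} at $x$ to bound $|y-P_m(y)|=\dist(y,A_m)<2\de$. The only cosmetic difference is that you make explicit the inclusion $y\in B_m$ and the smallness requirement on $\de$, which the paper leaves implicit.
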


\begin{proof}
Since $y\in {\mathcal U}_\de(X)$, there is $x\in X$ such that $|x-y|<\de$.
Pick $q_m\in X_0$ such that $|x-q_m|<\tfrac1{100}$.
Then $y\in B_m$ and hence $|f(y)-P_m(y)|<\Cseventeen\de$
by Lemma \ref{l:almost-projection}.
By {Lemma \ref{l:small-angle}}, 
we have $\dist(x,A_m)<\de$
and hence 
$$
 |y-P_m(y)|=\dist(y,A_m)<2\de.
$$
Therefore
$
 |f(y)-y| \le |f(y)-P_m(y)| + |y-P_m(y)| <(\Cseventeen+2)\de=\Cnineteen \delta
$.
\end{proof}

{
For $x,y\in M$, we denote by $d_M(x,y)$ the intrinsic arc-length distance between $x$ and $y$ in~$M$.
If $x$ and $y$ are from different connected components of $M$, then $d_M(x,y)=\infty$.
Since $M$ is closed in $E$, each component of $M$ is a complete Riemannian manifold.

\begin{lemma}\label{l:intrinsic-vs-ambient-distance}
Let $x,y\in M$ be such that $|x-y|<\tfrac45$. Then $d_M(x,y) < 1$.
\end{lemma}
}

\begin{proof}
Let $x,y\in M$ be as above.
Then by \eqref{e:small-tube} there are points $x',y'\in {\mathcal U}_\de(X)$ such that
$f(x')=x$ and $f(y')=y$.
By Lemma \ref{l:fdisplacement} we have
$|x-x'| < \Cnineteen\de$ and $|y-y'|< \Cnineteen\de$,
hence $|x'-y'|<\frac45+2 \Cnineteen\de$ by the triangle inequality.
Let $x'',y''\in X$ be such that $|x'-x''|<\de$
and $|y'-y''|<\de$.

{

Then, when $\delta$  is smaller than a bound depending on $n$, see (\ref{delta condition}), 
$$
|x''-y''|\le |x'-y'|+2\de<\tfrac45+2 \Cnineteen\de +2 \delta<1.
$$
Hence $y''\in B_1^X(x'')$.
This and \eqref{e:affine-delta} imply that
$y''\in {\mathcal U}_\de (D_1(x''))$.
Therefore both $x'$ and $y'$ and hence the line segment $[x',y']$
are contained in the $2\de$-neighborhood of
the affine $n$-disc $D_1(x'')$.
Since $B_1^X(x'') \subset X$ it follows from
 \eqref{e:affine-delta} that
{$D_1(x'')\subset\mathcal U_\delta(B_1^X(x''))\subset \mathcal U_\delta(X)$.
Hence the $2\de$-neighborhood of $D_1(x'')$}
is contained in ${\mathcal U}_{3\de}(X)$.
Thus $[x',y']$ is contained in ${\mathcal U}_{3\de}(X)$ and hence in the domain of~$f$.

Consider the $f$-image of the line segment $[x',y']$.
It is a smooth path in~$M$ connecting $x$ and $y$.
Lemma \ref{l:almost-projection} for $k=1$
implies that $f$ is locally Lipschitz with Lipschitz constant $1+{\Cseventeen'}\de$.
Therefore,  
$$
 \length(f([x',y'])) \le (1+{\Cseventeen'}\de) |x'-y'| < (1+{\Cseventeen'}\de)(\tfrac45+2\Cnineteen\de) < 1,
$$
see (\ref{delta condition}).
Hence $d_M(x,y)<1$.}


\end{proof}

Now we are in position to prove the third assertion of Proposition \ref{p:surface}.

{
\begin{lemma}\label{l:reachM}
$\Reach(M)\ge \frac13$.
Furthermore, for every $p\in \mathcal U_{1/3}(M)$
there exists a unique $x\in M$ such that $|p-x|<\frac13$ and $p-x\perp T_xM$.
\end{lemma}

\begin{proof}
Fix $p\in \mathcal U_{1/3}(M)$.
By Lemma \ref{l:intrinsic-vs-ambient-distance},
the set $B_{1/3}(p)\cap M$ is contained in a unit ball of $(M,d_M)$,
since the diameter of this set in $E$ is bounded by $\frac23<\frac45$.
Since $M$ is a complete Riemannian manifold, closed balls in $(M,d_M)$ are compact.
Hence there exists $x\in M$ nearest to~$p$.
It remains to prove that $x$ is a unique nearest point and that it
is also a unique point of $B_{1/3}(p)\cap M$ such that $p-x\perp T_xM$.

Let $y$ be another point from $B_{1/3}(p)\cap M$.
By Lemma \ref{l:intrinsic-vs-ambient-distance} we have $d_M(x,y)<1$.
Connect $x$ to $y$ by a unit-speed minimizing geodesic $\gamma\co[0,L]\to M$
and consider the function $f(t)=\frac12|p-\gamma(t)|^2$, $t\in[0,L]$.
Computing the second derivative of $f(t)$ yields
\be\label{e:reach1}
 f''(t) = {\ctext |\gamma'(t)|^2 - }\langle \gamma''(t),p-\gamma(t)\rangle
 = {\ctext 1  -} \langle \gamma''(t),p-\gamma(t)\rangle
\ee
where $\langle\cdot,\cdot\rangle$ is the inner product in $E$.


Let $\kappa$ denote our bound on the second fundamental form of $M$,
i.e.\ $\kappa=\Ceight\de$.
Since $\gamma$ is a geodesic, $|\gamma''(t)|\le\kappa$.
This and \eqref{e:reach1} imply that
$
 |f''(t)-1| \le \kappa |p-\gamma(t)|
$
for all $t$. Thus $0<f''(t)<2$ as long as $|p-\gamma(t)|<\kappa^{-1}$.
Since $p-x\perp T_x\Sigma$, we have $f'(0)=0$.
Therefore $0<f''(t)<2$, $0<f'(t)<2t$, and $f(0)<f(t)<f(0)+ t^2$
for all $t\in(0,L]$ such that $f(0)+ t^2<\frac12\kappa^{-2}$.

Assuming that $\kappa=\Ceight\de<\frac12$ (see \eqref{delta condition}) and using estimates
$|p-x|<1$ and $L=d_M(x,y)<1$, we see that the above inequalities
hold for all $t\in[0,L]$.
In particular $f(L)>f(0)$ and $f'(L)>0$,
hence $|p-y|>|p-x|$ and $p-y$ is not orthogonal to $T_yM$.
\end{proof}
}

{
Now we have the normal projection map $P_M\co\mathcal U_{1/3}(M)\to M$.
Let us prove the fifth assertion of Proposition \ref{p:surface}.
Let $x\in X$ and $y=P_M(x)$. Then $|x-y|<5\de$ by Lemma \ref{l:dHMX}.
By Lemma \ref{l:disc-cover} there exists $q_m\in X_0$ such that
$|y-q_m|<\frac1{100}+5\de$ and $y\in f(D)$ where $D=D_{1/10}(q_m)$.
By Lemma \ref{l:almost-projection}, $f|_D$ is $\Cseventeen'{\delta}$-close
to $P_m|_D=\text{id}_D$ in the $C^1$-topology.
Therefore
$$
 \angle ( A_{q_m}, T_yM ) < (1-\Cseventeen'\de)^{-1}\Cseventeen'\de < 2\Cseventeen'\de
$$
provided that $\de<(2\Cseventeen')^{-1}$.
By Lemma \ref{l:small-angle} we have $\angle(A_x,A_{q_m})<5\de$. Hence
$$
 \angle ( A_{x}
  T_yM ) < (2\Cseventeen'+5)\de .
$$
and \eqref{e:Pangle} follows with $\Cnine=2\Cseventeen'+5$.
}

\medbreak

{
It remains to prove the fourth assertion of Proposition \ref{p:surface}.
Consider the normal disc bundle
\be\label{normal disc bundle}
 \nu_{1/3}M := \{(x,v) : x\in M, v\in E, v\perp T_xM, |v|<1/3 \}
\ee
and the map $J\co\nu_{1/3}M\to E$ given by $J(x,v)=x+v$.
Lemma \ref{l:reachM} implies that $J$ is a bijection onto $\mathcal U_{1/3}(M)$.
The normal projection $P_M\co\mathcal U_{1/3}(M)\to M$
can be written as $P_M=\pi\circ J^{-1}$ where $\pi(x,v)=x$
for $(x,v)\in \nu_{1/3}M$. Thus is suffices to show that $J^{-1}$ is smooth
and estimate its derivatives.

Let $(x_0,v_0)\in\nu_{1/3}M$. 
By means of a parallel translation we may assume that $x_0$ is the origin of $E$.
By Lemma \ref{l:Mismanifold}, a neighborhood of $x_0$ in $M$
admits a local parametrization $\varphi\co V\to M$ which is
is $\Cseventeen(k)\de$-close in $C^k$-topology
to an affine isometric embedding.
We identify $V$ with a neighborhood of the origin in the tangent space $T_{x_0}M\subset E$
so that $\varphi$ is  $\Cseventeen(k)\de$-close to the identity in $C^k$-topology.
Let $B$ be the ball of radius $1/3$ in the orthogonal complement $(T_{x_0}M)^\perp$ of $T_{x_0}M$ in~$E$.
Parametrize a neighborhood of $(x_0,v_0)$ in $\nu_{1/3}M$ 
{by $(\varphi(x), \, v- P_{\varphi(x)}(v))$
and introduce
 a map
$\Phi\co V\times B\to E $ given by
\be\label{Phi capital}
 \Phi(x,v) = \varphi(x)+ v- P_{\varphi(x)}(v), \qquad x\in V,\ v\in B,
\ee
%
%
where} $P_{\varphi(x)}$ is the orthogonal projection from $E$ to $T_{\varphi(x)}M$.
The projection $P_{\varphi(x)}(v)$ can be written explicitly as an arithmetic
formula involving the first derivatives of $\varphi$ and their inner products
with each other and~$v$, hence the $k$th derivatives of ${\Phi}$ can be
written explicitly in terms of the derivatives of $\varphi$ up to the order $k+1$
and the inner product in~$E$. If $\varphi$ is the identity, then so is ${\Phi}$.
Since $\varphi$ is  $\Cseventeen(k+1)\de$-close to the identity in $C^{k+1}$-topology,
this implies an estimate
\beq\label{e C20}
 \|{\Phi}-\text{Id}\|_{C^k(V\times B)} < {\Ctwenty(k)}\de 
\eeq
where $\Ctwenty(k)$ is a constant that can be written explicitly
in terms of $\Cseventeen(k+1)$.
By the Inverse Function Theorem, this implies that
${\Phi}$ is a local diffeomorphism provided that $\de<C(k)^{-1}$.
The normal projection $P_M$ in a neighborhood of $(x_0,v_0)$
is given by $P_M=\varphi\circ\pi_1\circ{\Phi}^{-1}$ where
$\pi_1\co V\times B\to V$ is the first coordinate projection.
The $k$th differential of ${\Phi}^{-1}$ can be written as an explicit
dimension-independent formula in terms of differentials of $\varphi$
up the $k$th order. If $\varphi$ is the identity, then $P_M$ is
the orthogonal projection $P_{T_{x_0}M}$.
This implies an estimate
$$
 \|P_M-P_{T_{x_0}M}\|_{C^k} \le \Cten(k) \de
$$
for all $k\ge 0$, where $\Cten(k)$ can be written explicitly in terms of $\Cseventeen(k+1)$.
These bounds imply \eqref{e:dkPM} and \eqref{e:d1PM}.
}


This finishes the proof of Proposition \ref{p:surface}.
As explained in the beginning of this section,
of Theorem \ref{t:surface} follows via a rescaling argument.

%
%

\begin{remark}
\label{rem:f and PM}
{Assuming  in Theorem \ref{t:surface} that $\delta<r/100$, and by scaling metric by factor $r^{-1}$ in  Lemmas \ref{l:almost-projection}  and {\ref{l:dHMX},}}   
the above arguments about $P_M$
imply that
\be\label{e: new citation}
 \|f - P_M\|_{C^k({\mathcal U}_{r/10}(M))} < \Cseventeen(k)\de{r^{-k}} 
\ee
for all $k$. Thus, for computation purposes, the explicitly constructed
map $f$ is as good as the normal projection $P_M$.
\end{remark}

\begin{remark} \label{rem: Theorem 2 optimality} Let us show that 
the constants in Theorem \ref{t:surface} are optimal, up to  constant factors.
Let $M\subset E$  be a closed  $n$-dimensional submanifold whose
second fundamental form is bounded by $\kappa_{\de,r}=\frac 1{2}\de r^{-2}$, with $0<\de<r<1$, and
{$\Reach(M)>2r$}.
Let  $x\in M$.
Using formula  (\ref{eq: Hausdorff distance of balls}) we see that 
\be
\label{e:2r estimate}
d_{H}(B^M_{2r}(x),B_{2r}^{T_xM}(x))\le  \de .
\ee
Here $B^M_{2r}(x)$ is the intrinsic ball in $M$ of radius $2r$ centered at $x$.

Our assumptions on $M$
imply that the normal projection $P_M$ is well-defined and 2-Lipschitz
in the ball $B^E_r(x)$. 
Hence for any $z\in M \cap B^E_r(x)$
the projection $P_M([x,z])$ of the line segment $[x,z]$ is a curve of length
at most $2r$. 
Therefore $z=P_M(z)\in B^M_{2r}(x)$.
Thus $M \cap B_r^E(x) \subset B^M_{2r}(x)$.
Also note that 
$
B^M_{r}(x)\subset M \cap B_r^E(x)
$.
These relations, \eqref{e:2r estimate} and \eqref{eq: Hausdorff distance of balls}
imply that $d_{H}(M\cap B^E_r(x),B_{r}^{T_xM}(x))\le  \de$.
As $x$ above is an arbitrary point of $M$, we have that 
$M$ is $\delta$-close to $n$-flats at scale $r$. 
This shows that  in Theorem \ref{t:surface} the bounds in claims (2) and (3) 
on the second fundamental form 
and  {reach} 
are optimal, up to multiplying these bounds by constant factors depending
on $n$. 
\end{remark}

\section{Proof of Proposition \ref{p:injrad} and injectivity radius estimates}
\label{sec:injrad}

The main goal of this section is to prove Proposition \ref{p:injrad}.
We begin with recalling some facts about Riemannian manifolds 
of bounded curvature and proving the estimate \eqref{eq: GH distance of balls}

Let $M=(M,g)$ be a complete Riemannian manifold with $|\Sec_M|\le K$
where $K>0$. For $p\in M$, consider the exponential map
$\exp_p\co T_pM\to M$.
We restrict this map to the ball of radius $r<\frac{\pi}{\sqrt K}$ in $T_pM$
centered at the origin. As a consequence of Rauch Comparison Theorem,
$\exp_p$ is non-degenerate in this ball and we have the following
estimates on its local bi-Lipschitz constants:
for ${v}\in T_pM$ such that $|{v}|\le r<\frac\pi{\sqrt K}$
and every $\xi\in T_pM\setminus\{0\}$,
\be
\label{e:rauch}
\frac{\sin(\sqrt Kr)}{\sqrt Kr}
 \le \frac{|d_{v}\exp_p(\xi)|}{|\xi|} 
\le \frac{\sinh(\sqrt Kr)}{\sqrt Kr}
\ee
(see e.g.\ 
\cite[Thm.\ 27 in Ch.\ 6]{Pe} and \cite[Thm. IV.2.5 and Remark IV.2.6]{Sakai}).

If $r\le {\frac12\min\{\frac\pi{\sqrt K},\inj_M(p) \}}$
then the geodesic $r$-ball $B^M_r(p)$ is convex,
i.e., minimizing geodesics with endpoints in this ball do not leave it
(see e.g.\ \cite[Thm.\ 29 in Ch.\ 6]{Pe}).
This makes the local bi-Lipschitz estimate \eqref{e:rauch} global.
{
Hence
\be
\label{e:rauch-bilip}
\frac{\sin(\sqrt Kr)}{\sqrt Kr}
 \le \frac{d_M(\exp_p({u}),\exp_p({v}))}{|{u}-{v}|} 
\le \frac{\sinh(\sqrt Kr)}{\sqrt Kr}
\ee
and therefore
\beq\label{final comparision estimate}
 \bigl|d_M(\exp_p({u}),\exp_p({v}))-|{u}-{v}|\bigr| \le  \tfrac12 Kr^3
\eeq
for all ${u,v}\in T_pM$ such that $|{u}|,|{v}|\le r$.
Here \eqref{final comparision estimate} follows from \eqref{e:rauch-bilip}
and the estimates $|{u}-{v}|\le 2r$ and
\be\label{e:sinh-estimate}
 t-\tfrac16t^3 \le \sin (t)\le \sinh (t)\le t+\tfrac14t^3, \qquad t\in[0,\tfrac\pi2] .
\ee
Thus the distortion of $\exp_p$ within the $r$-ball
is bounded by $\tfrac12 Kr^3$ provided that $r\le \frac12 \min\{\frac\pi{\sqrt K},\inj_M(p) \}$.
This proves \eqref{eq: GH distance of balls}.
}



The upper bound in \eqref{e:rauch-bilip} does 
depend on the assumption that $r\le \frac12 \inj_M(p) $.
Moreover the distances within a ball of radius 
$\frac\pi{2\sqrt K}$ have a better upper bound stated in Lemma \ref{l:toponogov-lite} below.
This lemma is a variant of Toponogov's Comparison Theorem (see e.g.\ \cite[Thm.\ 79 in Ch.\ 11]{Pe})
for geodesics that are not necessarily minimizing but whose lengths are bounded in terms of curvature.

Let $M^2_{-K}$ denote the rescaled
hyperbolic plane of curvature $-K$.
For real numbers $a,b>0$ and $\alpha\in[0,\pi]$, denote by 
$\curlyvee_{-K}(a,b,\alpha)$ the length of the side $x_1x_2$
of a triangle $\triangle x_0x_1x_2$ in $M^2_{-K}$
whose sides $x_0x_1$ and $x_0x_2$
equal $a$ and $b$, resp., and the angle at $x_0$ equals $\alpha$.
Note that
\be\label{e:hyperb-vs-eucl}
 \curlyvee_{-K}(a,b,\alpha) \le \curlyvee_0(a,b,\alpha) + \tfrac12Kr^3
 \quad\text{if $a,b\le r\le\tfrac\pi{2\sqrt K}$} ,
\ee
where $\curlyvee_0$ is defined similarly using the Euclidean plane as~$M^2_0$.
This follows from \eqref{final comparision estimate} applied to $M^2_{-K}$
in place of~$M$.

\begin{lemma}\label{l:toponogov-lite}
Let $M=(M^n,g)$ be a complete Riemannian manifold, $|\Sec_M|\le K$ where $K>0$,
$p\in M$, and $0<r\le\frac{\pi}{2\sqrt K}$.
Then
\be\label{e:toponogov-lite}
 d_M(\exp_p(u),\exp_p(v)) \le \curlyvee_{-K}(|u|,|v|,\angle(u,v)) \le |u-v| + \tfrac12Kr^3
\ee
for all $u,v\in T_pM$ such that $|u|,|v|\le r$.
\end{lemma}

\begin{proof}
This lemma is a standard application of Rauch comparison.
We give a proof for the reader's convenience.

Let $\tilde M$ be the rescaled hyperbolic $n$-space of curvature $-K$
and $\tilde p\in\tilde M$.
Denote by $B$ and $\tilde B$ the closed $r$-balls
centered at $p$ and $\tilde p$ in $M$ and $\tilde M$, resp.
Define a map $f\co \tilde B\to B$ by
$
  f = \exp_p\circ I \circ \exp_{\tilde p}^{-1}|_{\tilde B}
$
where $\exp_p$ and $\exp_{\tilde p}$ are the Riemannian exponential maps
of $M$ and $\tilde M$, resp., and $I$ is a linear isometry from $T_{\tilde p}\tilde M$ to $T_pM$.
Since $r<\frac{\pi}{\sqrt K}$, $\exp_p$ is non-degenerate within the $r$-ball.
Therefore, by \cite[Thm. IV.2.5]{Sakai}, the map $f$ does not increase lengths
of smooth curves.

Let $u,v\in T_pM$ be such that $|u|,|v|\le r$
and let $\tilde\gamma$ be a geodesic segment in $\tilde M$ between
the points $\exp_{\tilde p}(I^{-1}(u))$ and $\exp_{\tilde p}(I^{-1}(v))$.
Then $\tilde\gamma$ is contained in~$\tilde B$ and 
\be\label{e:toponogov1}
\length(\tilde\gamma)=\curlyvee_{-K}(|u|,|v|,\angle(u,v)) .
\ee
Since $\gamma:=f\circ\gamma$ is a path connecting $\exp_p(u)$ and $\exp_p(v)$ in $M$,
we have
$$
 d_M(\exp_p(u),\exp_p(v)) \le \length(\gamma) \le \length(\tilde\gamma)
$$
where the second inequality follows from the above mentioned fact that $f$ does not increase lengths.
This and \eqref{e:toponogov1} imply the first inequality in \eqref{e:toponogov-lite}.
The second inequality in \eqref{e:toponogov-lite} follows from \eqref{e:hyperb-vs-eucl}.
\end{proof}

The next lemma is a quantified version of the fact that,
for Riemannian manifolds with two-sided curvature bounds,
collapsing in the Gromov--Hausdorff sense is equivalent
to injectivity radii going to zero
(see \cite{Fu88}, \cite{CFG} or \cite[Ch.~8]{Gr}).
The advantage of Lemma \ref{l:injrad-flat} over collapsing technique
is that it provides bounds independent of the dimension.

\begin{lemma}\label{l:injrad-flat}
Let $M$ be a complete Riemannian manifold
with $|\Sec_M|\le K$ where $K>0$.
Let $p\in M$ and $r>0$ be such that
$Kr^2\le 10^{-3}$ and
$$
 d_{GH}(B_r^M(p),B_r^n) < 10^{-3} r
$$
where $n=\dim M$.
Then $ \inj_M(p) \ge \tfrac9{10}r $.
\end{lemma}

\begin{proof}
Define $\ep=10^{-3}$.
The statement of the lemma is scale invariant so it suffices
to prove it for $r=1$. More precisely, we rescale $M$ by the factor $r^{-1}$.
The rescaled manifold, for which we reuse the notation $M$, satisfies
\be\label{e:flat1}
 |\Sec_M| \le Kr^2\le \ep 
\ee
and
\be\label{e:flat2}
 d_{GH}(B_1^M(p),B_1^n) < \ep  ,
\ee
The desired inequality now takes the form $\inj_M(p) \ge \tfrac9{10}$.
We rewrite it as 
\be\label{e:flat0}
 \inj_M(p) \ge 1 - 100\ep .
\ee
The rest of the proof works for any $\ep\le 10^{-3}$.

First we informally explain the idea of this long and technical proof.
{Since the curvature of $M$ is small, the ball $B_1^M(p)$ is GH close to the set
of vectors in the unit ball of $T_pM$ corresponding to minimizing
geodesics starting at~$p$.
This is shown in Step~1 below. 
(In fact, the proof deals with spheres rather than balls
but we speak about balls in this informal explanation).
If the injectivity radius $\inj_M(p)$ is small, there is a short geodesic loop from $p$ to itself.
Using triangle comparison one can see that minimizing geodesics of length close to~1
cannot have too small angles with this loop.
This part of the argument is contained in Step~5.
One concludes that, if $\inj_M(p)$ is small then
$B_1^M(p)$, and hence $B_1^n$, is GH close to a subset
of the Euclidean unit ball where a significant part (namely a ball of certain smaller radius)
is removed.
Clearly this is impossible in any fixed dimension but
it is not so obvious when $n\to\infty$ while the radius of the removed ball stays fixed.
This issue is handled in Steps 2--4 with Kirszbraun's and Ulam-Borsuk theorems
applied to suitable maps.
Now we proceed with the formal proof.
}

By \eqref{e:flat2}, 
there is a $2\ep$-isometry $f\co B_1^n\to B_1^M(p)$ such that $f(0)=p$.
We denote by $B$ the unit ball in $T_pM$
and construct a map $h\co B_1^n\to B$ as follows:
for every $x\in B_1^n$, choose $h(x)\in B$ such that
$\exp_p(h(x))=f(x)$ and $|h(x)|=d_M(p,f(x))$.
(Note that the choice of $h(x)$ is not necessarily unique).
We proceed in a number of steps.

\smallbreak
{\it Step 1.}
We show that $h$ has a small distortion
on the unit sphere $S^{n-1}=\pd B_1^n$,
see \eqref{e:hx-hy-below} and \eqref{e:hx-hy-above} below.

For every $x,y\in B_1^n$ we have
$
 |f(x)-f(y)| \le |h(x)-h(y)| + \tfrac12\ep
$
by \eqref{e:flat1} and Lemma \ref{l:toponogov-lite} applied to $u=h(x)$, $v=h(y)$ and $r=1$.
Hence
\be\label{e:hx-hy-below}
 |h(x)-h(y)| \ge |f(x)-f(y)| - \tfrac12\ep \ge |x-y| - \tfrac52\ep
\ee
since $f$ is a $2\ep$-isometry.
In particular, since $h(0)=0$,
\be\label{e:hx-below}
 |h(x)| \ge |x| - \tfrac52\ep
\ee
for all $x\in B_1^n$. 

Pick $x,y\in S^{n-1}$.
By \eqref{e:hx-hy-below} with $-x$ in place of $x$,
we have
\be\label{e:hopp1}
 |h(y)-h(-x)|^2 \ge \left(|x+y| - \tfrac52\ep\right)^2 \ge |x+y|^2 - 10\ep
\ee
since $|x+y|\le 2$.
By the parallelogram identity, we have
$$
 |h(y)+h(-x)|^2 + |h(y)-h(-x)|^2 = 2(|h(y)|^2+|h(-x)|^2) \le 4
$$
and
$
 |x+y|^2 + |x-y|^2 = 2(|x|^2+|y|^2) = 4 .
$
These relations and \eqref{e:hopp1} imply that
$$
 |h(y)+h(-x)|^2 \le 4 - |h(y)-h(-x)|^2 \le 4 -|x+y|^2 + 10\ep = |x-y|^2 + 10\ep .
$$
Therefore 
\be\label{e:hopp2}
|h(y)+h(-x)| \le |x-y| + \sqrt{10\ep}
\ee
for all $x,y\in S^{n-1}$.
In particular, substituting $y=x$ yields that 
\be\label{e:hopp3}
 |h(x)+h(-x)| \le \sqrt{10\ep}
\ee
for all $x\in S^{n-1}$. 
By the triangle inequality, \eqref{e:hopp2} and \eqref{e:hopp3} imply that
\be\label{e:hx-hy-above}
 |h(x)-h(y)| \le |h(x)+h(-x)| + |h(y)+h(-x)| \le |x-y| + \ep_1
\ee
where $\ep_1=2\sqrt{10\ep}$.
Note that $\ep_1\le\frac15$ since $\ep\le 10^{-3}$.

\smallbreak
{\it Step 2.}
Let $Z$ be a maximal $\ep_1$-separated subset of $S^{n-1}$.
The inequality \eqref{e:hx-hy-above} implies that the restriction $h|_Z$
is 2-Lipschitz: $|h(x)-h(x)|\le 2 |x-y|$ for any $x,y\in Z$.
Since $T_pM$ is isometric to $\R^n$, 
Kirszbraun's theorem \cite{Kir} implies that $h|_Z$ admits
a 2-Lipschitz extension $\tilde h\co\R^n\to T_pM$.
We need only the restriction of $\tilde h$ to the unit sphere.

Pick $x\in S^{n-1}$. By the maximality of $Z$, there exists $z\in Z$
such that $|x-z|\le\ep_1$.
The 2-Lipschitz continuity of $\tilde h$ implies that $|\tilde h(x)-\tilde h(z)| \le 2\ep_1$,
and \eqref{e:hx-hy-above} implies that $|h(x)-h(z)|\le 2\ep_1$.
Since $\tilde h(z)=h(z)$, it follows that
\be\label{e:tildeh-h}
 |\tilde h(x) - h(x) | \le 4\ep_1 \le \tfrac45 .
\ee
This and \eqref{e:hx-below} imply that
$
 |\tilde h(x)| \ge |h(x)|-\tfrac45 \ge \tfrac15 - \tfrac52\ep > 0 .
$
Thus we can define a continuous map $\phi\co S^{n-1}\to \pd B$ by
$$
 \phi(x) = \frac{\tilde h(x)}{|\tilde h(x)|}, \qquad x\in S^{n-1} .
$$

\smallbreak
{\it Step 3.} We show that $\phi$ maps $S^{n-1}$ surjectively onto $\pd B$.
Arguing by contradiction, suppose that there exists 
$w_0\in\pd B\setminus\phi(S^{n-1})$.
Then $\phi$ is a map from $S^{n-1}$ to the set $\pd B\setminus\{w_0\}$, which is homeomorphic to $\R^{n-1}$.
Hence, by the Borsuk-Ulam Theorem, there exists $x_0\in S^{n-1}$ such that
$\phi(x_0)=\phi(-x_0)$.
This means that the vectors $\tilde h(x_0)$ and $\tilde h(-x_0)$ are positively proportional:
\be\label{e:tildeh-prop}
  \tilde h(-x_0) = \lambda \tilde h(x_0) \qquad\text{for some $\lambda>0$} .
\ee

Let $u=h(x_0)$ and $v=h(-x_0)$.
By \eqref{e:hx-below} we have $1-\frac52\ep\le |u|,|v|\le 1$. Therefore
\be\label{e:plus-product}
 \langle u,u-v\rangle + \langle v,u-v\rangle = |u|^2-|v|^2 \ge (1-\tfrac52\ep)^2-1 \ge -5\ep
\ee
On the other hand, $|u-v|\ge 2-\frac52\ep$ by \eqref{e:hx-hy-below}. Hence
\be\label{e:minus-product}
 \langle u,u-v\rangle - \langle v,u-v\rangle = |u-v|^2 \ge (2-\tfrac52\ep)^2 \ge 4-10\ep .
\ee
Adding \eqref{e:minus-product} to \eqref{e:plus-product} and dividing by two yields that
$
 \langle u,u-v\rangle \ge 2 - \tfrac{15}2\ep 
$.
Since $|\tilde h(x_0)-u|\le 4\ep_1$ by \eqref{e:tildeh-h}
and $|u-v|\le 2$, it follows that
$$
 \langle \tilde h(x_0),u-v\rangle \ge \langle u,u-v\rangle - |\tilde h(x_0)-u|\cdot|u-v| 
 \ge 2 - \tfrac{15}2\ep - 8\ep_1 > 0
$$
where the last inequality follows from the bounds 
$\ep_1\le\frac15$ and $\ep\le 10^{-3}$.
Similarly, switching the roles of $x_0$ and $-x_0$ we obtain that
$$
  \langle \tilde h(-x_0),u-v\rangle =  - \langle \tilde h(-x_0),v-u\rangle < 0 .
$$
Thus the products $\langle \tilde h(x_0),u-v\rangle$ and $\langle\tilde h(-x_0),u-v\rangle$
have opposite signs.
This contradicts \eqref{e:tildeh-prop},
therefore $\phi$ is surjective.

\smallbreak
{\it Step 4.}
The surjectivity of $\phi$ implies that for every $v_0\in \pd B$ there exists 
$z\in S^{n-1}$ such that
\be\label{e:angle-to-hz}
  \angle(v_0,h(z)) \le \arcsin \frac {2\ep_1}{|h(z)|} .
\ee
Indeed, let $x\in S^{n-1}$ be such that $\phi(x)=v_0$. By the construction of $Z$,
there exists $z\in Z$ such that $|x-z|\le\ep_1$.
Then $|\tilde h(x)-h(z)|\le 2\ep_1$ since $\tilde h$
is a 2-Lipschitz extension of $h|_Z$.
The direction of $v_0=\phi(x)$ is the same as that of $\tilde h(x)$, hence
\be\label{e:angle-to-hz0}
 \sin\angle (v_0,h(z)) = \sin\angle(\tilde h(x),h(z)) \le \frac{|\tilde h(x)-h(z)|}{|h(z)|}
  \le \frac{2\ep_1}{|h(z)|}
\ee
where the first inequality follows from the Euclidean law of sines
in the triangle with vertices $0$, $\tilde h(x)$, $h(z)$.
Also note that $|\tilde h(x)-h(z)|<2\ep_1\le\frac25<|h(z)|$ by \eqref{e:hx-below}.
Hence $|\tilde h(x)-h(z)|$ is not the largest side of this Euclidean triangle
and therefore $\angle (v_0,h(z))<\frac\pi2$.
This and \eqref{e:angle-to-hz0} imply \eqref{e:angle-to-hz}.

\smallbreak
{\it Step 5.}
Now we prove \eqref{e:flat0}. Let $r_0=\inj_M(p)$.
We may assume that $r_0<1$, otherwise \eqref{e:flat0} holds trivially.
Since $|\Sec_M|\le\ep<1$ and $r_0<1<\pi$, 
Klingenberg's Lemma (see \cite[Lemma 16 in Ch.\ 5]{Pe})
implies that there exists a geodesic loop
$\gamma$ of length $2r_0$ in $M$ starting and ending at~$p$.
Equivalently, there exists $v\in T_pM$ such that $|v|=2r_0$ and $\exp_p(v)=p$.
Let $v_0=v/2r_0$, then $v_0\in\pd B$.
By the result of Step~4, there exists $z\in S^{n-1}$ satisfying \eqref{e:angle-to-hz}.
Define $u=h(z)$, $a=|u|=|h(z)|$, and $\beta=\angle(v,u)=\angle(v_0,u)$,
then \eqref{e:angle-to-hz} takes the form
\be\label{e:betabound}
  \beta\le \arcsin\frac{2\ep_1}{a}
\ee
By the definition of $h$ and \eqref{e:hx-below} we have
\be\label{e:acloseto1}
  1-\tfrac52\ep \le a \le 1
\ee
We apply Lemma \ref{l:toponogov-lite} with parameters $K=\ep$ and $r=2r_0<2$ and obtain that
$$
 d_M(p,f(z)) = d_M(\exp_p(v),\exp_p(u)) \le \curlyvee_{-\ep}(2r_0,a,\beta)
$$
(recall that $\exp_p(u)=\exp_p(h(z))=f(z)$ by the definition of~$h$).
On the other hand,
$
 d_M(p,f(z)) = |h(z)| = a
$
by the definition of~$h$. Thus
\be\label{e:distbound}
 a \le \curlyvee_{-\ep}(2r_0,a,\beta) .
\ee

We deduce \eqref{e:flat0} from
\eqref{e:betabound}, \eqref{e:acloseto1}, and \eqref{e:distbound}
via elementary hyperbolic geometry.
Let $\tilde M=M^2_{-\ep}$ and construct points $\tilde p,\tilde p_1,\tilde z\in\tilde M$
such that $d_{\tilde M}(\tilde p,\tilde p_1)=2r_0$, $d_{\tilde M}(\tilde p,\tilde z)=a$
and $\angle\tilde z\tilde p\tilde p_1=\beta$.
By construction we have $d_{\tilde M}(\tilde z,\tilde p_1)=\curlyvee_{-\ep}(2r_0,a,\beta)$,
hence, by \eqref{e:distbound},
\be\label{e:ephyp1}
 d_{\tilde M}(\tilde z,\tilde p) \le d_{\tilde M}(\tilde z,\tilde p_1)  .
\ee
Let $\ell\subset\tilde M$ be the geodesic line through $\tilde p$ and $\tilde p_1$.
Let $\tilde q\in\ell$ be the orthogonal projection of $\tilde z$ to~$\ell$
and define $r_1=d_{\tilde M}(\tilde p,\tilde q)$.
The inequality \eqref{e:ephyp1} implies that
$
 d_{\tilde M}(\tilde p,\tilde q) \le d_{\tilde M}(\tilde p_1,\tilde q) 
$,
hence $r_1\le\frac12d_{\tilde M}(\tilde p,\tilde p_1) = r_0$.
Thus it suffices to prove that $r_1\ge 1-100\ep$.
Define $b=d_{\tilde M}(\tilde z,\tilde q)$ and let $\tilde z_1\in\tilde M$
be the point symmetric to $\tilde z$ with respect to~$\ell$.
 From the triangle $\triangle\tilde p\tilde z\tilde z_1$
with sides $d_{\tilde M}(\tilde p,\tilde z)=d_{\tilde M}(\tilde p,\tilde z_1)=a$
and angle $\angle \tilde z\tilde p\tilde z_1=2\beta$
one sees that
$$
 2b = d_{\tilde M}(\tilde z,\tilde z_1) = \curlyvee_{-\ep}(a,a,2\beta)
 \le \curlyvee_0(a,a,2\beta) + \tfrac12\ep = 2a\sin\beta + \tfrac12\ep,
$$
where the inequality in the middle follows from \eqref{e:hyperb-vs-eucl}
since $a\le 1$ by \eqref{e:acloseto1}.
Hence
\be\label{e:ephyp2}
 b \le a\sin\beta + \tfrac14\ep \le 2\ep_1+\tfrac14\ep
\ee
by \eqref{e:betabound}.
Now from the triangle $\triangle \tilde q\tilde p\tilde z$
with sides $d_{\tilde M}(\tilde q,\tilde p)=r_1$ and 
$d_{\tilde M}(\tilde q,\tilde z)=b$ 
and angle $\angle \tilde z\tilde q\tilde p=\frac\pi2$ one sees that
$$
 a = d_{\tilde M}(\tilde p,\tilde z) = \curlyvee_{-\ep}(r_1,b,\tfrac\pi2)
 \le \curlyvee_0(r_1,b,\tfrac\pi2) + \tfrac12\ep = \sqrt{r_1^2+b^2} + \tfrac12\ep .
$$
where the inequality again follows from \eqref{e:hyperb-vs-eucl}.
This, \eqref{e:acloseto1}, and \eqref{e:ephyp2} imply that
$$
 r_1^2 \ge (a-\tfrac12\ep)^2 - b^2 \ge (1-3\ep)^2 - (2\ep_1+\tfrac14\ep)^2
 \ge 1 -4\ep_1^2-7\ep
$$
where the last inequality holds since 
$\ep_1\le\frac15$ and $\ep\le 10^{-3}$.
Substituting $\ep_1=2\sqrt{10\ep}$ we obtain that $r_1^2 \ge 1-167\ep$.
Since $\ep\le 10^{-3}$, $\sqrt{1-167\ep} \ge 1-100\ep$,
thus $r_1\ge 1-100\ep$.
Since $\inj_M(p)=r_0\ge r_1$, this proves \eqref{e:flat0}
and Lemma \ref{l:injrad-flat} follows.
\end{proof}

{
Now we prove Proposition \ref{p:injrad}.
We restate the first part of Proposition \ref{p:injrad}
as the following lemma,
which also provides an explicit value of the constant $\cfour$.
}


\begin{lemma}
\label{l:injrad}
Let $K>0$ and let $M, \tilde M$ be complete $n$-dimensional
Riemannian manifolds with $|\Sec_M|\le K$ and $|\Sec_{\tilde M}|\le K$,
and
\be\label{e:injrad00}
 0 < r \le \min\{\tfrac\pi{\sqrt K}, \inj_{\tilde M}(\tilde x) \} .
\ee
Then
\be\label{e:injrad-goal}
 \inj_M(x) \ge r - 10^6 \cdot d_{GH}(B_r^M(x),B_r^{\tilde M}(\tilde x)) .
\ee
%
%
\end{lemma}

\begin{proof}
%
Define 
\be\label{e:injrad-delta}
 \de =  d_{GH}(B_r^M(x),B_r^{\tilde M}(\tilde x)) .
\ee
We may assume that $\de<10^{-6}r$, otherwise \eqref{e:injrad-goal} is trivial. 
We may also assume that $\de>0$, otherwise $\inj_M(x)\ge r$ since $B_r^M(x)$
is isometric to $B_r^{\tilde M}(\tilde x)$.
Below we prove a stronger inequality $\inj_M(x) \ge r - 20\de$
assuming that $0<\de<10^{-6}r$.

First we apply Lemma \ref{l:injrad-flat}
to the smaller ball $B_\rho^M(x)$ where $\rho=10^{-2}r$.
To verify the assumptions of Lemma \ref{l:injrad-flat},
observe that
\be\label{e:Krho^2}
 K\rho^2 = 10^{-4} Kr^2 \le 10^{-4} \pi^2 < 10^{-3} .
\ee
{\ctext since $ Kr^2 \le\pi^2<10$ by \eqref{e:injrad00}.}
Then by \eqref{eq: GH distance of balls}, \eqref{e:injrad00}, and \eqref{e:Krho^2},
$$
 d_{GH}(B_\rho^{\tilde M}(\tilde x),B_\rho^n)\le {\tfrac14} K\rho^3 \le \tfrac14\cdot 10^{-3} \rho.
$$
Since $\rho<r$, \eqref{e:injrad-delta} and the definition of the pointed GH distance
imply that
$$
d_{GH}(B_\rho^M(x),B_\rho^{\tilde M}(\tilde x)) \le 3\de .
$$
Hence, by the triangle inequality for $d_{GH}$,
\be\label{e:rho-flatness}
 d_{GH}(B_\rho^M(x),B_\rho^n) \le 3\de +  \tfrac14\cdot 10^{-3}\rho < 10^{-3}\rho
\ee
since $\de \le 10^{-6} r=10^{-4}\rho$.
By \eqref{e:Krho^2} and \eqref{e:rho-flatness},
the assumptions of Lemma \ref{l:injrad-flat} are satisfied 
for $p=x$ and $\rho$ in place of~$r$.
Now  Lemma \ref{l:injrad-flat} implies that
\be\label{e:r0below}
 \inj_M(x) > \tfrac9{10}\rho > 20\de .
\ee
We need this preliminary lower bound for the subsequent argument to work.

Let $r_0=\inj_M(x)$ and assume towards a contradiction that
\be\label{e:r0bounds}
r_0 < r-20\de .
\ee
Since $\Sec_M\le K$ and $r_0<r\le\frac\pi{\sqrt K}$, 
by Klingenberg's Lemma (see \cite[Lemma 16 in Ch.\ 5]{Pe}) 
there is a geodesic loop
$\gamma$ of length $2r_0$ in $M$
starting and ending at~$x$.
Let $y$ be the midpoint of this loop
and $\gamma_1$, $\gamma_2$ the two halves of~$\gamma$
between $x$ and $y$.
Note that $\gamma_1$ and $\gamma_2$
are minimizing geodesics and $d_M(x,y)=r_0$.

{
By \eqref{e:injrad-delta}, there is a correspondence $\mathcal R$
between the balls $B_r^M(x)$ and $B_r^{\tilde M}(\tilde x)$
with distortion at most $2\de$, see \cite[Theorem 7.3.25]{BBI}.
Recall that a \textit{correspondence} between metric spaces $X$ and $\tilde X$
is a subset $\mathcal R\subset X\times\tilde X$ with surjective 
coordinate projections to $X$ and~$\tilde X$,
and the \textit{distortion} of $\mathcal R$ is defined by
$$
 \operatorname{dis}\mathcal R := \sup \{|d_X(x,y)-d_{\tilde X}(\tilde x,\tilde y)| : (x,\tilde x), (y,\tilde y)\in\mathcal R \} 
$$
We fix $\mathcal R$ with $ \operatorname{dis}\mathcal R\le 2\de$
for $X=B_r^M(x)$ and $\tilde X=B_r^{\tilde M}(\tilde x)$
and say that $y\in B_r^M(x)$ and $\tilde y\in B_r^{\tilde M}(\tilde x)$
\textit{correspond} to each other if $(y,\tilde y)\in\mathcal R$.
Since we are working with pointed GH distance, the centers $x$ and $\tilde x$ 
correspond to each other.}


Pick $\tilde y\in B_r^{\tilde M}(\tilde x)$
corresponding to the point $y$ constructed above.
Then
$$
 d_{\tilde M}(\tilde x,\tilde y)\le d_M(x,y)+2\de = r_0+2\de <r-18\de
$$
by \eqref{e:r0bounds}.
Since $\inj_{\tilde M}(\tilde x)\ge r$, it follows that
there is a point $\tilde z\in B^{\tilde M}_r(\tilde x)$
such that $\tilde y$ belongs to the minimizing geodesic from $\tilde x$ to $\tilde z$
and $d_{\tilde M}(\tilde y,\tilde z)=18\de$.
Pick $z\in B_r^M(x)$ corresponding to~${\tilde z}$ and 
let $a=d_M(y,z)$.
Since
{$\operatorname{dis}\mathcal R\le 2\de$}
and the triangle inequality in $\tilde M$ turns to equality 
for $\tilde x,\tilde y,\tilde z$,
we have
$$
 r_0+a=d_M(x,y)+d_M(y,z) \le {d_M(\tilde x,\tilde z)+4\de  } \le d_M(x,z)+6\de .
$$
Thus
\be\label{e:topo0}
 d_M(x,z) \ge r_0+a-6\de .
\ee
{Also note that
$
 |a-18\de| = |d_M(y,z)-d_{\tilde M}(\tilde y,\tilde z)| \le \operatorname{dis}\mathcal R\le 2\de .
$
Therefore
\be\label{e:abounds}
 16\de \le a \le 20\de < r_0
\ee
where the last inequality follows from \eqref{e:r0below}.}

Let $\gamma_3$ be a minimizing geodesic between $y$ and $z$.
Consider the angles $\angle(\gamma_3,\gamma_1)$ and $\angle(\gamma_3,\gamma_2)$ at $y$.
Their sum equals $\pi$, hence at least one of them is no greater than~$\frac\pi2$.
{Assume w.l.o.g.\ that $\angle(\gamma_3,\gamma_1)\le\frac\pi2$
and let $u,v\in T_yM$ be the vectors tangent to $\gamma_3$ and $\gamma_1$, resp.,
and such that $|u|=|v|=a$. Since $\angle(u,v)=\angle(\gamma_3,\gamma_1)\le\frac\pi2$,
we have $|u-v|\le \sqrt2 a$.
Note that $z=\exp_y(u)$. Let $x'=\exp_y(v)$.
Then, by \eqref{e:abounds}, $x'$ lies on $\gamma_1$
at distance $r_0-a$ from~$x$.
By Lemma \ref{l:toponogov-lite} applied to $p=y$
and $a$ in place of $r$,
$$
 d_M(x',z) \le |u-v| + \tfrac12 Ka^3 \le \sqrt2a + \tfrac12 Ka^3 .
$$
Hence
\be\label{e:topo01}
 d_M(x,z) = d_M(x,x') + d_M(x',z) \le r_0-a + \sqrt2a + \tfrac12 K a^3 .
\ee
By \eqref{e:abounds} and the assumption $\de<10^{-6}r$,
$$
\tfrac12 K a^2 \le 200 K\de^2 < 10^{-9} Kr^2 < 10^{-8}
$$
since $ Kr^2 \le\pi^2<10$ by \eqref{e:injrad00}.
This and \eqref{e:topo01} imply that
$$
 d_M(x,z) \le r_0 + (\sqrt2+10^{-8} -1 )a < r_0+\tfrac12a .
$$
This and \eqref{e:topo0} imply that $a<12\de$.
This contradicts \eqref{e:abounds},
hence the assumption \eqref{e:r0bounds} was false.
Thus $r_0\ge r-20\de$ and Lemma \ref{l:injrad} follows.
}
\end{proof}

\begin{proof}[Proof of Proposition \ref{p:injrad}]
Lemma \ref{l:injrad} implies the first claim of Proposition \ref{p:injrad}
for any $\cfour\ge 10^6$.
To prove the second one,
let $f\co M\to\tilde M$ be a $(1+\ep,\de)$-quasi-isometry
and $\rho=\min\{\inj_{\tilde M} , \tfrac\pi{\sqrt K} \}$.
By Lemma \ref{l:QI-for-balls},
$$
 d_{GH}(B^M_\rho(x),B^{\tilde M}_\rho(f(x))) \le 2\ep\rho+{5}\de
$$
for all $x\in M$.
Hence by Lemma \ref{l:injrad},
$$
 \inj_M(x) \ge \rho - 10^6 (2\ep\rho+{5}\de)
 = (1- 2\cdot 10^6 \ep) \rho -  {5} \cdot 10^6 \de
$$
for all $x\in M$. Thus the second claim of Proposition \ref{p:injrad}
holds for any $\cfour\ge {5}\cdot 10^6$.
\end{proof}


\section{Proof of Theorem \ref{t:manifold}}
\label{sec:manifold-proof}

Similarly to the proof of Theorem \ref{t:surface}, 
we first observe that the the statement of Theorem \ref{t:manifold}
is scale invariant and it suffices to prove it for $r=1$.
When $r=1$, Theorem \ref{t:manifold} is equivalent to the following proposition
with $\de_0(n)={\sigma_2}(n)>0$.

\begin{proposition}
\label{p:manifold}
For every positive integer $n$
there exists $\de_0=\de_0(n)>0$
such that the following holds.
Let $0<\de<\de_0$ and let $X$ be a metric space
which is $\de$-intrinsic and $\de$-close to $\R^n$ at scale~1.
Then there exists a complete $n$-dimensional Riemannian manifold $M$
such that

1. There is a $(1+\Cone\de,\Cone\de)$-quasi-isometry from $X$ to $M$.

2. The sectional curvature $\Sec_M$ of $M$ satisfies $|\Sec_M|\le \Ctwo\de$.

3. The injectivity radius of $M$ is bounded below by $1/2$.
\end{proposition}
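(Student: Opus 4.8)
The plan is to follow the outline given after the statement of Theorem~\ref{t:manifold}: construct $M$ as a Whitney-type embedding of $X$ into a Hilbert space $E$, recognize the image as a Hausdorff approximation of a smooth submanifold via Theorem~\ref{t:surface}, and then equip that submanifold with a Riemannian metric making a natural map $X\to M$ almost isometric at scale~$1$. First I would fix, for each point $x$ in a maximal $\tfrac1{100}$-separated net $X_0=\{q_i\}\subset X$, a $2\de$-isometry $\psi_i\colon B_1^X(q_i)\to B_1^n$ with $\psi_i(q_i)=0$; these exist because $X$ is $\de$-close to $\R^n$ at scale~$1$, and Lemma~\ref{l:gh algorithm} (or Remark~\ref{rem:two gh aproximations}) guarantees that on overlaps $B_1^X(q_i)\cap B_1^X(q_j)$ the two charts differ by a near-orthogonal affine map up to error $C\de$. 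Using a partition of unity subordinate to the cover $\{B_{1/2}^X(q_i)\}$ together with the coordinate maps $\psi_i$, and a bounded collection of ``test'' linear functionals as in the classical Whitney embedding argument, I would build a map $\Phi\colon X\to E$ (with $E=\R^N$, $N=N(n)$ large, when $X$ is bounded, and $E=\ell^2$ otherwise) whose restriction to each $B_1^X(q_i)$ is, in the $\psi_i$-coordinates, $C\de$-close to an affine isometric embedding of $B_1^n$ into $E$.

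The key structural claim is then that $\Sigma:=\Phi(X)\subset E$ is $C\de$-close to $n$-flats at scale $c$ for a dimensional constant $c>0$: near each $\Phi(q_i)$ the set $\Sigma$ lies within $C\de$ of the affine $n$-plane $A_i$ spanned by the image of the $i$-th chart, because all the gluing is $C\de$-controlled and the charts themselves are $2\de$-isometries to Euclidean balls. Rescaling by $1/c$ and applying Theorem~\ref{t:surface} (Proposition~\ref{p:surface}) produces a closed smooth $n$-dimensional submanifold $M\subset E$ with $d_H(\Sigma,M)\le C\de$, second fundamental form bounded by $C\de$, normal injectivity radius $\ge c'$, and with a $C$-Lipschitz normal projection $P_M$ whose higher derivatives are $O(\de)$. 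Composing, $\pi:=P_M\circ\Phi\colon X\to M$ is the candidate quasi-isometry. Using the almost-affine-isometry property of $\Phi$ in each chart, the bound $d_H(\Sigma,M)\le C\de$, and the $C^k$-control on $P_M$ from Theorem~\ref{t:surface}, one checks that $\pi$ distorts distances $<1$ by at most $C\de$ and that $\pi(X)$ is a $C\de$-net in $M$ (in the chordal metric of $E$, hence also in the intrinsic metric of $M$ since $M$ has small curvature).

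Next I would install the Riemannian metric. Rather than using the metric induced from $E$ (which need not match $d_X$ even at small scale), I would define $g$ on $M$ as in Section~\ref{subsec:riemannian metric}: pull back the flat metrics of the charts $\psi_i$ under the local inverse of $\pi$ and average them with a smooth partition of unity on $M$. Because the charts agree up to $C\de$ in $C^k$ and are $2\de$-isometries to $B_1^n$, the resulting $g$ is $C\de$-$C^2$-close to a flat metric in each local chart, so $|\Sec_M|\le C\de$ follows from standard formulas for curvature in terms of the metric and its first two derivatives; the existence of charts of definite size in which $g$ is nearly Euclidean also gives $\inj_M\ge \tfrac12$ (shrinking the earlier constant $c'$ if necessary). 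With this $g$, the map $\pi\colon X\to(M,g)$ becomes an ``almost isometry at scale~$1$'': for $x,y\in X$ with $\min\{d_X(x,y),d_M(\pi x,\pi y)\}<1$ one has $|d_M(\pi x,\pi y)-d_X(x,y)|<C\de$, by comparing both distances with the Euclidean distance in a common chart. Finally, since $X$ is $\de$-intrinsic and $(M,g)$ is intrinsic (being a Riemannian manifold, so $\de$-intrinsic for every $\de>0$), Lemma~\ref{l:local-gh-implies-qi} upgrades this local almost-isometry into a genuine $(1+C\de,C\de)$-quasi-isometry from $X$ to $M$, giving claim~(1); claims~(2) and~(3) are the curvature and injectivity-radius bounds just obtained. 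Completeness of $M$ follows from closedness in $E$ together with the uniform lower bound on $\inj_M$.

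\textbf{Main obstacle.} The genuinely delicate part is not any single estimate but the \emph{coherent bookkeeping} of the Whitney gluing: one must arrange that the finitely-many ($\le N(n)$) overlapping charts around each point can be simultaneously straightened by near-orthogonal maps so that the composite map $\Phi$ has image uniformly $C\de$-close to $n$-flats, with all implied constants depending only on $n$ and not on the (possibly infinite) combinatorics of $X_0$. This is exactly where the controlled local structure ($\de$-closeness to $\R^n$ at a \emph{fixed} scale, finiteness of local overlaps, and the near-uniqueness of $\de$-isometries to $B_1^n$ from Remark~\ref{rem:two gh aproximations}) must be combined carefully; once $\Phi$ is in hand with the right properties, Theorem~\ref{t:surface} and Lemma~\ref{l:local-gh-implies-qi} do the heavy lifting and the remaining verifications are routine.
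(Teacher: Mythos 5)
Your overall architecture is the paper's (net, local charts, a Whitney-type embedding, Theorem~\ref{t:surface}, the normal projection, a pullback metric averaged by a partition of unity, and Lemma~\ref{l:local-gh-implies-qi} at the end), but the step you route all the $\de$-smallness through is false, not merely unproven. You claim the glued map $\Phi$ can be arranged so that its restriction to each unit chart is $C\de$-close to an \emph{affine isometric} embedding, hence that $\Sigma=\Phi(X)$ is $C\de$-close to $n$-flats at a definite scale $c=c(n)$, so that Theorem~\ref{t:surface} returns $M$ with $d_H(\Sigma,M)\le C\de$ and second fundamental form $\le C\de$. First, a partition-of-unity combination of charts is never locally $C\de$-close to an affine isometry: the bump functions have derivatives of order $1$, so the differential of $\Phi$ is order-one far from an isometry wherever they vary (in the paper the corresponding map $F$ is only uniformly bi-Lipschitz, Lemma~\ref{l:Fsmooth}). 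Second, and more fundamentally, no map with your claimed properties can exist in general. Take $X$ to be the round sphere of curvature $\de$ in dimension $n=2$; by \eqref{eq: GH distance of balls} it is $\de$-close to $\R^2$ at scale $1$ and it is intrinsic. If $\Sigma$ were $C\de$-close to $2$-flats at scale $c$, Theorem~\ref{t:surface} would produce a closed surface $M\subset E$ with second fundamental form $\le C\de c^{-2}$, whose induced metric has $|\Sec_M|\le C\de^2$ by the Gauss equation, and which by your own argument (via $P_M\circ\Phi$ and Lemma~\ref{l:local-gh-implies-qi}) is $(1+C\de,C\de)$-quasi-isometric to $X$. Rescaling by $\sqrt\de$ and letting $\de\to0$ then gives complete surfaces with curvature tending to $0$ that GH-converge, without collapse (the limit has full dimension, as in the end of the proof of Lemma~\ref{l:injrad}), to the unit round sphere; non-collapsed limits of manifolds with curvature tending to $0$ are flat, a contradiction. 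This is precisely why the paper settles for less at this stage: $\Sigma$ is only $Cr_0^2$-close to flats at a constant scale $r_0$ (Lemma~\ref{l:Sigma-near-flat} requires $r\ge C\de^{1/2}$), the interpolated $M$ has second fundamental form bounded by a \emph{constant} and $d_H(\Sigma,M)\le Cr_0^2$ (Lemma~\ref{l:submanifold}), and the factor $\de$ re-enters only through the statement that the transition maps $\psi_j^{-1}\circ\psi_i$ are $C\de$-close to the affine isometries $A_{ij}$ (Lemmas~\ref{l:sameprojection} and~\ref{l:transdelta}); only then is the averaged pullback metric $C\de$-close to Euclidean in charts. Your subsequent verifications lean on $d_H(\Sigma,M)\le C\de$ and $O(\de)$ bounds for $P_M$, so they do not survive once this intermediate claim is corrected without reorganizing the argument along those lines.

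A second, independent gap is the injectivity radius. Nearly Euclidean charts of a definite size $c(n)\ll\tfrac12$ only exclude geodesic loops of length below roughly $2c(n)$; they say nothing about loops of length between $2c(n)$ and $1$, so "shrinking $c'$ if necessary" cannot yield $\inj_M\ge\tfrac12$. The paper gets this bound from global information: since $\Psi$ is a $(1+C\de,C\de)$-quasi-isometry and $X$ is $\de$-close to $\R^n$ at scale $1$, unit balls of $(M,g)$ are GH $C\de$-close to $B_1^n$, and then Proposition~\ref{p:injrad} (whose proof needs Klingenberg's lemma, Toponogov comparison and a collapsing/Alexandrov-limit argument) applied with $\tilde M=\R^n$ and $\rho=1$ gives $\inj_M\ge 1-C\de>\tfrac12$. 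You would need to invoke or reprove that result; it does not follow from the local chart structure.
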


The proof of Proposition \ref{p:manifold} occupies
the rest of this section, which is split into several subsections.

\begin{remark}\label{de 0 value}
{In the proof of Proposition \ref{p:manifold} the bounds $C_j$ etc.\ are constructed via explicit arguments.
Thus, by following the steps of the proof, one can obtain
an explicit formula for the value  $\de_0(n)$.
However, the details of this go outside the framework of this {\ctext paper.}}
\end{remark}

%
We recycle the letter $r$ for use in other notation.
We fix $n$ and assume that a metric space $X$ satisfies the assumption
of the proposition for a sufficiently small $\de>0$.

Fix a maximal $\frac1{100}$-separated set $X_0\subset X$.
We say that two points $x,y\in X_0$ are \textit{adjacent}
if $d_X(x,y)<1$
and say that they are \textit{neighbors} if $d_X(x,y)<\tfrac12$.

The adjacency relation defines a graph which
we refer to as the \textit{adjacency graph}.
The set of vertices of this graph is $X_0$
and the edges are between all pairs of adjacent points.
We need the following properties of this graph.

\begin{lemma}
\label{l:adjgraph}
1. The adjacency graph is connected.

2. Its vertex degrees are bounded by a {number $N_1(n)$} depending only on $n$.
\end{lemma}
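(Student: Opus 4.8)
The plan is to derive both statements from the two standing hypotheses on $X$: that $X$ is $\de$-intrinsic and that $X$ is $\de$-close to $\R^n$ at scale~$1$. Here $X_0\subset X$ is a fixed maximal $\tfrac1{100}$-separated set, hence a $\tfrac1{100}$-net in $X$, and adjacency of $x,y\in X_0$ means $d_X(x,y)<1$.

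\emph{Connectedness.} First I would show that any $x\in X$ can be joined to any $y\in X$ by a chain of points of $X_0$ in which consecutive points are adjacent. Since $X$ is $\de$-intrinsic, there is a $\de$-straight $\de$-chain $x=z_1,z_2,\dots,z_N=y$ in $X$; in particular consecutive points satisfy $d_X(z_i,z_{i+1})<\de<1$. Using that $X_0$ is a $\tfrac1{100}$-net, pick for each $i$ a point $w_i\in X_0$ with $d_X(w_i,z_i)<\tfrac1{100}$, taking $w_1,w_N$ to be fixed nets points near $x,y$. Then by the triangle inequality $d_X(w_i,w_{i+1})<\de+\tfrac2{100}<1$ once $\de<\tfrac1{100}$, so consecutive $w_i$ are adjacent and the path $w_1,\dots,w_N$ lies in the adjacency graph. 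Applying this with $x,y$ themselves ranging over $X_0$ shows the adjacency graph is connected.

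\emph{Bounded degree.} This is where the local flatness hypothesis enters, and it is the main point. Fix $x\in X_0$ and consider its neighbors in the adjacency graph, i.e.\ the set $S=\{y\in X_0: d_X(x,y)<1\}$. All such $y$ lie in the ball $B_1^X(x)$, and this set is $\tfrac1{100}$-separated because $X_0$ is. Now $X$ is $\de$-close to $\R^n$ at scale~$1$, so there is a $2\de$-isometry $h\co B_1^X(x)\to B_1^n\subset\R^n$. Under $h$ the set $S$ maps to a subset of the Euclidean unit ball whose pairwise distances are all at least $\tfrac1{100}-2\de>\tfrac1{200}$ (for $\de$ small). A standard volume/packing bound in $\R^n$ shows that a $\tfrac1{200}$-separated subset of $B_1^n$ has at most $N=N(n)$ elements, namely $N\le (1+400)^n=401^n$ by comparing volumes of disjoint balls of radius $\tfrac1{400}$ centered at these points, all contained in $B_{1+1/400}^n$. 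Hence $|S|\le N(n)$, which bounds every vertex degree of the adjacency graph by a constant depending only on $n$.

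The routine inequalities (choosing how small $\de_0$ must be so that $\de<\tfrac1{100}$ and $2\de<\tfrac1{200}$ etc.) are absorbed into the standing assumption of the section that $\de$ is sufficiently small depending on $n$. The only real content is the reduction of the degree bound to a Euclidean packing estimate via the $2\de$-isometry to $B_1^n$; the connectedness half is a direct unwinding of the $\de$-intrinsic property together with the net property of $X_0$.
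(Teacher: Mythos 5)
Your proposal is correct and follows essentially the same route as the paper: connectedness by pushing a $\de$-chain from the $\de$-intrinsic property onto the $\tfrac1{100}$-net $X_0$ via the triangle inequality, and the degree bound by transporting the $\tfrac1{100}$-separated set of neighbors through a $2\de$-isometry into $B_1^n$ and invoking a Euclidean packing bound. The only difference is that you spell out the volume-comparison constant explicitly, which the paper leaves implicit.
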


\begin{proof}
1. Let $x,y\in X_0$.
Since $X$ is $\de$-intrinsic, there is a $\de$-chain
$x_1,\dots,x_N\in X$ with $x_1=x$ and $x_N=y$.
For each $x_i$, there is a point $x_i'\in X_0$
with $d_X(x_i,x_i')\le\frac1{100}$.
By the triangle inequality, $d_X(x_i',x_{i+1}')<2\de+\frac1{50}<1$ for all $i$,
and we may assume that $x_1'=x$ and $x_N'=y$.
Then the sequence $x_1',\dots,x_N'$
is a path connecting $x$ to $y$ in the adjacency graph.

2. Let $q\in X_0$.
Since $d_H(B_1(q),B_1^n)<\de$, there exists a $2\de$-isometry
$f\co B_1(q)\to B_1^n$.
Let $Y=X_0\cap B_1(q)$ be the set of points
adjacent to~$q$. Since $Y$ is $\frac1{100}$-separated, its image
$f(Y)$ is a $(\frac1{100}-2\de)$-separated subset of $B_1^n$.
We may assume that $\de$ is so small that $\frac1{100}-2\de>\frac1{200}$.
Then the cardinality of~$Y$
is no greater than the maximum possible number of $\frac1{200}$-separated points in $B_1^n$.
\end{proof}

Lemma \ref{l:adjgraph} implies that the set $X_0$ is at most countable.
In the sequel we assume that $X_0$ is countably infinite, $X_0=\{q_i\}_{i=1}^\infty$.
In the case when $X_0$ is finite, the proof is the same
except that the indices are restricted to a finite set.

\subsection{Approximate charts}
\label{subsec:app charts}

Fix a collection of points $\{p_i\}_{i=1}^\infty$ in $\R^n$ such that
the Euclidean unit balls $D_i:=B_1(p_i)$ are disjoint.
For $r>0$, we denote by $D_i^r$ the Euclidean ball $B_r(p_i)\subset\R^n$.

Recall that $X_0=\{q_i\}_{i=1}^\infty$.
For each $i\in\N$ we have $d_{GH}(B_1(q_i),D_i)<\de$
since $D_i$ is isometric to $B_1^n$.
Recall that here we are dealing with pointed GH
distance {between}  balls where the centers
are distinguished points.
Hence there exists a $2\de$-isometry $f_i\co B_1(q_i)\to D_i$
such that $f_i(q_i)=p_i$.

We fix $2\de$-isometries $f_i\co B_1(q_i)\to D_i$, $i\in\N$,
for the rest of the proof.
The balls $D_i$ and the maps $f_i$ play the role
of coordinate charts in~$X$.
The next lemma provides {a kind of transition map}
between charts.

\begin{lemma}
\label{l:transition}
For each pair of adjacent points $q_i,q_j\in X_0$, there exists
an affine isometry $A_{ij}\co\R^n\to\R^n$ such that 
\be
\label{e:transition}
 |A_{ij}(f_i(x))-f_j(x)|<\Ctwentytwo\de
\ee
for every $x\in B_1(q_i)\cap B_1(q_j)$.
\end{lemma}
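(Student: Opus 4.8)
The plan is to first produce, inside the overlap $Y:=B_1(q_i)\cap B_1(q_j)$, a well-spread system of $n+1$ points whose $f_i$-images form a non-degenerate simplex in $\R^n$; then to use rigidity of such simplices to \emph{define} $A_{ij}$ from these anchor points; and finally to \emph{propagate} the estimate from the anchors to all of $Y$ by a multilateration argument. Note that a fat simplex cannot in general be found near $q_i$ itself, since $Y$ may be a thin lens; so the anchors must be placed near a ``midpoint'' of $q_i$ and $q_j$.

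\textbf{Step 1 (a fat anchor simplex).} Since $q_i,q_j$ are adjacent, $d_X(q_i,q_j)<1$, hence $|f_i(q_j)-p_i|<1+2\de$ and the Euclidean midpoint $\zeta:=\tfrac12(p_i+f_i(q_j))$ lies in $D_i$ with $|\zeta-p_i|,|\zeta-f_i(q_j)|<\tfrac12+\de$. As $f_i(B_1(q_i))$ is a $2\de$-net in $D_i$, there is $w\in B_1(q_i)$ with $|f_i(w)-\zeta|<2\de$, and since $f_i$ is a $2\de$-isometry one gets $d_X(w,q_i),d_X(w,q_j)<\tfrac12+5\de<\tfrac35$ for $\de$ small; consequently $B_{1/3}(w)\subset Y$. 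A further application of the $2\de$-net property shows that $f_i(Y)$ is $2\de$-dense in the Euclidean ball $B_{1/8}(f_i(w))\subset D_i$. Now fix, depending only on $n$, points $\xi_0,\dots,\xi_n\in B_{1/16}(f_i(w))$ forming a regular $n$-simplex of edge length $\tfrac1{100}$, and choose $z_k\in Y$ with $|f_i(z_k)-\xi_k|<2\de$ ($k=0,\dots,n$). For $\de$ small the points $a_k:=f_i(z_k)$ form a $\gamma$-fat $n$-simplex with $\gamma=\gamma(n)>0$, and $\bigl|\,|a_k-a_l|-d_X(z_k,z_l)\,\bigr|<2\de$ for all $k,l$.

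\textbf{Step 2 (defining $A_{ij}$).} Set $b_k:=f_j(z_k)\in\R^n$. Since $f_j$ is a $2\de$-isometry, $\bigl|\,|b_k-b_l|-d_X(z_k,z_l)\,\bigr|<2\de$, whence $\bigl|\,|a_k-a_l|-|b_k-b_l|\,\bigr|<4\de$ for all $k,l$. Two $(n+1)$-point configurations in $\R^n$ whose pairwise distances agree up to $\epsilon$, one of which is a $\gamma$-fat simplex with $\epsilon$ small relative to $\gamma$, are congruent up to an error $C(n)\epsilon$: after translating $a_0$ and $b_0$ to the origin, fatness makes $(a_1,\dots,a_n)$ a well-conditioned basis, and the Gram--Schmidt estimate used in the proof of Lemma~\ref{l:gh algorithm} produces an orthogonal map carrying each $a_k$ to within $C\de$ of $b_k$. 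Composing with the two translations yields an affine isometry $A_{ij}\co\R^n\to\R^n$ with $|A_{ij}(a_k)-b_k|<C\de$ for $k=0,\dots,n$.

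\textbf{Step 3 (propagation) and the main obstacle.} Let $x\in Y$ be arbitrary. For each $k$,
$$
 \bigl|\,|A_{ij}(f_i(x))-A_{ij}(a_k)|-|f_j(x)-b_k|\,\bigr|=\bigl|\,|f_i(x)-a_k|-|f_j(x)-b_k|\,\bigr|<4\de,
$$
since both $|f_i(x)-a_k|$ and $|f_j(x)-b_k|$ equal $d_X(x,z_k)$ up to $2\de$. Together with $|A_{ij}(a_k)-b_k|<C\de$ this gives $\bigl|\,|A_{ij}(f_i(x))-b_k|-|f_j(x)-b_k|\,\bigr|<C\de$ for all $k$, i.e.\ the two points $A_{ij}(f_i(x))$ and $f_j(x)$ have almost equal distances to each of $b_0,\dots,b_n$. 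As $b_0,\dots,b_n$ form a $\gamma'$-fat simplex (being within $C\de$ of the isometric image of the fat simplex $\{a_k\}$) and all points involved lie in a fixed bounded region, the affine map $y\mapsto(|y-b_k|^2-|y-b_0|^2)_{k=1}^n$ is a bi-Lipschitz isomorphism of $\R^n$ with constants depending only on $n$; hence $|A_{ij}(f_i(x))-f_j(x)|<C\de$, which is the assertion of the lemma. The only genuinely technical points are the two elementary rigidity facts used in Steps 2 and 3 — quantitative congruence of near-isometric fat simplices, and stability of multilateration against a fat simplex — where one must check that all constants depend only on $n$; both reduce to the well-conditionedness of the edge matrix of a regular simplex and thus to the linear-algebra estimate already exploited for Lemma~\ref{l:gh algorithm}.
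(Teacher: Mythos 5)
Your proof is correct, and it follows the same overall strategy as the paper — locate an approximate midpoint of $q_i,q_j$ inside the overlap $Y$ (so that both charts see a full-dimensional ball there) and then rigidly align the two charts using the quantitative Gram--Schmidt linear algebra behind Lemma~\ref{l:gh algorithm} — but the execution differs in one genuine way. The paper translates both charts by the images of the midpoint $x_0$, observes that $h_1=f_i-f_i(x_0)$ and $h_2=f_j-f_j(x_0)$ are $C\de$-isometries from $Y$ onto regions squeezed between $B_{1/3}(0)$ and $B_2(0)$, and gets the orthogonal map $U$ with $|U(h_1(x))-h_2(x)|<C\de$ for \emph{all} $x\in Y$ at once by adapting Lemma~\ref{l:gh algorithm} (cf.\ Remark~\ref{rem:two gh aproximations}) to these non-ball domains. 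You instead align only $n+1$ explicit anchor points forming a fat simplex near the midpoint, and then propagate the estimate to an arbitrary $x\in Y$ by a multilateration argument: since $A_{ij}(f_i(x))$ and $f_j(x)$ have almost equal distances to the $n+1$ fat anchors $b_k$, the affine map $y\mapsto(|y-b_k|^2-|y-b_0|^2)_{k=1}^n$ (invertible with constants depending only on $n$, given the fixed simplex scale and the bounded ambient region) forces them to be $C\de$-close. This propagation step is self-contained and sidesteps the need to rework Lemma~\ref{l:gh algorithm} for domains that are merely sandwiched between two balls, at the cost of verifying the two elementary rigidity facts (near-congruence of near-isometric fat simplices, and multilateration stability), both of which reduce, as you say, to the well-conditioning of the edge matrix of a fixed regular simplex; the paper's route is shorter because that machinery is already set up in Section~2. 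All the quantitative checks in your Steps~1--3 (the midpoint $w$, the $2\de$-density of $f_i(Y)$ in $B_{1/8}(f_i(w))$, the $4\de$-agreement of pairwise distances, and the linear-in-$\de$ error in the simplex alignment) go through with constants depending only on $n$.
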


\begin{proof} 
Let $Y=B_1(q_i)\cap B_1(q_j)$.
Since $d_{GH}(B_1(q_i),B_1^n)<\de$ and $q_j\in B_1(q_i)$, there exists
$x_0\in Y$ such that
$$
 \max\{d_X(x_0,q_i), d_X(x_0,q_j) \} < {\tfrac12+3\de} .
$$
{
Define maps $h_1,h_2\co Y\to\R^n$ by $h_1(x)=f_i(x)-f_i(x_0)$
and $h_2(x)=f_j(x)-f_j(x_0)$. 
Since $B_{1/2-3\de}(x_0)\subset Y\subset B_{3/2+3\de}(x_0)$
and $f_i,f_j$ are $2\de$-isometries, $h_1$ and $h_2$ satisfy
the assumptions of Lemma \ref{l:rotation} with parameters
{$3/2+3 \delta, 3/2-3 \delta,  2\delta$}
in place of $R,r,\de$, respectively. Hence by Lemma \ref{l:rotation}
}
there exists an orthogonal map $U\co\R^n\to\R^n$ such that
\be\label{e:transition2}
 |U(h_1(x))-h_2(x)| < {12\Cthirteen n\de}
\ee
for all $x\in Y$. Now define $A_{ij}\co\R^n\to\R^n$ by
\be\label{e:transition2b}
 A_{ij}(y) = U(y-f_i(x_0)) + f_j(x_0), \qquad y\in\R^n .
\ee
This definition and \eqref{e:transition2} implies \eqref{e:transition}.
\end{proof}

We fix maps $A_{ij}$ constructed
in Lemma \ref{l:transition} for the rest of the proof.
We may assume that $A_{ji}=A_{ij}^{-1}$ for all $i,j$
and $A_{ii}$ is the identity map.

\begin{lemma}
\label{l:3way}
Let $q_i,q_j,q_k\in X_0$ be three pairwise adjacent points.
Then
\be\label{e:3way}
 |A_{jk}(A_{ij}(x))-A_{ik}(x)|< \Ctwentythree\de
\ee
for all $x\in D_i$.
\end{lemma}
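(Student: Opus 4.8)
The plan is to reduce \eqref{e:3way} to a stability fact about affine isometries of $\R^n$: an affine isometry that moves the vertices of a non-degenerate simplex by less than $\de$ must be $C\de$-close to the identity on any fixed bounded region. Set $h := A_{ki}\circ A_{jk}\circ A_{ij}$, an affine isometry of $\R^n$ (being a composition of such). Since $A_{ik}\circ h = A_{jk}\circ A_{ij}$ and $A_{ik}$ is an isometry, the inequality \eqref{e:3way} is exactly $|h(y)-y|<C\de$ for all $y\in D_i$. So it suffices to prove the latter.

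First I would locate a definite-size region inside $D_i$ on which $h$ is forced to be near the identity. Because $q_i,q_j,q_k$ are pairwise adjacent, the set $Y:=B_1(q_i)\cap B_1(q_j)\cap B_1(q_k)$ is a nonempty subset of $X$ containing $q_i,q_j,q_k$. Under the $2\de$-isometry $f_i\co B_1(q_i)\to D_i$ (with $f_i(q_i)=p_i$) the points $q_j,q_k$ go to $f_i(q_j),f_i(q_k)\in D_i$, and from adjacency one gets $|p_i-f_i(q_j)|,\ |p_i-f_i(q_k)|,\ |f_i(q_j)-f_i(q_k)|<1+2\de$. Let $c=\tfrac13\bigl(p_i+f_i(q_j)+f_i(q_k)\bigr)$; then each of these three centers lies within $\tfrac23+C\de$ of $c$, so for $\de$ small enough $B_{1/4}(c)\subset D_i\cap B_{1-2\de}(f_i(q_j))\cap B_{1-2\de}(f_i(q_k))$. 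Consequently, if $x\in B_1(q_i)$ and $f_i(x)\in B_{1/4+2\de}(c)$, then $|f_i(x)-f_i(q_j)|,|f_i(x)-f_i(q_k)|<1-2\de$, hence $d_X(x,q_j),d_X(x,q_k)<1$, i.e.\ $x\in Y$. Since $f_i(B_1(q_i))$ is a $2\de$-net in $D_i$, this shows every point of $B_{1/4}(c)$ lies within $2\de$ of $f_i(Y)$.

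Next I would run the cocycle estimate on $Y$. Fix once and for all a regular $n$-simplex with centroid $c$, edge length $\tfrac18$, and vertices $y_0,\dots,y_n\in B_{1/4}(c)$; its shape depends only on $n$. Choose $x_l\in Y$ with $|f_i(x_l)-y_l|\le 2\de$. Applying Lemma \ref{l:transition} to the adjacent pairs $(i,j)$, $(j,k)$, $(i,k)$ at the point $x_l\in Y$, and using that $A_{jk}$ is an isometry, we chain
\[
 A_{jk}\bigl(A_{ij}(f_i(x_l))\bigr)\ \approx\ A_{jk}(f_j(x_l))\ \approx\ f_k(x_l)\ \approx\ A_{ik}(f_i(x_l)),
\]
each step costing $C\de$, whence $|h(f_i(x_l))-f_i(x_l)|<C\de$; as $h$ is an isometry and $|f_i(x_l)-y_l|\le 2\de$, this gives $|h(y_l)-y_l|<C\de$ for $l=0,\dots,n$.

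Finally I would pass from the simplex vertices to all of $D_i$. Writing $h(y)=Uy+b$ with $U$ orthogonal, the bounds $|h(y_l)-y_l|<C\de$ yield $|(U-I)(y_l-y_0)|<C\de$ for $l=1,\dots,n$; since $\{y_l-y_0\}_{l=1}^n$ is a basis whose condition number depends only on $n$, this forces $\|U-I\|<C\de$, and then $|h(y_0)-y_0|<C\de$ controls the translation. Using $h(y)-y=(U-I)(y-y_0)+(h(y_0)-y_0)$ together with $|y-y_0|<2$ for $y\in D_i$ (both $y$ and $y_0$ lie within $1$ of $p_i$) gives $|h(y)-y|<C\de$ on $D_i$, which is \eqref{e:3way}. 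The one genuinely substantive step is the second paragraph — showing that $f_i(Y)$ really fills a Euclidean ball of radius bounded below inside $D_i$; this is exactly where pairwise adjacency is used (through the centroid estimate), and everything after it is a routine bookkeeping of the $C\de$-estimates already in hand.
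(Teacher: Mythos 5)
Your proof is correct and follows essentially the same route as the paper: both chain the three estimates of Lemma \ref{l:transition} at points of $Y=B_1(q_i)\cap B_1(q_j)\cap B_1(q_k)$, whose $f_i$-image nearly fills a Euclidean ball of definite radius inside $D_i$, and then extend the bound to all of $D_i$ using that the discrepancy comes from affine isometries. The only difference is cosmetic: you evaluate at $n+1$ simplex vertices and spell out the extension step (orthogonal part plus translation of $h$), whereas the paper proves the bound on the whole intersection region $Z$ and leaves the extension from $Z$ to $D_i$ implicit.
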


\begin{proof} 
{Let} $a=f_i(q_j)$ and $b=f_i(q_k)$.
{Consider the intersection of Euclidean balls}
$$
 Z:=D_i\cap B_{1-2\de}(a)\cap B_{1-2\de}(b) \subset\R^n .
$$
Let $x\in Z$.
{Since $f_i$ is a $2\de$-isometry,
there is $q\in B_1(q_i)$ such that
$|f_i(q)-x|<2\de$.
Note that $q$ belongs to the balls $B_1(q_j)$ and $B_1(q_k)$ as well.
Then
\be\label{e:3way1}
 |A_{ik}(x)-f_k(q)| \le |A_{ik}(f_i(q))-f_k(q)| + |A_{ik}(x) - A_{ik}(f_i(q))| < (\Ctwentytwo+2) \de
\ee
by \eqref{e:transition} and the fact that $A_{ik}$ is an isometry.
Similarly,
\be\label{e:3way2}
 |A_{ij}(x)-f_j(q)| < (\Ctwentytwo+2) \de
\ee
and therefore
\be\label{e:3way3}
 |A_{jk}(A_{ij}(x))-f_k(q)| 
 \le |A_{jk}(A_{ij}(x))-A_{jk}(f_j(q))| + \Ctwentytwo\de
 \le (2\Ctwentytwo+2) \de
\ee
where the first inequality follows from \eqref{e:transition}
and the second one from \eqref{e:3way2} and the fact that $A_{jk}$ is an isometry.
Now \eqref{e:3way1} and \eqref{e:3way3} imply that
\be\label{e:3way-semifinal}
|A_{jk}(A_{ij}(x)) - A_{ik}(x)| < (3\Ctwentytwo+4)\de =: \de_1
\ee
for all $x\in Z$.
}

{
Observe that $Z$ contains a ball of radius $\frac13-3\de$.
Indeed, consider the point $p=\frac13(p_i+a+b)$.
By the triangle inequality,
$
 |p-p_i| = \tfrac13 |(a-p_i) + (b-p_i) | \le \tfrac23
$
since $a,b\in D_i=B_1(p_i)$.
Hence $D_i$ contains the ball $B_{1/3}(p)$.
Similarly, since {$|b-a|<1+2\de$}, we have $|p-a|<\frac23+\de$ and
$|p-b|<\frac23+\de$, hence the ball $B_{1/3-3\de}(p)$ is contained in
$B_{1-2\de}(a)$ and in $B_{1-2\de}(b)$.

Thus \eqref{e:3way-semifinal} holds for all $z\in B_{1/4}(p)$.
The affine map $A=A_{jk}\circ A_{ij}-A_{ik}$ can
be written in the form
$A(x) = A(p) + L(x-p)$ where $L\co\R^n\to\R^n$ is a linear map.
Then \eqref{e:3way-semifinal} implies that $|A(p)|<{\ctext 2\de_1}$
and $|L(v)|<{\ctext  2\de_1}$ for all $v\in B_{1/4}^n$.
Hence $\|L\|\le {\ctext 8\de_1}$.
Therefore for all $x\in B_2(p)$,
$$
|A(x)|\le |A(p)|+|L(x-p)| <{\ctext 17\de_1 = 17}(3\Ctwentytwo+4)\de  =: \Ctwentythree \de
$$
Since $D_i\subset  B_2(p)$, it follows that \eqref{e:3way} holds for all $x\in D_i$.
}
\end{proof}


\begin{lemma}
\label{l:centerdistance}
Let $q_i,q_j,q_k\in X_0$. Then

1. If $q_i$ and $q_j$ are adjacent, then
\beq\label{e C24}
 \bigg|{|A_{ij}(p_i)-p_j|}-d_X(q_i,q_j)\bigg| < {\Ctwentyfour}\de .
 \eeq

2. If $q_k$ is adjacent to both $q_i$ and $q_j$, then
$$
 \bigg|{|A_{ik}(p_i)-A_{jk}(p_j)|}-d_X(q_i,q_j)\bigg| < {\Ctwentyfour}\de
$$
\end{lemma}

\begin{proof}
The first assertion follows from the second one by setting $k=j$
(recall that $A_{jj}$ is the identity map).
Let us prove the second assertion.

Since $p_i=f_i(q_i)$, \eqref{e:transition} implies that
$A_{ik}(p_i)$ is $\Ctwentytwo\de$-close to $f_k(q_i)$.
Similarly, $A_{jk}(p_j)$ is $\Ctwentytwo\de$-close to $f_k(q_j)$.
Hence the distance $|A_{ik}(p_i)-A_{jk}(p_j)|$ differs from $|f_k(q_i)-f_k(q_j)|$
by at most $2\Ctwentytwo\de$.
In its turn, the distance
$|f_k(q_i)-f_k(q_j)|$ differs from $d_X(q_i,q_j)$
by at most $2\de$ because $f_j$ is a $2\de$-isometry.
Thus $|A_{ik}(p_i)-A_{jk}(p_j)|$ differs from $d_X(q_i,q_j)$
by at most $(2\Ctwentytwo+2)\de={\Ctwentyfour}\de$ and the lemma follows.
\end{proof}

\begin{lemma}
\label{l:nearcenter}
For every $i\in\N$ and every $x\in D_i^{1/3}$ there exist $j\in\N$
such that $q_i$ and $q_j$ are neighbors and
$A_{ij}(x)\in D_j^{1/50}$.
\end{lemma}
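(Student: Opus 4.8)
The plan is to obtain the neighbor $q_j$ as a point of the net $X_0$ lying close to a preimage of $x$ under the chart map $f_i$, and then to locate $A_{ij}(x)$ using the transition estimate \eqref{e:transition}. First I would use that $f_i\colon B_1(q_i)\to D_i$ is a $2\de$-isometry, so in particular $f_i(B_1(q_i))$ is a $2\de$-net in $D_i$. Since $x\in D_i^{1/3}\subset D_i$, there exists $w\in B_1(q_i)$ with $|f_i(w)-x|<2\de$. Combining $|f_i(w)-x|<2\de$ with $|x-p_i|\le\tfrac13$ and $f_i(q_i)=p_i$, the fact that $f_i$ is a $2\de$-isometry gives
\[
 d_X(w,q_i)<|f_i(w)-p_i|+2\de<\tfrac13+4\de .
\]

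Next, since $X_0$ is a $\tfrac1{100}$-net in $X$, I would choose $q_j\in X_0$ with $d_X(w,q_j)\le\tfrac1{100}$. Then
\[
 d_X(q_i,q_j)\le d_X(q_i,w)+d_X(w,q_j)<\tfrac13+4\de+\tfrac1{100}<\tfrac12
\]
once $\de$ is small enough, so $q_i$ and $q_j$ are neighbors; in particular they are adjacent, hence $A_{ij}$ is defined. Moreover $d_X(w,q_i)<1$ and $d_X(w,q_j)<1$, so $w\in B_1(q_i)\cap B_1(q_j)$ and the transition estimate \eqref{e:transition} applies at $w$.

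Finally, since $A_{ij}$ is an isometry and $|f_i(w)-x|<2\de$, we have $|A_{ij}(x)-A_{ij}(f_i(w))|<2\de$; together with $|A_{ij}(f_i(w))-f_j(w)|<C\de$ from \eqref{e:transition} this yields $|A_{ij}(x)-f_j(w)|<C\de$. On the other hand, $f_j$ being a $2\de$-isometry with $f_j(q_j)=p_j$ gives $|f_j(w)-p_j|<d_X(w,q_j)+2\de\le\tfrac1{100}+2\de$. Hence $|A_{ij}(x)-p_j|<\tfrac1{100}+C\de<\tfrac1{50}$ for $\de$ sufficiently small, i.e.\ $A_{ij}(x)\in D_j^{1/50}$, as claimed.

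I do not expect any genuine difficulty here: the argument is a short triangle-inequality chase through the $2\de$-isometry property of the charts and the transition estimate \eqref{e:transition}. The only point needing a little care is checking that the chosen $q_j$ is actually a \emph{neighbor} of $q_i$ (distance $<\tfrac12$, not merely $<1$), which works because $\tfrac13+\tfrac1{100}<\tfrac12$ leaves room to absorb the $O(\de)$ errors provided $\de_0=\de_0(n)$ is chosen small enough.
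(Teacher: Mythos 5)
Your proposal is correct and follows essentially the same argument as the paper: pick a preimage of $x$ under the $2\de$-isometry $f_i$, choose $q_j\in X_0$ near it, check $d_X(q_i,q_j)<\tfrac12$ by a triangle-inequality chase, and then use the transition estimate \eqref{e:transition} to place $A_{ij}(x)$ within $\tfrac1{100}+C\de<\tfrac1{50}$ of $p_j$. The only cosmetic difference is that you apply \eqref{e:transition} at the preimage point $w$ and then use that $f_j$ is a $2\de$-isometry, whereas the paper applies it at $q_j$ directly; both yield the same bound.
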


\begin{proof}
Since $f_i$ is a $2\de$-isometry from $B_1(q_i)$ to $D_i$,
there exists $y\in B_1(q_i)\subset X$ such that $|f_i(y)-x|\le 2\de$.
Since $X_0$ is a $\frac1{100}$-net in $X$,
there is a point $q_j\in X_0$ such that $d_X(y,q_j)\le\frac1{100}$.
For this point $q_j$ we have
$$
 |x-f_i(q_j)|<|f_i(y)-f_i(q_j)|+2\de < d_X(y,q_j) + 4\de \le \tfrac1{100}+4\de
$$
since $f_i$ is a $2\de$-isometry.
This and the fact that $x\in D_i^{1/3}$
imply that
$$
 |p_i-f_i(q_j)|<\tfrac13+\tfrac1{100}+4\de  .
$$
Since $p_i=f_i(q_i)$ and $f_i$ is a $2\de$-isometry, it follows that
$$
 d_X(q_i,q_j) < \tfrac13+\tfrac1{100}+6\de < \tfrac12 .
$$
Thus $q_i$ and $q_j$ are neighbors, 
in particular there is a well-defined map $A_{ij}$.
Since $A_{ij}$ is an isometry, we have
$$
|A_{ij}(x)-A_{ij}(f_i(q_j))|= |x-f_i(q_j)|< \tfrac1{100}+4\de .
$$
By \eqref{e:transition} we have $|A_{ij}(f_i(q_j))-f_j(q_j)|< \Ctwentytwo\de$,
hence 
$$
|A_{ij}(x)-p_j|=|A_{ij}(x)-f_j(q_j)|<\tfrac1{100}+ {(\Ctwentytwo+4)}\de < \tfrac1{50}
$$
provided that $\de$ is sufficiently small.
Thus $A_{ij}(x)\in D_j^{1/50}$ as claimed.
\end{proof}

\subsection{Approximate Whitney embedding} \label{AWE}

At this point we essentially forget about the original metric space $X$
and use the collection of balls $D_i\subset\R^n$
and maps $A_{ij}$ from the previous section for the rest of the construction.
{Let $\Om=\bigcup D_i\subset \R^n$ and $\Omega_0= \bigcup D_i^{1/10}$.}

Let $S=\mathbb S^n$ be the unit sphere in $\R^{n+1}$ centered at $e_{n+1}$,
where $e_1,\dots,e_{n+1}$ is the standard basis of {$\R^{n+1}$}.
Note that $S$ contains the points 0 and $2e_{n+1}$.
For every $r>0$ we denote by $S_r$ the set
of points in $S$ lying at distance less than $r$ from
the `north pole' $2e_{n+1}$.
That is, $S_r=S\cap B_r(2e_{n+1})$.

Fix a smooth map 
\be\label{smooth map phi}
{\phi}\co\R^n\to S
\ee
 with the following properties:
\begin{enumerate}
\item ${\phi}(x)=0$ for all $x\in\R^n\setminus B_{1/5}(0)$.

\item ${\phi}|_{B_{1/5}(0)}$ is a diffeomorphism onto $S\setminus\{0\}$.

\item ${\phi}|_{B_{1/10}(0)}$ is a diffeomorphism onto
the spherical cap $S_1$.

\item ${\phi}|_{B_{1/50}(0)}$ is a diffeomorphism onto
the spherical cap $S_{1/10}$

\end{enumerate}

{For algorithmic constructions discussed below, we can assume that ${\phi}$  is given
as a  function that is defined piecewisely by explicit, real analytic formulas.}
For each $i$ let ${\phi}_i(x)={\phi}(x-p_i)$
and define a map $F_i\co\Om\to S\subset\R^{n+1}$ as follows.
If a point $x\in\Om$ belongs to a ball $D_j$,
put
\be\label{e:Fi definition}
 F_i(x) =
 \begin{cases}
 {\phi}_i(A_{ji}(x)), &\text{if $D_j$ is adjacent to $D_i$} \\
\quad \quad 0, &\text{otherwise} .
 \end{cases}
\ee
In particular $F_i(x)={\phi}_i(x)$ if $x\in D_i$.

\begin{lemma}
\label{l:nonzeroF}
If $F_i(x)\ne 0$ for some $x\in D_j^{1/5}$, then $q_i$ and $q_j$ are neighbors.
\end{lemma}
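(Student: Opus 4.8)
The plan is to unwind the definition \eqref{e:Fi definition} of $F_i$ and then bound $d_X(q_i,q_j)$ using Lemma \ref{l:centerdistance}. The whole argument is elementary; the only point requiring care is the \emph{order} of steps: one must first extract adjacency of $q_i$ and $q_j$ from the hypothesis $F_i(x)\ne 0$ before one is entitled to speak of the map $A_{ij}$ or to invoke Lemma \ref{l:centerdistance}.

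First I would observe that $D_j^{1/5}\subset D_j$, so the point $x$ lies in the ball $D_j$ and the defining formula \eqref{e:Fi definition} applies to it. Since $F_i(x)\ne 0$, the second (``otherwise'') case of \eqref{e:Fi definition} is excluded; hence $D_j$ is adjacent to $D_i$, i.e.\ $q_i$ and $q_j$ are adjacent, so that $d_X(q_i,q_j)<1$ and the affine isometry $A_{ij}$ (together with its inverse $A_{ji}$) is defined. Moreover in this case $F_i(x)=\phi_i(A_{ji}(x))$, so $\phi_i(A_{ji}(x))\ne 0$. Since $\phi_i(y)=\phi(y-p_i)$ and $\phi$ vanishes outside $B_{1/5}(0)$ by property (1) of $\phi$, it follows that $A_{ji}(x)\in B_{1/5}(p_i)=D_i^{1/5}$, that is $|A_{ji}(x)-p_i|<\tfrac15$.

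Next I would convert this into an estimate on the distance between the centers $p_i$ and (the image of) $p_j$. Since $x\in D_j^{1/5}$ we have $|x-p_j|<\tfrac15$, and as $A_{ji}$ is an isometry, $|A_{ji}(x)-A_{ji}(p_j)|=|x-p_j|<\tfrac15$. Combining this with $|A_{ji}(x)-p_i|<\tfrac15$ via the triangle inequality gives $|A_{ji}(p_j)-p_i|<\tfrac25$, and applying the isometry $A_{ij}=A_{ji}^{-1}$ to both points yields $|A_{ij}(p_i)-p_j|<\tfrac25$. Finally, since $q_i$ and $q_j$ are adjacent, Lemma \ref{l:centerdistance}(1) gives $d_X(q_i,q_j)<|A_{ij}(p_i)-p_j|+C\de<\tfrac25+C\de<\tfrac12$, the last inequality holding because $\de$ is assumed sufficiently small throughout the proof. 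Hence $q_i$ and $q_j$ are neighbors, as claimed.
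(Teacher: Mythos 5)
Your proof is correct and follows essentially the same route as the paper: extract adjacency from $F_i(x)\ne 0$, deduce $|A_{ji}(x)-p_i|<\tfrac15$ from the support of $\phi$, use the isometry property and the triangle inequality to get a bound of $\tfrac25$ on the distance between the (transported) centers, and conclude via Lemma \ref{l:centerdistance}. The only cosmetic difference is that you apply $A_{ij}$ once more so as to invoke part (1) of Lemma \ref{l:centerdistance}, whereas the paper invokes part (2) directly on $|A_{ji}(p_j)-p_i|$; these are the same estimate since $A_{ij}$ is an isometry.
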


\begin{proof}
The assumption $F_i(x)\ne 0$ implies that
$q_i$ and $q_j$ are adjacent and therefore $F_i(x) = {\phi}_i(A_{ji}(x))$.
Thus ${\phi}_i(A_{ji}(x))\ne 0$ and hence
$
 |A_{ji}(x)-p_i| < \tfrac15
$.
Since $A_{ji}$ is an isometry and $|p_j-x|<\frac15$, we have
$$
|A_{ji}(p_j)-p_i| \le |p_j-x| + |A_{ji}(x)-p_i| < \tfrac25 .
$$
This and Lemma \ref{l:centerdistance}(2) imply that
$d_X(q_i,q_j) < \tfrac25+{\ctext {\Ctwentyfour}}\de < \tfrac12$,
hence $q_i$ and $q_j$ are neighbors.
\end{proof}

Let $E$ be the space of square-summable sequences
$(u_i)_{i=1}^\infty$ in $\R^{n+1}$
equipped with the norm defined by $|u|^2=\sum |u_i|^2$
for $u=(u_i)_{i=1}^\infty$.
This is a Hilbert space naturally isomorphic to $\ell^2$.
Define a map $F\co\Om\to E$ by
\be\label{e:F definition}
F(x)=(F_i(x))_{i=1}^\infty
\ee
Lemma \ref{l:adjgraph} implies that
for every $x\in U$ there are only finitely many indices $i$
such that $F_i(x)\ne 0$.
Therefore the sequence $F(x)\in (\R^{n+1})^\infty$
is finite and hence indeed belongs to $E$.

\begin{lemma}
\label{l:Fsmooth}  
1. $F$ is smooth and {there is $\Ctwentyfive(k)>0$ 
depending only on $n$ and $k$ such that}
\be\label{e:Fsmooth}
 \|F\|_{C^k(\Om)} \le \Ctwentyfive(k)
\ee
for all $k\ge 0$.

2. For every $i\in\N$ the restriction $F|_{D_i^{1/10}}$
is uniformly bi-Lipschitz, that is,
\be
\label{e:Fbilip}
\Ctwentysix^{-1} |x-y| \le |F(x)-F(y)| \le \Ctwentysix |x-y|
\ee
for all $x,y\in D_i^{1/10}$.
\end{lemma}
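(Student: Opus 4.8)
The plan is to exploit two facts already at hand: the adjacency graph has vertex degree bounded by a constant $N=N(n)$ (Lemma~\ref{l:adjgraph}), so over each ball $D_j$ only boundedly many component maps $F_i$ fail to vanish identically; and the building block $\phi\co\R^n\to S$ is a single smooth map, depending only on $n$, which in \eqref{e:Fi definition} is precomposed only with the affine isometries $A_{ji}$ (whose linear parts are orthogonal and whose higher derivatives vanish).

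For Claim~1 I would first note that $\Omega=\bigsqcup_i D_i$ is a disjoint union of open balls, so it suffices to bound $\|F\|_{C^k}$ over each $D_j$ separately, uniformly in $j$. On $D_j$, formula \eqref{e:Fi definition} shows that $F_i|_{D_j}$ is either $\equiv 0$ or equal to $\phi\circ L_{ji}$ with $L_{ji}(x)=A_{ji}(x)-p_i$ affine and with orthogonal linear part; the chain rule then gives $\|F_i\|_{C^k(D_j)}\le\|\phi\|_{C^k(\R^n)}$, a constant depending only on $n$ and $k$. Since $F_i|_{D_j}\not\equiv 0$ only when $D_i$ is adjacent to $D_j$ (or $i=j$), Lemma~\ref{l:adjgraph}(2) bounds the number of such indices $i$ by $N+1$; hence over $D_j$ the map $F$ is valued in a fixed finite-dimensional coordinate subspace of $E$, is smooth there, and the operator norm of $d^k_xF=(d^k_xF_i)_i$ is at most $\sqrt{N+1}\,\|\phi\|_{C^k(\R^n)}$, because dropping to a single coordinate only shrinks norms and there are at most $N+1$ nonzero ones. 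Setting $C_k=\sqrt{N+1}\,\|\phi\|_{C^k(\R^n)}$ gives \eqref{e:Fsmooth}.

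For Claim~2, on $D_i^{1/10}=B_{1/10}(p_i)$ the upper Lipschitz bound is immediate from the $k=1$ case of \eqref{e:Fsmooth}, since $D_i^{1/10}\subset\Omega$ is convex. For the lower bound I would keep only the $i$-th coordinate: $|F(x)-F(y)|\ge|F_i(x)-F_i(y)|$, and on $D_i^{1/10}$ one has $F_i(x)=\phi_i(x)=\phi(x-p_i)$ with $x-p_i\in B_{1/10}(0)$. By property~(3) in the construction of $\phi$, $\phi|_{B_{1/10}(0)}$ is a diffeomorphism onto the cap $S_1$, and since $\phi$ is a fixed smooth map that is a diffeomorphism on the open ball $B_{1/5}(0)\supset\overline{B_{1/10}(0)}$ (property~(2)), a standard compactness-and-injectivity argument shows it is bi-Lipschitz on $\overline{B_{1/10}(0)}$ with a constant depending only on the (fixed, $n$-dependent) choice of $\phi$. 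Hence $|F_i(x)-F_i(y)|\ge C^{-1}|x-y|$, which gives \eqref{e:Fbilip}.

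The argument is essentially bookkeeping; the only point worth flagging is that smoothness of $F$ as a map into the infinite-dimensional Hilbert space $E$ must be understood locally — it is precisely the bounded-degree property of Lemma~\ref{l:adjgraph} that lets one replace $F$, near any point, by a finite sum taking values in a fixed finite-dimensional subspace, so that ordinary differential calculus applies and the $C^k$ bounds assemble by $\ell^2$-summing the boundedly many nonzero terms. No genuine obstacle is expected.
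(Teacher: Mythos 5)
Your proposal is correct and follows essentially the same route as the paper: bounded vertex degree of the adjacency graph plus the fact that each nonzero component is $\phi$ precomposed with an affine isometry gives the uniform $C^k$ bounds, and the lower Lipschitz bound comes from projecting to the $i$-th coordinate and using that $\phi_i$ is bi-Lipschitz on $D_i^{1/10}$. Your extra remarks (the $\ell^2$ assembly of the boundedly many nonzero coordinates, and the compactness argument for the bi-Lipschitz constant of $\phi$ on $\overline{B_{1/10}(0)}\subset B_{1/5}(0)$) are just slightly more explicit versions of steps the paper leaves implicit.
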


\begin{proof}
1. Let $x\in D_i$.
By Lemma \ref{l:adjgraph},
there is at most ${N_1(n)}$ indices $j$ such that
$F_j|_{D_i}\ne 0$.
For every such $j$ we have
$\|d^k_xF_j\| \le \|{\phi}\|_{C^k(\R^n)}$,
therefore $\|d^k_xF\| \le {N_1(n)}\cdot \|{\phi}\|_{C^k(\R^n)}=\Ctwentyfive(k)$.

2. The second inequality in \eqref{e:Fbilip} follows from \eqref{e:Fsmooth} when $\Ctwentysix\geq \Ctwentyfive(1)$.
To prove the first one, observe that $\Ctwentysix>0$ {can
be chosen so that} $|F(x)-F(y)|\ge |F_i(x)-F_i(y)|\ge \Ctwentysix^{-1}{|x-y|}$ {for $x,y\in D_i^{1/10}$}
since the $i$th coordinate projection from $E$ to $\R^n$
does not increase distances and $F_i|_{D_i^{1/10}}={\phi}_i|_{D_i^{1/10}}$ is bi-Lipschitz.
\end{proof}

Eq.\ \eqref{e:Fbilip} implies that the
first derivative of $F$ is 
uniformly bi-Lipschitz, i.e.,
\be\label{e:dFbilip}
\Ctwentysix^{-1} |v| \le |d_xF(v)| \le\Ctwentysix |v|
\ee
for all $x\in D_i^{1/10}$ and $v\in\R^n$.

Lemma \ref{l:Fsmooth} implies that for each $i$ the image $\Sigma_i:=F(D_i^{1/10})$
is a smooth submanifold of~$E$.
We are going to apply Theorem \ref{t:surface} to the union 
$\Sigma=\bigcup_i\Sigma_i$ in $E$.
As the first step, we show that these submanifolds lie close to one another.

\begin{lemma}\label{l:sheets}
{There are $\Ctwentyseven=\Ctwentyseven(0)>0$ and $ \Ctwentyseven(m)>0$  such that if}
  $q_i$ and $q_j$ are neighbors and let $x\in D_i^{1/5}$,
then $A_{ij}(x)\in D_j$ and
\beq\label{e C27}
 |F(x)-F(A_{ij}(x))|<\Ctwentyseven(0)\de ,
\eeq
and
\be
\label{e:sheets}
  \|d_x^m(F- F\circ A_{ij})\| < \Ctwentyseven(m)\de
\ee
for all $m\ge 1$.
\end{lemma}

\begin{proof}
By Lemma \ref{l:centerdistance},
$$
|A_{ij}(p_i)-p_j| < d_X(q_i,q_j)+{\ctext \Ctwentyfour}\de < \tfrac12+{\ctext \Ctwentyfour}\de .
$$
Since $A_{ij}$ is an isometry,
$
 |A_{ij}(x)-A_{ij}(p_i)| = |x-p_i| < \tfrac15
$.
Therefore
$$
 |A_{ij}(x)-p_j| \le |A_{ij}(x)-A_{ij}(p_i)| + |A_{ij}(p_i)-p_j|
  <\tfrac12+\tfrac15+{\ctext \Ctwentyfour}\de < 1 ,
$$
hence $A_{ij}(x)\in D_j$.
Since $x$ is an arbitrary point of $D_i^{1/5}$,
we have shown that $A_{ij}(D_i^{1/5})\subset D_j$.

Recall that the number of indices $k$ such that $F_k$ does not
vanish on $D_i\cup D_j$ is bounded by a constant depending only on $n$.
Hence in order to verify \eqref{e:sheets} it suffices to show that
\be
\label{e:sheets1}
 \| d_x^m(F_k-F_k\circ A_{ij})\| < C_m\de
\ee
for every fixed~$k$.
Consider four cases.

\textit{Case 1}: $q_k$ is adjacent to both $q_i$ and $q_j$.
In this case
$$
 F_k|_{D_i^{1/5}}  = {\phi}_k\circ A_{ik}|_{D_i^{1/5}}
$$
and
$$
 F_k\circ A_{ij}|_{D_i^{1/5}} = {\phi}_k\circ A_{jk}\circ A_{ij}|_{D_i^{1/5}} .
$$
Now \eqref{e:sheets1}  {follows} from the fact that
the affine isometries $A_{ik}$ and $A_{jk}\circ A_{ij}$
are $4\Ctwentythree\de$-close on $D_i$
by Lemma \ref{l:3way}.

\textit{Case 2}: $q_k$ is not adjacent to $q_i$ and $q_j$.
This case is trivial because $F_k|_{D_i}$ and $F_k\circ A_{ij}|_{D_i}$
both vanish by definition.


\textit{Case 3}: $q_k$ is adjacent to $q_j$ but not to $q_i$.
In this case $F_k|_{D_i}=0$ by definition. Let us show
that $F_k\circ A_{ij}|_{D_i^{1/5}}$ also vanishes.
Since $d_X(q_k,q_i)\ge 1$, Lemma \ref{l:centerdistance} implies that
$
|A_{kj}(p_k)-A_{ij}(p_i)| > 1 -{\ctext \Ctwentyfour}\de
$.
Hence for every $y\in D_i^{1/5}$,
$$
|A_{kj}(p_k)-A_{ij}(y)| > 1-\tfrac15-{\ctext \Ctwentyfour}\de > \tfrac15 .
$$
Since $A_{kj}=A_{jk}^{-1}$ and $A_{kj}$ is an isometry, this implies that
$
 |p_k-A_{jk}\circ A_{ij}(y)| > \tfrac15
$
and hence
$$
 F_k\circ A_{ij}(y) = {\phi}_k\circ A_{jk}\circ A_{ij}(y) = 0
$$
for every $y\in D_i^{1/5}$.

\textit{Case 4}: $q_k$ is adjacent to $q_i$ but not to $q_j$.
In this case $F_k\circ A_{ij}|_{D_i^{1/5}}=0$, so it suffices to
prove that $F_k|_{D_i^{1/5}}=0$.
Suppose the contrary, then Lemma \ref{l:nonzeroF}
implies that $q_k$ and $q_i$ are neighbors.
Since $q_i$ and $q_j$ are also neighbors,
it follows that $q_k$ and $q_j$ are adjacent. This  contradiction proves the claim.
\end{proof}

\begin{figure}

\psfrag{0}{\hspace{-5mm}$D_j$}
\psfrag{1}{$D_k$}
\psfrag{2}{\hspace{-1mm}$F^{(k)}$}
\psfrag{3}{\hspace{-1mm}$F^{(j)}$}
\psfrag{4}{}
\psfrag{5}{$M$}
\psfrag{6}{$\Sigma_k$}
\psfrag{7}{$\Sigma_j$}%
\psfrag{8}{$P_M$}%

\figcommented{
}

\epscommented{
\includegraphics[width=10.5cm]{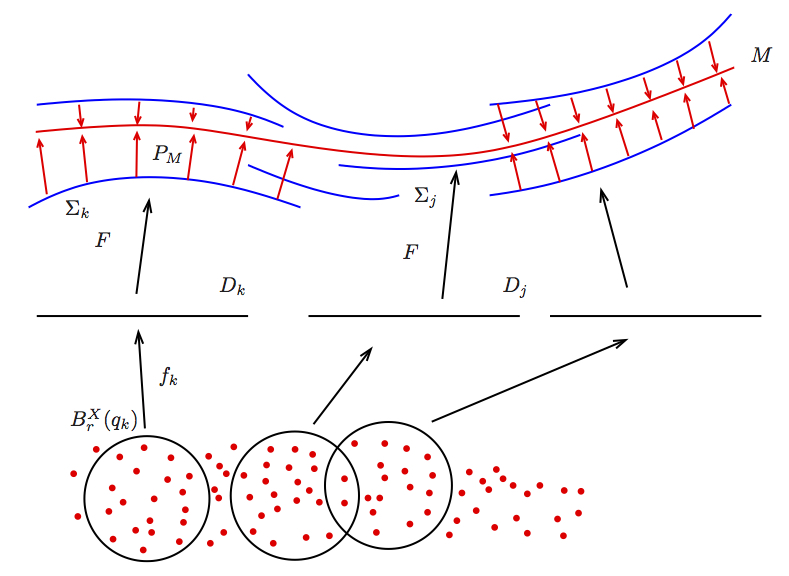}
}
\caption{
A schematic visualisation of the interpolation algorithm `ManifoldConstruction' based on Theorem \ref{t:manifold},
see Section \ref{sec:constructive}. Assume that a finite metric space $(X,d_X)$ is given.
{\ctext First we construct a maximal $(r/100)$-separated subset $X_0=\{q_i,\ i=1,2,\dots,N\}\subset X$  and $r$-neighborhoods $B^X_r(x_i)\subset X$ of points $q_i\in X_0$.
Then, we construct balls $D_i\subset \R^n$
approximating the $r$-balls $B^X_r(q_i)$ and local embeddings $f_i:B^X_r(q_i)\to D_i$. The balls $D_i$ are considered as  local coordinate charts.}
We embed these local charts to an Euclidean space $E=\R^m$ {using}  Whitney-type
embeddings ${\ctext F^{(i)}=F|_{D_i^{r/10}}:D_i^{r/10}\to \Sigma_i}$. Submanifolds $\Sigma_i\subset E$  are denoted by blue curves. Using  the algorithm SubmanifoldInterpolation,
the union $\bigcup_i\Sigma$ is interpolated to a red {submanifold} $M\subset E$. 
When $P_M$ is the normal projector onto $M$, denoted by the red arrows, we can determine
a metric tensor $g_i$ on $P_M(\Sigma_i)$ by pushing forward the Euclidean
metric from $D_i$  to $P_M(\Sigma_i)$  by the map $P_M\circ F|_{D_i}$. 
The metric tensor $g$  on $M$ is obtained by computing a smooth weighted average
of tensors $g_i$.
}
\label{fig: manifold}
\end{figure}

We introduce the following notation for some important subsets of $E$.
For every $i\in\N$ define 
\be\label{Sigma}
\Sigma_i=F(D_i^{1/10})
\quad\text{and}\quad
\Sigma_i^0=F(D_i^{1/50}) .
\ee
Let $\Sigma=\bigcup_i\Sigma_i$
and $\Sigma^0=\bigcup_i\Sigma_i^0$.

Recall that $\Sigma_i$ is a smooth $n$-dimensional submanifold of~$E$.
For a point $x\in\Sigma_i$, we denote by $T_x\Sigma_i$
the tangent space of $\Sigma_i$ at $x$ realized as an affine subspace of~$E$
containing~$x$.
That is, $T_x\Sigma_i$ is the $n$-dimensional affine subspace of $E$
tangent to $\Sigma_i$ at~$x$.

\begin{lemma}
\label{l:Sigma0} {There  is $\Ctwentyeight>\Ctwentyseven(0)$ such that the following holds.}
For every $x\in\Sigma_i$ there exist $j\in\N$
and $y\in\Sigma_j^0$ such that
\be\label{e:Sigma01}
 |x-y| <  {\Ctwentyeight}\de
\ee
and
\be\label{e:Sigma02}
 \angle(T_x\Sigma_i,T_y\Sigma_j) <  {\Ctwentyeight}\de .
\ee
\end{lemma}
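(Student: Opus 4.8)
The plan is to read off the index $j$ and the point $y$ directly from Lemma~\ref{l:nearcenter}. Write $x=F(p)$ with $p\in D_i^{1/10}$. Since $D_i^{1/10}\subset D_i^{1/3}$, Lemma~\ref{l:nearcenter} supplies an index $j$ with $q_i$ and $q_j$ neighbors and $A_{ij}(p)\in D_j^{1/50}$. I then set $y:=F(A_{ij}(p))$, which belongs to $F(D_j^{1/50})=\Sigma_j^0$ by definition. This is the only genuine choice in the argument; the two remaining assertions are estimates.

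For the distance bound \eqref{e:Sigma01} I would invoke Lemma~\ref{l:sheets}: we have $p\in D_i^{1/10}\subset D_i^{1/5}$ and $q_i,q_j$ are neighbors, so the zeroth-order part of that lemma applies and gives $|x-y|=|F(p)-F(A_{ij}(p))|<C\de$ at once.

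For the angle bound \eqref{e:Sigma02} I would first describe the tangent spaces concretely. Since $p\in D_i^{1/10}$, the estimate \eqref{e:dFbilip} shows that $d_pF$ is injective with $|d_pF(v)|\ge C^{-1}|v|$, so the linear part of $T_x\Sigma_i$ is $d_pF(\R^n)$; likewise, since $A_{ij}(p)\in D_j^{1/50}\subset D_j^{1/10}$, the linear part of $T_y\Sigma_j$ is $d_{A_{ij}(p)}F(\R^n)$, and because $A_{ij}$ is an affine isometry (so its linear part is orthogonal, in particular surjective) the chain rule identifies this with $d_p(F\circ A_{ij})(\R^n)$. Next I would use the $m=1$ case of Lemma~\ref{l:sheets}, which gives $\|d_pF-d_p(F\circ A_{ij})\|<C\de$. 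Finally I would convert this into an angle bound: any unit vector $u$ in the linear part of $T_x\Sigma_i$ is $d_pF(v)$ with $|v|\le C$ by the lower bound, hence $d_p(F\circ A_{ij})(v)$ lies in the linear part of $T_y\Sigma_j$ within distance $\|d_pF-d_p(F\circ A_{ij})\|\,|v|<C\de$ of $u$; since the two subspaces have the same dimension $n$, such a one-sided bound forces $\angle(T_x\Sigma_i,T_y\Sigma_j)<C\de$, recalling that the angle of two affine subspaces is by definition the angle of their linear parts. None of this is hard; the only thing that needs care is the bookkeeping of the nested balls $D^{1/50}\subset D^{1/10}\subset D^{1/5}\subset D^{1/3}$, which is exactly what makes Lemmas~\ref{l:nearcenter} and~\ref{l:sheets} and the immersion estimate \eqref{e:dFbilip} applicable at each use.
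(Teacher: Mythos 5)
Your proposal is correct and follows essentially the same route as the paper: choose $j$ and $y=F(A_{ij}(p))$ via Lemma~\ref{l:nearcenter}, get \eqref{e:Sigma01} from Lemma~\ref{l:sheets} with $m=0$, and get \eqref{e:Sigma02} from Lemma~\ref{l:sheets} with $m=1$ combined with \eqref{e:dFbilip}. The only difference is that you spell out the conversion of the derivative estimate into an angle bound, a step the paper leaves implicit.
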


\begin{proof}
Since $x\in\Sigma_i$, we have $x=F(z)$ for some $z\in D_i^{1/10}$.
By Lemma \ref{l:nearcenter} there exists $j$ such that
$q_i$ and $q_j$ are neighbors and {$A_{ij}(z)\in D_ j^{1/50}$}.
Let $y=F(A_{ij}(z))$, then $y\in\Sigma_j^0$.
Lemma \ref{l:sheets} for $m=0$ implies that
$$
|x-y| = |F(z)-F(A_{ij}(z))| <  \Ctwentyseven(0)\de
$$
proving \eqref{e:Sigma01}.
To prove \eqref{e:Sigma02}, observe that $T_x\Sigma_i$ and $T_y\Sigma_j$
are parallel to the images of the derivatives
$d_zF$ and $d_{A_{ij}(z)}F$, resp.
The image of $d_{A_{ij}(z)}F$ coincides with
the image of $d_z(F\circ A_{ij})$.
By Lemma \ref{l:sheets} for $m=1$ we have
$$
 \|d_z F - d_z(F\circ A_{ij})\| <  \Ctwentyseven(1)\de .
$$
This and \eqref{e:dFbilip} imply \eqref{e:Sigma02} {with an appropriate 
$\Ctwentyeight>\Ctwentyseven(0)$.}
\end{proof}

We use general metric space notation for subsets of $E$.
In particular, for a set $Z\subset E$ and $r>0$
we denote by ${\mathcal U}_r(Z)$ the $r$-neighborhood of $Z$ in $E$.

\begin{lemma}
\label{l:Sigmai}
$\Sigma\cap {\mathcal U}_{1/2}(\Sigma_i^0)\subset {\mathcal U}_{\Ctwentyeight\de}(\Sigma_i)$
for every $i\in\N$.
\end{lemma}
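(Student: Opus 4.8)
The plan is to fix $i\in\N$, take an arbitrary point $x\in\Sigma\cap U_{1/2}(\Sigma_i^0)$, and produce a point of $\Sigma_i$ within distance $C\de$ of $x$. Since $\Sigma=\bigcup_j\Sigma_j$ we may write $x\in\Sigma_j$ for some $j$, so $x=F(z)$ with $z\in D_j^{1/10}$; and by hypothesis there is $w\in\Sigma_i^0$ with $|x-w|<1/2$, say $w=F(u)$ with $u\in D_i^{1/50}$. The natural candidate for the nearby point of $\Sigma_i$ is $F(A_{ji}(z))$, so the proof splits into three claims: (i) $q_i$ and $q_j$ are neighbors, so that $A_{ji}$ is defined and $F_i(z)=\phi_i(A_{ji}(z))$; (ii) $A_{ji}(z)\in D_i^{1/10}$, so that $F(A_{ji}(z))\in\Sigma_i$; and (iii) $|F(z)-F(A_{ji}(z))|<C\de$, which is exactly the content of Lemma \ref{l:sheets}.

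For (i) I would look at the $i$-th coordinate of $x$ and $w$ in $E$. The coordinate projection $E\to\R^{n+1}$ is $1$-Lipschitz, so $|F_i(z)-F_i(u)|<1/2$. Since $u\in D_i^{1/50}$, property (4) of $\phi$ gives $F_i(u)=\phi_i(u)\in S_{1/10}$, hence $|F_i(u)-2e_{n+1}|<1/10$; therefore $|F_i(z)-2e_{n+1}|<3/5$, i.e.\ $F_i(z)\in S_{3/5}\subset S_1$, and in particular $F_i(z)\ne 0$. Because $z\in D_j$, the definition \eqref{e:Fi definition} of $F_i$ then forces $D_j$ to be adjacent to $D_i$ and $F_i(z)=\phi_i(A_{ji}(z))$; and since $z\in D_j^{1/10}\subset D_j^{1/5}$ and $F_i(z)\ne 0$, Lemma \ref{l:nonzeroF} upgrades adjacency to the neighbor relation.

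For (ii), from $\phi_i(A_{ji}(z))=F_i(z)\ne 0$ and property (1) of $\phi$ we get $A_{ji}(z)\in B_{1/5}(p_i)$, on which $\phi_i$ is injective by property (2); since also $F_i(z)\in S_1=\phi_i(B_{1/10}(p_i))$ by property (3), injectivity forces $A_{ji}(z)\in B_{1/10}(p_i)=D_i^{1/10}$, so $F(A_{ji}(z))\in F(D_i^{1/10})=\Sigma_i$. For (iii) I would invoke Lemma \ref{l:sheets} with the ordered pair $(j,i)$: $q_j$ and $q_i$ are neighbors and $z\in D_j^{1/5}$, hence $|F(z)-F(A_{ji}(z))|<C\de$. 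Combining, $\dist(x,\Sigma_i)\le|x-F(A_{ji}(z))|<C\de$, and since $x$ was arbitrary, $\Sigma\cap U_{1/2}(\Sigma_i^0)\subset U_{C\de}(\Sigma_i)$.

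The main obstacle is really the only delicate point: the bookkeeping with the spherical caps in step (ii). One must chase the estimate $|x-w|<1/2$ through the $1$-Lipschitz coordinate projection and the chain of inclusions $S_{1/10}\subset S_{3/5}\subset S_1$ carefully enough to conclude that $A_{ji}(z)$ lies in the \emph{small} ball $D_i^{1/10}$, not merely in $B_{1/5}(p_i)$; this is precisely what makes $F(A_{ji}(z))$ a point of $\Sigma_i$ rather than of the larger set $F(D_i^{1/5})$. Once that is secured, the rest is a direct application of Lemmas \ref{l:nonzeroF} and \ref{l:sheets} with no estimates beyond those already established.
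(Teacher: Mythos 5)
Your proposal is correct and follows essentially the same route as the paper: project to the $i$th coordinate, use the cap estimate $\tfrac12+\tfrac1{10}<1$ to conclude $F_i(z)=\phi_i(A_{ji}(z))\in S_1$ and hence $A_{ji}(z)\in D_i^{1/10}$, invoke Lemma \ref{l:nonzeroF} for the neighbor relation, and finish with Lemma \ref{l:sheets} ($m=0$). Your extra injectivity remark in step (ii) just makes explicit the paper's identity $\phi_i^{-1}(S_1)=D_i^{1/10}$, so there is nothing to add.
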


\begin{proof}
Let $q\in\Sigma\cap {\mathcal U}_{1/2}(\Sigma_i^0)$.
Since $q\in {\mathcal U}_{1/2}(\Sigma_i^0)$,
there exists $y\in D_i^{1/50}$ such that
$|q-F(y)|<\frac12$.
Since $q\in\Sigma$, we have
$q=F(z)$ where $z\in D_j^{1/10}$ for some $j$.
Since the $i$th coordinate projection from $E$ to $\R^{n+1}$
does not increase
distances,
$$
 |F_i(z)-F_i(y)|\le |F(z)-F(y)| = |q-F(y)| < \tfrac12 .
$$
Recall that $F_i(y)={\phi}_i(y)$ because $y\in D_i$.
Since $y\in D_i^{1/50}$, the point ${\phi}_i(y)$
belongs to the spherical cap $S_{1/10}$.
Hence $|F_i(y)-2e_{n+1}|<\frac1{10}$.
Therefore
$$
 |F_i(z)-2e_{n+1}| \le |F_i(z)-F_i(y)|+|F_i(y)-2e_{n+1}|
<\tfrac12+\tfrac1{10}<1 .
$$
Thus $F_i(z)$ belongs to the spherical cap $S_1\subset S\subset\R^{n+1}$,
in particular $F_i(z)\ne 0$.
Hence $F_i(z)={\phi}_i(A_{ji}(z))$ and
therefore $A_{ji}(z)\in{\phi}_i^{-1}(S_1)=D_i^{1/10}$.

Since $F_i(z)\ne 0$, Lemma \ref{l:nonzeroF} implies that $q_i$ and $q_j$
are neighbors. 
Now, by Lemma \ref{l:sheets} (for $m=0$) {and the inequality $\Ctwentyeight>\Ctwentyseven(0)$,} {we have} 
$$
|q-F(A_{ji}(z))| = |F(z)-F(A_{ji}(z))| <   {\Ctwentyeight}\de .
$$
Since  $A_{ji}(z)\in D_i^{1/10}$, this inequality implies that
$$
 q \in {\mathcal U}_{{\Ctwentyeight}\de}(F(D_i^{1/10}))={\mathcal U}_{{\Ctwentyeight}\de}(\Sigma_i) .
$$
Since $q$ is an arbitrary point from the set $\Sigma\cap {\mathcal U}_{1/2}(\Sigma_i^0)$,
the lemma follows.
\end{proof}

\begin{lemma}
\label{l:Sigmai-near-flat}
{There is $\Ctwentynine{\ctext >\Ctwentyseven(0)}$ 
such that}
for every $q\in\Sigma_i^0$
and {$r>0$,} 
\be\label{e:Sigmai-near-flat}
 d_H(\Sigma_i\cap B_r(q),T_q\Sigma_i\cap B_r(q)) <  \Ctwentynine r^2 .
\ee
\end{lemma}

\begin{proof}
By Lemma \ref{l:Fsmooth},
$\Sigma_i=F(D_i^{1/10})$ is a {submanifold} parametrized
by a uniformly bi-Lipschitz smooth map $F|_{D_i^{1/10}}$. 
We may assume that $r<\frac1{50\Ctwentysix}$ where $\Ctwentysix$ is the
bi-Lipschitz constant in (\ref{e:Fbilip}).
{
Indeed,  if $r\ge\frac1{50\Ctwentysix}$ then \eqref{e:Sigmai-near-flat}
holds for any $\Ctwentynine > 50\Ctwentysix$
since the left-hand side of \eqref{e:Sigmai-near-flat} is bounded 
by~$r$.}

Let $q=F(x)$ where $x\in D_i^{1/50}$.
Then every point $q'\in \Sigma_i\cap B_r(q)$ is the image
of some {$x'\in B_{\Ctwentysix r}(x)\subset B_{1/50}(x)\subset D_i^{1/10}$.} 
Hence
$$
 \dist(q',T_q\Sigma_i) \le {\Ctwentynine} r^2,
$$
where $  \Ctwentynine= \Ctwentyfive(2)$ is the uniform bound of the second derivatives of 
$F|_{D_i^{1/10}}$, see (\ref{e:Fsmooth}).
This means that $\Sigma_i$ deviates from its tangent space $T_q\Sigma_i$
within the $r$-ball $B_r(q)$ by distance at most $ \Ctwentynine r^2$.

In addition, the point $q\in\Sigma_i^0=F(D_i^{1/50})$
is separated by a distance at least $\frac1{20\Ctwentysix}>2r$
from the boundary of~$\Sigma_i$. Therefore, for each point
from $T_q\Sigma_i\cap B_r(q)$ there exists a point in $\Sigma_i$
within distance $   \Ctwentynine r^2$.
\end{proof}

The next lemma essentially says that the $\Sigma\subset E$
is $C\de$-close to affine spaces in $E$ 
at a scale of order $\de^{1/2}$.

\begin{lemma}
\label{l:Sigma-near-flat}
{There is $\Cthirtythree>0$  such that} for every $x\in\Sigma_i$
{and $r\ge\de^{1/2}$},
\be\label{e:Sigma-near-flat0}
 d_H(\Sigma\cap B_r(x),T_x\Sigma_i\cap B_r(x)) < \Cthirtythree r^2 .
\ee
\end{lemma}

\begin{proof}
{
We may assume that $r\le\frac14$, otherwise \eqref{e:Sigma-near-flat0}
holds for any $\Cthirtythree\ge 4$ since the left-hand side is bounded by~$r$.}
By Lemma \ref{l:Sigma0}, there exists $j\in\N$ and
$q\in\Sigma^0_j$
such that $|x-q|<\Ctwentyeight\de$ and
$\angle(T_x\Sigma_i,T_q\Sigma_j)<\Ctwentyeight\de$.
Let $A=T_q\Sigma_j$. Observe that
the Hausdorff distance between the affine balls
$T_x\Sigma_i\cap B_r(x)$ and $B_r^A(q)=A\cap B_r(q)$
is bounded by 
$$
|x-q|+r\sin\angle(T_x\Sigma_i,A) <  {\Ctwentyeight}\de +\Ctwentyeight r\de {\  \le 2\Ctwentyeight r^2}
$$
since $\de\le r^2$ {and $\de\le\de^{1/2}\le r$}.
{Assuming that $\Cthirtythree >4\Ctwentyeight$,}
it suffices to verify that
$
 d_H(\Sigma\cap B_r(x),B_r^A(q)) < \frac 12 \Cthirtythree r^2 .
$
By the definition of the Hausdorff distance,
this is equivalent to the following pair of inclusions:
\be\label{e:Sigma-near-flat2}
 \Sigma\cap B_r(x)\subset {\mathcal U}_{\Cthirtythree r^2/2}(B_r^A(q))
\ee
and
\be\label{e:Sigma-near-flat3}
 B_r^A(q)\subset {\mathcal U}_{\Cthirtythree r^2/2}(\Sigma\cap B_r(x)) .
\ee
Since $|x-q|<\Ctwentyeight\de$, we have $B_r(x) \subset B_{r+\Ctwentyeight \de}(q)$ and therefore
$$
 \Sigma\cap B_r(x) \subset \Sigma\cap B_{r+\Ctwentyeight\de}(q)
 \subset \Sigma\cap {\mathcal U}_{r+\Ctwentyeight \de}(\Sigma_j^0) \subset {\mathcal U}_{\Ctwentyeight\de}(\Sigma_j)
$$
where the
last inclusion follows from Lemma \ref{l:Sigmai}
{provided that $r+\Ctwentyeight \de<\frac12$.
The latter follows from the inequality $r\le\frac14$ 
if $\de$ is so small that $\Ctwentyeight\de<\frac14$.}
Hence
\begin{align}\label{e:Sigma-near-flat1}
\Sigma\cap B_r(x) &\subset {\mathcal U}_{\Ctwentyeight\de}(\Sigma_j) \cap B_{r+\Ctwentyeight \de}(q) \\
\nonumber
 &\subset {{\mathcal U}_{\Ctwentyeight \de}(\Sigma_j \cap B_{r+2\Ctwentyeight \de}(q))}
\subset {\mathcal U}_{\Ctwentynine r^2}(B^A_{r+{2}\Ctwentyeight\de}(q))
\end{align}
where the last inclusion follows from 
Lemma \ref{l:Sigmai-near-flat}.
Since 
$$
B^A_{r+2\Ctwentyeight \de}(q)\subset {\mathcal U}_{2\Ctwentyeight \de}(B_r^A(q))
 {\subset {\mathcal U}_{2\Ctwentyeight       r^2}(B_r^A(q))},
$$
this implies \eqref{e:Sigma-near-flat2} {when $\Cthirtythree \geq 2(\Ctwentynine+2\Ctwentyeight    )$}.

It remains to verify \eqref{e:Sigma-near-flat3}.
{
We assume that $\de$ is so small that $\Ctwentyeight\de^{1/2}<1$,
then $\Ctwentyeight\de< \de^{1/2} \le r$.
%
%
Since $|x-q|<\Ctwentyeight \de$, this implies that}
 $|x-q|<r$. 
Let $r_1=r-|x-q|$.
By Lemma \ref{l:Sigmai-near-flat},
$$
 B^A_{r_1}(q) \subset {\mathcal U}_{{\ctext \Ctwentynine} r^2}(\Sigma\cap B_{r_1}(q)) 
 \subset  {\mathcal U}_{{\ctext \Ctwentynine} r^2}(\Sigma\cap B_r(x)) .
$$
Since $B^A_r(q)\subset {\mathcal U}_{r-r_1}(B^A_{r_1}(q))$,  {and inequality ${\ctext \Ctwentynine} >\Ctwentyseven(0)$}
and $r-r_1=|x-q| <  {{\ctext \Ctwentynine} }  \de<  {{\ctext \Ctwentynine} }      r^2$, this implies \eqref{e:Sigma-near-flat3}
{when $\Cthirtythree \geq 4{\ctext \Ctwentynine} . $}
{By choosing $\Cthirtythree$  so that the above inequalities are valid, the lemma follows.}
\end{proof}

\subsection{The manifold $M$}

We choose a positive constant $r_0<1$ such that
\be
\label{e:2C0de0}
\Ctwentyeight r_0 < {\sigma_2}
\ee
where {$\Ctwentyeight$ is the constant from {Lemma \ref{l:Sigma0}}}
and ${\sigma_2}$ is the constant from Theorem~\ref{t:surface}.
Some additional requirements on $r_0$ arise in the course of
the argument below, but the final value of $r_0$ depends only on~$n$.

We may assume that the constant $\de_0$ in
Proposition \ref{p:manifold} {satisfies $\de_0< r_0^2$
(see Lemma \ref{l:Sigma-near-flat}).}
Then, for $\de<\de_0$, Lemma \ref{l:Sigma-near-flat} implies that
\be \label{20.7.1}
d_H(\Sigma\cap B_{r_0}(x),T_x\Sigma_i\cap B_{r_0}(x)) <  \Cthirtythree  r_0^2
\ee
for every $x\in\Sigma_i$.
This and \eqref{e:2C0de0} imply that the assumptions of
Theorem \ref{t:surface} are satisfied for $\Sigma$ in place of $X$,
$r_0$ in place of $r$, $ \Cthirtythree  r_0^2$ in place of $\de$,
and $T_x\Sigma_i$ in place of $A_x$ (for $x\in\Sigma_i$).
Then, the conclusion of Theorem \ref{t:surface}
with these settings, {and with $ \Cthirtysix(m)= \Cthirtythree\Cten(m)$
and $\Cthirtysix'={\Cthirtythree\Cnine}$,} is the following lemma.

\begin{lemma}
\label{l:submanifold}
{There are $\ctwelve>0$ and $\Cthirtysix(m)>0,$
$\Cthirtysix'>0$ such that the following holds.} 
If {$r_0<\ctwelve$}  
 and {$\de< r_0^2$}, then
there exists a closed $n$-dimensional smooth submanifold $M\subset E$
such that

1. $d_H(\Sigma,M)< {5}\Cthirtythree  r_0^2<\frac{1}{10}r_0<\frac1{100}$.

2. The second fundamental form of $M$ at every point is {bounded by $\Cthirtythree\Ceight$}.

3. 
{$\Reach(M)\ge r_0/3$.}

4. The normal projection $P_M\co {\mathcal U}_{r_0/3}(M)\to M$
satisfies {for all $x\in \mathcal U_{r_0/3}(M)$}
\beq\label{e C36}
 \|d^m_x P_M\| <\Cthirtysix(m) 
 r_0^{2-m}, \qquad {m\ge 2},
\eeq
{
and
\be\label{e:dxPM}
 \|d_x P_M-P_{\vec T_yM}\| < \Cthirtysix(1)r_0 < \tfrac1{10}, \qquad y=P_M(x) .
\ee
}

5. $\angle(T_x\Sigma_i,T_{y}M) < 
\Cthirtysix'
r_0 {< \frac1{10}}$ for every $x\in\Sigma_i$
{and $y=P_M(x)$}.
\qed
\end{lemma}

{
These inequalities in Lemma \ref{l:submanifold} that are not present in Theorem \ref{t:surface}
follow from the choice of $\ctwelve$ (and thus $r_0$) sufficiently small.
The inequality $d_H(\Sigma,M)<\frac{1}{10}r_0$ ensures that $\Sigma$
lies `deep inside' the domain of~$P_M$.
The second inequality in \eqref{e:dxPM} implies that 
\be\label{e:dPMbound}
 \|d_xP_M\|<1+\tfrac1{10}<2
\ee
and hence $P_M$ is locally 2-Lipschitz.
}



%

Let $M$ be a submanifold from Lemma \ref{l:submanifold}.
Recall that 
$\Sigma=\bigcup_i\Sigma_i=\bigcup_i F(D_i^{1/10})$
{and $\Sigma$} is contained in the domain of~$P_M$.
For each $i$, define a map
$\psi_i\co D_i^{1/10}\to M$ by
\be
\label{e:psi-F-mod1}
 \psi_i = P_M\circ F|_{D_i^{1/10}}
\ee
and let $V_i$ be the image of $\psi_i$, that is
\be\label{Vi set}
 V_i = P_M(F(D_i^{1/10})) = P_M(\Sigma_i) .
\ee
Observe {that
\be
\label{e:GH Sigma M}
d_H(\Sigma,M)<{\ctext 5}\Cthirtythree r_0^2<\tfrac1{10}
\ee
and}
\be
\label{e:psi-F}
 |\psi_i(x)-F(x)| \le d_H(\Sigma,M)<{\ctext 5}\Cthirtythree r_0^2<\tfrac1{10}
\ee
for every $x\in D_i^{1/10}$.
This follows from Lemma \ref{l:submanifold}(1)
and the fact that $\psi_i(x)$ is the nearest point in $M$
to $F(x)$.

The next lemma shows that the maps $\psi_i$ provide a nice
family of coordinate charts for $M$.

\begin{lemma}
\label{l:charts}
If {$r_0<\Cthirtyeight$, where $\Cthirtyeight$  is sufficiently small, and $\de<r_0^2$}, then

1. $\psi_i$ is uniformly bi-Lipschitz, that is,
\be
\label{e:psi-F-mod2}
  \Cthirtynine^{-1} |x-y | \le |\psi_i(x)-\psi_i(y)| \le   \Cthirtynine|x-y|
\ee
for all $x,y\in D_i^{1/10}$.
In particular, $V_i$ is an open subset of $M$ and
$\psi_i$ is a diffeomorphism between $D_i^{1/10}$ and $V_i$.

2. $\bigcup_i \psi_i(D_i^{1/{30}})=M$.

3. If $i,j\in\N$ are such that $V_i\cap V_j\ne\emptyset$,
then $q_i$ and $q_j$ are neighbors.
\end{lemma}

\begin{proof}
1. {The Lipschitz continuity of $\psi_i$ follows from the bounds on the
first derivatives of $F$ and $P_M$, see Lemma \ref{l:Fsmooth}
and \eqref{e:dPMbound}.
More precisely, the second inequality in \eqref{e:psi-F-mod2} holds for any $\Cthirtynine\geq 2\Ctwentyfive(1)$.
}

It remains to prove that {with a suitable $\Cthirtynine>0$, we have}
\be
\label{e:charts1}
|\psi_i(x)-\psi_i(y)| \ge \Cthirtynine^{-1}|x-y|
\ee
for all $x,y\in D_i^{1/10}$.
{
For every $x\in D_i^{1/10}$ and $v\in\R^n$ we have
\be\label{e:charts2}
 |d_x\psi_i(v)| = | d_{F(x)} P_M(d_xF(v)) | \ge \tfrac12 |d_xF(v)| \ge (2\Cthirtyseven)^{-1}|v| .
\ee
The first inequality in \eqref{e:charts1} follows from {the first inequality in} \eqref{e:dPMbound}, Lemma \ref{l:submanifold}(5),
and the fact that $d_xF(v)$ belongs to $T_{F(x)}\Sigma_i$.
The second inequality in \eqref{e:charts1} follows from \eqref{e:dFbilip}.
}



{
By Lemma \ref{l:submanifold}(4) and \eqref{e:Fsmooth}, the first and second derivatives
of $P_M$ and $F$ are bounded by constants independent of~$r_0$.
These bounds imply that
$
 \|d^2_x\psi_i\| \le \Cforty
$
for all $x\in D_i^{1/10}$ and a suitable constant $\Cforty>0$.
Hence
\beq\label{e C40}
 | \psi_i(x)-\psi_i(y) - d_x\psi_i(x-y) | \le \tfrac12 \Cforty |x-y|^2
\eeq
for all  $x,y\in D_i^{1/10}$.
This and \eqref{e:charts2} imply that
$$
 | \psi_i(x)-\psi_i(y) | \ge \tfrac12 | d_x\psi_i(x-y) | \ge (4\Cthirtyseven)^{-1}|x-y|
$$
for all $x,y\in D_i^{1/10}$ such that 
\beq\label{e: C41}
|x-y|\le(2\Cthirtyseven\Cforty)^{-1}=:\Cfortyone.
\eeq

}


To handle the case when $|x-y|>\Cfortyone$, observe that
$$
 |\psi_i(x)-\psi_i(y)| > |F(x)-F(y)|-2\Cthirtythree r_0^2
$$
by \eqref{e:psi-F}.
Since $F|_{D_i}$ in uniformly bi-Lipschitz (by Lemma \ref{l:Fsmooth}),
it follows that
\be
\label{e:charts3}
 |\psi_i(x)-\psi_i(y)| \ge \Ctwentysix^{-1}|x-y| - 2\Cthirtythree r_0^2
\ee
for all $x,y\in D_i^{1/10}$.
If $|x-y|>\Cfortyone$ and $r_0$ is so small that $2\Cthirtythree r_0^2<\tfrac12 \Ctwentysix^{-1}\Cfortyone$
then the right-hand side of \eqref{e:charts3}
is bounded below by $\frac12\Ctwentysix^{-1}|x-y|$. Thus \eqref{e:charts1}
holds {with a suitable constant $ \Cthirtynine>0$} for all $x,y\in D_i^{1/10}$
and the first claim of the lemma follows.

2. Let $x\in M$.
By Lemma \ref{l:submanifold}(1)
there exists $z\in\Sigma$ such that $|x-z|< \Cthirtythree r_0^2$.
By Lemma \ref{l:Sigma0}
there exists $i\in\N$ and $y\in\Sigma_i^0$
such that $|y-z|<  {{\Ctwentyeight}}\de$. Then
$$
|x-y|<\Cthirtythree r_0^2+\Ctwentyeight \de< (\Cthirtythree +\Ctwentyeight)r_0^2<r_0/3
$$
where in the last inequality we assume that {$r_0<\Cthirtyeight$  and $\Cthirtyeight<\frac 13(\Cthirtythree +\Ctwentyeight)^{-1}$.} We are going to show that $x\in F(D_i^{1/{30}})$.

Since $x\in M$ and $|x-y|<r_0/3$, the straight line segment $[x,y]$ is contained
in the domain of $P_M$. Let $\gamma$ be the image of this segment under $P_M$.
Then $\gamma$ is a smooth curve in $M$ connecting $x$ to the point
$P_M(y)\in P_M(\Sigma_i^0)=\psi_i(D_i^{1/50})$.
Since $P_M$ is locally 2-Lipschitz, we have
$\length(\gamma) \le 2 |x-y| < 2(\Cthirtythree +\Ctwentyeight)r_0^2$.
%
We parametrize $\gamma$ by $[0,1]$ in such a way that $\gamma(0)=P_M(y)$
and $\gamma(1)=x$.
Suppose that $x\notin\psi_i(D_i^{1/{30}})$ and let
$$
 t_0 = \min\{t\in[0,1]: \gamma(t)\notin\psi_i(D_i^{1/{30}}) \} .
$$
This minimum exists since $\psi_i(D_i^{1/{30}})$ is an open subset of $M$.
Define
$
\tilde\gamma(t)=\psi_i^{-1}(\gamma(t))
$
for all $t\in[0,t_0)$.
Note that $t_0>0$ and $\tilde\gamma(0)\in D_i^{1/50}$
because $P_M(y)\in \psi_i(D_i^{1/50})$.
Since $\psi_i$ is a diffeomorphism onto its image,
$\tilde\gamma$ is a smooth curve in $D_i$.
Moreover, since $\psi_i$ is uniformly bi-Lipschitz, we have
$$
 \length(\tilde\gamma) \le C\length(\gamma)<   \Cthirtynine r_0^2 .
$$
Hence the limit point
$ p = \lim_{t\to t_0} \tilde\gamma(t) $
exists and satisfies
$$
 |p-\tilde\gamma(0)| \le \length(\tilde\gamma) <    \Cthirtynine r_0^2 .
$$
We may assume that $r_0$ is so small that the right-hand side
of this inequality is smaller than $\frac1{{30}}-\frac1{50}$.
Since $\tilde\gamma(0)\in D_i^{1/50}$, it follows that $z\in D_i^{1/50}$.
Hence $\gamma(t_0)=\psi_i(p)\in \psi_i(D_i^{1/{30}})$,
contrary to the choice of $t_0$.
This contradiction shows that $x\in\psi_i(D_i^{1/{30}})$.
Since $x$ is an arbitrary point of $M$,
the second claim of the lemma follows.

3. Assume that $V_i\cap V_j\ne\emptyset$.
Then there exist $x\in D_i^{1/10}$ and $y\in D_j^{1/10}$
such that $\psi_i(x)=\psi_j(y)$.
This equality and \eqref{e:psi-F} imply that
$|F(x)-F(y)| < \tfrac15$,
hence
\be\label{e:charts4}
|F_i(x)-F_i(y)|<\tfrac15 
\ee
(recall that $F_i\co\Om\to\R^{n+1}$ is the $i$th coordinate projection of $F$).
Since $x\in D_i^{1/10}$, the point $F_i(x)\in\R^{n+1}$
belongs to the spherical cap $S_1$
and therefore $|F_i(x)|>1$.
This and \eqref{e:charts4}
imply that $F_i(y)\ne 0$ and hence $q_i$ and $q_j$ are neighbors
by Lemma \ref{l:nonzeroF}.
\end{proof}

Note that Lemma \ref{l:charts}(3) and Lemma \ref{l:adjgraph}(2)
imply that the sets $V_i$ cover $M$ with {bounded multiplicity $N_1(n)$,
that is, for every $x\in M$ the number of indices $i$
such that $x\in V_i$ is bounded by a $N_1(n)$} depending
only on $n$.

Now we can fix the value of $r_0$ such that
Lemma \ref{l:submanifold} and Lemma \ref{l:charts} work.
Since $r_0$ is yet another constant depending only on $n$,
we omit the dependence on $r_0$ in subsequent estimates
and just use the generic notation $C$.
In particular, the fourth assertion of
Lemma \ref{l:submanifold} now implies that
\be
\label{e:PMbounded}
 \|dP_M\|_{C^{k}({\mathcal U}_{r_0/3}(M))} \le {\Cfortyfour(k)}
\ee
where {$\Cfortyfour(k)=\Cten(k) r_0^{1-k}$
for all $k\ge 1$ and $\Cfortyfour(0)=2$}. {By applying Lemma \ref{l:iterated chainrule}(1) in Appendix A, 
 \eqref{e:Fsmooth}, and \eqref{e:PMbounded}, we obtain} 
\be
\label{e:psibounded}
 \|d\psi_i\|_{C^m(D_i^{1/10})} <
 \Cfortyfive(m):=
 {
  2^{m(m+1)/2+m}\Cfortyfour(m) \Ctwentyfive(m+1)^{m+1}.}
\ee
for all $m\ge 0$.
%
%
%

%
%
%
%
%
%

\begin{lemma}
\label{l:sameprojection}
{There is $\Cfortysix>0$  such that the following is valid.}
If $x\in D_i^{1/10}$, $y\in D_j^{1/10}$ and
$\psi_i(x)=\psi_j(y)$, then
\be
\label{e:sameprojection}
|F(x)-F(y)|< \Cfortysix\de .
\ee
\end{lemma}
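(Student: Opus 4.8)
The strategy is not to compare $F(x)$ and $F(y)$ directly --- via $z:=\psi_i(x)=\psi_j(y)$ one only gets $|F(x)-F(y)|\le 2\,d_H(\Sigma,M)$, which is of order $r_0^2$ and hence far weaker than $C\de$ --- but to route the comparison through a third index $k$, chosen as a neighbor of $q_j$ for which the transition map $A_{jk}$ carries $y$ into the \emph{small} ball $D_k^{1/50}$. The point is that $y$ only lies in $D_j^{1/10}$, which is not central enough to place $F(y)$ near $\Sigma_j^0$, but after the change of chart the relevant point lands near $\Sigma_k^0$, which is exactly the hypothesis needed to invoke Lemma~\ref{l:Sigmai}.

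Concretely, I would first apply Lemma~\ref{l:nearcenter} to $y\in D_j^{1/10}\subset D_j^{1/3}$ to obtain $k$ with $q_j,q_k$ neighbors and $A_{jk}(y)\in D_k^{1/50}$, and then Lemma~\ref{l:sheets} (for the pair $q_j,q_k$, with $y\in D_j^{1/5}$) to get $|F(y)-F(A_{jk}(y))|<C\de$, so $F(y)\in U_{C\de}(\Sigma_k^0)$. Since $z=P_M(F(x))=P_M(F(y))$ and $\dist(F(x),M),\dist(F(y),M)\le d_H(\Sigma,M)<\tfrac1{10}r_0$ by Lemma~\ref{l:submanifold}(1), both $F(x)$ and $F(y)$ lie within $\tfrac1{10}r_0$ of $z$; hence $F(x)$ lies within $\tfrac15 r_0+C\de<\tfrac12$ of $\Sigma_k^0$, i.e.\ $F(x)\in\Sigma\cap U_{1/2}(\Sigma_k^0)$. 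Lemma~\ref{l:Sigmai} (with $k$ in place of $i$) then gives $F(x)\in U_{C\de}(\Sigma_k)=U_{C\de}(F(D_k^{1/10}))$, so there exists $x''\in D_k^{1/10}$ with $|F(x)-F(x'')|<C\de$.

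It remains to compare $x''$ and $A_{jk}(y)$, which both lie in $D_k^{1/10}$. Since $P_M$ is $C$-Lipschitz on $U_{r_0/3}(M)$ (Lemma~\ref{l:submanifold}(4)) and $F(x),F(x''),F(y),F(A_{jk}(y))\in\Sigma\subset U_{r_0/3}(M)$, we get $|\psi_k(x'')-z|=|P_M(F(x''))-P_M(F(x))|<C\de$ and $|\psi_k(A_{jk}(y))-z|=|P_M(F(A_{jk}(y)))-P_M(F(y))|<C\de$, hence $|\psi_k(x'')-\psi_k(A_{jk}(y))|<C\de$. The bi-Lipschitz bound of Lemma~\ref{l:charts}(1) gives $|x''-A_{jk}(y)|<C\de$, and the Lipschitz bound $\|F\|_{C^1(\Om)}\le C_1$ of Lemma~\ref{l:Fsmooth}(1), applied inside the ball $D_k$, gives $|F(x'')-F(A_{jk}(y))|<C\de$. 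Combining the three estimates,
\[
 |F(x)-F(y)| \le |F(x)-F(x'')| + |F(x'')-F(A_{jk}(y))| + |F(A_{jk}(y))-F(y)| < C\de ,
\]
as claimed. (Note that $q_i,q_j$ need not be assumed to be neighbors, and the case $i=j$ is subsumed.)

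The main obstacle is the verification in the second paragraph that $F(x)\in U_{1/2}(\Sigma_k^0)$: this is where the whole improvement, from an order-$r_0^2$ bound down to the desired order-$\de$ bound, is produced (it enters only through the $C\de$ in the conclusion of Lemma~\ref{l:Sigmai} and the bi-Lipschitz estimate), and it is precisely why one cannot work with $j$ directly --- for an arbitrary $y\in D_j^{1/10}$ the point $F(y)$ is in general neither $O(\de)$-close to $\Sigma_j^0$ nor even within distance $\tfrac12$ of it, so one must first pass to the re-centered chart index $k$ supplied by Lemma~\ref{l:nearcenter}.
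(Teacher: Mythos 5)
Your proof is correct and follows essentially the same route as the paper's: the paper applies Lemma \ref{l:Sigma0} (itself proved via Lemmas \ref{l:nearcenter} and \ref{l:sheets}) to $F(x)$ to anchor a point of $D_k^{1/50}$ and then uses Lemma \ref{l:Sigmai} on $F(y)$, while you do the mirror image, anchoring $A_{jk}(y)\in D_k^{1/50}$ on the $y$ side and applying Lemma \ref{l:Sigmai} to $F(x)$. Both arguments then conclude identically, via the Lipschitz bound on $P_M$, the bi-Lipschitz bound on $\psi_k$ from Lemma \ref{l:charts}(1), and the Lipschitz bound on $F$.
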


\begin{proof}
Applying  Lemma 
\ref{l:Sigma0} to the point $F(x)\in\Sigma_i$
yields that there exists $k\in\N$ and a point $z\in D_k^{1/50}$
such that $|F(x)-F(z)|<   {{\Ctwentyeight}}\de$. {Since $P_M$ is uniformly Lipschitz,
see (\ref{e:PMbounded}) with $\Cfortyfour(0)=2$,  {and  $\Ctwentyeight>\Ctwentyseven(0)$,}}
it follows that
\be
\label{e:charts6}
 |\psi_i(x)-\psi_k(z)| <  {2}  {{\Ctwentyeight}}\de
\ee
and (since $\psi_i(x)=\psi_j(y)$)
\be
\label{e:charts7}
 |\psi_j(y)-\psi_k(z)| < {2}  {{\Ctwentyeight}}\de .
\ee
This and \eqref{e:psi-F} imply that
$
 |F(y)-F(z)| < \tfrac15+{4} 
   {{\Ctwentyeight}}\de < \tfrac12
$,
hence $F(y)\in {\mathcal U}_{1/2}(\Sigma_k^0)$.
By Lemma \ref{l:Sigmai} it follows that $F(y)\in {\mathcal U}_{\Ctwentyeight\de}(\Sigma_k)$.
This means that there exists $z'\in D_k^{1/10}$ such that
\be
\label{e:charts8}
|F(z')-F(y)|<{\Ctwentyeight}\de.
\ee
Then
$$
|\psi_k(z')-\psi_j(y)| = |P_M(F(z'))-P_M(F(y))| <   {{\Cten(1)}\,}{{\Ctwentyeight}} \de 
$$
{since} $P_M$ is uniformly Lipschitz.
This and \eqref{e:charts7} imply that
$|\psi_k(z)-\psi_k(z')|<( {{2}} +{\Cten(1)})\Ctwentyeight \de$.
Since $\psi_k$ is uniformly bi-Lipschitz
by the first claim of the Lemma \ref{l:charts},
it follows that
\beq\label{e C47}
 |z-z'|\le \Cthirtynine^{-1} |\psi_i(z)-\psi_i(z')| < \Cfortyseven \de,\quad \Cfortyseven=\Cthirtynine^{-1}( {{2}} +{\Cten(1)})\Ctwentyeight,\hspace{-15mm}
\eeq
and hence $|F(z)-F(z')|<\Ctwentysix   \Cfortysix\de$ by Lipschitz continuity of $F$, see Lemma \ref{l:Fsmooth}.
This and \eqref{e:charts8} imply that $|F(y)-F(z)|<\tfrac 12\Cfortysix,$ where 
$\Cfortysix=
(\Ctwentysix   \Cfortyseven+\Ctwentyeight)\de$.

Thus we have shown that \eqref{e:charts7} implies that $|F(y)-F(z)|<\tfrac 12\Cfortysix\de$.
Similarly \eqref{e:charts6} implies that $|F(x)-F(z)|<\tfrac 12\Cfortysix \de$
and \eqref{e:sameprojection} follows.
\end{proof}

We are going to restrict our coordinate maps $\psi_i$
to smaller balls $D_i^{1/15}$.
Let $V_i'=\psi_i(D_i^{1/15})$ and $U_{ij}=\psi_i^{-1}(V_i'\cap V_j')$.
The set $U_{ij}\subset D_i^{1/15}$ is the natural domain
of the transition map $\psi_j^{-1}\circ\psi_i$ between
the restricted coordinate charts.

\begin{lemma}
\label{l:transdelta} {There is $\Cfortyeight=\Cfortyeight(m)>0$  such that the following is valid.}
Let $i,j\in\N$ be such that $V'_i\cap V'_j\ne\emptyset$.
Then
\be
\label{e:transdelta}
 \|\psi_j^{-1}\circ\psi_i-A_{ij}\|_{C^m(U_{ij})} < \Cfortyeight(m)\de
\ee
for all $m\ge 0$.
\end{lemma}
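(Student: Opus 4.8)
The plan is to show that $g:=\psi_j^{-1}\circ\psi_i$ is $C^m$-close to the affine isometry $A_{ij}$ by first establishing the $C^0$ bound — which simultaneously pins down the domain needed later — and then deriving the $C^m$ bound by two applications of the standard estimate for a difference of two compositions sharing an outer factor. Since $V_i'\cap V_j'\ne\emptyset$ and $V_i'\subset V_i$, $V_j'\subset V_j$, Lemma \ref{l:charts}(3) gives that $q_i$ and $q_j$ are neighbors, so $A_{ij}$ is defined and Lemma \ref{l:sheets} applies on $D_i^{1/5}\supset U_{ij}$; by the definition of $U_{ij}$ the map $g$ is well defined on $U_{ij}$ with values in $D_j^{1/15}$.

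For the $C^0$ step and the domain inclusion: for $x\in U_{ij}$ the identity $\psi_j(g(x))=\psi_i(x)$ reads $P_M(F(g(x)))=P_M(F(x))$, so Lemma \ref{l:sameprojection} gives $|F(x)-F(g(x))|<C\de$, while Lemma \ref{l:sheets} (with $m=0$) gives $A_{ij}(x)\in D_j$ and $|F(x)-F(A_{ij}(x))|<C\de$; hence $|F(g(x))-F(A_{ij}(x))|<C\de$. Looking at the $j$-th coordinate, where $F_j=\phi_j$ on $D_j$, and using that $\phi_j(g(x))$ lies in the compact subcap $\phi_j(\overline{B_{1/15}(0)})$ of the open cap $S_1=\phi_j(B_{1/10}(0))$ while $\phi^{-1}(S_1)=B_{1/10}(0)$, we conclude for $\de$ small that $\phi_j(A_{ij}(x))\in S_1$, i.e.\ $A_{ij}(x)\in D_j^{1/10}$. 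The local bi-Lipschitz bound of Lemma \ref{l:Fsmooth}(2) on $D_j^{1/10}$ then yields $|g(x)-A_{ij}(x)|<C\de$, which is \eqref{e:transdelta} for $m=0$, and in addition $A_{ij}(U_{ij})\subset D_j^{1/10}$.

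With this inclusion, $\psi_j\circ A_{ij}=P_M\circ F\circ A_{ij}$ is well defined on $U_{ij}$, and on $U_{ij}$
\[
 \psi_i-\psi_j\circ A_{ij}=P_M\circ F-P_M\circ(F\circ A_{ij}).
\]
The outer map satisfies $\|P_M\|_{C^m(U_{r_0/3}(M))}<C_m$ by \eqref{e:PMbounded}; the inner maps $F|_{U_{ij}}$ and $(F\circ A_{ij})|_{U_{ij}}$ have images in $\Sigma\subset U_{r_0/3}(M)$ and $C^m$-norms bounded by \eqref{e:Fsmooth} (here we use that $A_{ij}$ is an affine isometry and $A_{ij}(U_{ij})\subset D_j^{1/10}$); and $\|F-F\circ A_{ij}\|_{C^m(D_i^{1/5})}<C_m\de$ by Lemma \ref{l:sheets}. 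The composition estimate for a difference of maps with a common outer factor then gives $\|\psi_i-\psi_j\circ A_{ij}\|_{C^m(U_{ij})}<C_m\de$.

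Finally, $\psi_j$ is a uniformly bi-Lipschitz diffeomorphism $D_j^{1/10}\to V_j$ by Lemma \ref{l:charts}(1) with $\|\psi_j\|_{C^m(D_j^{1/10})}<C_m$ by \eqref{e:psibounded}, so the quantitative inverse function theorem gives $\|\psi_j^{-1}\|_{C^m(V_j)}<C_m$. Applying the composition estimate once more, with outer map $\psi_j^{-1}$ and the $C^m$-close inner maps $\psi_i|_{U_{ij}}=\psi_j\circ g$ and $(\psi_j\circ A_{ij})|_{U_{ij}}$ (both with image in $V_j$ and bounded $C^m$-norm), yields
\[
 \|g-A_{ij}\|_{C^m(U_{ij})}=\|\psi_j^{-1}\circ\psi_i-\psi_j^{-1}\circ\psi_j\circ A_{ij}\|_{C^m(U_{ij})}<C_m\de,
\]
which is \eqref{e:transdelta}. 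The main technical point is the bookkeeping in these composition estimates — checking that each inner map lands in the set on which the corresponding outer map has bounded derivatives — together with the resolution of the circularity in the $C^0$ step, where the inclusion $A_{ij}(U_{ij})\subset D_j^{1/10}$, needed before the $C^m$ arguments, is obtained from the spherical-cap structure of $\phi$ rather than directly from bi-Lipschitz bounds.
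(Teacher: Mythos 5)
Your proposal is correct and follows essentially the same route as the paper: it rests on the same ingredients (Lemma \ref{l:charts}(3) for neighborliness, Lemma \ref{l:sheets} for $\|F-F\circ A_{ij}\|_{C^m}<C_m\de$, Lemma \ref{l:sameprojection} together with the inclusion $A_{ij}(U_{ij})\subset D_j^{1/10}$, and the uniform bounds \eqref{e:PMbounded}, \eqref{e:Fsmooth}, \eqref{e:psibounded} with an inverse-function-theorem step). The only differences are organizational — you compare $\psi_i$ with $\psi_j\circ A_{ij}$ directly and then invert $\psi_j$, where the paper introduces $\tilde\psi_i=P_M\circ F\circ A_{ij}$ and inverts that, and you obtain the domain inclusion via the spherical-cap argument (as in Lemma \ref{l:Sigmai}) rather than the bi-Lipschitz bound on $F|_{D_j}$ — which is a harmless, arguably slightly cleaner, variant.
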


\begin{proof}
Note that $q_i$ and $q_j$ are neighbors by Lemma \ref{l:charts}(3).
By Lemma \ref{l:sheets} it follows that $A_{ij}(D_i^{1/10})\subset D_j$.
Consider the map $G\co D_i^{1/10}\to E$ defined by $G=F\circ A_{ij}|_{D_i^{1/10}}$.
By Lemma \ref{l:sheets} we have
\be
\label{e:transdelta1}
 \|G-F\|_{C^m(D_i^{1/10})} < \Ctwentyseven(m)\de .
\ee
This and Lemma \ref{l:submanifold}(1) imply
that the image of $G$ is contained in the domain of $P_M$,
so we can consider a map $\tilde\psi_i\co D_i^{1/10}\to M$
defined by $\tilde\psi_i=P_M\circ G$.

{Next we apply Lemma \ref{l:iterated chainrule}(2) in Appendix A with
$f=P_M$, $g=F$, and $h=G$, where $F$ and $G$ are defined in arbitrary ball $B^n(x',\rho)\subset \R^n$, centered at $x'\in D_i^{1/10}$,  and having radius $\rho<r_0/(10\Ctwentyfive({m}))$. 
Then by Lemma \ref{l:Fsmooth}, $F(B^n({p_i},\rho)) \subset B(F(x'),r_0/10)\subset E$.
Assuming that $\de<1/(10{\Ctwentyeight})$, the inequality (\ref{e:transdelta1}) implies that
$G(B^n(x_0,\rho)) \subset  Y=B(F(x'),r_0/3)\subset E$.
As $F(x')\in M$, these imply that the image of both $F$ and $G$ are in $Y\subset {\mathcal U}_{r_0/3}(M)$ where $P_M$ is defined.
Using  Lemma \ref{l:iterated chainrule}(2) in Appendix A   in these small balls with 
\eqref{e:PMbounded} and \eqref{e:transdelta1}} and combining these local estimates,
we obtain
\begin{eqnarray}
\label{e:tilde psibounded}
 & &\|\tilde\psi_i-\psi_i\|_{C^m(D_i^{1/10})} <
 \Cfortynine(m)\de ,\quad\hbox{where}\\ \nonumber& & \Cfortynine(m):= 
 {(m+1)2^{m(m-1)}  \Cfortyfour(m+1)
\,\cdotp(1 +  \Ctwentyfive(m))^m\Ctwentyeight(m).}
\end{eqnarray}
%

%
%
%
%
%
%

{Assume that $\de< \min(1,\Cthirtynine/(2\Cfortynine(1)))$.
 {Then 
 (\ref {e:psibounded}) and (\ref{e:tilde psibounded}) imply that
 $\tilde\psi_i^{-1}$ has locally the Lipschitz constant $2\Cthirtynine^{-1}$.}
 Then (\ref{e:tilde psibounded})} and Lemma \ref{l:charts}(1) imply
that
$\tilde\psi_i$ is a diffeomorphism onto its image, and 
the image of $\tilde\psi_i$ contains~$V_i'$. {
Using {(\ref{e:psibounded}) and} (\ref{e:tilde psibounded}), we see that 
\begin{eqnarray}\label{e: inverse tilde psi}
 & & \|d\tilde\psi_i\|_{C^{m-1}(D_i^{1/10})} \leq 
 \Cfortyfive(m-1)+\Cfortynine(m),\\ \nonumber
 & & \|d\tilde\psi_i^{-1}\|_{C^{m-1}(V_i')} \leq 
(3m)^m(1+ \Cfortyfive(m-1)+\Cfortynine(m))^{2m}(2\Cthirtynine^{-1})^m=:\Cfifty(m).\hspace{-3mm}
\end{eqnarray}
Moreover, by Lemma \ref{l:iterated chainrule}(1) in Appendix A}, \eqref{e:tilde psibounded} and \eqref{e: inverse tilde psi} imply that the composition
$\tilde\psi_i^{-1}\circ\psi_i$ is $C\de$-close to the identity,
more precisely,
\be
\label{e:transdelta2}
 \|\tilde\psi_i^{-1}\circ\psi_i-\text{id}\|_{C^m(D_i^{1/15})} <  \Cfortyeight(m)\de , 
\ee
{where 
$
 \Cfortyeight(m)=m^m \Cfifty(m)
 (1+ \Cfortynine(m))^m+{\Cfortynine(0)}.
$

 Let} us show that $A_{ij}(U_{ij})\subset D_j^{1/10}$.
Let $x\in U_{ij}$ and $z=A_{ij}(x)$.
Then $|F(x)-F(z)|<{\Ctwentyeight}\de$ by Lemma \ref{l:sheets}.
Let $y\in U_{ji}$ be such that $\psi_j(y)=\psi_i(x)$.
Then $|F(x)-F(y)|<\Cfortysix\de$ by Lemma \ref{l:sameprojection}.
Therefore $|F(y)-F(z)|<({\Ctwentyeight}+\Cfortysix)\de$.
Since $F|_{D_j}$ is uniformly bi-Lipschitz by Lemma \ref{l:Fsmooth}(2),
it follows that
$$
|y-z|<  \Ctwentysix |F(y)-F(z)|<  \Ctwentysix ({\Ctwentyeight}+\Cfortysix) \de<\tfrac1{10}-\tfrac1{15},
$$
if $\de$ is sufficiently small.
Since $y\in U_{ji}\subset D_j^{1/15}$,
this implies that $z\in D_j^{1/10}$.

Thus we have shown that $A_{ij}(U_{ij})\subset D_j^{1/10}$.
This implies that
$$
 \tilde\psi_i|_{U_{ij}} = P_M\circ F\circ A_{ij}|_{U_{ij}} = \psi_j\circ A_{ij}|_{U_{ij}}
$$
and therefore
$$
 \tilde\psi_i^{-1}|_{V_i'\cap V_j'} = A_{ij}^{-1}\circ \psi_j^{-1}|_{V_i'\cap V_j'}.
$$
Then  \eqref{e:transdelta2} implies that
$$
\|A_{ij}^{-1}\circ \psi_j^{-1} \circ\psi_i-\text{id}\|_{C^m(U_{ij})} <  \Cfortyeight(m)\de .
$$
and \eqref{e:transdelta} follows {as $A_{ij}$ is an affine isometry}.
\end{proof}


\subsection{Riemannian metric and quasi-isometry}
\label{subsec:riemannian metric}

Now we are going to equip $M$ with a Riemannian metric $g$
such that the resulting Riemannian manifold $(M,g)$ satisfies
the assertions of Proposition \ref{p:manifold}.
(The metric induced from $E$ is not suitable for this purpose.
One of the reasons is that its curvature is bounded by $C$ but not by $C\de$.
{\ctext Another reason is that the map $\phi$ is arbitrary, so
distances may be distorted}.)

First we observe that there exists a smooth partition
of unity $\{u_i\}$ on $M$ subordinate to the covering $\{V_i'\}$ {and $\Cfiftyone(m)>0$}
 such that 
\be
\label{e:ubounded}
 \|u_j\circ \psi_i\|_{C^m(D_i^{1/15})} < \Cfiftyone(m)
\ee
for all $i,j\in\N$ and all $m\ge 0$.
To construct such a partition of unity,
fix a smooth function {$h\co\R^n\to\R_+$ which equals 1 within
the ball $B_{1/{30}}(0)$ and 0 outside the ball $B_{1/15}(0)$, given by
$h(t)=\alpha_{1/30,1/15}(t)$, see (\ref{mu function})}.
Then define $\tilde u_i\co M\to\R_+$ by
\be\label{tilde ui}
 \tilde u_i(x) =
 \begin{cases}
 h(\psi_i^{-1}(x)-p_i), &\quad\text{if $x\in V_i'$} \\
  \quad   \quad   \quad 0, &\quad\text{otherwise}.
 \end{cases}
\ee
Finally, let $u(x)=\sum_i\tilde u_i(x)$ and $u_i(x)=\tilde u_i(x)/u(x)$.
Lemma \ref{l:transdelta} implies that {there is $\Cfiftyone(m)>0$ such that} 
$$
 \|\tilde u_j\circ\psi_i\|_{C^m(D_i^{1/15})} < \Cfiftyone(m)
$$
for all $i,j\in\N$ and all $m\ge 0$.
{
As in Lemma \ref{l:charts}(3) and Lemma \ref{l:adjgraph}(2),
we see that there is $N_2(n)$ depending
only on $n$ such that  for every $x\in M$ the number of indices $i$
such that $x\in V_i'$ is bounded by a $N_2(n)$. Hence, the sets $V_i'$ cover $M$ with bounded multiplicity $N_2(n)$ and}
it follows from Lemma \ref{l:charts}(2) that
a similar estimate holds for $u\circ\psi_i$ and
\eqref{e:ubounded} follows.

For every $i\in\N$, define a Riemannian metric $g_i$ on $V_i$
by 
\be
\label{e: gi metric}
g_i=(\psi_i^{-1})^*g_E
\ee 
where $g_E$ is the
standard Euclidean metric in $D_i^{1/10}\subset \R^n$
and the star denotes the pull-back of the metric by a map.
In the other words, $g_i$ is the unique Riemannian metric on $V_i$
such that $\psi_i$ is an isometry between $D_i^{1/10}$
and $(V_i,g_i)$.
Then Lemma \ref{l:transdelta} implies that
\be
\label{e:gi-estimate}
 \|\psi_j^*g_i-g_E\|_{C^m(U_{ij})} <{2^m n^4} \Cfortyeight(m)^2\de
\ee
for all $m\ge 0$ and $i,j\in\N$ such that $V_i'\cap V_j'\ne\emptyset$.
Define a metric $g$ on $M$ by
\be\label{e: g metric} 
g=\sum_i u_i g_i.
\ee
The pull-back $\psi_j^*g$ of this metric by a coordinate map $\psi_j$
has the form
\be\label{e:g in coordinates}
 \psi_j^*g = \sum\nolimits_i(u_i\circ\psi_j)\cdot\psi_j^*g_i .
\ee
By \eqref{e:ubounded} and \eqref{e:gi-estimate} it follows that
\be
\label{e:g-estimate}
 \|\psi_j^*g-g_E\|_{C^m(D_j^{1/15})} < \Cfiftytwo({n},m)\de,
 \quad 
  \Cfiftytwo({n},m):={4^mn^4  \Cfortyeight(m)^2\Cfiftyone(m)N_2(n)}.
\ee
Let $\Cfiftythree=\Cfiftytwo({n},0).$
So in the local coordinates defined by $\psi_j$ on $V_j'$
the metric tensor is $ \Cfiftythree\de$-close to the
Euclidean one and its derivatives up to the second order
are bounded by $  \Cfiftytwo({n},2)\de$.
So are the sectional curvatures of the metric.
Thus $(M,g)$ satisfies the second assertion of Proposition \ref{p:manifold} {with a suitable 
constant $\Ctwo$.}

Let $d_g\co M\times M\to\R_+$ be the distance induced by~$g$.
The estimate \eqref{e:g-estimate} implies that the coordinate
maps $\psi_i$ are almost isometries between the Euclidean metric on $D_i^{1/15}$
and the metric $g$ on~$V_i'$. More precisely, $\psi_i$ distorts the lengths
of tangent vectors by a factor of at most $1+\Cfiftythree\de$.
Therefore
\be
\label{e:dg-estimate}
(1+\Cfiftythree\de)^{-1}<\frac{d_g(\psi_i(x),\psi_i(y))}{|x-y|} < 1+\Cfiftythree\de,
\ee
for all $x,y\in D_i^{1/30}$.
(The ball $D_i^{1/30}$ here is twice smaller than the domain where
$\psi_i$ is almost isometric. This adjustment is needed because
the $d_g$-distance between points in $V_i'$ can be realized by paths
that leave $V_i'$.)

{Below we will assume that  $\de<\Cfiftythree^{-1}$ so that $1+ \Cfiftythree\de<2$ in
(\ref{e:dg-estimate}).}
{
This and bi-Lipschitz continuity of charts $\psi_i$ (see Lemma \ref{l:charts}(1)) imply
that $d_g$ is bi-Lipschitz equivalent to the intrinsic metric $d_M$ induced on $M$ from~$E$. Namely
\be\label{e:dM-vs-dg}
 (2\Cthirtynine)^{-1} \le \frac{d_g(x,y)}{d_M(x,y)} \le 2\Cthirtynine
\ee
for all $x,y\in M$.
}

Now we construct a $(1+C\de,C\de)$-quasi-isometry $\Psi\co X\to M$.
Recall that
$X_0=\{q_i\}_{i=1}^\infty$ is a $\frac1{100}$-net in our original metric space $X$
and for each $i\in\N$ we have a $2\de$-isometry
$f_i\co B_1(q_i)\to D_i$ such that $f_i(q_i)=p_i$.
We construct $\Psi\co X\to M$ as follows.
For every $x\in X$, pick a point $q_j\in X_0$ such that $d_X(x,q_j)\le\frac1{100}$
and define 
\be\label{Psi map}
\Psi(x)=\psi_j(f_j(x)).
\ee
The next lemma shows that the choice of $q_j$
does not make much difference.

\begin{lemma}
\label{l:another-qi}{There is $\Cfiftyfour>0$  such that the following holds.}
Let $x\in X$ and $q_i\in X_0$ be such that $d_X(x,q_i)<\frac1{20}$.
Then $f_i(x)\in D_i^{1/15}$ and
\be
\label{e:another-qi}
 d_g(\Psi(x),\psi_i(f_i(x))) <\Cfiftyfour\de .
\ee
\end{lemma}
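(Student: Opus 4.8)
The plan is to compare the two points $\psi_i(f_i(x))$ and $\Psi(x)=\psi_j(f_j(x))$, where $q_j\in X_0$ is the net point used in the definition of $\Psi$, by first showing they are $C\de$-close in $E$ and then transferring this estimate into the metric $g$ through the common chart $\psi_j$.

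First I would record the easy facts. Since $f_i\co B_1(q_i)\to D_i$ is a $2\de$-isometry with $f_i(q_i)=p_i$, the hypothesis $d_X(x,q_i)<\tfrac1{20}$ gives $|f_i(x)-p_i|<\tfrac1{20}+2\de<\tfrac1{15}$, so $f_i(x)\in D_i^{1/15}$; likewise $d_X(x,q_j)\le\tfrac1{100}$ yields $f_j(x)\in D_j^{1/30}$. Also $d_X(q_i,q_j)\le d_X(q_i,x)+d_X(x,q_j)<\tfrac1{20}+\tfrac1{100}<\tfrac12$, so $q_i$ and $q_j$ are neighbors, hence Lemmas \ref{l:transition} and \ref{l:sheets} apply to the pair $i,j$ and $A_{ij}$ is defined.

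Next I would estimate $|\psi_i(f_i(x))-\psi_j(f_j(x))|$ in $E$. By Lemma \ref{l:transition}, $|A_{ij}(f_i(x))-f_j(x)|<C\de$; by Lemma \ref{l:sheets} applied to $f_i(x)\in D_i^{1/5}$ we have $A_{ij}(f_i(x))\in D_j$ and $|F(f_i(x))-F(A_{ij}(f_i(x)))|<C\de$; and $F$ is uniformly Lipschitz by Lemma \ref{l:Fsmooth}. Combining these gives $|F(f_i(x))-F(f_j(x))|<C\de$, and applying the uniformly Lipschitz normal projection $P_M$ (Lemma \ref{l:submanifold}(4)) gives $|\psi_i(f_i(x))-\psi_j(f_j(x))|<C\de$ in $E$.

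Finally I would pass to the metric $g$ of $M$. Since $\psi_j(f_j(x))$ is the $\psi_j$-image of a point of $D_j^{1/30}$ and $\psi_j$ is uniformly bi-Lipschitz onto the open set $V_j=\psi_j(D_j^{1/10})$ (Lemma \ref{l:charts}(1)), the point $\psi_j(f_j(x))$ lies at intrinsic distance bounded below by a fixed constant from $M\setminus V_j$. On the other hand, because $M$ has second fundamental form bounded by a constant and normal injectivity radius at least $r_0/3$, the argument of Remark \ref{rem: Theorem 2 optimality} shows that two points of $M$ at small Euclidean distance are at intrinsic distance at most twice as large; hence $\psi_i(f_i(x))\in V_j$ for $\de$ small, and its coordinate $z:=\psi_j^{-1}(\psi_i(f_i(x)))$ satisfies $|z-f_j(x)|\le C|\psi_i(f_i(x))-\psi_j(f_j(x))|<C\de$. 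Together with $|f_j(x)-p_j|<\tfrac1{100}+2\de$ this places $z$ in $D_j^{1/30}$, so \eqref{e:dg-estimate} applied to $z$ and $f_j(x)$ yields $d_g(\psi_j(z),\psi_j(f_j(x)))<(1+C\de)|z-f_j(x)|<C\de$, which is exactly \eqref{e:another-qi} since $\psi_j(z)=\psi_i(f_i(x))$ and $\psi_j(f_j(x))=\Psi(x)$. The one delicate point is this last step: one must know a priori that $\psi_i(f_i(x))$ lands inside the chart domain $V_j$ before its $\psi_j$-coordinate makes sense, and this is precisely where the bounded-geometry comparison between the Euclidean and intrinsic metrics on $M$ is needed.
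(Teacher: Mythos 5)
Your proposal is correct and follows essentially the same route as the paper: the same chain (the neighbour relation for $q_i,q_j$, Lemma \ref{l:transition}, Lemma \ref{l:sheets} with $m=0$, and Lipschitz continuity of $P_M$) yields the Euclidean estimate $|\Psi(x)-\psi_i(f_i(x))|<C\de$, and the $P_M$-image of the straight segment converts it into an intrinsic estimate on $M$. The only divergence is the final transfer to $d_g$: the paper simply invokes the bi-Lipschitz equivalence (with constant depending only on $n$) of $d_g$ with the intrinsic metric induced from $E$, whereas you pull $\psi_i(f_i(x))$ back through the chart $\psi_j$ and apply \eqref{e:dg-estimate}; both are valid, yours just makes that equivalence explicit inside the chart.
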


\begin{proof}
Let $q_j$ be the point of $X_0$ chosen for $x$
in the construction of~$\Psi$.
Then $d_X(x,q_j)\le\frac1{100}$ and $\Psi(x)=\psi_j(f_j(x))$.
By the triangle inequality,
$$
d_X(q_i,q_j)<\tfrac1{20}+\tfrac1{100}<\tfrac12 ,
$$
hence $q_i$ and $q_j$ are neighbors.
Observe that
$
 |f_i(x)-p_i|<\tfrac1{20}+{2}\de
$
since $p_i=f_i(q_i)$ and $f_i$ is a $2\de$-isometry.
Similarly,
$
 |f_j(x)-p_j|<\tfrac1{100}+{2}\de
$.
Hence $f_i(x)\in D_i^{1/15}$ and $f_j(x)\in D_j^{1/50}$.
By \eqref{e:transition}, the point $f_j(x)$ is $\Ctwentytwo\de$-close
to $A_{ij}(f_i(x))$.
{
Hence, by Lemma \ref{l:Fsmooth},
$$
 | F(f_j(x)) - F( A_{ij}(f_i(x)) ) | 
 < \Ctwentysix |f_j(x)-A_{ij}(f_i(x))|
 < \Ctwentysix\Ctwentytwo\de .
$$
By Lemma \ref{l:sheets} we have $|F(f_i(x))-F(A_{ij}(f_i(x)))|<{\Ctwentyeight}\de$.
Therefore
\be\label{e:another-qi2}
 | F(f_j(x)) - F(f_i(x)) | < (  \Ctwentysix\Ctwentytwo + {\Ctwentyeight}) \de =: \Cfiftyfive\de .
\ee
Denote $a=F(f_j(x))$ and $b=F(f_i(x))$, then $\Psi(x)=P_M(a)$ and $\psi_i(f_i(x))=P_M(b)$.
Assuming that $\Cfiftyfive\de<r_0/10$, \eqref{e:another-qi2} implies that
$|a-b|<\Cfiftyfive\de<r_0/10$.
Since $a\in\Sigma\subset\mathcal U_{r_0/10}(M)$ and $P_M$ is defined in $\mathcal U_{r_0/3}(M)$,
it follows that the line segment $[a,b]$ is contained in the domain of~$P_M$.
This and \eqref{e:dPMbound} giving  the  upper bound 2
for the local Lipschitz constant of $P_M$ imply that
$$
 d_M(P_M(a),P_M(b)) \le \text{length}(P_M([a,b])) \le 2\Cfiftyfive\de
$$
Hence, by \eqref{e:dM-vs-dg},
$$
 d_g(P_M(a),P_M(b)) \le 4\Cthirtynine\Cfiftyfive\de =: \Cfiftyfour\de .
$$
Since $P_M(a)=\Psi(x)$ and $P_M(b)=\psi_i(f_i(x))$,
\eqref{e:another-qi} follows.
}
\end{proof}

Now let us show that $\Psi(X)$ is a $\Cfiftysix\de$-net in $(M,d_g)$,
{where $\Cfiftysix=\Cfiftyfour+4$.}
{
Pick $z\in M$. By Lemma \ref{l:charts}(2), $z\in\psi_i(D_i^{1/30})$ for some~$i$.
Let $y\in D_i^{1/30}$ be such that $\psi_i(y)=z$.
Since $f_i$ is a $2\de$-isometry, there is $x\in B_1(q_i)$ such that $|y-f_i(x)|<2\de$.
By the bi-Lipschitz estimate \eqref{e:dg-estimate},
\be\label{e:Cdenet1}
 d_g(z,\psi_i(f_i(x))) \le 2 |y-f_i(x)| < 4\de .
\ee
Since $y\in D_i^{1/30}$, $|y-f_i(x)|<2\de$, and $f_i$ is a $2\de$-isometry
with $f_i(q_i)=p_i$, we have
$
 d(x,q_i) < \tfrac1{30} + 4\de < \tfrac1{20} .
$
Hence $d_g(\Psi(x),\psi_i(f_i(x))) <\Cfiftyfour\de$
by Lemma \ref{l:another-qi}.
This and \eqref{e:Cdenet1} imply that 
\beq\label{e C56}
d_g(z,\Psi(x))<(\Cfiftyfour+4)\de=\Cfiftysix\de.
\eeq
Since $z\in M$ is arbitrary, it follows that
$\Psi(X)$ is a $\Cfiftysix\de$-net in $(M,d_g)$.
}



\begin{lemma}
\label{l:smaller-scale}{There is $\Cfiftyseven\geq \Cfiftysix$ such that the following holds.}
For all $x,y\in X$ such that
$d_X(x,y)<\frac1{100}$ or $d_g(\Psi(x),\Psi(y))<\frac1{100}$,
one has
\be
\label{e:distancedelta}
 |d_g(\Psi(x),\Psi(y))-d_X(x,y)| < \Cfiftyseven\de .
\ee
\end{lemma}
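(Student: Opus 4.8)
The plan is to treat separately the two alternatives in the hypothesis, and in each to localize \eqref{e:distancedelta} to a single approximate chart $\psi_i$, in which $f_i$ is a $2\de$-isometry and $g$ is $C\de$-close to the Euclidean metric by \eqref{e:g-estimate}--\eqref{e:dg-estimate}.

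\emph{The case $d_X(x,y)<\frac1{100}$.} Pick $q_i\in X_0$ with $d_X(x,q_i)\le\frac1{100}$. Then $d_X(y,q_i)<\frac1{50}$, and since $f_i(q_i)=p_i$ and $f_i$ is a $2\de$-isometry on $B_1(q_i)$, for $\de$ small both $f_i(x)$ and $f_i(y)$ lie in $D_i^{1/30}$. By \eqref{e:dg-estimate}, $d_g(\psi_i(f_i(x)),\psi_i(f_i(y)))$ differs from $|f_i(x)-f_i(y)|$ by at most $C\de|f_i(x)-f_i(y)|<C\de$, and $|f_i(x)-f_i(y)|$ differs from $d_X(x,y)$ by at most $2\de$ since $f_i$ is a $2\de$-isometry and $x,y\in B_1(q_i)$. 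Finally $d_X(x,q_i),d_X(y,q_i)<\frac1{20}$, so Lemma \ref{l:another-qi} gives $d_g(\Psi(x),\psi_i(f_i(x)))<C\de$ and $d_g(\Psi(y),\psi_i(f_i(y)))<C\de$; three applications of the triangle inequality yield \eqref{e:distancedelta}.

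\emph{The case $d_g(\Psi(x),\Psi(y))<\frac1{100}$.} Let $q_{i_0},q_{j_0}\in X_0$ be the points used in the definition of $\Psi$ at $x$ and $y$, so $\Psi(x)=\psi_{i_0}(f_{i_0}(x))$ with $f_{i_0}(x)\in D_{i_0}^{1/30}$ and $\Psi(y)=\psi_{j_0}(f_{j_0}(y))$ with $f_{j_0}(y)\in D_{j_0}^{1/30}$. The crucial step is a \emph{no-escape} argument. On $V_{i_0}'=\psi_{i_0}(D_{i_0}^{1/15})$ the metric $g$ is $C\de$-close to $(\psi_{i_0}^{-1})^*g_E$ by \eqref{e:g-estimate}, so the portion inside $V_{i_0}'$ of a $d_g$-path from $\Psi(x)$ to $\Psi(y)$ of $g$-length $<\frac1{100}$, pulled back by $\psi_{i_0}^{-1}$, becomes a path in $D_{i_0}^{1/15}$ starting in $D_{i_0}^{1/30}$ of Euclidean length at most $(1+C\de)\cdot\frac1{100}<\frac1{50}<\frac1{15}-\frac1{30}$; hence it cannot reach $\partial D_{i_0}^{1/15}$, so the $d_g$-path never leaves $V_{i_0}'$ and $\Psi(y)\in V_{i_0}'$. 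Put $w=\psi_{i_0}^{-1}(\Psi(y))\in D_{i_0}^{1/15}$; the same comparison of $g$- and Euclidean lengths (using also that $[f_{i_0}(x),w]\subset D_{i_0}^{1/15}$, since both endpoints are within $\frac1{30}+\frac1{50}<\frac1{15}$ of $p_{i_0}$) yields $\bigl|d_g(\Psi(x),\Psi(y))-|f_{i_0}(x)-w|\bigr|<C\de$ together with $|f_{i_0}(x)-w|<\frac1{50}$. Since $\Psi(y)\in V_{i_0}'\cap V_{j_0}'$, Lemma \ref{l:charts}(3) gives that $q_{i_0}$ and $q_{j_0}$ are neighbors, so $d_X(y,q_{i_0})\le d_X(y,q_{j_0})+d_X(q_{j_0},q_{i_0})<1$ and $f_{i_0}(y)$ is defined; by \eqref{e:transition}, $|A_{j_0 i_0}(f_{j_0}(y))-f_{i_0}(y)|<C\de$. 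On the other hand $f_{j_0}(y)=(\psi_{j_0}^{-1}\circ\psi_{i_0})(w)$ with $w\in U_{i_0 j_0}$, so Lemma \ref{l:transdelta} gives $|f_{j_0}(y)-A_{i_0 j_0}(w)|<C\de$; applying the isometry $A_{j_0 i_0}=A_{i_0 j_0}^{-1}$ and combining, $|f_{i_0}(y)-w|<C\de$. As $f_{i_0}$ is a $2\de$-isometry and $x,y\in B_1(q_{i_0})$, it follows that $\bigl||f_{i_0}(x)-w|-d_X(x,y)\bigr|<C\de$, and with the estimate above this proves \eqref{e:distancedelta}.

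\emph{Main obstacle.} The decisive point is the no-escape argument: the metric $g$ is known to be $C\de$-close to a flat model only \emph{inside} each chart $V_i'$, so before computing $d_g$ in coordinates one must exclude that a short $g$-geodesic emanating from deep inside a chart wanders into an uncontrolled region; this forces the interplay of the radii $\frac1{30}$, $\frac1{15}$, $\frac1{10}$ and is the reason the threshold $\frac1{100}$, comparable to the mesh of the net $X_0$, appears in the statement. The accompanying transition-map bookkeeping, relating the chart-$i_0$ coordinate $w$ of $\Psi(y)$ back to $f_{i_0}(y)$, is routine given \eqref{e:transition} and Lemma \ref{l:transdelta}.
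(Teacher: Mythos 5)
Your proof is correct and follows essentially the same route as the paper's: split into the two alternatives, work in the chart attached to $x$, and in the second case show that $\Psi(y)$ lies in $V_{i_0}'$ and use Lemma \ref{l:transdelta} together with \eqref{e:transition} to identify $\psi_{i_0}^{-1}(\Psi(y))$ with $f_{i_0}(y)$ up to $C\de$. The only cosmetic difference is at the end of the second case: the paper deduces from $|f_i(y)-p_i|<\tfrac1{50}+C\de$ that $d_X(y,q_i)<\tfrac3{100}$ and re-invokes the first case, whereas you conclude by comparing $d_g$ directly with the Euclidean distance in the chart; both rest on the same estimates \eqref{e:g-estimate}--\eqref{e:dg-estimate}.
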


\begin{proof} 
Let  $x\in X$ and $q_i$ be the point of $X_0$ chosen for $x$
in the construction of~$\Psi$, so that
$d_X(x,q_i)\le\frac1{100}$. Then $\Psi(x)=\psi_i(f_i(x))$.
Note that $|f_i(x)-p_i|<\frac1{100}+{2}\de<\frac{1}{30}$ 
since $p_i=f_i(q_i)$ and $f_i$ is a $2\de$-isometry.
 
First, we consider the case  when $y\in X$ is such that $d_X(y,q_i)<\frac{3}{100}$.
Since $f_i$ is a $2\de$-isometry,
$|f_i(y)-p_i|<\frac{3}{100}+{2}\de<\frac 1{30}$ and
the distance $|f_i(x)-f_i(y)|$ differs from $d_X(x,y)$
by at most $2\de$.
The above and \eqref{e:dg-estimate} imply that
$$
 |d_g(\psi_i(f_i(x)),\psi_i(f_i(y)))-d_X(x,y)| <{\Cfiftythree\de |f_i(x)-f_i(y)| +2\de\leq 
 (2+\Cfiftythree)\de} .
$$
This and Lemma \ref{l:another-qi}
 prove \eqref{e:distancedelta} when   $d_X(y,q_i)<\frac{3}{100}$.

 In particular, this proves the claim of the lemma
 in the case when  $d_X(x,y)<\frac{1}{100}$
 as then by the triangle inequality we have
$d_X(y,q_i)<\frac1{100}+\frac1{100}<\frac 3{100}$ .

Second, we consider the case when $y\in X$ is such that   $d_g(\Psi(x),\Psi(y))<\frac1{100}$.
For every $r>0$, denote by $B_i(r)$ the ball of radius $r$ in $M$
with respect to $d_g$ centered at $\psi_i(p_i)$.
Since $\psi_i$ almost preserves the metric tensor {in the sense of  (\ref{e:g-estimate}),
by denoting $\Cfiftyfour=\Cfiftytwo(0)$}
we have
\be\label{medium balls}
 B_i(\tfrac1{15}-\Cfiftyfour\de)\subset V_i'=\psi_i(D_i^{1/15}) \subset B_i(\tfrac1{15}+\Cfiftyfour\de) .
\ee
Since $|f_i(x)-p_i|<\frac1{100}+{2}\de$,
it follows {\ctext from (\ref{e:dg-estimate}) that the point $\Psi(x)=\psi_i(f_i(x))$ belongs
to $B_i((1+\Cfiftythree\de)(\frac1{100}+{2}\de))\subset 
B_i(\frac1{100}+3(1+\Cfiftythree)\de)$ and hence
$$\Psi(y)\in B_i(\frac1{100}+\frac1{100}+3(1+\Cfiftythree)\de)=B_i(\frac {1}{50}
+3(1+\Cfiftythree)\de)\subset V_i'.$$}
%
%
%
%

Let $q_j$ be the point of $X_0$ chosen for $y$ when defining $\Psi$
that satisfies $d_X(y,q_j)\le\frac1{100}$.
Since $\Psi (y)\in V_i'$, the point
$z:=\psi_i^{-1}(\Psi(y))=\psi_i^{-1}\circ\psi_j(f_j(y))$ is well-defined.
Moreover, $z$ lies within distance $\frac {1}{50}+{2}\de$ from $p_i$
since $\Psi(y)\in B_i(\frac {1}{50}+{2}\de)$.
By Lemma \ref{l:transdelta}, $z$ is $ \Cfortyeight(0)\de$-close to $A_{ij}(f_j(y))$
and the latter is $\Ctwentytwo\de$-close to $f_i(y)$ by \eqref{e:transition}.
Hence $|f_i(y)-p_i|<\frac {1}{50} +{( \Cfortyeight(0)+\Ctwentytwo+2)}\de$.
Since $f_i$ is a $2\de$-isometry, it follows that
$d_X(y,q_i)<\frac {1}{50}+{( \Cfortyeight(0)+\Ctwentytwo+4)}\de<\frac {3}{100}$.
Thus, \eqref{e:distancedelta}
follows from the first part of the proof.
\end{proof}

Lemma \ref{l:smaller-scale} and the fact that
$\Psi(X)$ is a $\Cfiftysix\de$-net in $(M,d_g)$ {imply}
that $\Psi$ satisfies the assumptions of
Lemma \ref{l:local-gh-implies-qi}
with $r=\frac1{100}$ and $ \Cfiftyseven\de$ in place of $\de$.
Thus
$\Psi$ is a $(1+{10^3\Cfiftyseven}\de,{3\Cfiftyseven}\de)$-quasi-isometry
from $X$ to $(M,d_g)$
and the first claim of Proposition \ref{p:manifold} follows.
The second claim is already proven above.
It remains to prove the third claim of Proposition \ref{p:manifold}.

Since $\Psi$ is a $(1+{10^3}{\Cfiftyseven}\de,{3}{\Cfiftyseven}\de)$-quasi-isometry,
{Lemma \ref{l:QI-for-balls} implies that}
every unit ball in $(M,d_g)$ is GH {($ 2015\Cfiftyseven\delta$)}-close to a unit ball in $X$,
and hence also ${(2015\Cfiftyseven+1)}\de$-close to a unit ball in $\R^n$.
{
Thus, for every $p\in M$, 
\beq\label{e C58}
d_{GH}(B^M_1(p),{B_1^n} )<(2015\Cfiftyseven+1)\de < 10^{-3},
\eeq
see \eqref{delta condition}.}
{Also, we have already shown that  $(M,g)$ satisfies the second assertion of Proposition \ref{p:manifold} 
so that its sectional curvature is bounded by $\Ctwo{\de}$.}
Therefore one can apply {Lemma \ref{l:injrad-flat} with $r=1$ and $K=\Ctwo \de<10^{-3}$ (see \eqref{delta condition}),
and conclude that $\inj_M\ge \frac9{10}>\frac12$}.
This finishes the proof of Proposition \ref{p:manifold}
and the proof of Theorem~\ref{t:manifold}.


%
%
%
%
%
%
%
%

\begin{remark}\label{rem: Thm 1 converse}
The quasi-isometry parameters in Theorem \ref{t:manifold} 
are optimal up to constant factors.
To see this, assume that a metric space $X$ is
$(1+\de r^{-1},\de)$-quasi-isometric to  an $n$-dimensional manifold $M$
with $|\Sec_M|\le \de r^{-3}$ and $|\inj_M|\ge 2r$.
Then by Lemma \ref{l:QI-for-balls} the $r$-balls in $X$ are GH $C\de$-close
to $r$-balls in $M$. 
Furthermore, by  \eqref{eq: GH distance of balls}  the 
$r$-balls in $M$ are GH $C\de$-close to $r$-balls in $\R^n$. 
Hence $X$ is  $C\delta$-close to $\R^n$ at scale $r$.

Thus the assumption of Theorem \ref{t:manifold}
that $X$ is $\de$-close to $\R^n$ at scale $r$
is necessary, up to multiplication of the parameters by a constant factor depending on~$n$.
The assumption that $X$ is $\de$-intrinsic could be weakened,
but it is not really restrictive due to Lemma \ref{l:minchain}.
\end{remark}

\begin{remark}\label{centers in X0}
{We note in the proof of 
Proposition 
\ref{p:manifold} the construction of the manifold $M$ uses only the $r$-balls $B_r(q_i)$ centered in a maximal
$\frac r{100}$-separated subset $X_0=\{q_i\}_{i=1}^N$ in $X$ and the fact that
the  Gromov-Hausdorff distance 
between any balls $B_r(q_i)$ and $B_r^n$ is less than $\delta$.
We will later use this observation in the Algorithm {\it ManifoldConstruction}.

We also note that 
the assumptions of Theorem \ref{t:manifold} can be relaxed: It is enough to assume
that $X$ is $\de$-intrinsic  and  there is a $(r/100)$-net $X_0\subset X$ such that for any $x\in X_0$ the ball $B_r(x)\subset X$  is
$\delta$-close to the Euclidean ball $B_r^n$. Indeed, when this is valid, we  
see 
that the ball of radius $\tfrac{99}{100}r$ centered 
at any point $x\in X$ is $3\delta$-close to the Euclidean 
ball of the same radius. Then the assumptions in the claim of  
  Theorem \ref{t:manifold}  are valid with parameters $r$  and $\delta$  replaced by
   $\tfrac{99}{100}r$  and $3\delta$, respectively.}

\end{remark}

\section{Proof of Corollaries \ref{cor:compact class}, \ref{cor:alexandrov} and \ref{cor 2:manifold}}
\label{sec:proof-corollaries}

\begin{proof}[Proof of Corollary \ref{cor:compact class}]
First we prove the first inclusion in \eqref{e:class inclusion}.
Let $X$ be a metric space from the class $\mathcal M_{\de/6}(n,K/2,2i_0,D-\de)$.
Then there exists a manifold $M\in\mathcal M(n,K/2,2i_0,D-\de)$
such that $d_{GH}(M,X)<\frac\de 6$.
Hence every ${r}$-ball in $X$ is GH $\frac\de2$-close
to an ${r}$-ball in $M$.
Since $r=(\de/K)^{1/3}$, by \eqref{eq: GH distance of balls} we have
$d_{GH}(B_r^M(x),B^n_r)<\frac12 Kr^3=\frac\de2$ for every $x\in M$.
Hence every $r$-ball in $X$ is GH $\de$-close to $B^n_r$.
Thus $X$ is $\de$-close to $\R^n$ at scale $r$.
Similarly $X$ is $\de_0$-close to $\R^n$ at scale ${i_0}$.
Since $d_{GH}(M,X)<\frac\de 6$, 
Lemma \ref{l:intrinsic metric}(1) implies that $X$ is $\de$-intrinsic.
We also have $\diam(X)\le\diam(M)+2d_{GH}(X,M)\le D$.
Thus $X\in\mathcal X$, proving the first inclusion in \eqref{e:class inclusion}.

Now we prove the second inclusion in \eqref{e:class inclusion}.
Let $X\in\mathcal X$. 
Recall that $\de=Kr^3$, $\de_0=Ki_0^3$, $\de<\de_0$, {and $Ki_0^2<{\sigma_1}$}.
Therefore $r<i_0$ and $\de r^{-1}<\de_0 i_0^{-1}<{\sigma_1}$.
If ${\sigma_1}$ is sufficiently small then by
Theorem \ref{t:manifold} there exists a manifold $M$
which is $(1+\Cone \de r^{-1},\Cone\de)$-quasi-isometric to $X$ and has
$|\Sec_M|\le \Ctwo\de r^{-3}=\Ctwo K$.
Let us show that $\inj_M>{2}i_0/3$.
{
Since  $X\in \mathcal X(n,\de_0,i_0,D)$ and $r<i_0$,
the above quasi-isometry and Lemma \ref{l:QI-for-balls} imply that.  
$$
 d_{GH}(B^M_{i_0}(x),B^n_{i_0}) \le 2C_1\de + 5C_1\de + \de_0 < (7C_1+1)\de_0
$$
for all $x\in M$.
By Proposition \ref{p:injrad}(1) applied to $\tilde M=\R^n$ and $\rho=i_0$ it follows that
$$
 \inj_M \ge i_0 - (7C_1+1)\cfour\de_0 > 2i_0/3
$$
provided that ${\sigma_1}$ is sufficiently small.
(Recall that $i_0<\sqrt{{\sigma_1}/K}$ and $\de_0<{\sigma_1}i_0$.)
}

By \eqref{e:t1-gh-estimate} we have $d_{GH}(X,M) \le {2\Cone} \de r^{-1} D$,
hence $\diam(M) \le D (1+{4\Cone}\de r^{-1})$.
{
We may assume that ${\sigma_1}$ is so small that ${4\Cone}\de r^{-1}<\tfrac 14$  and} 
let $M_1$ be the result of rescaling $M$ by the factor $(1+{4\Cone}\de r^{-1})^{-1}$.
Then $\diam(M_1)\le D$ and $d_{GH}(M,M_1)\le {2\Cone}\de r^{-1}D$.
Hence
\be\label{e:dGH(X,M1)}
 d_{GH}(X,M_1) \le d_{GH}(X,M) + d_{GH}(M,M_1) 
 \le  {4\Cone}  \de r^{-1}D ={4\Cone} DK^{1/3}\de^{2/3} .
\ee
{Note that the above} scale factor between $M$ and $M_1$
is greater than $\frac34$. Then $\inj_{M_1}\ge \frac34\inj_M \ge i_0/{2}$
and therefore $M_1\in\mathcal M(n,{\tfrac 43\Ctwo} K,i_0/{2},D)$. 
This and \eqref{e:dGH(X,M1)} imply the second inclusion in \eqref{e:class inclusion}
and Corollary \ref{cor:compact class} follows.
\end{proof}

{
\begin{proof}[Proof of Corollary \ref{cor:alexandrov}]
In this proof we assume that the reader is familiar with basics
of Alexandrov space geometry, see e.g.\ \cite{BBI,BrHa,BGP}.

The implication (2)$\Rightarrow$(1) of Corollary \ref{cor:alexandrov}
is standard.
Let $X$ be an $n$-manifold equipped with a metric of
curvature bounded between $-K_0$ and $K_0$ in the sense of Alexandrov
and injectivity radius bounded below by $i_0>0$.
Then (see e.g.\ \cite{BerNik} for proofs) the tangent cone of
$X$ at every point is isometric to $\R^n$,
all geodesics are uniquely extensible
and hence $X$ has a well-defined exponential map.
The definition of Alexandrov curvature bounds implies that
the exponential map features the same distance comparison properties
as a Riemannian manifold with $|\Sec|\le K_0$,
in particular \eqref{final comparision estimate} holds.
Hence, just like in the Riemannian case
(cf.\ \eqref{eq: GH distance of balls} and Section \ref{sec:injrad}), we have
$d_{GH}(B^X_r(x), B_r^n) \le K_0r^3$
for all $x\in X$ and $0<r\le r_0:=\min\{K^{-1/2},i_0/2\}$.
For $r\ge r_0$ one can use the trivial estimate
$ d_{GH}(B^X_r(x), B_r^n) \le 2r $.
Thus \eqref{e:BXKr3} holds for $K=\max\{K_0,2r_0^{-2}\}$
and all $r>0$.
This proves the implication (2)$\Rightarrow$(1) of Corollary \ref{cor:alexandrov}.

Now we prove the implication (1)$\Rightarrow$(2).
Let $X$ be a complete geodesic space satisfying
\be\label{e:BXKr3}
  d_{GH}(B^X_r(x),B^n_r) \le Kr^3 .
\ee
for some $K>0$ and all $r>0$.
Pick a decreasing sequence $r_i\to 0$ 
such that $r_1\le {\min(1,}\sqrt{\sigma_1/K})$ where $\sigma_1$ is the constant from Theorem~\ref{t:manifold}.
Due to \eqref{e:BXKr3} and the bound on $r_1$, we can apply 
Theorem \ref{t:manifold} to $X$ with $r=r_i$ and $\de=Kr_i^3$.
This yields a sequence of Riemannian $n$-manifolds $M_i$
such that
$|\Sec_{M_i}|\le \Ctwo K$, $\inj_{M_i}\ge r_i/2$,
and $M_i$ is $(1+\Cone Kr_i^2,\Cone Kr_i^3)$-quasi-isometric to~$X$.
By Lemma \ref{l:QI-for-balls} it follows that
$(X,p)$ is a pointed GH limit of $(M_i,p_i)$
where $p\in X$ is an arbitrary marked point and $p_i\in M_i$
corresponds to~$p$ via the quasi-isometry.

Let us  show that the injectivity radii of $M_i$
are uniformly bounded away from~0.
To do this we apply Proposition \ref{p:injrad} to $M=M_i$ and $\tilde M=M_1$.
Due to their quasi-isometry to~$X$,
the manifolds $M_i$ and $M_1$ are $(1+2\Cone Kr_1^2,2\Cone Kr_1^3)$-quasi-isometric to each other.
Hence by Proposition \ref{p:injrad}(2)
\be\label{e:injMi}
 \inj_{M_i} \ge (1-2\cfour \Cone Kr_1^2)\min\left\{\frac{r_1}{2},\frac{\pi}{\sqrt{\Ctwo K}}\right\} -  2\cfour \Cone Kr_1^3 .
\ee
The right-hand side of \eqref{e:injMi} is bounded below by $r_1/4$ if
$r_1\le \cfourteen/\sqrt{K}$ for a suitable constant $\cfourteen>0$
(whose value is determined by the constants provided by Theorem~\ref{t:manifold} and
Proposition \ref{p:injrad}).
Thus all $M_i$ have a uniform lower bound for the injectivity radius
and hence $X$ is a non-collapsed limit of $\{M_i\}$.
As explained in e.g.\ \cite[\S8.20]{Gr}, it follows that
$X$ is an $n$-manifold (with a low regularity Riemannian metric)
with the same bounds for curvature and injectivity radius.

It remains to prove the last claim of Corollary \ref{cor:alexandrov}
(that estimates curvature and injectivity radius of $X$ in terms of~$K$).
The curvature bound $\Ctwo K$ obtained above already has the desired form.
The injectivity radius bound $r_1/4$ gets the desired form $1/(\cfourteen\sqrt{K})$
if we choose $r_1=\min\{\sqrt{\sigma_1},\cfourteen\}/\sqrt{K}$.
\end{proof}

}

\medskip

Finally, we prove Corollary \ref{cor 2:manifold}.

\begin{proof}[Proof of Corollary \ref{cor 2:manifold}]  
Let us consider $\hat \de<\de_0$, where $\delta_0=\delta_0(n,K)$ is chosen later in the proof,
and $r=(\hat \delta/K)^{1/3}$. Then
$r<r_0$, where 
$r_0=(\delta_0/K)^{1/3}$. 
By \eqref{eq: GH distance of balls},
the manifold $N$ is $\hat\de$-close to $\R^n$ at scale $r/2$
provided that above $r_0\le\min\{K^{-1/2},\frac12\inj_N\}$.
Hence the set $X$
with the approximate distance function $\tilde d$
is ${2\hat \de}$-close to $\R^n$ at scale $r/2$.
As in Lemma \ref{l:minchain},
we can replace $\tilde d$ by a ${10}\hat\de$-intrinsic metric $d'$ on~$X$. 
This can be done with standard algorithms for finding
shortest paths in graphs.
By Lemma \ref{l:local-gh-implies-qi}, $(X,d')$ 
is $(1+{40}\hat\de r^{-1},{20}\hat\de)$-quasi-isometric to~$N$.

The metric space $(X,d')$ 
is ${40}\hat\de$-close to $\R^n$ at scale $r/2$.
We may assume that $\delta_0=\delta_0(n,K)$ satisfies $\de_0< K^{- 1/2} \rhode_1^{3/2}$,
where $\rhode_1=\rhode_1(n)$ is given in Theorem \ref{t:manifold}.
Then  
%
 $\de_0< \rhode_1 r_0$.

%
%

As in {Theorem \ref{t:manifold} (see also the algorithm {\it ManifoldConstruction} below),}
using the given data
one can construct a manifold $M=(M,g)$ 
which is 
 $(1+{40}\Cone\hat\de r^{-1},{20}\Cone\hat\de)$-quasi-isometric
 to {$(X,d')$}
and has  {$|\Sec_M|\le \Ctwo \,{40}\hat\de\, (r/2)^{-3}=
 \Csix K,$  where $ \Csix ={80}\Ctwo $.}
Since both $M$ and $N$ are quasi-isometric to $X$ with these parameters,
they are $(1+{80}\Cone\hat\de r^{-1},{40}\Cone\hat\de)$-quasi-isometric to each other.
By Proposition \ref{p:diffeo} it follows that there exists a
bi-Lipschitz diffeomorphism between $M$ and $N$ with bi-Lipschitz
constant {$1+\cthree\,{80}\Cone\hat\de r^{-1}=
1+{80}\cthree\Cone K^{1/3}\hat \delta\,{}^{2/3}$.}
Thus $M$ satisfies the statements 1 and 2 of Corollary~\ref{cor 2:manifold}.
%
%
%
%


To verify the last statement of Corollary \ref{cor 2:manifold},
assume that $\de_0=\de_0(n,K)$ is chosen to be so small that 
$r_0=(\delta_0/K)^{1/3}<(\Ctwo K)^{-1/2}$.
Then Proposition \ref{p:injrad}(2) applies to $M$ and $\tilde M=N$
with ${40}\Cone\hat\de$ in place of $\de$ and $\Ctwo K$ in place of $K$.
It implies that
$$
 \inj_M \ge (1-{80} \cfour \Cone\hat\de r^{-1} )\min \{\inj_N, \pi(\Ctwo K)^{-1/2} \} {-\cfour {40}\Cone\hat\de}
$$
We may assume that $\de_0$ is so small that the term {$1-
{80} \cfour \Cone\hat\de r^{-1}
=1-{80}\cfour \Cone K^{1/3}\hat \delta\,{}^{2/3}$
in this estimate is greater than $\frac12$.}
Then the last statement of Corollary \ref{cor 2:manifold} follows.
Choosing  $\de_0=\de_0(n,K)$ so that the above conditions for $\de_0$ and  $r_0$ are
satisfied,  we  obtain   Corollary  \ref{cor  2:manifold}.
\end{proof}

\section{Manifold reconstructions based on Theorems \ref{t:manifold} and \ref{t:surface}}
\label{sec:constructive}


{Now we change the gear and explain how the above geometric proofs can be developed to {manifold reconstruction} procedures. }

\subsection{Outline of reconstruction procedures}

The constructive proofs of
Theorems \ref{t:manifold} and \ref{t:surface} yield
algorithms that can be used to produce {submanifold}s or manifolds from
finite data sets. We give only the 
{sketches of the algorithms that could be written of based on these theorems.
Adding the necessary details to make these sketches {\ctext numerically implementable algorithms  needs more work and it is outside
the scope of this paper.
However, for sake of brevity, we below refer these sketches as algorithms and procedures.
These} algorithms use the sub-algorithms
{\it FindDisc} and {\it GHDist} given in Sections \ref{subsec:algorithm-gh}
and \ref{subsec:algorithm-finddisc}. 
In the description of the algorithm we assume
that the data set $X$ is finite.

First we outline the algorithm based on Theorem \ref{t:surface}.

\medskip
\underline{Algorithm SubmanifoldInterpolation:}
Assume that we are given the dimension $n$, the scale parameter $r$, and 
a finite set  points $X\subset E=\R^m$.
We suppose that $X$ is $\de r$-close to $n$-flats at scale $r$
where $\de$ is sufficiently small.
Our aim is to construct a {submanifold} $M\subset {\R^m}$ that approximates  $X$.
We implement the following steps: 
\begin{enumerate}
\item We rescale $X$ by the factor $1/r$. After this scaling, the problem
is reduced to the case when $r=1$.

\item 
We choose a maximal $\frac1{100}$-separated set $X_0\subset X$ and
enumerate the points of $X_0$ as $\{q_i\}_{i=1}^{N}$.
We apply the algorithm {\it FindDisc} to every point $q_i\in X_0$
to find an affine subspace $A_i$ through $q_i$ such that the
unit $n$-disc $A_i\cap B_1(q_i)$ lies within 
Hausdorff distance ${\Ctwelve {n}}\delta$ from the set $X \cap B_1(q_i)$.
We construct the orthogonal projectors
$P_i\co {\R^m}\to {\R^m}$ onto $A_i$.

\item
We construct the functions $\varphi_i:{\R^m}\to {\R^m}$, defined in 
(\ref{e:def of phi_i}), that are convex combinations of the projector $P_i$ and  the identity map.
Then we iterate these maps to construct 
 $f\co {\R^m}\to {\R^m}$, $f= \varphi_{N}\circ\varphi_{{N}-1}\circ\ldots\circ\varphi_1$, see (\ref{e:def of fi}).
 
 \item We construct the image $M= f({\mathcal U}_\de(X))$  of the $\de$-neighborhood
 of the set $X$  in the map $f$, see Remark \ref{rem:small-tube}.
 \end{enumerate}

The output of the algorithm  SubmanifoldInterpolation is the $n$-dimensional
{submanifold} $M\subset {\R^m}$.
\medskip

The algorithm based on Theorem \ref{t:manifold} is the following.
\medskip

\underline{Algorithm ManifoldConstruction:}
Assume that we are given the dimension $n$, the scale parameter $r$, and 
a finite metric space $(X,d)$. 
Our aim is to construct a smooth $n$-dimensional 
Riemannian manifold $(M,g)$ approximating $(X,d)$.
We implement the following steps: 
\begin{enumerate}
\item We multiply all distances by $1/r$. After this scaling, the problem
is reduced to the case when $r=1$.

{
\item
{We select a maximal $\frac 1{100}$-separated subset $X_0\subset X$
and enumerate the points of $X_0$ as $\{q_i\}_{i=1}^N$.}
{
We choose a set $\{p_i\}_{i=1}^N$ such that the unit balls
$D_i=B_1^n(p_i)\subset\R^n$ are disjoint.}

\item
{For each $q_i\in X_0$, we apply the algorithm {\it GHDist} 
to the ball $B_1(q_i)\subset X$ to find the value $\de_a(q_i)$.
Define $\de_a=\max_{q\in X_0} \de_a(q)$, see
\eqref{delta alpha estimate0} and \eqref{delta alpha estimate}.

}}

\item For all $q_i,q_j\in X_0$  such that $d_X(q_i,q_j)<1$, we
construct the affine transition maps $A_{ij}\co\R^n\to\R^n$ using the maps
${{\bf F}_{(i)}}:B_1(q_i)\to D_i$ and ${{\bf F}_{(j)}}:B_1(q_j)\to D_j$ and the construction given in
{
Lemma \ref{l:rotation} and formula \eqref{e:transition2b}.}

\item
Denote $\Omega_0= \bigcup_{i=1}^N D_i^{1/10}$,
where $D_i^{1/10}=B_{1/10}(p_i)\subset\R^n$,
and $E=\R^m$, $m={(n+1)N}$.
We construct a Whitney embedding-type map  
$$
F\co\Omega_0\to {\R^m},\quad F(x)=(F_i(x))_{i=1}^N
$$  
where $F_i:\Omega_0\to \R^{n+1}$  
are given by (\ref{e:Fi definition}).

 \item We construct the local patches $\Sigma_i=F(D_i^{1/10})$ {and 
${\kappa_0}$-net $Y_i=\{y_{i,k}\}_{k=1}^{K_i}$ in $\Sigma_i$ that is 
${(\kappa_0}/2)$-separated,}
where ${\kappa_0}$ is the constant from Proposition \ref{p:diffeo}.

\item We apply algorithm {\it SubmanifoldInterpolation} for the 
points $\{y_{i,k};\ 1\le i\le N,\ 1\le k\le K_j\}$ to obtain a {submanifold} $M\subset {\R^m}$.
We construct the normal projector $P_M\co {\mathcal U}_{2/5}(M)\to M$ for the {submanifold} $M$.

\item   We construct maps $
 \psi_i = P_M\circ F|_{D_i^{1/10}}:D_i^{1/10}\to P_M(\Sigma_i)\subset M$.
 
%
%
 \item We construct  metric tensors $g_i$ on sets  $P_M(\Sigma_i)\subset M$ 
   by pushing forward the Euclidean metric
$g^e$ on $\Omega_0$ to {the sets $P_M(\Sigma_i)$} 
using the maps $\psi_i$. Then metric $g$ on $M$  is constructed
by using a partition of unity to compute 
a weighted average of the obtained metric tensors, see (\ref{e:g in coordinates}).
\end{enumerate}

The output of the algorithm is the {submanifold} $M\subset {\R^m}$ and the metric $g$ on it.

\medskip

We note that  by Lemma \ref{l:gh algorithm} and formula \eqref{delta alpha estimate}, 
 we have for all $x\in X_0$ that the Gromov-Hausdorff distance 
between the ball $B_1(x)$ and $B_1^n$ is at most  $2\delta_a(x)$. 
A sufficient condition for the correctness of the {\ctext algorithm}  {\it ManifoldConstruction} is that  $\de_a$
computed in the step (3)  is smaller than 
the constant $\de_0(n)/2$, where $\de_0(n)$  is given in Proposition 
\ref{p:manifold}, see Remark \ref{de 0 value}. 

Also, we note that  {in the proof of 
Proposition 
\ref{p:manifold} the construction of  the submanifold $M$ and in the above algorithm we use  only  the $r$-balls in $X$ centered at the points of a maximal
$\frac r{100}$-separated subset $X_0$ of $X$,  see  Remark \ref{centers in X0}.}

\subsubsection{An alternative construction with the map $f$ replacing the projector $P_M$.} \label{rem. simplification} 
{The numerical computation of the projector $P_M$, mapping a point to the nearest point on manifold $M$, may
be difficult. To overcome this practical difficulty, we observe that
the manifold $M$ given by the algorithm 
{\it ManifoldConstruction} can be constructed using 
the functions $\tilde \psi_i=f\circ F|_{D_i^{1/10}}$, instead
of functions $\psi_i = P_M\circ F|_{D_i^{1/10}}$,
see Remarks \ref{rem:f and PM} {and \ref{e:def of f finite X0}. In other words, the steps (7), (8), and (9) can be replaced by
\begin{itemize}

\item [(7')] We apply algorithm {\it SubmanifoldInterpolation} for the 
points $\{y_{i,k};\ 1\le i\le N,\ 1\le k\le K_j\}$ to obtain a {submanifold} $M\subset {\R^m}$.
The map $f$ constructed in the step 3 of the algorithm {\it SubmanifoldInterpolation}
gives a map $f\co {\mathcal U}_{r/10}(M)\to M$ from the neighborhood of $M$ onto $M$,
see Remark \ref{rem:f and PM}.

\item[(8')]   We construct maps $
\tilde \psi_i=f\circ F|_{D_i^{1/10}}:D_i^{1/10}\to f(\Sigma_i)\subset M$.

 \item[(9')]  We construct  metric tensors $g_i$ on sets  $f(\Sigma_i)\subset M$ 
   by pushing forward the Euclidean metric
$g^e$ on $\Omega_0$ to {the sets $f(\Sigma_i)$} 
using the maps $\tilde \psi_i$. Then the metric $g$ on $M$ is 
a weighted average of the these metric tensors using a suitable partition of unity, {see (\ref{local metric}) below}.
 \end{itemize}

} 
When  $\tilde D_i=B^n_{1/{30}}(p_i)\subset \R^n$ and $\tilde \Sigma_i=\tilde \psi_i(\tilde D_i)$, $i=1,2,\dots,N$
the algorithm gives the maps
$\tilde \psi_i^{-1}:\tilde \Sigma_i\to \tilde D_i$
 that by Lemma \ref{l:charts} can be considered
as local coordinate charts of $M$ that cover the whole manifold $M$
and the transition functions {$\tilde \eta_{ji}= \tilde\psi_j^{-1}\circ \tilde\psi_i$
that map $$
\tilde \eta_{ji}:\tilde V_{ij}=\tilde\psi_i^{-1}( \tilde\psi_i(\tilde D_i)\cap  \tilde\psi_j(\tilde D_j))\to \tilde V_{ji}=\tilde\psi_j^{-1}( \tilde\psi_i(\tilde D_i)\cap  \tilde\psi_j(\tilde D_j)).$$
Note that the transition functions $\tilde \eta_{ji}$ need to be approximated numerically, e.g.\ using Newton's algorithm.}

{To construct the metric tensor $g$ on these charts, we can first take a family non-negative functions
${v}_i\in C^\infty_0(\tilde D_i)$ {\ctext such that   
${v}_i(x)=1$ for $x\in B^n_{1/{50}}(p_i)$.} Then 
the sets $\{x\in M:\ (\tilde \psi_i^{-1})^*{v}_i(x)>0\}$ cover $M$ and define a partition of unity on $M$  by
\beq\label{partition}
{\tilde v}_j(x)=\bigg(\sum_{i=1}^N  ((\tilde \psi_i^{-1})^*{v}_i)(x)\bigg)^{-1} ((\tilde \psi_j^{-1})^*{{v}}_j)(x).
\eeq
Then the  metric tensor  $(\tilde \psi_j)^*g$ on the chart $\tilde D_j$
is given by
\beq\label{local metric}
((\tilde \psi_j)^*g)(y)=\sum_{k=1}^N  \bigg(\sum_{i=1}^N  {{v}}_i(\tilde \eta_{ji}(y))\bigg)^{-1} 
{{v}}_k(\tilde \eta_{jk}(y))\,(\tilde \eta_{jk})_*g_e,
\eeq
where $g_e$  is the Euclidean metric on  $\tilde D_k$.}

The collection of local coordinate charts $\tilde D_j$, metric tensors $g^{(j)}=(\tilde \psi_j)^*g:\tilde D_j\to \R^{n\times n}$, and transition functions $\tilde \eta_{ij}
 :\tilde V_{ij}\to \tilde V_{ji}$   is a representation of the Riemannian manifold $M$
 in local coordinate charts.}
  {\ctext Using this representation we can determine
 the image of a geodesic $\gamma_{x_0,\xi_0}(s)$, emanating from $(x_0,\xi_0)\in TM$, on several
 coordinate charts $\tilde \psi_i^{-1}:\tilde \Sigma_i\to \tilde D_i$ and  determine the metric tensor in the Riemannian normal coordinates, \cite{Pe}.
 Thus, for practical imaging purposes, for instance to visualize the  $n$-dimensional manifold $(M,g)$, the  algorithm {\it ManifoldConstruction}  can be continued with the following steps
\begin{enumerate}
 \item[(10)] For given $x_0\in M$, determine the metric tensor $g$ in the 
 normal coordinates given by the map $\exp_{x_0}:\{\xi \in T_{x_0}M:\ \|\xi\|_g<\rho\}$,
 where $\rho<\hbox{inj}_M$.
 
 \item[(11)] For  given $x_0\in M$ and two linearly independent vectors $\xi_1,\xi_2 \in T_{x_0}M$, 
 visualize the properties of the metric $g$, e.g.\ the determinant of the metric, in the normal coordinates,
by computing
  the map \beq 
  s=(s_1,s_2)\mapsto \det(g(\exp_{x_0}(s_1\xi_1+s_2\xi_2))),\eeq 
  in the set  $\{s\in \R^2: \|s_1\xi_1+s_2\xi_2\|_g<\rho\}$. This
   produces an image of the metric
  in a two-dimensional slice of the manifold.  Moreover, consider a data point $x\in X$ 
  such that $x\in B^X_r(q_j)$, with some index $j$,  and that its image $y=\tilde \psi_j(f_j(x))$ on $M$
  satisfies $y\in B_\rho^M(x_0)$. Then the vector $\xi=\exp_{x_0}^{-1}(y)\in T_{x_0}M$
  corresponds to the data point $x$ in the tangent space of $M$ at $x_0$.
  In the visualization of the  two-dimensional slice, the data point $x$ can be visualized 
  as the projection $\bar \xi$  of the vector $\xi$ to the plane span$(\xi_1,\xi_2)$. In this way both the metric and the original data points $X$ can be visualized in two dimensional slices of the manifold.
\end{enumerate}
Practical imaging methods similar to step (11) above have been used in seismic imaging, for example in the imaging of the wave speed function in the time-migration coordinates, see e.g.\ \cite{Cameron}. 
}

\subsubsection{{\ctext Numerical approximation} of the extended transition functions using a Newton-type algorithm} 
%
\label{rem. simplification 2}
 {
 The functions  $\tilde \psi_i=f\circ F|_{D^{1/10}_i}$, $i=1,2,\dots, N$, discussed in subsection \ref{rem. simplification}
 and used
  in the steps (8')-(9') of the algorithm, are piecewisely defined by explicit formulas.
   Next we discuss, how {the inverse functions of these maps  and the extensions of the transition functions $\tilde \eta_{ji}$  can be approximated using a Newton-type algorithm.} 

To consider the inverse function of $\tilde \psi_i$, we first reduce the problem to finding an inverse function to a map between $n$-dimensional spaces.

We  construct the tangent spaces
$$T_{i}:=y_i+\hbox{Ran}(d\tilde \psi_i(p_{i}))$$ 
of the $n$-dimensional {submanifold}s $\tilde \psi_i(D^{1/10}_i)\subset \R^m$ at $y_i=\tilde \psi_i(p_{i})$, where $i=1,2,\dots, N$,
{and $\hbox{Ran}(A)$ denotes the range (i.e., the image) of the operator $A$.}
Recall that for $x\in M$,  the map $dP_M(x)$ is the orthogonal projector in $T_x{\R^m}={\R^m}$ 
onto $T_xM$. Denote $P_x=dP_M(x)$ and  $P_{i}=dP_M(y_{i})$.
Then $P_{i}:\R^m\to T_{i}$ are
the orthogonal projections. Below, $B^m_R(y)\subset \R^m$ is the ball having the radius $R$  and the   centre $y$.

%


%
%

 {Then, we compose $ \tilde \psi_i$ with a projector $P_j$ and an affine isometry $A_j: T_i \to\R^n$ and obtain a map 
\be\label{Gi functions}
 G_{j,i}:=A_j\circ P_j \circ \tilde \psi_i: {D^{1/10}_i} \to \R^n.
 \ee
In particular, we are interested in the maps $ G_{j}= G_{j,j}$. These maps are used below to determine the
 extended transition functions in formula (\ref{extended transition functions}).
 }

%
%
%
%
%
%
%
%

First we recall some estimates proven above.
{We recall that the constants $C$ and $C_k$, depend only on dimension $n$.}
By Lemma \ref{l:Fsmooth}, 
\beq\label{added C21}
\|F\|_{C^2(\Om)} \le {\Ctwentynine},
\eeq
 where {${\Ctwentynine}=\Ctwentyfive(2)$.
Next we use  {Lemma \ref{l:submanifold}, or equivalently, Theorem \ref{t:surface} with
$r_0\leq 1 $ in place of $r$ and $ \Cthirtythree  r_0^2$ in place of $\de$, and  choose later the value of $r_0$ so that it depends only on $n$. 
{We also use the fact that
 by (\ref{e:Fsmooth}) and (\ref{e:GH Sigma M}), $M\subset E$    is in the ball of radius
 $\Ctwentyfive(0)+\tfrac1{10}$ centered at zero, and hence
\be \label{e:sup  PM}
 \|P_M\|_{C^0( {\mathcal U}_{r_0/3}(M))} \le \Ctwentyfive(0)+\tfrac1{10}+\tfrac {r_0}3\le {\Clast}=
 \Ctwentyfive(0)+\tfrac1{10}+\tfrac {1}3.
\ee
When}} we
denote  {
 $\Cbracetthree={\ctext {{\Clast}}}+\Cthirtysix(2)+\Cthirtysix(1)$},  
%
%
%
%
%
%
%
 we have by Lemma \ref{l:submanifold}} {\ctext and the fact that $r_0\leq 1$,}
  \beq\label{e C61 interpolated}
 \|P_M(x)\|_{C^{2}( {\mathcal U}_{r_0/3}(M))}\leq {\ctext \Cbracetthree}.
  \eeq
 By Remark \ref{rem:f and PM} 
\be 
\|f - P_M\|_{C^k({\mathcal U}_{r_0/10}(M))} \leq \Cbracetfour r_0^{2-k},\quad k=0,1,2,
\label{added a1}
\ee 
{where $\Cbracetfour=\Cthirtythree \max (\Cseventeen(2),\Cseventeen(1),\Cseventeen(0)) $.}
Then, using interpolation in H\"older spaces \cite{BL} to inequalities 
(\ref{added a1}) with $k$ being 1 and 2, we see that 
\be 
\|f - P_M\|_{C^{1, 1/2}({\mathcal U}_{r_0/10}(M))} \leq \Cbracetfour r_0^{1/2}.
\label{added a1 interpolated}
\ee 

Lemma \ref{l:submanifold}(1) and the formulas (\ref{added C21}), (\ref{added a1 interpolated}), and (\ref{added a1}) yield that there is $\cbracetone>0$  such that 
\be\label{C2 estimate}
\|\tilde\psi_i\|_{C^2(D^{1/10}_i)} \leq   \cbracetone\quad\hbox{and}\quad \|G_{j,i}\|_{C^2(D^{1/10}_i)} \leq   \cbracetone.
\ee
By (\ref{e:psi-F-mod2}), there is a constant ${\Cbracetfive=\Cthirtynine^{-1}}>0$ such that
the maps $\psi_i = P_M\circ F:D^{1/10}_i\to \R^m$, defined in (\ref{e:psi-F-mod1}),
 satisfy 
\be\label{e C65}
\big| d\psi_i|_x(v)\big|\ge \Cbracetfive|v|,\quad \hbox{for }x\in D^{1/10}_i,\ v\in \R^n.
 \ee
 When $r_0 <(\Cbracetfive/ (2\Cbracetfour))^2,$ the formulas (\ref{added a1 interpolated}), \eqref{e C65}, and the identity
 $P_M\circ \tilde \psi_i=\tilde \psi_i$  
 imply that for  $z=\tilde \psi_i(x)$ 
 we have
 \be\label{eq dpsi1}
\big|P_z(d\tilde \psi_i|_x(v))\big |=\big| d\tilde \psi_i|_x(v)\big |\ge \frac 12 \Cbracetfive |v|,\quad x\in D^{1/10}_i.
\ee
 {Assume next that $$\tilde \psi_i(D^{1/10}_i)\cap \tilde \psi_j(D^{1/10}_j)\not=\emptyset.$$ Then we have, by {\ctext \eqref{added a1 interpolated}}
 and \eqref{C2 estimate}, for  $z\in \tilde \psi_i(D^{1/10}_i)$} that 
 \beq
 \|P_z-P_{y_j}\|
 \leq 
 \frac 2{10}{\ctext \Cbracetfour}
  \cbracetone r_0^{1/2}.\eeq
 So, when  
 $r_0 <  (\Cbracetfive/({\ctext 2\Cbracetfour} \cbracetone))^2$, we have
  \be\label{eq dpsi}
\big|{P_j}(d\tilde \psi_i|_x(v))\big |\ge \frac 14 \Cbracetfive |v|,\quad x\in D^{1/10}_i.
\ee
Now we choose $r_0=\min((\Cbracetfive/ (2\Cbracetfour))^2,(  \Cbracetfive/({\ctext 4\Cbracetfour}  \cbracetone))^2)$
in the above use of   Lemma \ref{l:submanifold} so that the above conditions for $r_0$ are valid.

As ${A_j}  :T_j\to \R^n$ is an affine isometry, (\ref{eq dpsi}) implies
\be\label{e C66}
\| (dG_{j,i}(x))^{-1}\|\le \cbracettwo=\frac 4{\Cbracetfive},
\quad x\in D^{1/10}_i.
 \ee

 {\ctext Denote
$
\cbracetthree=\max(3\cbracetone\cbracettwo,1),  
$ 
and choose 
\beq
\rho_n=\min(\frac1{100},\frac1{20 (\cbracetthree)^2},
\frac {r_0}{100\cbracetone}).\eeq   }

Recall that  $D_i=B_{1/10}^n(p_i)$ and $\tilde D_i=B_{1/30}^n(p_i)$ and let $R_n=\cbracetone\rho_n$.
As $\rho_n\leq 1/100,$ formula
(\ref {C2 estimate}) yields
 for $x\in B^n_{1/20}(p_i)$  that
 $\tilde \psi_i(B^n_{\rho_n}(x))\subset B^m_{R_n}(\tilde \psi_i(x)).$

{Our next aim is to  cover the set
 $\tilde D_i$ by small balls
 of radius $\rho_n$, and to use Newton's method to find the transition functions in these balls.


To consider how the transition functions can be constructed with a numerical algorithm, we first  will extend these functions to be defined in larger domains.} 
We call these functions the
extended transition functions.
To this end, 
let  $h_k\in B^n_{1/30}(0)\subset \R^n$, $k=1,2,\dots,K$ be a maximal set of $\rho_n$-separated points in $B^n_{1/30}(0)$.
Note that $K$ is bounded by $\hbox{vol}( B^n_{1/10}(0))/\hbox{vol}( B^n_{\rho_n/2}(0))$.

For $a\in \Z_+$, denote $$\mathcal V_i(a)=\bigcup_{k=1}^K  B^m_{aR_n}(\tilde \psi_i(p_i+h_k))\subset \R^m.$$
Assume next that  $\tilde\psi_i(\tilde D_i)\cap \tilde\psi_j(\tilde D_j)\not=\emptyset$.
If  
 $x_i\in \tilde D_i= B^n_{1/30}(p_i)$
 is such that  $$\tilde\psi_i(x_i)\in   \tilde\psi_i(\tilde D_i)\cap \tilde\psi_j(\tilde D_j),$$
 there exists $x_j\in \tilde D_j$ so that $\tilde\psi_j(x_j)=\tilde\psi_i(x_i)$.
 Then there are 
  $k_i$ and $k_j$  such that  $x_i\in B^n_{\rho_n}(p_i+h_{k_i})$ and $x_j\in B^n_{\rho_n}(p_j+h_{k_j})$ and
 we see that 
  \begin{eqnarray}\label{eq: inclusions 1}
& &  \tilde \psi_i(p_i+h_{k_i})\in
B^m_{R_n}(\tilde \psi_i(x_i))=B^m_{R_n}(\tilde \psi_j(x_j))\\
 \nonumber & &  \hspace{25mm}\subset
B^m_{2R_n}(\tilde \psi_j(p_j+h_{k_j}))\subset \mathcal V_j(2).
  \end{eqnarray}

%
%
%
%
Let $\mathcal K(j,i)=\{k\in \{1,2,\dots,K\}:\ \tilde \psi_i(p_i+h_{k})\in  \mathcal V_j(2)\}$ and  $$W_{ji}=\bigcup_{k\in \mathcal K(j,i)}  B^n_{\rho_n}(p_{i}+h_{k})\subset \R^n.$$
By (\ref{eq: inclusions 1}), we have $$\tilde\psi_i^{-1}( \tilde\psi_i(\tilde D_i)\cap  \tilde\psi_j(\tilde D_j))\subset W_{ji}$$ and
 the function 
 \be\label{extended transition functions 0}
 \tilde \eta_{ji}^e=\tilde\psi_j^{-1}\circ \tilde\psi_i:W_{ji}\to D_{j}
 \ee
  is an extension of the
transition function  $\tilde\eta_{ji}$, that is, it coincides with $ \tilde \eta_{ji}$ in the set $\tilde\psi_i^{-1}( \tilde\psi_i(\tilde D_i)\cap  \tilde\psi_j(\tilde D_j))$.
Moreover, for $k\in  \mathcal K(j,i)$
  \begin{eqnarray}\label{eq: inclusions 2}
 \tilde \psi_i(B^n_{\rho_n}(p_i+h_{k}))&\subset& 
 B^m_{R_n}(\tilde \psi_i(p_i+h_{k}))\\
 \nonumber
& \subset &
\bigcup_{z'\in   \mathcal V_j(2)} B^m_{R_n}(z')
\subset \mathcal V_j(3),
  \end{eqnarray}
that implies 
\beq\label{set W3}
& &G_{j,i}(W_{ji})\subset \mathcal W_j(3)\hbox{ and }G_j(W_{ji})\subset \mathcal W_j(3),\eeq
{\ctext where, as $P_j$ is an orthogonal projector and $A_j$ is an affine isometry,
 \ba
\nonumber \mathcal W_j(3)&:=&A_j(P_j(\mathcal V_j(3)))\\
&=&\bigcup_{k=1}^K  A_j(P_j(B^m_{3R_n}(\tilde \psi_j(p_j+h_k)))
\\
&=&\bigcup_{k=1}^K  B^n_{3R_n}(p_{j,k})\subset \R^n,
\ea
and $p_{j,k}=A_j(P_j(\tilde \psi_j(p_j+h_k)))$.
To} compute the extended transition function $\tilde \eta_{ji}^e$ 
it is enough to compute the inverse function
\beq\label{Gij inverse} G_j^{-1}=(A_j\circ P_j\circ \tilde \psi_j)^{-1}:\mathcal W_j(3)\to D_j,\eeq
as then we can write 
\be\label{extended transition functions}
\tilde \eta_{ji}^e= \tilde \psi_j^{-1}\circ \tilde \psi_i=(A_j\circ P_j\circ \tilde \psi_j)^{-1}\circ A_j\circ P_j\circ \tilde \psi_i=G_j^{-1}\circ G_{j,i}:
 W_{ji}\to  D_i.
\ee

Next, to consider various push forwards of metric tensors (see (\ref{g push forwards}) below), we analyse the computation of {\ctext the restrictions of the inverse functions in balls ${\ctext B^n_{3R_n}(p_{j,k})}$, that is,}  $G_{j,i}^{-1}|_{\ctext B^n_{3R_n}(p_{j,k})}:{\ctext B^n_{3R_n}(p_{j,k})}\to D_i$.
These maps give us the inverse maps $G_{j,i}^{-1}:\mathcal W_j(3)\to D_i$.
In the case when $i$ and $j$ are equal, this give us also  the function $G_j^{-1}:\mathcal W_j(3)\to D_j$.

To consider (\ref{Gij inverse}),
assume 
 that we are given $z\in \mathcal W_j(3)$. Then we can determine $k_0\in \{1,2,\dots,K\}$ such that $z\in 
 {\ctext  B^n_{3R_n}(p_{j,k_0})}$.
Then, we will start the iteration in Newton's algorithm from  ${x}_0=p_{i}+h_{k_0}$. 
The iterations of Newton's method proceed as follows. For $p\geq 0$, 
 \begin{equation} \label{eq: iteration}
{x}_{p+1} = {x}_p- (dG_{j,i}({x}_p))^{-1}(G_{j,i}({x}_p) - z). 
  \end{equation}
As 
$$
| (dG_{j,i}(x_0))^{-1}(z-G_{j,i}(x_0))|<3  \cbracettwo R_n=\cbracetthree\rho_n=r_1
$$
and  $ q:=\cbracetone\cbracettwo(\cbracetthree\rho_n){\ctext \leq 
(\cbracetthree)^2\rho_n}<\frac 12$, it follows by
the convergence theorem for Newton's algorithm \cite[Thm.\ 6.14]{Kress} that
the sequence $({x}_p)_{p=1}^\infty$ 
stays in ${\ctext B^n_{r_1}(p_{i}+h_{k_0})}
\subset D_{i}$ and it converges to the limit point ${x}={G_{j,i}^{-1}(z)}$ and moreover,
this sequence satisfies 
 \begin{equation} \label{eq:23h} |{x}_p- {x}| \leq 2\cbracetthree\rho_nq^{2^p-1}.
  \end{equation}
  Note that the  above also shows that $\mathcal W_j(3)\subset  G_{j,i}( D_i)$.  
Summarising, the above shows that  the Newton's algorithm can be used to compute the inverse functions of
${G_{j,i}}$ 
and of the extensions of the transition functions ${\tilde \eta}^e_{ji}$.

 }

{

\subsection{Analysis of the computational complexity}
\label{subsec :complexity 1}
\subsubsection{Computational complexity of the algorithm {\it SubmanifoldInterpolation} }
\label{rem:complexity 2a}


We analyse the computational complexity of the above algorithms  in terms the number
of elementary computational operations needed {(see Remark \ref{rem:complexity 1}). We note that as we have only presented the sketches of the above algorithms, in the considerations below we do not analyze the real computational requirements, in particular in the sense that we do not consider how much computational resources the needed elementary operations use}. {\ctext Below we consider two types of computational requirements: First the requirements for the one-time work that corresponds to the preparatory work and second the computational work required to answer to a query that produces a point of the constructed manifold. These 
are then combined to estimate the computational work required to obtain a grid of points on the manifold. We also recall that below $C$  denotes a constant which depends only on $n$,
that is $C=C(n)$, but the value of $C$ may change even inside a formula line.} 

Let $\ell=\# X$  be the number    of elements in the set $X$.
We assume that $E=\R^m$  and $X\subset {\R^m}$  satisfies the assumptions of Theorem \ref{t:surface} with sufficiently small $\delta$ 
and $r\leq 1$ so that 
 the set $X$ is $\delta$-close to $n$-flats in scale $r$. Also, we assume that $\diam(X)<D$.
In the step 2 of  {\it SubmanifoldInterpolation} we construct a   maximal $r/100$ separated subset $X_0\subset X$. Since
 $\sec(M)\leq K=C\de r^{-3}$, 
 the number  $N=\# X_0$  of elements in the set $X_0$ satisfies
  $N\leq \min(\ell,C\left( e^{CKD}/ r \right)^n).$ 
In the step 2  the construction of the set $X_0$ can be done by going through all points $x$ in $X$ one by one and include it in $X_0$ if the distance from $x$ to some of the points chosen earlier to be included in $X_0$ at least $1/100$. This  requires {at most $N\ell\leq \ell^2$ operations.}
Also, in the 
 steps 2-3 of the algorithm, one applies algorithm  {\it FindDisc}  $N$ times to a set consisting of $\ell$ points in an $m$-dimensional space
 and this requires $CmN\ell$ operations.
 Thus, the steps 1-3, that construct a function $f$, require altogether $CmN\ell$ operations.
  {\ctext Observe that $f$ is given as a composition of explicit functions containing parameters that depend on the data,
  that is, the coordinates of the points $q_i$ in \eqref{mu function 2} and the matrixes and vectors that determine the affine projectors $P_i=P_{A_{q_i}}$ in \eqref{PAx}.
   Thus the construction of $f$ means the determination of the values of these parameters. These steps for finish the one-time work requited for the preparatory steps. The preparatory steps require altogether
 $$
 N\ell+CmN\ell\leq  CN\ell m
 $$
elementary operations.}

Recall that the {submanifold} $M\subset {\R^m}$ is the image of a $\delta$-neighbourhood of the set $X$ in the constructed function $f$.  
{\ctext Moreover, by Lemma \ref{l:disc-cover}, $M$ is the union images of the
$n$-dimensional discs $A_i\cap B_1(q_i,r)$ of radii $r$, that is,
$M=\bigcup_{i=1}^N f(A_i\cap B_1(q_i,r))$.
Obtaining one point of manifold $M$ can be considered as
a query where the input consists of an index $i$  and a point $z\in B_1(q_i,r)$ 
and the query gives the answer $f(z)$.
 As we have already constructed the function $f$, or more precisely the parameters that determine this function taking values in $\R^m$, answering such a query requires 
$Cm$ elementary operators (that is, finitely many operations for each $m$ coordinates of $f(x)$).

Let us next consider computing a grid of points on $M$. To this end, let
$0<\eta<\delta$  be a small parameter. Then,} choose 
an $\eta$-dense computational grids in the $n$-dimensional discs $A_i\cap B_1(q_i,r)$, having radii $r$.
Let  us denote these computational grids by $(z_{i,j})_{j=1}^J$, $J\leq C(r/\eta)^n$. Then {\ctext we obtain a grid of points on the}
{submanifold} $M$ by constructing a $(C\eta)$-dense subset $\{f(z_{i,j});\ i=1,2,\dots,N,\ j=1,\dots,J\}$ of $M$. 
This requires $CNJm$  operations. {\ctext Summarising, computing
the points  $\{f(z_{i,j});\ i=1,2,\dots,N,\ j=1,\dots,J\}$ on $M$ using the algorithm  {\it SubmanifoldInterpolation}
require} altogether $$
CmN\ell+CNJm
\leq  Cm\ell ^2+C\ell m (r/\eta)^n$$
 operations. }

 

\subsubsection{Computational complexity of the algorithm {\it ManifoldConstruction}.}
\label{rem:complexity 2b}
We estimate the number of elementary operations needed in  {\it ManifoldConstruction} to compute
the transition functions between local charts and the metric on the charts with given numerical accuracy. By rescaling in the step 1, that is, by multiplying the distance function $d_X:X\times X\to \R$ and the geometric parameters $r$ and  
$\delta$ by factor $1/r$, it suffices to handle the case $r=1$. Because of this, we analyse the complexity of the algorithm
in the case when $r=1$ and $\delta<\delta_0(n)$, where $\delta_0(n)<1$ appearing 
in Proposition \ref{p:surface} depends only on $n$. 

We will analyse the computational complexity of the  alternative version of the algorithm described in
the subsections \ref{rem. simplification} and \ref{rem. simplification 2}.
%

{\ctext Again, we start with the requirements of the one-time work needed for the preparatory work.}
We assume that  a finite metric space $X$  satisfies assumptions of Theorem \ref{t:manifold} and that  $\diam(X)<D$.
Again,  let $\ell=\# X$ be the number    of elements in the set $X$.
Below, $\theta>0$ will be the parameter corresponding to the required numerical accuracy.

 
{
We first observe that the step 2 of the algorithm requires $C\ell^2$  steps.

In the step 3, we apply algorithm {\it GHDist} to all balls $X_1^{(i)}:=B_1(q_i)\subset X$ with $i=1,2,\dots,N$,
that is, for balls centered in points of $X_0$. Let $\ell_i=\#X_1^{(i)}$. By Lemma \ref{l:adjgraph}, each point $x\in X$ belongs at most $C=C(n)$ balls $X_1^{(i)}$,
and thus $\sum_{i=1}^N \ell_i\leq C\ell$. Applying algorithm {\it GHDist} to the ball $X_1^{(i)}$
requires $C\ell_i^2$ elementary operations, so step 3 requires altogether
\ba
\sum_{i=1}^N C\ell_i^2\leq C (\sum_{i=1}^N \ell_i)^2\leq C\ell^2
\ea
elementary operations.}

Since
by  Theorem \ref{t:manifold} the set $X$ is $(C\delta)$-close to a smooth $n$-dimensional  manifold $M$
such that $\sec(M)\leq K=C\de $, we have that 
 the number  $N=\# X_0$  of elements in the maximal $\frac 1{100}$-separated set $X_0$ satisfies $N\leq \min(\ell,C(e^{CKD})^n).$
Below, we give estimates in terms of  $N$  and $\ell$. 
In the step 3, we construct the set $X_0=\{q_i\}_{i=1}^N\subset X$, and at the same time make a record of those elements
$q_i,q_j\in X_0$  for which $d_X(q_i,q_j)<1$. This  requires $CN\ell$  operations. 
{Below, we use the fact that by Lemma \ref{l:adjgraph}, for any $i$ the number of $j$  such that 
 $d_X(q_i,q_j)<1$ is bounded by a number depending only on $n$.}

{In the step 4, finding  maps $A_{ij}$, $i,j=1,2,\dots,N$
as in Lemma \ref{l:rotation} and formula \eqref{e:transition2b} requires $CN\ell$ operations.
Indeed, for each $i\in\{1,\dots,N\}$ there is a bounded number of maps $A_{ij}$ to construct,
see Lemma \ref{l:adjgraph}.
The construction of $A_{ij}$ described in the proof of Lemma \ref{l:rotation}
requires a number of operations proportional to the number of points in the ball $B_1(q_i)$.}
In the step 5 we introduce the space $E=\R^{m}$, where $m=(n+1)N$.
We emphasize that here the dimension $m$ of the space ${\R^m}$ depends on $N$ and therefore on the metric space $X$.
 In this step, the construction of the map $F$  requires
$CN^2$ operations. The construction of {the ${\kappa_0}$-nets in $Y_i\subset \Sigma_i$ in the step 6 requires $CN\,\cdotp m$  operations. Note that by Lemma \ref{l:Fsmooth}, the maps $F:D_i^{1/10}\to \Sigma_i$ are bi-Lipschitz with the Lipschitz constant $\Ctwentysix$, and therefore we can first choose a $(2\Ctwentysix)^{-1}\kappa_0$-net $Z_i\subset D_i^{1/10}$ and then choose $Y_i\subset F(D_i^{1/10})$ to be a maximal $(\kappa_0/2)$-separated subset of $F(Z_i)$.} 

In the step 7' we apply the steps 1-3 of algorithm {\it SubmanifoldInterpolation} for the set $Y=\bigcup_i Y_i$, consisting of $CN$  points that are in
$E=\R^m$. This  requires $CmN^2$ operations, and gives
us functions $f$ and  $\tilde \psi_i=f\circ F|_{D^{1/10}_i}$, $i=1,2,\dots, N$. This implements also the step 8' of the algorithm. 

{\ctext Next we construct for all $j\in \{1,2,\dots,N\}$ the sets $\mathcal I(j)\subset \{1,2,\dots,N\}$ of those $i$ for which  $d_X(q_i,q_j)<1$. By Lemma \ref{l:adjgraph}, the number of
elements in the set $\mathcal I(j)$ is bounded by a number depending only on $n$.
Thus the
 construction of sets $\mathcal I(j)$  for all $j\in \{1,2,\dots,N\}$ requires 
 $CN^2$ elementary operations.}

Next we consider pairs  $(i,j)$  such that  $i\in \mathcal I(j)$.
The functions $f$ and  $\tilde \psi_i$ are  piecewisely defined by explicit formulas.
{As explained in subsection \ref{rem. simplification 2}, 
we  can construct  numerically 
 the inverse maps $$G_{j,i}^{-1}=(A_j\circ P_j\circ \tilde \psi_i)^{-1}:\mathcal W_j(3)\to  D_i=B^{1/{10}}(p_i),$$
 see (\ref{Gi functions}).
 There, the construction of the orthogonal projections $P_i$ onto the tangent spaces $T_i$ 
require 
$CmN$ operations.  
{\ctext This finishes  one-time work needed for the preparatory work that
has required
\ba
& &2C\ell^2+2CN\ell
+ 
CmN^2+CN^2 
 + CmN\leq  C\ell^2+CN^3
\ea
operations.

Next we consider several queries. The first query we consider takes as an input a point $z\in \mathcal W_i(3)$ and gives as the output a numerical approximation for the inverse of the map $G_{j,i}$ at the point $z$.
To do that, we}
compute the inverse of the map $G_{j,i}$ by using  
 Newton's method.} By (\ref{eq:23h}), to compute {numerically the value $G_{j,i}^{-1}(z)$ for a given $z\in \mathcal W_i(3)$} and with  the precision of $\theta\in (0,\frac 14)$, it suffices to take $C \log_2 \log_2 (1/\theta)$ iterations.  {Below in this section,
 when we consider the computation of the transition functions and the metric tensor, all computations are done with the precision  $C\theta$.
 
{\ctext Let $i\in \mathcal I(j) $ and 
 $y^{(j)}\in \tilde D_j$  and $y^{(j,i)}\in W_{ji}$. Using the numerical computations described above one can compute the values of 
the values of  $z^{(j,i)}=G_{j,i}(y^{(j,i)})$ and $G_j^{-1}(z^{(j,i)})$.
Observe that $G_{j,i}$ is a composition of the linear operators
$A_j$ and $P_j$ and $\R^m$-valued functions $ \tilde \psi_i$ having explicit formulas containing parameters that we have already computed using the data. Hence this requires $Cm$ elementary operations. 
The second query we consider  takes as an input a point $y^{(j,i)}$
and gives as an output the value
 the extended transition function $\tilde \eta_{ji}^e(y^{(j,i)})=G_j^{-1}(G_{j,i}(y^{(j,i)}))$ with  the precision of $C\theta$, see (\ref{extended transition functions}). 
 As the computation of the derivative of the 
  $\R^m$-valued function $G_j$ requires $Cm$  operations,
 this query requires $Cm \log_2 \log_2 (1/\theta)$ elementary operations.

Next we consider the query where the input is a pair $(i,j)$,
satisfying $i\in \mathcal I(j)$,  and  a point $y^{(j,i)}$ and the output is the} metric tensor
\beq\label{g push forwards}
({\tilde \eta}^e_{ji})_*g^e=(G_j^{-1})_*(G_{j,i})_*g^e, 
\eeq
 evaluated at the point $y^{(j,i)}$.
Here, using the matrix notation, $\tilde g_{(ji)}= ({\tilde \eta}^e_{ji})_*g^e$  is given by
\ba
\tilde g_{(ji)}(y)
=\bigg(\frac {\partial G_{j}}{\partial y}(y)\bigg)^{-1}\cdotp 
\bigg(\frac {\partial G_{j,i}}{\partial z}(z)\bigg)\,\cdotp
g^e\,\cdotp\bigg(\frac {\partial G_{j,i}}{\partial z}(z)\bigg)^t\cdotp\bigg((\frac {\partial G_{i}}{\partial y}(y))^t\bigg)^{-1}\bigg|_{z=G_{j,i}^{-1}(y)}
\ea
where $g^e_{ab}=\delta_{ab}$  is the Euclidean metric.
We note that since $G_{j,i}$ is a composition of linear operators and functions having explicit formulas,
the derivative $DG_{j,i}$ can be computed at any given point {\ctext using elementary operations which number depend only on $n$. Using the Lipschitz estimates \eqref{C2 estimate}
and \eqref{e C66}, we see that 
computing $({\tilde \eta}^e_{ji})_*g^e$ at the point $y^{(j,i)}$
with the precision  $C\theta$ requires
$C \log_2 \log_2 (1/\theta)$ elementary operations.

Next we consider  the computation of the metric tensor $(\tilde \psi_j)^*g$ at 
  $y^{(j)}\in \tilde D_j$ with the precision $C\theta$.  This is  done by using a  partition of unity
  (\ref{partition}) on $M$ and computing a weighted sum of tensors
   $({\tilde \eta}^e_{ij})_*g^e$, as described in formula (\ref{local metric})
  where the sum needs to be taken over the  indexes $i$ and $k$ that satisfy $i,k\in \mathcal I(j)$. As number of elements in
  $\mathcal I(j)$ is bounded by $C=C(n)$, this requires 
  $C \log_2 \log_2 (1/\theta)$ elementary operations.
  Finally, we consider the query, where the input is $y^{(j)}$  and the output
  is $f(y^{(j)})$. As discussed above in the context of 
 the algorithm  {\it SubmanifoldInterpolation}, this takes $Cm$ elementary operations as we have already computed the parameters that determine the function $f$.
 Summarising the above, after the one-time work described above, we
 can consider a query, where the input is an index $j$ 
 and a point $y^{(j)}\in \tilde D_j$ and the output
  is the collection
  \beq\label{final query}
  f(y^{(j)}),\quad ((\tilde \psi_j)^*g)_{ab}(y^{(j)}),\quad
  \{\tilde \eta_{ji}(y^{(j)}):\ i\in \mathcal I(j)\}
  \eeq
 with the precision  $C\theta$. That is, the output is the point $  f(y^{(j)})$  on $M$ that corresponds to $y^{(j)}$
 on the local coordinate chart  $D_j$, the metric tensor on $D_j$
 at the point $y^{(j)}$, and the values of the transition functions from the chart
 $D_j$ to the charts $D_i$ at  $y^{(j)}$. After the one-time work,
 answering the query (\ref{final query}) requires 
 \ba
& &\hspace{-1cm}
Cm+Cm\log_2 \log_2 (1/\theta)\leq Cm\log_2 \log_2 (1/\theta)
 \ea
elementary operations.


To construct a grid of points on the manifold $M$, let  $\mathcal Q_{j}=\tilde D_j\cap ({\rho}\Z^n),$ $j=1,2,\dots,N,$ be  computational grids in the sets $\tilde D_j$,  where 
 ${\rho}>0$ is the grid size parameter. We write these grids as
 $$
 \mathcal Q_{j}=\{y^{(j)}_l:\ l=1,2,\dots,L_{j}\}.$$
 We see that 
the numbers $L_j$ of the points in $\mathcal Q_{j}$  are bounded by $C{\rho}^{-n}$.
Answering to the query \eqref{final query} for all points $y^{(i)}_l$ in the computational grid
$\bigcup_{j=1}^N  \mathcal Q_{j}$ produces a $C\rho$-dense set of grid points on the manifold $M$ and
the values of the metric tensors on all local coordinate charts $D_j$ at these grid points.
Summarising the above analysis, the computation work to do this requires altogether 
\ba
& &\hspace{-1cm}2C\ell^2+2CN\ell
+ 
CmN^2+CN^2 
 + CmN +C{{\rho}^{-n}}Nm\log_2 \log_2 (1/\theta)
  \\ 
    & & \quad \quad \leq  C\ell^2+CN^3+
    C{{\rho}^{-n}}N^2\log_2 \log_2 (1/\theta)
 \ea
elementary operations.}} We recall that here $r=1$, $C$  depends on the intrinsic dimension $n$ of the manifold, $\rho$ is the size parameter of the computational grid,  $m=(n+1)N$, $C\theta$ is the required numerical accuracy, $N$  is the number of points in the maximal $(r/100)-$separated set in $X$ and $\ell$ is the number of points in $X$.
}

 \section*{Appendix A: Estimates for higher derivatives of
a composition of functions}
 
 {
For a function $f :X\to \mathbb{R}^n$ defined in an open set $X\subset \R^n$,
 let $D_vf(x)=\partial_vf(x)$ be derivative of $f$ to direction $v\in\R^n$ at $x$. We denote $D_vf(x)=\partial f(x)\,\cdotp v$.
 The derivatives of order $k$ are below considered as multilinear forms,
 $$
 \partial_{v_1}\partial_{v_2}\dots\partial_{v_k}f(x)=   \partial^k f(x)[v_1,v_2,\dots,v_k],
$$
where $v_1, \dots v_k\in \R^n $ and $x \in X$.

 We consider higher order derivatives as multilinear forms {\ctext and} 
use following lemma for multilinear forms, proven in \cite[Prop.\ 9.1.1] {Nesterov}.

\begin{lemma}\label{thm:nest}
 Let $A[h_1,\dots, h_k]$ be a $k-$linear symmetric form on $\mathbb{R}^n$ and $B[h_1, h_2]$ be a symmetric positive semidefinite
$2-$linear form on $\mathbb{R}^n$. Assume that for some $\alpha$ one has $|A[h,\dots,h]|\leq \alpha (B[h,h])^{k/2}$,  for $h\in \mathbb{R}^n$.
Then,
for all $h_1, \dots, h_k \in \mathbb{R}^n$,
\ba
|A[h_1, \dots, h_k]| \leq \alpha \prod_{i=1}^k (B[h_i, h_i])^{1/2}.
\ea
\end{lemma}

We will refer to the above result as the Nesterov-Nemirovski Lemma.

  When $h:X\subset \mathbb{R}^n\to \R^n$, we define 
 \begin{eqnarray} \label{Hari1a}
 \|h\|_{C^k(X)}&=&\sup_{0\le r\le k} \sup_{v_1, \dots v_r\in S^{n-1}}\sup_{x\in X}\|\partial_{v_1} \dots \partial_{v_r} h(x)\|_{\R^n}\\
  \nonumber 
 &=&\sup_{0\le r\le k} \sup_{v\in S^{n-1}}\sup_{x\in X}\sup_{a\in S^{n-1}} |\partial_{v}^r  h(x)\,\cdotp a|,\\
 \label{Hari1b}
 \|h\|_{\dot C^k(X)}&=& \sup_{v_1, \dots, v_k\in S^{n-1}}\sup_{x\in X}\|\partial_{v_1} \dots \partial_{v_k} h(x)\|_{\R^n}\\
&=& \sup_{v\in S^{n-1}}\sup_{x\in X}\sup_{a\in S^{n-1}} |\partial_{v}^k  h(x)\,\cdotp a|,   \nonumber 
  \end{eqnarray}
     where the equalities in (\ref{Hari1a}) and  (\ref{Hari1b}) follow from Lemma \ref{thm:nest}. 
     We note that by Lemma \ref{thm:nest}, we have for example that
 \beq\label{e: decomposition of derivative}
  \sup_{v_1, \dots, v_k\in S^{n-1}}\|\partial_{v_1} \dots \partial_{v_k} h(x)\|_{\R^n}=
   \sup_{v,w\in S^{n-1}}\|\partial_{v}^{k-j} \partial_{w}^j h(x)\|_{\R^n}.
 \eeq
   We denote $C^k(X)=C^k(X;\R^n)$ and use the Euclidean norm $a\mapsto \|a\|_{\R^n}$ in $\R^n$.

   When $L_s^m(\R^n)$ is the set of symmetric $m$-multilinear maps $A:(\R^n)^m\to\R^n$, we define
  $$
  \|A\|_{L_s^m}=\sup_{w_1, \dots w_m\in S^{n-1}}\|A[w_1,w_2,\dots, w_m]\|_{\R^n}=\sup_{\|a\|_ {\R^n}=1}(\sup_{w\in S^{n-1}}|A[w,w,\dots, w]\,\cdotp a|)
  $$
and
   for 
   $H:X\subset \mathbb{R}^n\to  L^m(\R^n)$, define 
    \begin{eqnarray}
 \|H\|_{C^k(X;L_s^m)}&=&\sup_{0\le r\le k} \sup_{v_1, \dots v_r\in S^{n-1}}\sup_{x\in X}\|\partial_{v_1} \dots \partial_{v_r} H(x)\|_{L_s^m} \\
   \nonumber 
 &=&\sup_{0\le r\le k} \sup_{v\in S^{n-1}}\sup_{x\in X}\|\partial_{v}^r  H(x)[w,w,\dots,w]\|_{\R^n} ,\\
 \|H\|_{\dot C^k(X;L_s^m)}&=& \sup_{v_1, \dots v_k\in S^{n-1}} \sup_{w\in S^{n-1}}\sup_{x\in X}\|\partial_{v_1} \dots \partial_{v_k} H(x)\|_{L_s^m}    \\
&=& \sup_{v\in S^{n-1}} \sup_{w\in S^{n-1}}\sup_{x\in X}\|\partial_{v}^k  H(x)[w,w,\dots,w]\|_{\R^n},   \nonumber 
  \end{eqnarray}
    where we have again used the Nesterov-Nemirovskii lemma.
%


In the main text of the paper, we use the following lemma:

\begin{lemma}
\label{l:iterated chainrule}
(1) Let $k\in \N$ and $f_i : \mathbb{R}^n \to \mathbb{R}^n$, $i=1,2,\dots,N$   be $C^k-$smooth functions. Then
\beq\label{e- products}
\|f_1 \circ \dots \circ f_N\|_{C^k}\leq 2^{k(k+1)(N-1)/2}\|f_1\|_{C^k}(1+ \|f_2\|_{C^k})^k\dots (1+\|f_N\|_{C^k})^k.\hspace{-2cm}
\eeq

\noindent
(2)  Let $k\in \Z_+$. When $X\subset \mathbb{R}^n$ is open, $Y\subset \mathbb{R}^n$ is open and convex,  and 
$f :Y\to \mathbb{R}^n$ is  $C^{k+1}-$smooth and $g,h:X\to \mathbb{R}^n$ are  $C^{k}-$smooth functions such that 
$g(X)\subset Y$ and $h(X)\subset Y$, 
we have \begin{eqnarray*}
& &    \| (f \circ g)-( f \circ h)\|_{C^{k}(X)}
\\ \nonumber 
& &  \leq  
(k+1)2^{k(k-1)}\|  f  \|_{C^{k+1}(Y)}(1+\| g\|_{C^{k}(X)}+\|  h  \|_{C^{k}(X)})^{k} \| g- h \|_{C^{k}(X)}.
\end{eqnarray*}
\end{lemma}

 \begin{proof} 
 
(1) Consider first the case $N=2$. The claim is clearly valid for $k=0$.
Let us next assume that (\ref{e- products}) is valid for $k-1$. Then 
\begin{eqnarray}
\|f \circ g\|_{\dot C^k}\hspace{-3mm}  & \leq &\hspace{-3mm} \sup_{v, w:|w| = |v| = 1} \|((\partial_w f)\circ g)(\partial_v g)\|_{\dot C^{k-1}}\\ \nonumber
& \leq & \hspace{-2mm} 2^{k-1} \sup_{v, w:|w| = |v| = 1}\|(\partial_w f)\circ g\|_{C^{k-1}}\|\partial_v g\|_{C^{k-1}}\\
 \nonumber
& \leq &\hspace{-2mm}  2^{k-1}\,\cdotp 2^{k(k-1)/2} \sup_{v, w:|w| = |v| = 1}\|\partial_w f\|_{C^{k-1}}
(1+\|g\|)^{k-1}_{C^{k-1}}
\|\partial_v g\|_{C^{k-1}}\hspace{-1cm}\\
 \nonumber
& \leq & \hspace{-2mm} 2^{k(k+1)/2} \|f\|_{C^k} (1+\|g\|_{C^k})^k.\end{eqnarray}
This and (\ref{e- products}) for $k-1$ yields that (\ref{e- products}) is valid for $k$. By induction, (\ref{e- products}) is valid for $N=2$.

Iterating (\ref{e- products}) for composition of two functions $N$  times yields (\ref{e- products}) for general $N$.
This {\ctext proves} the claim (1).

(2) We have
\begin{eqnarray*} 
\partial_v(f \circ g)(x)=( ( \partial_{w}   f) \circ g)(x)\bigg|_{w=\partial_v g}= ( ( \partial  f) \circ g)(x)) [\partial_v g(x)] 
 \end{eqnarray*}
 and 
$
\partial_w ((\partial^i_v f) \circ g)= ( \partial (\partial^i_v f)) \circ g) \partial_w g 
$, where ${w=\partial_v g}$,
 and
 \begin{eqnarray*} 
& &\partial_w((\partial^i_v f) \circ g)-\partial_w((\partial^i_v f) \circ h)\\
&=&( (\partial (\partial^i_v f)) \circ g) [\partial_w g]- ((\partial (\partial^i_v f) )\circ h) [\partial_w h] \\
&=& \bigg((\partial (\partial^i_v f)) \circ g-(\partial (\partial^i_v f)) \circ h\bigg) [\partial_w g]+ ((\partial (\partial^i_v f)) \circ h)\bigg[\partial_w g- \partial_w h \bigg]  \end{eqnarray*}
For  an integer $i\leq k$, let $a_{i, k-i} :=  \| ((\partial^i f) \circ g)-((\partial^i f) \circ h)\|_{C^{k-i}(X;L_s^i)}.$

 When $g(X)\subset X$ and $i\leq k-1$ is  an integer,  the Nesterov-Nemirovskii lemma (see also (\ref{e: decomposition of derivative})) implies
   \begin{eqnarray*} 
& &\| ((\partial^i f) \circ g)-((\partial^i f) \circ h)\|_{\dot C^{k-i}(X;L_s^i)}\\
&=&\sup_{\|v\|=\|w\|=\|w'\|=1}\| \partial^{k-1-i} _{w'}\partial_w((\partial^i_v f) \circ g)-\partial^{k-1-i}_{w'}\partial_w((\partial^i_v f) \circ h)\|_{C^0(X;\R^n)}\\
&\leq &2^{k-1-i} \|(\partial_{w'} \partial^i_v f) \circ g-(\partial_{w'} \partial^i_v f) \circ h\|_{C^{k-1-i}(X;\R^n)}
\,\cdotp \|
\partial_w g\|_{C^{k-1-i}(X;\R^n)}\\
& &+2^{k-1-i}  \sup_{\|v\|=\|w\|=\|w'\|=1}
 \|
(\partial_{w'}\partial^i_v f) \circ h\|_{C^{k-1-i}(X)}
\,\cdotp \| \partial_w g- \partial_w h \|_{C^{k-1-i}(X)} \\
&\leq &2^{k-1-i}  \|( \partial^{i+1} f) \circ g-(\partial^{i+1}f) \circ h\|_{C^{k-1-i}(X;L_s^{i+1})}
\,\cdotp \|
\partial g\|_{C^{k-1-i}(X;L_s^1)}\\
& &+2^{k-1-i} 
 \|
(\partial^{i+1} f) \circ h\|_{C^{k-1-i}(X;L_s^{i+1})}
\,\cdotp \| \partial g- \partial h \|_{C^{k-1-i}(X;L_s^1)}.
 \end{eqnarray*} 
 Thus
  \begin{eqnarray*} 
& &\sup_{i \leq r \leq k-1} \| ((\partial^i f) \circ g)-((\partial^i f) \circ h)\|_{\dot C^{r-i+1}(X;L_s^i)}\\
& \leq &  2^{k-1-i}  \|( \partial^{i+1} f) \circ g-(\partial^{i+1}f) \circ h\|_{C^{k-1-i}(X;L_s^{i+1})}
\,\cdotp \|
 g\|_{C^{k-i}(X)}\\
& &+2^{k-1-i} 
 \|
(\partial^{i+1} f) \circ h\|_{C^{k-1-i}(X;L_s^{i+1})}
\,\cdotp \| g- h \|_{C^{k-i}(X)}. \end{eqnarray*}
Using this and the claim (1) of the lemma, we see that  
\begin{eqnarray} \nonumber
 a_{i, k-i}
&=&\sup_{i-1 \leq r \leq k-1} \| ((\partial^i f) \circ g)-((\partial^i f) \circ h)\|_{\dot C^{r-i+1}(X;L_s^i)}
\end{eqnarray}
satisfy
\begin{eqnarray} \label{e:induction of a}
 a_{i, k-i}
&\leq & 2^{k-1-i}  a_{i+1, k-1-i}
\,\cdotp \|
 g\|_{C^{k-i}(X)}
+ 2^{k-1-i} b_k,
\end{eqnarray}
where 
 \begin{eqnarray}\label{eq:bi+1,k-i}  
 b_k = 2^{k(k-1)/2}
\|  f  \|_{C^{k}(Y)}(1+\| g\|_{C^{k}(X)}+\|  h  \|_{C^{k}(X)})^{k} \| g- h \|_{C^{k}(X)}. \hspace{-2cm}
\end{eqnarray} 
 Since $Y$  is convex, we have by  the Nesterov-Nemirovskii lemma
\begin{eqnarray*} \nonumber
 a_{k,0}
&=& \sup_{\|v\|=1}\sup_{x\in X} \| ((\partial^{k}_vf) \circ g(x))-((\partial^{k}_v f) \circ h(x))\|_{C^{0}(X;L_s^{k-1+1})}\\
&\leq &\sup_{\|v\|=\|w\|=1}\sup_{y\in Y} \| \partial_w\partial^{k}_vf(y)\|\, \| g- h \|_{C^{0}(X)}\\
&\leq &\|  f  \|_{C^{k+1}(Y)} \| g- h \|_{C^{k}(X)}. 
\end{eqnarray*}
Using this and induction for (\ref{e:induction of a}) in the index $i$, we see that 
\begin{eqnarray*} \label{e:resul to a}
 a_{0, k}
\leq 2^{k(k-1)}\|  f  \|_{C^{k+1}(Y)}(1+\| g\|_{C^{k}(X)}+\|  h  \|_{C^{k}(X)})^{k} \| g- h \|_{C^{k}(X)}.
 \end{eqnarray*}
This yields the claim (2).
\end{proof}
}

\noindent{\bf Acknowledgements.} The authors express their gratitude to the Mittag-Leffler Institute,
the Institut Henri Poincare, and the Fields Institute,
where parts of this work have been done.

Ch.F.\  was partly supported {by} AFOSR, grant   DMS-1265524, and NSF, grant FA9550-12-1-0425.
S.I.\ was partly supported {by} RFBR, {grants 14-01-00062 and 17-01-00128-A}, Y.K.\ was partly supported by 
EPSRC and the AXA professorship, 
M.L.\ was  supported by Academy of Finland, grants 273979, 284715, 312110 and H.N. was partly supported by NSF grant DMS-1620102 and a Ramanujan Fellowship.

\end{document}